\title{Spectral rigidity for primitive elements of $F_N$}
\author{Camille Horbez}
\begin{document}
\maketitle
\newtheorem{de}{Definition} [section]
\newtheorem{theo}[de]{Theorem} 
\newtheorem{prop}[de]{Proposition}
\newtheorem{lemma}[de]{Lemma}
\newtheorem{cor}[de]{Corollary}
\newtheorem{propd}[de]{Proposition-Definition}

\theoremstyle{remark}
\newtheorem{rk}[de]{Remark}
\newtheorem{ex}[de]{Example}
\newtheorem{question}[de]{Question}

\normalsize

\addtolength\topmargin{-.5in}
\addtolength\textheight{1.in}
\addtolength\oddsidemargin{-.045\textwidth}
\addtolength\textwidth{.09\textwidth}

\begin{abstract}
Two trees in the boundary of outer space are said to be \emph{primitive-equivalent} whenever their translation length functions are equal in restriction to the set of primitive elements of $F_N$. We give an explicit description of this equivalence relation, showing in particular that it is nontrivial. This question is motivated by our description of the horoboundary of outer space for the Lipschitz metric in \cite{Hor14-2}. Along the proof, we extend a theorem due to White about the Lipschitz metric on outer space to trees in the boundary, showing that the infimal Lipschitz constant of an $F_N$-equivariant map between the metric completion of any two minimal, very small $F_N$-trees is equal to the supremal ratio between the translation lengths of the elements of $F_N$ in these trees. We also provide approximation results for trees in the boundary of outer space.
\end{abstract}

\section*{Introduction}
\emph{Outer space} $CV_N$ was introduced by Culler and Vogtmann in \cite{CV86} with a view to studying the group $\text{Out}(F_N)$ of outer automorphisms of a finitely generated free group. The space $CV_N$ (or its unprojectivized version $cv_N$) is the space of equivariant homothety (isometry) classes of simplicial free, minimal, isometric actions of $F_N$ on simplicial metric trees. The \emph{translation length} of an element $g$ of a group $G$ acting on an $\mathbb{R}$-tree $T$ is defined as $||g||_T:=\inf_{x\in T}d_T(x,gx)$. Translation lengths provide an embedding of $cv_N$ into $\mathbb{R}^{F_N}$, whose image has projectively compact closure, as was proved by Culler and Morgan \cite{CM87}. This compactification $\overline{CV_N}$ of outer space was described by Cohen and Lustig \cite{CL95} and Bestvina and Feighn \cite{BF94} as the space of homothety classes of minimal, very small actions of $F_N$ on $\mathbb{R}$-trees.

Instead of considering the translation lengths of all elements of $F_N$, one might only look at the subset $\mathcal{P}_N$ of primitive elements of $F_N$, i.e. those elements that belong to some free basis of $F_N$, and get another compactification of $CV_N$ as a subspace of $\mathbb{PR}^{\mathcal{P}_N}$, which we call the \emph{primitive compactification} of outer space. Our original motivation for describing this alternative compactification comes from the desire to get a description of the horoboundary of outer space with respect to the so-called Lipschitz metric on $CV_N$, whose systematic study was initiated by Francaviglia and Martino in \cite{FM11} (the distance between $T,T'\in CV_N$ is defined as the logarithm of the infimal Lipschitz constant of an $F_N$-equivariant map from the covolume $1$ representative of $T$ to the covolume $1$ representative of $T'$). This in turn is motivated by the question of describing the behaviour of random walks on $\text{Out}(F_N)$: in \cite{Hor14-2}, we derive an Oseledets-like theorem about possible growth rates of elements of $F_N$ under iteration of random automorphisms of the free group from a description of the horoboundary of outer space. It turns out that the horocompactification of outer space is isomorphic to the primitive compactification \cite{Hor14-2}.

Describing the primitive compactification of outer space requires understanding the lack of rigidity of the set $\mathcal{P}_N$ in $\overline{cv_N}$, i.e. giving a description of the equivalence relation that identifies $T,T'\in\overline{cv_N}$ whenever their translation length functions are equal in restriction to $\mathcal{P}_N$. This question may also be considered of independent interest, as part of a much wider class of problems arising in several contexts. The \emph{marked length spectrum rigidity conjecture} is still open for Riemannian manifolds : it is not known whether the isometry type of a negatively curved closed Riemannian manifold $M$ is determined by the length of the geodesic representatives of the free homotopy classes of curves in $M$ (this was proven to be true in the case of surfaces by both Croke \cite{Cro90} and Otal \cite{Ota90}). Culler and Morgan's result states that an analogue of the marked length spectrum rigidity conjecture holds for free groups. It is then natural to ask, given a subset $S\subseteq F_N$, whether it is spectrally rigid in $cv_N$ (or in the closure $\overline{cv_N}$), i.e. whether the restriction to $S$ of the marked length spectrum of a tree $T\in cv_N$ (or $T\in\overline{cv_N}$) determines $T$ up to $F_N$-equivariant isometry. Several results have already been obtained for some classes of subsets of $F_N$. Smillie and Vogtmann have shown that no finite subset of $F_N$ is spectrally rigid in $\overline{cv_N}$ for $N\ge 3$ \cite{SV92}. Kapovich has proved that almost every trajectory of the simple nonbacktracking random walk on $F_N$ with respect to any free basis yields a subset of $F_N$ that is spectrally rigid in $cv_N$ \cite{Kap12}. Ray has proved that for all $\phi\in\text{Aut}(F_N)$ and all $g\in F_N$, the $\phi$-orbit of $g$ is not spectrally rigid in $cv_N$ \cite{Ray11}. Finally, Carette, Francaviglia, Kapovich and Martino have shown that the set $\mathcal{P}_N$ (and more generally the $H$-orbit of any $g\in F_N$ for $N\ge 3$, where $H$ is any subgroup of $\text{Aut}(F_N)$ that projects to a nontrivial normal subgroup of $\text{Out}(F_N)$) is spectrally rigid in $cv_N$ \cite{CFKM12}, and they raise the question of whether $\mathcal{P}_N$ is spectrally rigid in $\overline{cv_N}$ for $N\ge 3$ (for $N=2$, they provide a counterexample, attributed to Tao). 

An element of $F_N$ is \emph{simple} if it belongs to some proper free factor of $F_N$. One can define another equivalence relation on $\overline{cv_N}$, by saying that two trees are \emph{simple-equivalent} if they have the same translation length functions in restriction to the set of \emph{simple} elements of $F_N$. One easily checks that this equivalence relation is the same as the one define above using primitive elements (this is the content of Proposition \ref{primitive-separable} of the present paper), and it turns out that it is sometimes easier to work with the collection of simple elements of $F_N$ rather than primitive elements in our arguments.

Generalizing Tao's counterexample to higher ranks, we show that the set $\mathcal{P}_N$ is not spectrally rigid in $\overline{cv_N}$, yet we also show that this class of examples is the only obstruction to spectral rigidity of $\mathcal{P}_N$ in $\overline{cv_N}$. Our construction is the following. Let $T_0$ be a (non necessarily minimal) $F_{N-1}$-tree in which some point is fixed by an element of $F_{N-1}$ not contained in any proper free factor (the simplest example is to consider a tree $T_0$ reduced to a point, but one can also find more elaborated simplicial examples, as well as nonsimplicial examples by considering trees dual to a measured foliation on a surface with one single boundary component). Let $T\in \overline{cv_N}$ be a tree given by a graph of actions having $T_0$ as its only vertex tree, and having a single edge $e$ with trivial edge group. Two trees are said to be \emph{special-pull-equivalent} if they are both obtained from such a tree by partially or totally folding the edge $e$ from one or both of its extremities along translates of the form $ge$, where $g\in F_{N-1}$ does not belong to any proper free factor of $F_{N-1}$, see Figure \ref{fig-slide}, see also Section \ref{sec-slide-equiv} for precise definitions. The lack of spectral rigidity of the set $\mathcal{P}_N$ in $\overline{cv_N}$ is precisely given by this equivalence relation on trees. 

\begin{figure}
\begin{center}
\input{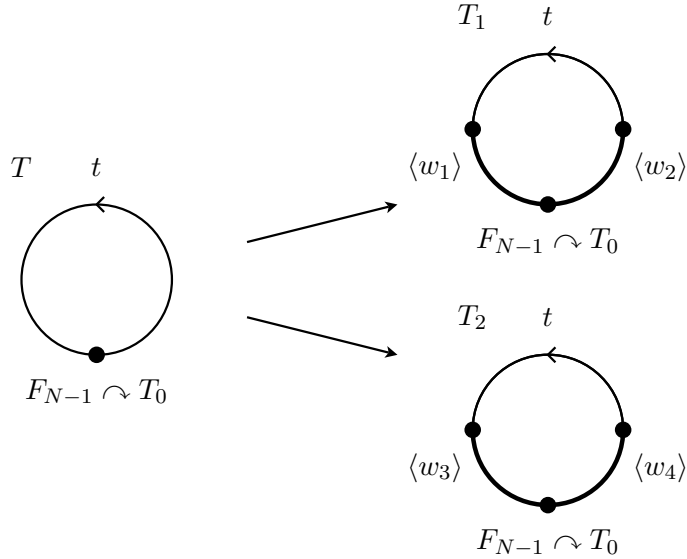}
\caption{The trees $T_1$ and $T_2$ are \emph{special-pull-equivalent} if $w_1,w_2,w_3,w_4$ do not belong to any proper free factor of $F_{N-1}$.}
\label{fig-slide}
\end{center}
\end{figure}

\begin{theo} \label{intro-equivalences}
For all $T,T'\in\overline{cv_N}$, the following assertions are equivalent.
\begin{itemize}
\item For all $g\in\mathcal{P}_N$, we have $||g||_T=||g||_{T'}$.
\item For all simple elements $g\in F_N$, we have $||g||_T=||g||_{T'}$.
\item The trees $T$ and $T'$ are special-pull-equivalent.
\end{itemize}
\end{theo}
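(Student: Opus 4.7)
The plan is to prove the theorem via $(2)\Leftrightarrow(1)$, then $(3)\Rightarrow(2)$ by direct computation, with the substantial content being the direction $(2)\Rightarrow(3)$. The first equivalence is contained in Proposition~\ref{primitive-separable}: as asserted in the introduction, every simple element of $F_N$ can be approximated by primitive elements well enough to transfer equality of translation length functions to the limit, while the converse is immediate since primitive elements are simple.

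For $(3)\Rightarrow(2)$, I would work directly with the graph-of-actions picture. Any simple element $g\in F_N$ is either conjugate into the vertex group $F_{N-1}$---in which case its axis lies in a single translate of $T_0$ and is unaffected by any fold of the edge $e$---or else it crosses the Bass--Serre edge. In the latter case, writing $g$ cyclically as $w_0 t^{\varepsilon_1} w_1 t^{\varepsilon_2}\cdots$ with $w_i\in F_{N-1}$, simplicity of $g$ forces each $w_i$ to lie in some proper free factor of $F_{N-1}$. The folds defining special-pull-equivalence occur only along translates $we$ with $w\in F_{N-1}$ \emph{not} contained in any proper free factor, so they do not alter the way the axis of $g$ enters and exits the translates of $T_0$; one concludes that $||g||_{T_1}=||g||_{T_2}$.

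The main direction is $(2)\Rightarrow(3)$, and the principal obstacle is that simple elements do not separate points of $\overline{cv_N}$, so one must pinpoint exactly where $T$ and $T'$ are allowed to differ. I would invoke the extension of White's theorem to $\overline{cv_N}$ announced in the abstract: the infimal $F_N$-equivariant Lipschitz constant between the metric completions of any two minimal very small trees equals $\sup_{g\in F_N}||g||_{T'}/||g||_T$. Under hypothesis $(2)$ this supremum is at most $1$ in both directions when restricted to simple elements, and combining this with the approximation results for boundary trees (also stated in the abstract) yields equivariant optimal maps $T\to T'$ and $T'\to T$ which are isometric on the axes of all simple elements. Levitt's graph-of-actions decomposition then forces $T$ and $T'$ to coincide outside trivial-stabilizer edges attached to vertex subtrees of the type described in Figure~\ref{fig-slide}.

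The genuinely delicate step is the exclusion argument: showing that no configuration other than the one displayed in Figure~\ref{fig-slide} produces two distinct boundary trees with the same simple length spectrum. I expect this to require a case analysis based on the classification of minimal very small $F_N$-trees (distinguishing simplicial pieces of various graph-of-groups shapes from surface-type pieces and Levitt-type mixing pieces), together with the spectral rigidity of $\mathcal{P}_N$ in $cv_N$ proved in \cite{CFKM12}, applied after suitable approximation of $T$ and $T'$ by trees in $cv_N$. Concretely, one reads off from the optimal maps built above exactly which cyclic words in an HNN decomposition can be modified by folds without affecting simple elements---namely those crossing $e$ through some transition $w_i$ outside every proper free factor of $F_{N-1}$---and this characterization is precisely special-pull-equivalence.
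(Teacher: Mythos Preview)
Your outline for $(1)\Leftrightarrow(2)$ is fine and matches Proposition~\ref{primitive-separable}. The other two directions, however, each contain a genuine gap.

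\textbf{On $(3)\Rightarrow(2)$.} Your key claim---that if $g$ is simple in $F_N$ and is written cyclically as $w_0 t^{\varepsilon_1} w_1 t^{\varepsilon_2}\cdots$, then each $w_i$ lies in a proper free factor of $F_{N-1}$---is false. For instance, with $F_3=\langle a,b\rangle\ast\langle t\rangle$, the element $[a,b]\,t$ is primitive in $F_3$, yet $[a,b]$ is not contained in any proper free factor of $\langle a,b\rangle$. What is true, and what the paper actually proves (Proposition~\ref{class-separable}, via Whitehead graphs), is much more specific: the only elements whose translation length is altered by a special pull are those whose cyclic normal form contains a piece $t^{\pm 1} g_i^k t^{\mp 1}$ with $g_i$ the pulling element, and \emph{those} particular elements are nonsimple. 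Identifying exactly which words are affected and then ruling out simplicity for that restricted class is the real content of Proposition~\ref{slide-separable}; your blanket assertion about all the $w_i$ does not hold and cannot substitute for it.

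\textbf{On $(2)\Rightarrow(3)$.} You invoke the extension of White's theorem to conclude that equal simple lengths give $1$-Lipschitz maps both ways. But Theorem~\ref{intro-White} asserts $\mathrm{Lip}(T,T')=\sup_{g\in F_N}\|g\|_{T'}/\|g\|_T$, with the supremum over \emph{all} of $F_N$, not over simple elements. Restricting the supremum to primitive or simple elements is \emph{not} valid for arbitrary $T\in\overline{cv_N}$---indeed, the failure of this is precisely why the special-pull-equivalence relation is nontrivial. The paper isolates a class of \emph{good} trees (those admitting Lipschitz approximations with two orbits of trivial-stabilizer edges) for which $\mathrm{Lip}(T,T')=\Lambda_{\mathcal{P}_N}(T,T')$ does hold (Corollary~\ref{White-primitive}), proves that every tree is a pull of a good tree (Theorem~\ref{approximation-by-separable}), and then argues that two simple-equivalent trees must be pulls of the \emph{same} good tree $\widehat{T}$ (Proposition~\ref{separable-slide}). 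Only after this reduction does one compare the two pulls and verify they are special (Proposition~\ref{weak-horo-strong}). Your sketch jumps directly to optimal maps between $T$ and $T'$ themselves, which is exactly the step that fails for non-good trees; the detour through $\widehat{T}$ is not a technicality but the heart of the argument.
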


Carette, Francaviglia, Kapovich and Martino's result, which states that the set of primitive elements is spectrally rigid in $cv_N$, is derived from Francaviglia and Martino's work \cite{FM11} about extremal Lipschitz distortion between trees $T,T'\in cv_N$. The key point, due to White, is that the minimal Lipschitz constant of an $F_N$-equivariant map from $T$ to $T'$ is also equal to $\Lambda(T,T'):=\sup_{g\in F_N}\frac{||g||_{T'}}{||g||_T}$, and this supremum is achieved on a finite set of primitive elements called \emph{candidates}, represented in the quotient graph $T/F_N$ by a special class of loops.

In order to study rigidity of $\mathcal{P}_N$ in $\overline{cv_N}$, we extend White's result to trees in the boundary of the unprojectivized outer space. Interested in the metric completion of outer space, Algom-Kfir extended it to the case where $T$ is a simplicial tree (possibly with nontrivial stabilizers) \cite[Proposition 4.5]{AK12}. We generalize it to arbitrary trees in $\overline{cv_N}$, thus answering Algom-Kfir's question \cite[Question 4.6]{AK12}. Given $T,T'\in\overline{cv_N}$, we define $\text{Lip}(T,T')$ to be the infimum of a Lipschitz constant of an $F_N$-equivariant map $f:T\to \overline{T'}$, where $\overline{T'}$ denotes the metric completion of $T'$, if such a map exists, and $\text{Lip}(T,T')=+\infty$ otherwise. We define $\Lambda(T,T'):=\sup_{g\in F_N}\frac{||g||_{T'}}{||g||_T}$ (where we take the conventions $\frac{0}{0}=0$ and $\frac{1}{0}=+\infty$). We prove equality between these two notions of stretching.

\begin{theo} \label{intro-White}
For all $T,T'\in\overline{cv_N}$, we have $\text{Lip}(T,T')={\Lambda}(T,T')$.
\end{theo}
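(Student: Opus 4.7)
The inequality $\Lambda(T,T') \le \text{Lip}(T,T')$ is the routine direction. Assuming $\text{Lip}(T,T') = L < +\infty$, let $f : T \to \overline{T'}$ be an $F_N$-equivariant $L$-Lipschitz map. For $g \in F_N$ with $||g||_T > 0$, picking $x$ on the axis of $g$ gives $d_{\overline{T'}}(f(x), g f(x)) = d_{\overline{T'}}(f(x), f(gx)) \le L ||g||_T$, hence $||g||_{T'} \le L ||g||_T$. If $||g||_T = 0$, the very small hypothesis forces $g$ to fix some $x \in T$, so $g$ fixes $f(x)$ in $\overline{T'}$ and $||g||_{T'} = 0$. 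With the stated conventions, this yields $\Lambda(T,T') \le L$.

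For the converse $\text{Lip}(T,T') \le \Lambda(T,T')$, we may assume $\Lambda := \Lambda(T,T') < +\infty$. The strategy is to approximate $T$ by simplicial trees where White's theorem (or its simplicial extension by Algom-Kfir) applies, and then pass to a limit. Using the approximation results for trees in $\overline{cv_N}$ established earlier in the paper, I would choose a sequence of simplicial trees $T_n$ equipped with $F_N$-equivariant morphisms $p_n : T_n \to T$ such that $||g||_{T_n} \to ||g||_T$ for every $g \in F_N$. Since morphisms are $1$-Lipschitz and equivariant, $||g||_{T_n} \ge ||g||_T$ for all $g$, whence $\Lambda(T_n, T') \le \Lambda$. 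Applying the simplicial-source version of White's theorem then produces an $F_N$-equivariant map $f_n : T_n \to \overline{T'}$ of Lipschitz constant at most $\Lambda + 1/n$.

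It remains to construct a limit map $f : T \to \overline{T'}$. Fix a countable $F_N$-invariant dense subset $D \subset T$, and for each $x \in D$ pick lifts $x^n \in T_n$ with $p_n(x^n) \to x$. A suitable normalization of the $f_n$, anchoring them near a chosen basepoint so that their values do not drift off to infinity in $\overline{T'}$, combined with the uniform Lipschitz bound, makes each sequence $(f_n(x^n))_n$ bounded and uniformly equicontinuous. A diagonal Arzel\`a--Ascoli extraction in the complete metric space $\overline{T'}$ produces pointwise limits $f(x)$ along a subsequence; the resulting function is $F_N$-equivariant and $\Lambda$-Lipschitz on $D$, and hence extends uniquely to a $\Lambda$-Lipschitz equivariant map $f : T \to \overline{T'}$. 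The main obstacle lies precisely in this limiting step: because the source trees $T_n$ all differ from $T$, one cannot apply Arzel\`a--Ascoli on a fixed space, and one must track points through the morphisms $p_n$ throughout. The normalization preventing the images of $f_n$ from escaping in $\overline{T'}$ also requires genuine care and is where the compatibility built into the approximation results becomes essential; handling it is the technical heart of the argument.
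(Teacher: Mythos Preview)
Your approach has a genuine gap at the approximation step. The only approximation-by-simplicial result established in the paper is Theorem~\ref{intro-approx} (equivalently Theorem~\ref{approximation-by-simplicial}): a tree $T\in\overline{cv_N}$ admits a Lipschitz approximation by elements of $cv_N$ \emph{if and only if} all arc stabilizers in $T$ are trivial. For a general $T$ with nontrivial arc stabilizers, no sequence of simplicial trees $T_n$ with $F_N$-equivariant morphisms $p_n:T_n\to T$ and $T_n\to T$ in the axes topology is provided, and indeed constructing one is problematic. If you try to approximate each dense-orbit vertex tree $T_v$ in the Levitt decomposition by free simplicial $G_v$-trees and then reassemble the graph of actions, you run into the obstruction that attaching points in $T_v$ for edges with nontrivial stabilizer have nontrivial point stabilizers, while free simplicial $G_v$-trees have none; so the edge groups have nowhere to attach. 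No alternative simplicial approximation (with non-free actions and the required compatibilities) is developed in the paper.

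This is exactly why the paper does \emph{not} proceed by a single global approximation. Instead it first proves the dense-orbits case via your strategy (Corollary~\ref{White-dense}, using Theorem~\ref{approximation-by-simplicial} and the limiting Theorem~\ref{limits}), and then treats a general $T$ through its Levitt decomposition: Proposition~\ref{weak-strong-C} builds a Lipschitz map $T\to\overline{T'}$ by hand, sending each vertex tree $T_v$ (which has dense orbits) via the already-established dense case and extending linearly on simplicial edges; Proposition~\ref{strong-implies-weak} then produces $\epsilon$-witnesses using an optimal map and a legal-turn analysis on the simplicial part. The splitting into ``dense vertex trees handled by approximation'' plus ``simplicial skeleton handled by optimal maps'' is the missing structural idea in your proposal. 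Your limiting argument is fine in spirit (the paper packages it as an ultralimit in Theorem~\ref{limits}), but it only carries the dense-orbits case; you still need the graph-of-actions analysis of Sections~\ref{sec-optimal}--\ref{sec-weak-strong} to finish.
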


Our proof of Theorem \ref{intro-White} relies on a structure theory of trees in the boundary of outer space. Levitt showed in \cite{Lev94} that any tree $T\in\overline{cv_N}$ splits as a graph of actions whose vertex trees have dense orbits (they can be reduced to a point). The case of trees with dense orbits, considered in Section \ref{sec-dense-case}, relies on two side results that provide us some approximation techniques. The first of these results extends work by Bestvina and Feighn \cite{BF94} and Guirardel \cite{Gui98}, and gives a way of approximating trees with dense orbits by free actions on simplicial metric trees.

\begin{theo}\label{intro-approx}
For all $T\in\overline{cv_N}$, the following assertions are equivalent.
\begin{itemize}
\item There exists a sequence $(T_n)_{n\in\mathbb{N}}$ of trees in $cv_N$ converging to $T$, such that for all $n\in\mathbb{N}$, there exists a $1$-Lipschitz $F_N$-equivariant map $f_n:T_n\to T$.
\item All arc stabilizers in $T$ are trivial. 
\end{itemize}
\end{theo}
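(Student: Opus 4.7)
The plan is to prove the two implications of the theorem separately, reducing the nontrivial direction to the case of trees with dense orbits via Levitt's decomposition \cite{Lev94}.

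For the direction from the existence of an approximating sequence to triviality of arc stabilizers, suppose one has $T_n\in cv_N$ converging to $T$ with $1$-Lipschitz $F_N$-equivariant maps $f_n:T_n\to T$, and suppose for contradiction that some $g\in F_N\smallsetminus\{e\}$ fixes a nondegenerate arc $I\subset T$. Then $||g||_T=0$, while $g$ is hyperbolic in each $T_n$ with axis $L_n$, and $F_N$-equivariance forces $f_n(L_n)$ to lie in the $g$-fixed subtree of $T$. Choosing a hyperbolic element $h\in F_N$ whose axis in $T$ meets $I$ and comparing the translation lengths of carefully chosen words in $g$ and $h$ in $T_n$ and in $T$, the $1$-Lipschitz constraint $||\cdot||_T\le||\cdot||_{T_n}$ combined with the free simplicial structure of $T_n$ should lead to an incompatibility in the limit, yielding the desired contradiction.

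For the converse, given $T\in\overline{cv_N}$ with trivial arc stabilizers, the plan is to apply Levitt's theorem and write $T$ as a graph of actions whose vertex trees are either single points or have dense $G_v$-orbits, where $G_v\subseteq F_N$ denotes the stabilizer of the vertex $v$. The hypothesis of trivial arc stabilizers in $T$ transfers to each vertex tree $T_v$ and forces triviality of the edge groups of the decomposition. For each vertex tree $T_v$ with dense orbits, I would construct a sequence of free simplicial $G_v$-trees $T_v^{(n)}$ converging to $T_v$ and admitting $1$-Lipschitz $G_v$-equivariant maps $T_v^{(n)}\to T_v$, extending the approximation results of Bestvina--Feighn \cite{BF94} and Guirardel \cite{Gui98}. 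These vertex-tree approximations would then be reassembled within the graph of actions structure of $T$ to produce a sequence $T_n\in cv_N$ approximating $T$ together with the required $1$-Lipschitz maps.

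The main obstacle is upgrading the Bestvina--Feighn and Guirardel approximation results from mere convergence of translation length functions to $1$-Lipschitz control in the dense-orbit case. To produce such approximations, the idea is to realize $T_v$ as the dual tree of a measured foliation on a band complex $\Sigma_v$ so that the universal cover of $\Sigma_v$ carries a natural $1$-Lipschitz $G_v$-equivariant projection onto $T_v$. Finite refinements of $\Sigma_v$, or small perturbations of its transverse measure, would yield simplicial $G_v$-trees $T_v^{(n)}$ through which this projection factors, giving the required $1$-Lipschitz maps $T_v^{(n)}\to T_v$. The triviality of arc stabilizers in $T$ is precisely what allows the approximating actions to be arranged as free, and controlling this freeness together with the Lipschitz estimate simultaneously is the crucial delicate point of the construction.
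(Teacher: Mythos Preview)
Your forward-direction argument contains an error and then a genuine gap. The claim that ``$F_N$-equivariance forces $f_n(L_n)$ to lie in the $g$-fixed subtree of $T$'' is false: $f_n(L_n)$ is $g$-invariant and bounded, but a $g$-invariant subtree of $T$ need not lie in the fixed-point set of $g$ (all of $T$ is $g$-invariant). What is true is only that every point of $f_n(L_n)$ lies within $\frac{1}{2}||g||_{T_n}$ of $\mathrm{Fix}_T(g)$. More seriously, the step that follows --- comparing translation lengths of ``carefully chosen words in $g$ and $h$'' --- is not a proof: the $1$-Lipschitz bound gives $||w||_T\le ||w||_{T_n}$ for every $w$, convergence gives $||w||_{T_n}\to ||w||_T$, and you indicate no word $w$ for which these two facts conflict with the existence of a nontrivial arc stabilizer. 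The paper's argument uses a completely different invariant, the quotient volume: the $1$-Lipschitz maps give $qvol(T)\le qvol(T_n)$ for all $n$, while Proposition~\ref{qvol} (due to Algom-Kfir) asserts that if $T^{simpl}$ has an edge with nontrivial stabilizer then $qvol(T)>\limsup_n qvol(T_n)$ for any sequence $T_n\in cv_N$ converging to $T$. This volume inequality is the missing ingredient.

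For the converse your outline is in the right spirit but is organised differently from the paper and glosses over the main technical content. The paper does not pass through Levitt's graph-of-actions decomposition and approximate each dense-orbits vertex tree separately; it works globally with a band complex $\Sigma$ dual to $T$ and successively \emph{narrows} bands --- first in exotic components following Guirardel, then in surface components from a distinguished boundary circle following Bestvina--Feighn --- each narrowing producing an explicit morphism $T_\delta\to T$. The trivial-arc-stabilizer hypothesis enters by excluding annuli from $\Sigma$, which is what allows the process to terminate in a free simplicial tree. Your phrases ``finite refinements of $\Sigma_v$'' and ``small perturbations of its transverse measure'' gesture toward this but do not name the narrowing construction; and decomposing first via Levitt introduces a reassembly problem (controlling attaching points in the approximating vertex trees) that the global approach sidesteps. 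Note also that a dense-orbits tree need not be geometric, so ``realize $T_v$ as dual to a band complex'' already hides a further reduction via strong limits.
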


\noindent The second side result we use enables us to build Lipschitz $F_N$-equivariant maps between trees in $\overline{cv_N}$ by a limiting process.

\begin{theo} \label{intro-limits}
Let $T$ and $T'$ be two very small $F_N$-trees, let $(T_n)_{n\in\mathbb{N}}$ (resp. $(T'_n)_{n\in\mathbb{N}}$) be a sequence of trees converging to $T$ (resp. $T'$) in the equivariant Gromov-Hausdorff topology, and let $(M_n)_{n\in\mathbb{N}}$ be a sequence of real numbers, satisfying $M:=\liminf_{n\to +\infty} M_n<+\infty$. Assume that for all $n\in\mathbb{N}$, there exists an $M_n$-Lipschitz $F_N$-equivariant map $f_n:T_n\to T'_n$. Then there exists an $M$-Lipschitz $F_N$-equivariant map $f:T\to \overline{T'}$.
\end{theo}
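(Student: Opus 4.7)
The plan is to extract an $M$-Lipschitz limit of the maps $f_n$ via an ultralimit construction, then use the tree structure of the ambient ultralimit to project down to $\overline{T'}$. After extracting a subsequence, assume $M_n \to M$. Fix a non-principal ultrafilter $\omega$ on $\mathbb{N}$, choose basepoints $*_n \in T_n$ tracking some $* \in T$ under the equivariant Gromov--Hausdorff convergence $T_n \to T$, and set $*'_n := f_n(*_n) \in T'_n$. Form the pointed ultralimits
\[ T_\omega := \lim_\omega(T_n, *_n), \qquad T'_\omega := \lim_\omega(T'_n, *'_n); \]
these are complete $\mathbb{R}$-trees carrying isometric $F_N$-actions. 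The inequality $d_{T'_n}(f_n(x_n), *'_n) \le M_n\, d_{T_n}(x_n, *_n)$, combined with $M_n \to M$, ensures that $\omega$-bounded sequences in $T_n$ are sent by $f_n$ to $\omega$-bounded sequences in $T'_n$; consequently the ultralimit map
\[ f_\omega \colon T_\omega \to T'_\omega, \qquad [x_n]_\omega \mapsto [f_n(x_n)]_\omega \]
is well-defined, $F_N$-equivariant, and $M$-Lipschitz (since $\lim_\omega M_n = M$).

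The equivariant Gromov--Hausdorff convergence $T_n \to T$ gives an $F_N$-equivariant isometric embedding $\iota \colon T \hookrightarrow T_\omega$: pick a countable dense $F_N$-invariant subset $X \subseteq T$, select coherent approximating sequences along $\omega$ for each $x \in X$ by a diagonal extraction, and extend by continuity using completeness of $T_\omega$. Analogously, an $F_N$-equivariant isometric embedding $\iota' \colon \overline{T'} \hookrightarrow T'_\omega$ is produced, with the caveat that approximating sequences for points of $T'$ must be $F_N$-translated so as to remain at bounded distance from $*'_n$; this step relies on the essentially cocompact action of $F_N$ on the minimal trees $T'_n$ (so that $F_N$-orbits form coarse nets in them). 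The image $\iota'(\overline{T'})$ is then a closed, convex, $F_N$-invariant subtree of the $\mathbb{R}$-tree $T'_\omega$.

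Since $\iota'(\overline{T'})$ is a closed convex subtree of an $\mathbb{R}$-tree, the nearest-point projection $\pi \colon T'_\omega \to \iota'(\overline{T'})$ is well-defined and $1$-Lipschitz, and its $F_N$-equivariance follows from the $F_N$-invariance of $\iota'(\overline{T'})$. The composition
\[ f := (\iota')^{-1} \circ \pi \circ f_\omega \circ \iota \colon T \to \overline{T'} \]
is then $M$-Lipschitz (as a composition of the $M$-Lipschitz map $f_\omega \circ \iota$ with the $1$-Lipschitz map $\pi$) and $F_N$-equivariant, as required. The main obstacle is the construction of the embedding $\iota'$: since the basepoint $*'_n = f_n(*_n)$ need not track any fixed point of $T'$, realizing $\overline{T'}$ inside $T'_\omega$ requires a careful, equivariance-preserving coordination of $F_N$-translates of approximating sequences with the ultrafilter, drawing on the coarse transitivity of the $F_N$-actions on very small minimal trees.
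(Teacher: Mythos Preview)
Your overall strategy is exactly the paper's: pass to a subsequence with $M_n\to M$, take ultralimits $T_\omega$ and $T'_\omega$ with basepoints $*_n$ and $*'_n=f_n(*_n)$, form the $M$-Lipschitz equivariant map $f_\omega$, embed $T$ and $\overline{T'}$ as $F_N$-invariant subtrees, and compose with the nearest-point projection. So the architecture is correct and matches the paper.

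The one place where your argument is not right is precisely the step you flag as the main obstacle: the embedding $\iota':\overline{T'}\hookrightarrow T'_\omega$. Your proposed fix, ``$F_N$-translate approximating sequences so as to remain at bounded distance from $*'_n$'', does not work: translating an approximating sequence of $y\in T'$ by $g_n\in F_N$ yields an approximating sequence of $g_n y$, not of $y$, so you lose control of which point of $T'$ is being embedded. Likewise, ``essentially cocompact action'' and ``orbits form coarse nets'' is not the relevant mechanism (very small trees with dense orbits are not cocompact in any useful sense here, and in any case a coarse-net statement does not by itself bound $d_{T'_n}(y_n,*'_n)$ for a fixed approximation $y_n$ of $y$).

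The paper's resolution is cleaner and uses only that $T'$ is \emph{irreducible}. First observe that $(*'_n)$ is a \emph{bounded sequence} in the sense that $\sup_n d_{T'_n}(*'_n,g*'_n)<\infty$ for every $g\in F_N$ (immediate from $*'_n=f_n(*_n)$, equivariance, and $M_n\to M$). Now choose $a,b\in F_N$ with $[a,b]$ hyperbolic in $T'$; then for large $n$ the same holds in $T'_n$, the intersection $C_{T'_n}(a)\cap C_{T'_n}(b)$ has uniformly bounded diameter, and both $*'_n$ and any approximation $y_n$ of a point $y\in T'$ lie in a uniformly bounded neighbourhood of this intersection (since $d(x,gx)=\|g\|+2d(x,C(g))$). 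Hence $d_{T'_n}(y_n,*'_n)$ is uniformly bounded, the map $y\mapsto[y_n]_\omega$ is a well-defined $F_N$-equivariant isometric embedding $T'\hookrightarrow T'_\omega$, and it extends to $\overline{T'}$ by completeness of $T'_\omega$. With this correction your proof goes through and coincides with the paper's.
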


The rest of our proof of Theorem \ref{intro-White}, which is carried out in Section \ref{sec-White}, consists in extending the standard techniques in the proof of White's theorem to be able to deal with graphs of actions.

We also extend the notion of candidates to trees in the boundary of outer space. However, the supremum in the definition of ${\Lambda}(T,T')$ can no longer be taken over the set $\mathcal{P}_N$ in general. This property nevertheless holds true for trees that contain at least two orbits of edges with trivial stabilizer in their simplicial part. Further building on the work of Bestvina and Feighn \cite{BF94} and Guirardel \cite{Gui98}, we obtain an approximation result by such trees, which is used in Section \ref{sec-rigidity} to derive Theorem \ref{intro-equivalences}.
\\
\\
\indent The paper is organized as follows. In Section \ref{sec-CVn}, we review basic facts about outer space and $\mathbb{R}$-trees. In Section \ref{sec-beginning}, we prove that two trees in $\overline{cv_N}$ are primitive-equivalent if and only if they are simple-equivalent (Section \ref{sec-equivalences}). We also give (and discuss) the precise definition of the special-pull-equivalence relation (Section \ref{sec-slide-equiv}), and we show that special-pull-equivalent trees are simple-equivalent, using the Whitehead algorithm (Section \ref{sec-Whitehead}). We also define the primitive compactification of outer space (Section \ref{sec-primitive-compactification}). Section \ref{sec-approximations} is dedicated to the proof of Theorem \ref{intro-approx}, by using techniques of approximations by geometric trees. We also prove an approximation result by trees having at least two orbits of edges with trivial stabilizers (Theorem \ref{approximation-by-separable}). Section \ref{sec-limits} is devoted to the proof of Theorem \ref{intro-limits}. In the next two sections, we prove our extension of White's theorem to trees in $\overline{cv_N}$. The case of trees with dense orbits is treated in Section \ref{sec-dense-case}, where we also prove that simple-equivalent trees with dense orbits are equal. We complete the proof of Theorem \ref{intro-White} in Section \ref{sec-White}. We also generalize the notion of candidates (Section \ref{sec-candidates}), and give more precise statements in the case of trees that have two distinct orbits of edges with trivial stabilizers in their simplicial parts (Section \ref{sec-separable-candidates}). In Section \ref{sec-rigidity}, we complete the proof of Theorem \ref{intro-equivalences}, by proving that simple-equivalent trees are special-pull-equivalent.

\section*{Acknowledgments}

It is a pleasure to thank my advisor Vincent Guirardel for his helpful advice, and his patience in reading through first drafts of the present paper and suggesting many improvements and simplifications.

\section{Outer space and its closure} \label{sec-CVn}

We start by fixing a few notations and recalling standard facts about outer space and $F_N$-actions on $\mathbb{R}$-trees in its closure.

\subsection{Outer space and its closure}\label{sec-cvn}

\emph{Outer space} $CV_N$ was defined by Culler and Vogtmann in \cite{CV86} to be the space of simplicial, free, minimal, isometric actions of $F_N$ on simplicial metric trees, up to equivariant homothety (an action of $F_N$ on a tree is said to be \emph{minimal} if there is no proper invariant subtree). We denote by $cv_N$ the \emph{unprojectivized outer space}, in which trees are considered up to isometry, instead of homothety. The reader is referred to \cite{Vog02} for an excellent survey and reference article about outer space. 

An \emph{$\mathbb{R}$-tree} is a metric space $(T,d_T)$ in which any two points $x$ and $y$ are joined by a unique arc, which is isometric to a segment of length $d_T(x,y)$ (the reader is referred to \cite{CM87} for an introduction to $\mathbb{R}$-trees). Let $T$ be an \emph{$F_N$-tree}, i.e. an $\mathbb{R}$-tree equipped with an isometric action of $F_N$. For $g\in F_N$, the \emph{translation length} of $g$ in $T$ is defined to be

\begin{displaymath}
||g||_T:=\inf_{x\in T}d_T(x,gx).
\end{displaymath}

\noindent Culler and Morgan have shown in \cite[Theorem 3.7]{CM87} that the map

\begin{displaymath}
\begin{array}{cccc}
i:&cv_N&\to &\mathbb{R}^{F_N}\\
&T&\mapsto & (||g||_T)_{g\in F_N}
\end{array}
\end{displaymath}

\noindent is injective, and actually a homeomorphism onto its image. More precisely, the following holds.

\begin{theo} \label{rigidity} (Culler-Morgan \cite[Theorem 3.7]{CM87})
Let $T,T'$ be two minimal $F_N$-trees. If $||g||_T=||g||_{T'}$ for all $g\in F_N$, then there is a unique $F_N$-equivariant isometry from $T$ to $T'$.
\end{theo}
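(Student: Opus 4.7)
The plan is to establish existence and uniqueness of the equivariant isometry separately, both hinging on the fact that translation lengths encode all the geometric information of the action.

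For uniqueness, suppose $\phi_1, \phi_2 : T \to T'$ are two $F_N$-equivariant isometries. Then $\psi := \phi_2^{-1} \circ \phi_1$ is an $F_N$-equivariant self-isometry of $T$, and it suffices to show $\psi = \mathrm{id}_T$. For any hyperbolic $g \in F_N$ (such elements exist since the action is minimal and $F_N$ is non-abelian for $N \ge 2$; the case $N=1$ is trivial), equivariance forces $\psi(A_g) = A_g$ for the axis $A_g$ of $g$, and commutation with the translation by $g$ along $A_g$ forces $\psi|_{A_g}$ to be a translation by some amount $t_g$. Picking a second hyperbolic $h$ whose axis crosses $A_g$ in a nondegenerate arc---which is possible by minimality---the translations $t_g$ and $t_h$ must coincide on the overlap, hence both vanish. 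Thus $\psi$ fixes pointwise the union of axes of all hyperbolic elements, and by minimality this union is $T$.

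For existence, the main input is that, for two hyperbolic elements $g, h$ of any $F_N$-tree, the four numbers $||g||, ||h||, ||gh||, ||gh^{-1}||$ together determine the relative configuration of the axes $A_g$ and $A_h$: whether they are disjoint, meet in a single point, meet in a nondegenerate arc, or share an end, together with the distance between them (when disjoint) or the signed overlap length (when not). In particular, since translation lengths agree in $T$ and $T'$, all such configurations coincide. I would fix one hyperbolic $g_0 \in F_N$ and an equivariant isometry $A_{g_0}^T \to A_{g_0}^{T'}$ of axes (a one-parameter family indexed by translation along the axis), then extend this identification to axes of all other hyperbolic elements by matching the common configuration data. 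By minimality the union of axes of hyperbolic elements is all of $T$, so this defines a map $\phi : T \to T'$. The isometric nature of $\phi$ is then checked via distance formulas $d_T(h_1 x_0, h_2 x_0)$, which depend only on translation lengths of products of $h_1^{-1}h_2$ with the chosen reference elements, and which give the same value in $T'$.

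The main obstacle will be two-fold: (i) pinning down the initial translation ambiguity on $A_{g_0}$, which is broken by requiring compatibility with a second, well-chosen hyperbolic element; and (ii) verifying global consistency across overlapping axes so that the map $\phi$ is well-defined on all of $T$, including in the presence of elliptic elements with nontrivial fixed subtrees. The cleanest way to package this rigorously is to invoke the Chiswell--Alperin--Bass axiomatization of length functions: the translation length function of a minimal $G$-action on an $\mathbb{R}$-tree satisfies a set of identities which themselves determine the action up to equivariant isometry. Agreement of the translation length functions on $T$ and $T'$ then yields the desired equivariant isometry.
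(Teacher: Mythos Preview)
The paper does not prove this theorem: it is stated as a background result and attributed to Culler--Morgan \cite[Theorem 3.7]{CM87}, with no argument given. There is therefore no ``paper's own proof'' to compare against. That said, your outline follows the standard strategy of Culler--Morgan (recover the based length function from translation lengths via the Chiswell/Alperin--Bass machinery, then reconstruct the tree), so in spirit you are aligned with the cited source.

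One genuine issue in your uniqueness argument: the case $N=1$ is not ``trivial'' --- it is \emph{false}. If $F_1=\mathbb{Z}$ acts on $\mathbb{R}$ by translations, every translation of $\mathbb{R}$ is an equivariant self-isometry, so uniqueness fails. The same occurs for any $N$ when $T$ is a line with $F_N$ acting through a homomorphism to $\mathbb{R}$. Culler--Morgan's theorem carries a non-degeneracy hypothesis (the action is irreducible, equivalently $F_N$ fixes no end of $T$); the paper omits it because every tree in $\overline{cv_N}$ for $N\ge 2$ is automatically irreducible. Your argument implicitly uses this when you ``pick a second hyperbolic $h$ whose axis crosses $A_g$ in a nondegenerate arc'': that step needs the tree not to be a single line. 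Also, the clause ``the translations $t_g$ and $t_h$ must coincide on the overlap, hence both vanish'' deserves one more sentence: since $\psi$ preserves both $A_g$ and $A_h$ setwise, it preserves the finite segment $A_g\cap A_h$, and a nontrivial translation cannot carry a finite segment into itself; that is what forces $t_g=0$. Your existence paragraph is a plan rather than a proof, but invoking the Chiswell/Alperin--Bass reconstruction is exactly how the literature handles it, so as a pointer it is appropriate.
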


Taking the quotient by equivariant homotheties, we get an embedding of $CV_N$ into the projective space $\mathbb{PR}^{F_N}$, whose image has compact closure $\overline{CV_N}$ \cite[Theorem 4.5]{CM87}. Hence $\overline{CV_N}$ is a compactification of $CV_N$. Bestvina and Feighn \cite{BF94}, extending results by Cohen and Lustig \cite{CL95}, have identified the compactification $\overline{CV_N}$ as the space of homothety classes of minimal, \emph{very small} $F_N$-trees, i.e. trees with trivial or maximally cyclic arc stabilizers and trivial tripod stabilizers. We also denote by $\overline{cv_N}$ the lift of $\overline{CV_N}$ to $\mathbb{R}^{F_N}$. We call the topology induced by this embedding on each of the spaces $CV_N, \overline{CV_N}, cv_N$ and $\overline{cv_N}$ the \emph{axes topology}, it is equivalent to the \emph{weak topology} on $CV_N$ introduced by Culler and Vogtmann in \cite{CV86}.

\subsection{A metric on outer space} \label{sec-metric}

There is a natural asymmetric metric on outer space, whose systematic study was initiated by Francaviglia and Martino in \cite{FM11} : given $T,T'\in cv_N$, the distance $d(T,T')$ is defined as the logarithm of the infimal Lipschitz constant of an $F_N$-equivariant map from $T$ to $T'$ (see also \cite[Section 2.4]{AK12}). An easy Arzelà-Ascoli argument shows that this infimal Lipschitz constant is actually achieved \cite[Lemma 3.4]{FM11}. This defines a topology on outer space, which is equivalent to the usual one \cite[Theorems 4.11 and 4.18]{FM11}. An element $g\in F_N$ is a \emph{candidate} in $T$ if it is represented in the quotient graph $X:=T/F_N$ by a loop which is either
\begin{itemize}
\item an embedded circle in $X$, or

\item an embedded bouquet of two circles in $X$, i.e. $\gamma=\gamma_1\gamma_2$, where $\gamma_1$ and $\gamma_2$ are embedded circles in $X$ which meet in a single point, or

\item a barbell graph, i.e. $\gamma=\gamma_1\eta\gamma_2\overline{\eta}$, where $\gamma_1$ and $\gamma_2$ are embedded circles in $X$ that do not meet, and $\eta$ is an embedded path in $X$ that meets $\gamma_1$ and $\gamma_2$ only at their origin (and $\overline{\eta}$ denotes the path $\eta$ crossed in the opposite direction). We call $\eta$ the \emph{central path} of $\gamma$.
\end{itemize}

\noindent The following result, due to White, gives an alternative description of the metric on outer space. A proof can be found in \cite[Proposition 3.15]{FM11}, it was simplified by Algom-Kfir in \cite[Proposition 2.3]{AK11}.

\begin{theo} \label{White} (White, see \cite[Proposition 3.15]{FM11} or \cite[Proposition 2.3]{AK11}) 
For all $F_N$-trees $T$,$T'\in CV_N$, we have
\begin{displaymath}
d(T,T')=\log \sup_{g\in F_N\smallsetminus\{e\}}\frac{||g||_{T'}}{||g||_T}.
\end{displaymath}
\noindent Furthermore, the supremum is achieved for an element $g\in F_N$ which is a candidate in $X:=T/F_N$.
\end{theo}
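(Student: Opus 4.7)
The plan is to prove the two inequalities separately. The inequality $\log\sup_g \|g\|_{T'}/\|g\|_T \le d(T,T')$ is immediate: any $F_N$-equivariant $L$-Lipschitz map $f: T \to T'$ sends, for each non-trivial $g \in F_N$ and each $x$ on the axis of $g$, the fundamental segment $[x,gx]$ of length $\|g\|_T$ to an equivariant path from $f(x)$ to $gf(x)$ of length at most $L\|g\|_T$, so $\|g\|_{T'}\le L\|g\|_T$, and one then takes the infimum over $L$.

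For the reverse inequality, the infimal Lipschitz constant $L:=e^{d(T,T')}$ is realised by some $F_N$-equivariant map $f: T \to T'$ by the cited result of Francaviglia--Martino. After straightening $f$ equivariantly, I may assume $f$ is affine on every edge of $X:=T/F_N$ with stretch factor $L_e \le L$, equality holding on the non-empty \emph{tension subgraph} $\Delta \subset X$. The key step is to show that every vertex $v$ of $\Delta$ has at least two \emph{gates}, meaning that among the directions at $v$ along edges of $\Delta$, at least two are sent by the germ of $f$ at $v$ to distinct directions at $f(v)$. If not, every edge of $\Delta$ at $v$ leaves in the same image direction at $f(v)$; an equivariant push of $f(v)$ by $\varepsilon$ along that direction strictly decreases $L_e$ on every edge of $\Delta$ at $v$, while altering each other $L_{e'}<L$ by at most $\varepsilon/\ell(e')$. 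For $\varepsilon$ small, the modified map keeps all stretches below $L$, producing an equivariant map of strictly smaller Lipschitz constant, which contradicts the extremality of $L$.

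Granted that each vertex of $\Delta$ has at least two gates, a walk in $\Delta$ that always exits through a different gate than it entered must, by finiteness of $X$, revisit a state and hence produce a \emph{legal} closed path $\gamma$ in $\Delta$ representing a conjugacy class $[g]$ in $F_N$. Legality prevents any cancellation when $f$ is applied to the axis of $g$, so $f$ restricts to a constant-speed scaling by $L$ on that axis, giving $\|g\|_{T'}=L\|g\|_T$ and the desired equality. To promote $\gamma$ to a candidate, I would take $\gamma$ of minimal length among legal loops in $\Delta$ and analyse its combinatorial type in $X$: minimality forbids any subloop that can be legally excised, and a case analysis on the self-intersection pattern forces $\gamma$ to be either an embedded circle, an embedded bouquet of two circles, or a barbell. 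This last combinatorial reduction is the step I expect to demand the most care, since at each simplification one must check that both legality and the stretch factor $L$ are preserved.
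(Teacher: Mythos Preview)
Your overall strategy matches the standard proof (which the paper cites rather than reproves; the paper's own contribution is the generalization in Section~\ref{sec-White}, whose argument in Propositions~\ref{gates-1} and~\ref{candidate} follows the same pattern). However, there are two genuine gaps.

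\textbf{The gate/push argument.} Your conclusion that the perturbed map has ``strictly smaller Lipschitz constant'' is false as written. Pushing $f(v)$ along the common image direction does reduce the stretch on the edges of $\Delta$ incident to $v$, and keeps the non-tension edges at $v$ below $L$; but edges of $\Delta$ \emph{not} incident to $v$ still have stretch exactly $L$, so the global Lipschitz constant remains $L$. What the perturbation actually produces is an $L$-Lipschitz map with \emph{strictly smaller tension graph}. The fix is to choose $f$ optimal in the stronger sense: among all $L$-Lipschitz equivariant maps, take one whose tension graph $\Delta$ is minimal for inclusion (this exists since $X$ is finite). Then the push gives the desired contradiction. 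This is exactly how the paper sets things up (see the definition of optimal map in Section~\ref{sec-optimal} and the proof of Proposition~\ref{gates-1}).

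\textbf{The candidate reduction.} Your plan to take a legal loop $\gamma$ of minimal length and excise subloops does not work directly, because a subloop of a legal loop need not be legal: if $\gamma=\alpha\beta$ with $\alpha,\beta$ based at $v$, legality of $\gamma$ at $v$ only says the turns $(\text{out}(\alpha),\text{in}(\beta))$ and $(\text{out}(\beta),\text{in}(\alpha))$ are legal, which says nothing about $(\text{out}(\alpha),\text{in}(\alpha))$. Both $\alpha$ and $\beta$ can be illegal simultaneously. The argument that actually works is the constructive one the paper carries out in Proposition~\ref{candidate}: build a legal path edge by edge until an \emph{oriented} vertex state repeats, obtaining a first legal loop; if closing it up creates an illegal turn, extend further and analyse the (finitely many) ways a second repetition can occur. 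The transitivity of illegality (last clause of Proposition~\ref{gates-1}) is what makes the case analysis terminate in the three candidate shapes. You should replace the minimal-length heuristic with this walk argument.
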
 

Notice in particular that candidates in $X$ are \emph{primitive} elements of $F_N$, i.e. they belong to some free basis of $F_N$ (see Lemma \ref{detect-separability}, for instance). White's theorem has been extended by Algom-Kfir to the case of two trees $T,T'\in\overline{cv_N}$ when $T$ is assumed to be simplicial (in \cite[Proposition 4.5]{AK12}, Algom-Kfir states her result when $T'$ is a tree in the metric completion of outer space, but it actually holds true with the same proof for all trees $T'\in\overline{cv_N}$). We denote by  $\text{Lip}(T,T')$ the infimal Lipschitz constant of an $F_N$-equivariant map from $T$ to $T'$.

\begin{theo}\label{Algom-Kfir} (Algom-Kfir \cite[Proposition 4.5]{AK12})
Let $T,T'\in\overline{cv_N}$. If $T$ is simplicial, then 

\begin{displaymath}
\text{Lip}(T,T')=\sup_{g\in F_N\smallsetminus\{e\}}\frac{||g||_{T'}}{||g||_T}.
\end{displaymath}
Furthermore, the supremum is achieved for an element $g\in F_N$ which is a candidate in $X:=T/F_N$.
\end{theo}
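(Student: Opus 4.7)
My plan is to follow White's original strategy, adapted to this broader setting where $T$ is simplicial and very small (so its vertex stabilizers are trivial or maximal cyclic) and $T'\in\overline{cv_N}$ is arbitrary, possibly with dense orbits. Set $\Lambda := \sup_{g\neq e}\frac{||g||_{T'}}{||g||_T}$. The inequality $\text{Lip}(T,T')\ge \Lambda$ is immediate: any $M$-Lipschitz $F_N$-equivariant map $f:T\to\overline{T'}$ satisfies $d_{\overline{T'}}(f(x),gf(x))\le M\,d_T(x,gx)$, and infimizing over $x\in T$ yields $||g||_{\overline{T'}}\le M\,||g||_T$; translation lengths in $T'$ and $\overline{T'}$ coincide. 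I may therefore assume $\Lambda<+\infty$ and concentrate on producing an $F_N$-equivariant map $T\to\overline{T'}$ of Lipschitz constant at most $\Lambda$ that moreover realizes the supremum on a candidate loop.

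To construct an initial equivariant map, I would pick a finite fundamental subtree $\mathcal{T}\subset T$. For each vertex $v$ of $\mathcal{T}$ whose stabilizer is a maximal cyclic subgroup $\langle h\rangle$, the finiteness of $\Lambda$ forces $||h||_{T'}\le \Lambda\cdot ||h||_T=0$, so $h$ is elliptic on $T'$; since $T'$ is very small, $\text{Fix}(h)\subset\overline{T'}$ is a non-empty closed subtree and I send $v$ into it. Vertices with trivial stabilizer can be mapped arbitrarily. Extending affinely along edges of $\mathcal{T}$ and equivariantly across $F_N$-translates yields a Lipschitz $F_N$-equivariant map $T\to\overline{T'}$ with finite Lipschitz constant. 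Given a minimizing sequence $(f_n)$ of such maps whose Lipschitz constants tend to $\text{Lip}(T,T')$, Theorem~\ref{intro-limits} applied with $T_n=T$ and $T'_n=T'$ produces a Lipschitz $F_N$-equivariant limit $f_0:T\to\overline{T'}$ whose Lipschitz constant equals $\text{Lip}(T,T')$.

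The crux is then the \emph{tension graph} argument. Let $M$ be the Lipschitz constant of $f_0$, and let $\Delta\subset X := T/F_N$ be the subgraph of edges whose lifts in $T$ are stretched by exactly $M$. Assume for contradiction $M>\Lambda$. At each vertex $w$ of $\Delta$, the initial directions at $f_0(w)\in\overline{T'}$ along the images of the tense edges at $w$ can be grouped into \emph{gates}, as in White. If some vertex $w$ had a single gate, one could perturb $f_0(w)$ inside $\text{Fix}(\text{Stab}(w))$ in the direction opposite to that gate, strictly decreasing the stretch on every tense edge at $w$ without turning any non-tense edge tense; this contradicts optimality of $f_0$. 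Hence every vertex of $\Delta$ has at least two gates, and a standard combinatorial argument on $X$ extracts from $\Delta$ an embedded circle, figure-eight, or barbell whose corresponding loop $\gamma$ is a candidate satisfying $||\gamma||_{T'}=M\,||\gamma||_T$. This yields $\Lambda\ge M$, contradicting $M>\Lambda$, and simultaneously displays the supremum as attained on a candidate. The step I expect to require the most care is the local perturbation at a vertex $w$ with non-trivial cyclic stabilizer $\langle h\rangle$: the perturbed image must remain inside $\text{Fix}(h)$, which restricts the available directions at $f_0(w)$, and one has to check that when the gate-opposite direction points out of $\text{Fix}(h)$, the $F_N$-equivariance of $f_0$ together with the action of $h$ on the star of $w$ forces the $h$-translates of the tense edges at $w$ to contribute an additional tense direction, producing a second gate at $w$ and restoring the dichotomy.
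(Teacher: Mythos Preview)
The paper does not prove Theorem~\ref{Algom-Kfir} directly; it is quoted from \cite{AK12} as background. The paper's own contribution is the general version (Theorem~\ref{White-candidates}), and when one specialises the machinery of Section~\ref{sec-White} (Propositions~\ref{optimal}, \ref{gates-1} and~\ref{candidate}) to a simplicial $T$, one recovers exactly the argument you outline: build an optimal map via Theorem~\ref{limits}, analyse the tension subgraph via gates, and extract a candidate realising the Lipschitz constant. So your approach and the paper's (implicit) proof coincide.

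Two points deserve attention. First, a slip of phrasing: at a single-gate vertex one perturbs $f_0(w)$ \emph{into} the common gate direction (so as to shorten all tense edges), not ``opposite to'' it. Your dichotomy for cyclic stabilisers is otherwise correct and matches Proposition~\ref{gates-1}: either the gate direction is $h$-invariant, so the perturbation stays in $\text{Fix}(h)$, or $h$ moves it and the $h$-translates of the tense edges supply a second gate.

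Second, and more substantively, a simplicial very small $T$ may have vertex groups of rank $\ge 2$; indeed $T/F_N$ can be a tree, with no embedded circles at all. In that regime the classical trichotomy you invoke (circle, bouquet, barbell) is insufficient, and the candidate list must include the simply- and doubly-degenerate barbells of Section~\ref{sec-candidates}. The paper handles this in Proposition~\ref{gates-1} by showing that at such a vertex \emph{every} turn is legal up to $G_v$, which lets the path reflect and close up as a degenerate barbell; your sketch should incorporate this case rather than defer to ``a standard combinatorial argument''.
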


\subsection{Decomposing actions in $\overline{CV_N}$} 
We now recall a result due to Levitt \cite{Lev94} which allows to decompose any $F_N$-tree into simpler actions. The reader is referred to \cite{Ser77} for an introduction to graphs of groups and related terminology. An \emph{$F_N$-graph of actions} consists of

\begin{itemize}
\item a marked metric graph of groups, whose edges all have positive length, with fundamental group $F_N$, with vertex groups $G_v$, edge groups $G_e$, and for every oriented edge $e$ with terminal vertex $t(e)$, an injective morphism $i_e:G_e\to G_{t(e)}$, and

\item an isometric action of every vertex group $G_v$ on an $\mathbb{R}$-tree $T_v$ (possibly reduced to a point), and

\item a point $p_e\in T_{t(e)}$ fixed by $i_e(G_e)\subseteq G_{t(e)}$ for every oriented edge $e$.
\end{itemize}

Associated to any $F_N$-graph of actions $\mathcal{G}$ is an $F_N$-tree $T(\mathcal{G})$. Informally, the tree $T(\mathcal{G})$ is obtained from the Bass-Serre tree of the underlying graph of groups by equivariantly attaching the vertex trees $T_v$ at the vertices $v$, an incoming edge being attached to $T_v$ at the prescribed attaching point. The reader is referred to \cite[Proposition 3.1]{Gui98} for a precise description of the tree $T(\mathcal{G})$. We say that an $F_N$-tree $T$ \emph{splits as a graph of actions} if there exists a graph of actions $\mathcal{G}$ such that $T=T({\mathcal{G}})$. An $F_N$-tree $T$ has \emph{dense orbits} if the $F_N$-orbit of one (and hence every) point of $T$ is dense in $T$. 

\begin{theo} \label{graph-of-actions} (Levitt \cite[Theorem 5]{Lev94})
Every $T\in\overline{cv_N}$ splits uniquely as a graph of actions with vertex trees having dense orbits (possibly reduced to a point).
\end{theo}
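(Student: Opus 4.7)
The plan is to carve out of $T$ the ``simplicial part'' and the ``dense-orbit parts,'' and assemble them into a graph of actions. I would start by defining the following relation on $T$: declare $x\sim y$ if the arc $[x,y]$ is entirely contained in the orbit closure $\overline{F_N\cdot x}$. Equivalently (and this equivalence should be checked first), $x\sim y$ if and only if for every $z\in [x,y]$ there is a sequence $g_n\in F_N$ with $g_n x\to z$. I would then verify that $\sim$ is an equivalence relation (transitivity using the tree structure and the fact that orbit closures are closed subtrees), that each equivalence class is a closed subtree of $T$, and that $\sim$ is $F_N$-invariant, so that $F_N$ permutes the classes and the setwise stabilizer of a class $Y$ acts on $Y$ with dense orbit by construction.

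Next, I would consider the quotient $\widehat{T}:=T/{\sim}$, defined by collapsing each $\sim$-class to a point. Standard arguments show that $\widehat{T}$ inherits from $T$ the structure of an $\mathbb{R}$-tree with an isometric $F_N$-action, on which the induced pseudo-metric is indeed a metric. The heart of the proof is to show that $\widehat{T}$ is \emph{simplicial}, i.e.\ a metric realization of a genuine simplicial tree. For this I would use that $T$ is very small: the classification of arc and tripod stabilizers in $\overline{cv_N}$ forces the set of orbits of branch points of $\widehat{T}$ to be finite, and forces the orbit of any nondegenerate arc in $\widehat{T}$ to meet it in a discrete set. More concretely, any convergent sequence of nondegenerate arcs in $\widehat{T}$ would project to a dense-orbit subtree in $T$, contradicting the definition of $\sim$; so $\widehat{T}$ has no accumulation of branch points and its edges have positive length, giving a simplicial structure. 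The quotient $\widehat{T}/F_N$ is a finite graph (bounded in complexity by $\mathrm{rk}(F_N)$ via the graph of groups obtained from $\widehat{T}$).

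Once $\widehat{T}$ is known to be simplicial, the graph of actions structure writes itself: the underlying graph of groups is $\widehat{T}/F_N$ with its Bass--Serre structure; the vertex tree at a vertex $v$ corresponding to a class $Y$ is $Y$ itself with the action of the setwise stabilizer $G_v$; the edge groups are the stabilizers of edges of $\widehat{T}$; and for each oriented edge $e$ of $\widehat{T}$ with terminal vertex projecting to $Y$, the attaching point $p_e\in Y$ is the unique point of $Y$ at which the preimage of $e$ in $T$ meets $Y$. One checks that $p_e$ is fixed by $i_e(G_e)$ because any element fixing the preimage edge fixes its endpoint. The natural map from the associated tree $T(\mathcal{G})$ to $T$ is an equivariant isometry by construction.

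For uniqueness, I would observe that in any graph-of-actions decomposition of $T$ with dense-orbit vertex trees, the vertex trees must be exactly the $\sim$-equivalence classes: inside a dense-orbit vertex tree any two points are $\sim$-related by definition, and conversely two $\sim$-related points cannot lie in distinct vertex trees because the arc between them would cross an edge of the graph of actions, whose interior points have orbits that are discrete transverse to the edge. Hence the decomposition is forced.

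The main obstacle I anticipate is step~2, verifying that $\widehat{T}$ is genuinely simplicial rather than having accumulating branch points or arcs of length zero. This is where the very small hypothesis on $T$, together with Levitt's finiteness of orbits of branch points for trees in $\overline{cv_N}$, is essential, and it is the step that fails for general $F_N$-trees outside $\overline{cv_N}$.
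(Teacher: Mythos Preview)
The paper does not supply a proof of this theorem: it is quoted verbatim from Levitt \cite[Theorem~5]{Lev94} and used as a black box, so there is no ``paper's own proof'' to compare against. Your sketch is essentially Levitt's original argument (define maximal subtrees with dense orbits, collapse them, and show the quotient is simplicial), and the outline is sound; the one place where your write-up is a bit quick is the claim that the setwise stabilizer of a $\sim$-class $Y$ acts on $Y$ with dense orbits, since the approximating elements $g_n$ with $g_n x\to y$ are a priori only in $F_N$, not in $\mathrm{Stab}(Y)$, and one has to use that distinct classes meet in at most a point together with finiteness of orbits of branch points to conclude.
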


We denote by $T^{simpl}$ the corresponding simplicial tree, obtained by collapsing all the vertex trees to points. An \emph{edge} in $T$ is a segment in the simplicial part of $T$ that projects to an edge in $T^{simpl}$. 

\subsection{Trees with dense orbits}

In this head, we collect a few facts about $F_N$-trees with dense orbits.

\begin{lemma} \label{dense-arcs} (Bestvina-Feighn \cite[Remark 1.9]{BF94}, Gaboriau-Levitt \cite[Proposition I.10]{GL95}, Sela \cite[Proposition 1.4]{Sel96}, Levitt-Lustig \cite[Lemma 4.2]{LL03})
Every very small $F_N$-tree with dense orbits has trivial arc stabilizers.
\end{lemma}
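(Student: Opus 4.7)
I would argue by contradiction: suppose some nontrivial $c\in F_N$ fixes a nondegenerate arc $I\subset T$. Since $T$ is very small, the pointwise stabilizer of $I$ is cyclic; replacing $c$ by a generator of this cyclic subgroup, I may assume $c$ is root-free in $F_N$, so that the centralizer of $c$ in $F_N$ is exactly $\langle c\rangle$. Let $Y:=T^c$ denote the (nondegenerate) fixed subtree of $c$, which contains $I$.

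The heart of the argument is to produce an element $h\in F_N\smallsetminus\langle c\rangle$ whose translate $hI$ overlaps $I$ in a nondegenerate subarc $J$. To find $h$, I would fix an interior point $x\in I$ and exploit density of the orbit $F_N\cdot x$: pick a sequence $(h_n)$ in $F_N$ with $h_nx\to x$ and $h_nx\ne x$ (so in particular $h_n\notin\langle c\rangle$, since $\langle c\rangle$ fixes $x$). Each arc $h_nI$ has the same length as $I$ and passes through $h_nx$; extracting a subsequence along which the tangent direction of $h_nI$ at $h_nx$ stabilizes (via a compactness or equivariant Gromov--Hausdorff argument), I would select an index $n$ for which $h_nI$ and $I$ run along each other on a nondegenerate arc through $h_nx$. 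Alternatively, I could invoke the approximation result Theorem~\ref{intro-approx} to reduce to a sequence of simplicial free actions in which the overlap claim is a combinatorial triviality, and then transfer the conclusion by a limiting argument.

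With such $h$ in hand, both $c$ and $hch^{-1}$ pointwise fix the nondegenerate arc $J$, so by the very small hypothesis they lie in a common cyclic subgroup of $F_N$ and therefore commute. Since the centralizer of the root-free element $c$ equals $\langle c\rangle$, we conclude $hch^{-1}\in\langle c\rangle$, and preservation of cyclic word length under conjugation in $F_N$ then forces $hch^{-1}=c^{\pm1}$. The case $hch^{-1}=c$ yields $h\in\langle c\rangle$ directly. The case $hch^{-1}=c^{-1}$ implies that $h^2$ centralizes $c$, so $h^2\in\langle c\rangle$; a short argument using torsion-freeness of $F_N$ and the fact that $\langle c\rangle$ is its own centralizer forces $h\in\langle c\rangle$ here as well. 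Either outcome contradicts $h\notin\langle c\rangle$, completing the proof.

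The main obstacle is the construction of $h$ with nondegenerate overlap: density of a single orbit controls positions of points but not a priori the tangent directions of arcs, so some compactness or approximation input is required to align a translate of $I$ with $I$ itself. Everything else reduces to routine free-group arithmetic combined with the defining properties of very small actions.
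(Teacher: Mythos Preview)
The paper does not prove this lemma; it is quoted with four citations and used as a black box throughout. So there is no in-paper argument to compare against.

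Your endgame is correct: once you have $h\notin\langle c\rangle$ with $hI\cap I$ nondegenerate, the very small hypothesis forces $c$ and $hch^{-1}$ into a common cyclic group, and your free-group arithmetic finishes the job. The problem is exactly where you flag it, and neither of your suggested repairs works. Appealing to Theorem~\ref{intro-approx} is circular: the direction you would need (produce a Lipschitz approximation of $T$ by free simplicial actions) takes ``trivial arc stabilizers'' as its \emph{hypothesis}, which is precisely the conclusion you are after; in the paper's logical order, Lemma~\ref{dense-arcs} is an independent input invoked alongside Theorem~\ref{approximation-by-simplicial} (see the proof of Corollary~\ref{White-dense}). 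The ``compactness of tangent directions'' idea is not available either: a point in an $\mathbb{R}$-tree can have infinitely many directions, and density of a single orbit does nothing to prevent every translate $h_nI$ from branching off $I$ at (or arbitrarily near) $h_nx$ even as $h_nx\to x$.

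The arguments in the cited references bypass this obstacle by using structural input---Gaboriau--Levitt's index bounds on arc stabilizers and on orbits of branch points and directions, or the Rips-theoretic component analysis underlying the Bestvina--Feighn and Sela treatments---rather than a bare overlap construction. If you want to rescue your approach, one route is to show that an arc all of whose nontrivial translates meet it in at most a point would force a nontrivial simplicial part in the Levitt decomposition (Theorem~\ref{graph-of-actions}), contradicting dense orbits; but that step requires real work beyond what you have sketched. As written, the proof is incomplete.
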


Given a tree $T\in\overline{cv_N}$, a subset $X\subseteq T$, and $M\in\mathbb{R}$, we denote by $\mathcal{N}_M(X)$ the $M$-neighborhood of $X$ in $T$. The \emph{bridge} between two closed subtrees $X,Y\subseteq T$ which do not intersect is the unique segment in $T$ which meets $X\cup Y$ only at its endpoints. Given a closed subtree $X\subseteq T$ and $x\in T$, we denote by $\pi_X(x)$ the closest point projection of $x$ to the subtree $X$.

\begin{lemma} \label{intersection-nbd}
Let $T$ be an $\mathbb{R}$-tree, let $X,Y\subseteq T$ be closed subtrees, and let $M\in\mathbb{R}$. If $X\cap Y\neq\emptyset$, then $\mathcal{N}_{M}(X)\cap\mathcal{N}_{M}(Y)=\mathcal{N}_M(X\cap Y)$. If $X\cap Y=\emptyset$, then for any point $y$ in the bridge between $X$ and $Y$ in $T$, we have $\mathcal{N}_{M}(X)\cap\mathcal{N}_{M}(Y)\subseteq\mathcal{N}_M(\{y\})$.
\end{lemma}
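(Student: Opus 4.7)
The lemma splits naturally into two cases, and both rely on the standard \emph{gate property} of closest-point projections in $\mathbb{R}$-trees: if $A \subseteq T$ is a closed subtree and $z \in T$, then for every $w \in A$ the segment $[z,w]$ decomposes as $[z,w] = [z,\pi_A(z)] \cup [\pi_A(z), w]$, with $[\pi_A(z),w] \subseteq A$ by convexity of subtrees.

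For the first assertion, the inclusion $\mathcal{N}_M(X \cap Y) \subseteq \mathcal{N}_M(X) \cap \mathcal{N}_M(Y)$ is trivial. For the reverse inclusion, I would fix $z \in \mathcal{N}_M(X) \cap \mathcal{N}_M(Y)$ and choose any $p \in X \cap Y$. By the gate property applied to $X$ and then to $Y$, both $\pi_X(z)$ and $\pi_Y(z)$ lie on the segment $[z,p]$. Without loss of generality $\pi_Y(z) \in [z,\pi_X(z)]$, so $\pi_X(z)$ lies on the subsegment $[\pi_Y(z),p]$; since $\pi_Y(z),p \in Y$, convexity forces $\pi_X(z) \in Y$ as well. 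Therefore $\pi_X(z) \in X \cap Y$ and $d(z, X \cap Y) \leq d(z,\pi_X(z)) = d(z,X) \leq M$.

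For the second assertion, let $B = [x_0, y_0]$ be the bridge and fix $y \in B$ and $z \in \mathcal{N}_M(X) \cap \mathcal{N}_M(Y)$. I would work with $p := \pi_B(z)$, so that $d(z,y) = d(z,p) + d(p,y)$. The goal is to establish
\[
d(z,p) + d(p, x_0) \leq M \quad \text{and} \quad d(z,p) + d(p, y_0) \leq M,
\]
since then, writing $y$ as lying in either $[x_0, p]$ or $[p, y_0]$ and using $d(p,y) \leq \max(d(p,x_0), d(p,y_0))$, we conclude $d(z,y) \leq M$. A short case analysis on the position of $p$ handles both inequalities: if $p$ lies in the interior of $B$, then $z$ sits in a component of $T \setminus (x_0, y_0)$ disjoint from $X$ and $Y$, so every path from $z$ into $X$ (resp. $Y$) must traverse $p$ and $x_0$ (resp. $y_0$), yielding $d(z,X) = d(z,p)+d(p,x_0)$ and $d(z,Y) = d(z,p)+d(p,y_0)$. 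If $p = x_0$, then $z$ sits on the $X$-side of the bridge and paths from $z$ to $Y$ still factor through $x_0$ then $y_0$, so $d(z,Y) = d(z,p) + d(p,y_0)$, and the first inequality $d(z,p)+d(p,x_0) = d(z,p) \leq d(z,X) \leq M$ is automatic since $d(p,x_0)=0$. The symmetric case $p = y_0$ is identical.

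The only subtlety is checking in each case that the stated path decompositions really hold; this comes down to the fact that the disjoint subtrees $X, Y$ and the bridge $B$ meet pairwise in single points, so removing the interior of $B$ separates $T$ into components each attached to $B$ at a single point, which is precisely the structural fact that makes each projection identification routine.
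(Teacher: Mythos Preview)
Your argument is essentially correct and follows the same projection-based idea as the paper's proof, but there is one small slip in the case $p=x_0$. You write $d(z,p)\le d(z,X)\le M$, but since $p=x_0\in X$ the inequality goes the other way: $d(z,X)\le d(z,p)$. The fix is immediate with what you have already established: from $d(z,Y)=d(z,p)+d(p,y_0)\le M$ and $d(p,y_0)\ge 0$ you get $d(z,p)\le M$, which is the first inequality. (Equivalently, the first inequality is weaker than the second in this boundary case.)

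For comparison, the paper handles both cases at once with a single observation: letting $J$ be $X\cap Y$ (or the bridge when $X\cap Y=\emptyset$), one checks that if $\pi_X(x)\notin J$ then $x$ and $\pi_X(x)$ lie in the same component of $T\smallsetminus J$, which is disjoint from $Y$, forcing $\pi_Y(x)\in J$. Hence at least one of the two projections lands in $J$, and the conclusion follows. Your first case reproduces exactly this, while your second case unpacks the same geometry via $\pi_B(z)$ and an explicit case split; the extra detail is fine, and the content is the same.
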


\begin{proof}
Let $x\in\mathcal{N}_{M}(X)\cap\mathcal{N}_{M}(Y)$. Let $J$ denote the subtree $X\cap Y$, or the bridge between $X$ and $Y$ in case $X\cap Y=\emptyset$. Assume that $\pi_X(x)\notin J$. Then one checks that $x$ and $\pi_X(x)$ must belong to the same component of $T\smallsetminus J$, and hence that $\pi_Y(x)\in J$. So either $\pi_X(x)\in J$, or $\pi_Y(x)\in J$, and the claim follows. 
\end{proof}

In the following statement, notice that whenever $T\in\overline{cv_N}$ is a tree with dense orbits, then the $F_N$-action on $T$ uniquely extends to an isometric action on its metric completion $\overline{T}$, and $\overline{T}$ again has dense orbits. Recall that an $F_N$-tree is \emph{minimal} if it contains no proper $F_N$-invariant subtree. An $F_N$-tree $T$ which is not minimal has a unique minimal proper $F_N$-invariant subtree $T^{\min}$, which is also the union of all axes of hyperbolic elements in $T$. In particular, for all $g\in F_N$, we have $||g||_T=||g||_{T^{\min}}$. When $T$ has dense orbits, we have $T^{\min}\subseteq T\subseteq \overline{T^{\min}}$, since the orbit of any point $x\in T^{\min}$ is dense in $T$ and contained in $T^{\min}$. For all $F_N$-trees $T$ and all $g\in F_N$, either $||g||_T=0$ (we say that $g$ is \emph{elliptic} in $T$), and in this case $g$ has a fixed point in $T$, or $||g||_T>0$ (then $g$ is said to be \emph{hyperbolic} in $T$), and in this case $g$ has an axis in $T$, i.e. there exists a subspace of $T$ homeomorphic to the real line on which $g$ acts by translation, with translation length $||g||_T$. In both cases, we define the \emph{characteristic set} of $g$ to be $C_T(g):=\{x\in T|d(x,gx)=||g||_T\}$ (see \cite[1.3]{CM87} for a description of the action of elements of $F_N$ on $F_N$-trees). 

\begin{prop} \label{dense-rigidity}
Let $T,T'$ be very small $F_N$-trees with dense orbits. Then there exists at most one Lipschitz $F_N$-equivariant map from $T$ to $\overline{T'}$.
\end{prop}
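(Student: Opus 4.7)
The plan is to study the displacement function $\psi : T \to \mathbb{R}_{\geq 0}$ defined by $\psi(x) = d_{\overline{T'}}(f(x), g(x))$ for two Lipschitz $F_N$-equivariant maps $f, g : T \to \overline{T'}$. Since $F_N$ acts on $\overline{T'}$ by isometries and $f,g$ are equivariant, $\psi$ is $F_N$-invariant; since $f,g$ are Lipschitz, $\psi$ is continuous. As $T$ has dense $F_N$-orbits, $\psi$ is constant on any (dense) orbit, hence on all of $T$ by continuity; call this constant $c \geq 0$. The proposition then reduces to ruling out the possibility $c > 0$.

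Suppose for contradiction that $c>0$, and fix $x_0 \in T$ with $p := f(x_0)$, $q := g(x_0)$, so that $d_{\overline{T'}}(p,q) = c$. By density of orbits, for every $\epsilon > 0$ there exists a nontrivial $h \in F_N$ with $d_T(x_0, h x_0) < \epsilon$, and the Lipschitz property gives $d(p, hp), d(q, hq) \leq L \epsilon$ where $L$ bounds both Lipschitz constants. Using the standard identity $d(z, h z) = ||h||_{\overline{T'}} + 2\, d(z, C_h)$ for isometries of $\mathbb{R}$-trees (with $C_h$ the characteristic set of $h$: its pointwise fixed subtree if elliptic, its axis if hyperbolic), I deduce $||h||_{\overline{T'}} \leq L\epsilon$ and that both $p, q$ lie within $L\epsilon/2$ of $C_h$. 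Projecting $p$ and $q$ to $C_h$ then yields two points at distance at least $c - L\epsilon$ apart, so $C_h$ contains a non-degenerate arc whose length approaches $c$ as $\epsilon \to 0$.

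If $h$ can be chosen elliptic in $\overline{T'}$, then $C_h$ is its pointwise fixed set, so $h$ fixes a non-degenerate arc. Since $T'$ has dense orbits, Lemma \ref{dense-arcs} gives trivial arc stabilizers in $T'$; this property extends to the metric completion $\overline{T'}$ (itself a very small $F_N$-tree with dense orbits), yielding the desired contradiction.

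The main obstacle is the case where every $h$ produced by the density argument is hyperbolic in $\overline{T'}$: then $C_h$ is an axis containing a long sub-arc near $[p,q]$, but $h$ only translates rather than fixes this arc. To handle this, I would use density to produce many nontrivial elements $h_1, h_2, \ldots$ moving $x_0$ arbitrarily little, each hyperbolic in $\overline{T'}$ with translation length going to zero and axis passing arbitrarily close to $[p,q]$. By a pigeonhole or compactness argument on the relative axis positions and orientations, an appropriate combination $h_i h_j^{-1}$ (or a product of powers) should act with vanishing translation length on a long shared sub-arc of two axes, producing an elliptic element fixing a non-degenerate arc, which again contradicts triviality of arc stabilizers in $\overline{T'}$. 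Making this combination precise---controlling the relative orientations of the axes and choosing exponents so that translations cancel exactly---is the key technical point.
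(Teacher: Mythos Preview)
Your reduction to a constant displacement $c$ via the continuous $F_N$-invariant function $\psi$ is correct and is a clean way to frame the problem. The genuine gap is in your treatment of the hyperbolic case: the plan of ``choosing exponents so that translations cancel exactly'' cannot be carried out, since you have no control making two translation lengths exactly equal, and a product $h_ih_j^{-1}$ of hyperbolic isometries with nearby axes is typically still hyperbolic.

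The missing idea, which is how the paper proceeds, is to use \emph{two} elements at once rather than trying to manufacture a single elliptic one. First observe that the set of nontrivial $h\in F_N$ with $d_T(x_0,hx_0)<\epsilon$ is not contained in any cyclic subgroup (otherwise a cyclic group would have a dense orbit in $T$, which is impossible). So pick $g,g'$ in this set generating a rank~$2$ subgroup; in particular $[g,g']\neq e$. Both $p$ and $q$ lie in $\mathcal{N}_{L\epsilon}(C_{\overline{T'}}(g))\cap\mathcal{N}_{L\epsilon}(C_{\overline{T'}}(g'))$. Now the key input is that since $\overline{T'}$ has trivial arc stabilizers (Lemma~\ref{dense-arcs}), the commutator $[g,g']$ fixes no arc, and by \cite[1.10]{CM87} this forces $C_{\overline{T'}}(g)\cap C_{\overline{T'}}(g')$ to be a segment of length at most $||g||_{T'}+||g'||_{T'}\le 2L\epsilon$ (possibly empty). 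Combined with Lemma~\ref{intersection-nbd}, the intersection of the two $L\epsilon$-neighborhoods has diameter $O(\epsilon)$, so $c=d(p,q)=O(\epsilon)$ for every $\epsilon>0$, whence $c=0$. This argument treats the elliptic and hyperbolic cases uniformly via characteristic sets, and never requires producing an elliptic element by hand.
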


\begin{proof}
It is enough to show that for all $M\in\mathbb{R}$, there exists at most one $M$-Lipschitz $F_N$-equivariant map from $T$ to $\overline{T'}$, so we fix $M\in\mathbb{R}$. Let $x\in T$, and let $\epsilon>0$. We claim that we can find a subset $X_{\epsilon}\subset \overline{T'}$ whose diameter is bounded above by $3M\epsilon$, with the property that $f(x)\in X_{\epsilon}$ for all $M$-Lipschitz $F_N$-equivariant maps $f:T\to\overline{T'}$. Indeed, let $f:T\to\overline{T'}$ be $M$-Lipschitz and $F_N$-equivariant. As $T$ has dense orbits, there exists $g\in F_N$ such that $d_T(x,gx)\le\epsilon$, and the set of all such elements of $F_N$ is not contained in any cyclic subgroup of $F_N$. As $f$ is $M$-Lipschitz and $F_N$-equivariant, any such $g\in F_N$ satisfies $d_{\overline{T'}}(f(x),gf(x))\le M\epsilon$. We also have $d_{\overline{T'}}(f(x),gf(x))=||g||_{\overline{T'}}+2d_{\overline{T'}}(f(x),C_{\overline{T'}}(g))$ (see \cite[1.3]{CM87}), so $f(x)\in\mathcal{N}_{M\epsilon}(C_{\overline{T'}}(g))$ and $||g||_{T'}\le M\epsilon$. Let $g,g'\in F_N$ be two elements satisfying $d_T(x,gx)\le\epsilon$ and $d_T(x,g'x)\le\epsilon$, which do not generate a cyclic subgroup of $F_N$ (in particular, the commutator $[g,g']$ is nontrivial). We have $f(x)\in\mathcal{N}_{M\epsilon}(C_{\overline{T'}}(g))\cap\mathcal{N}_{M\epsilon}(C_{\overline{T'}}(g'))$. As $\overline{T'}$ is very small and has dense orbits, it follows from Lemma \ref{dense-arcs} that the commutator $[g,g']$ does not fix any arc in $\overline{T'}$, so $C_{\overline{T'}}(g)\cap C_{\overline{T'}}(g')$ is a (possibly empty) segment of length at most $2M\epsilon$ (see \cite[1.10]{CM87}). By Lemma \ref{intersection-nbd}, the set $\mathcal{N}_{M\epsilon}(C_{\overline{T'}}(g))\cap\mathcal{N}_{M\epsilon}(C_{\overline{T'}}(g'))=\mathcal{N}_{M\epsilon}(C_{\overline{T'}}(g)\cap C_{\overline{T'}}(g'))$ has diameter at most $3M\epsilon$. We set $X_{\epsilon}:=\mathcal{N}_{M\epsilon}(C_{\overline{T'}}(g))\cap\mathcal{N}_{M\epsilon}(C_{\overline{T'}}(g'))$.

If $f,f':T\to \overline{T'}$ are two $M$-Lipschitz, $F_N$-equivariant maps, then for all $\epsilon>0$, we have $f(x),f'(x)\in X_{\epsilon}$, hence $d_{\overline{T'}}(f(x),f'(x))\le 3M\epsilon$. This implies that $f(x)=f'(x)$. As this is true for all $x\in T$, we get that $f=f'$.
\end{proof}

\subsection{Morphisms between $F_N$-trees}\label{sec-morphism}

A \emph{morphism} between two $\mathbb{R}$-trees $T$ and $T'$ is a map $f:T\to T'$ such that every segment $J\subset T$ can be subdivided into finitely many subsegments, in restriction to which $f$ is an isometry (in particular, any morphism between two $\mathbb{R}$-trees is $1$-Lipschitz). We say that two arcs in $T$ are \emph{folded} by $f$ if they have initial subsegments whose $f$-images are equal. 

Let $T$ be an $F_N$-tree containing an edge $e$ with trivial stabilizer. A \emph{U-turn} over $e$ is a pair of distinct adjacent edges in $T$ of the form $(e,ge)$, where $g$ belongs to the stabilizer of one of the extremities $v$ of $e$ and is not a proper power, such that either the stabilizer of the image of $v$ in $T^{simpl}$ has rank at least $2$, or $v$ does not project to a valence one vertex of the quotient graph of actions. In the following lemmas, we collect a few facts about $F_N$-equivariant morphisms between $F_N$-trees.

\begin{lemma}\label{morphism-1}
Let $T$ and $T'$ be two very small $F_N$-trees. If $T'$ has trivial arc stabilizers, then an $F_N$-equivariant morphism from $T$ to $T'$ cannot fold any U-turn in $T$.
\end{lemma}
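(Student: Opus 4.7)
The plan is a direct argument by contradiction. Suppose $f \colon T \to T'$ is an $F_N$-equivariant morphism folding a U-turn $(e, ge)$, and let $v$ be the common endpoint of $e$ and $ge$ that is fixed by $g$. The goal will be to produce a nontrivial element of $F_N$ that stabilizes a nondegenerate arc of $T'$, contradicting the arc-stabilizer hypothesis.

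Since $f$ is a morphism, for some sufficiently small $\epsilon > 0$ the map $f$ restricts to an isometry on $[v, a] \subset e$ and on $[v, ga] \subset ge$, where $a \in e$ is the point at distance $\epsilon$ from $v$ and $ga \in ge$ is its translate (at the same distance from $v$, since $g$ fixes $v$). Folding of $(e, ge)$ gives, on such sufficiently short initial subsegments, the equality $f([v, a]) = f([v, ga])$ of two genuine segments of length $\epsilon$ emanating from $f(v)$; comparing the endpoints of these isometric images forces $f(a) = f(ga)$. By $F_N$-equivariance, $f(ga) = g \cdot f(a)$, so $g$ fixes $f(a)$; and from $g \cdot v = v$, one gets $g \cdot f(v) = f(v)$. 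Hence $g$ fixes both endpoints of the nondegenerate arc $[f(v), f(a)] \subset T'$, and therefore fixes this arc pointwise (an isometry of an $\mathbb{R}$-tree fixing two distinct points fixes the arc between them). Triviality of arc stabilizers in $T'$ then forces $g = 1$, which contradicts the fact that $e \neq ge$ together with the standing hypothesis (from the setup preceding the U-turn definition) that the edge $e$ has trivial stabilizer.

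The only delicate point I anticipate is ensuring that folding can be witnessed on initial subsegments short enough for $f$ to be isometric on them, so that the images are honest segments rather than piecewise-isometric paths that might backtrack through $f(v)$; this is where the morphism hypothesis is indispensable, and once the local reformulation is in place the identification $f(a) = f(ga)$ is immediate from length considerations and the rest of the argument is a two-line application of the arc-stabilizer hypothesis. I note in passing that the additional conditions in the definition of a U-turn — the rank of the vertex stabilizer in $T^{simpl}$, or the valence of the image of $v$ in the quotient graph of actions — do not seem to play any role in this particular lemma; they are presumably exploited in the companion lemmas of Section \ref{sec-morphism}.
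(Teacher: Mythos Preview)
Your proof is correct and takes essentially the same approach as the paper's: both produce a nondegenerate initial subsegment of $e$ whose $f$-image is stabilized by the nontrivial element $g$, and derive a contradiction from the trivial arc-stabilizer hypothesis together with the morphism property. The paper phrases the contradiction slightly differently (trivial arc stabilizers force $f(I)$ to be a point, contradicting that a morphism is locally isometric), but the content is identical; your observation that the extra conditions in the definition of a U-turn play no role here is also correct.
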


\begin{proof}
Assume that an $F_N$-equivariant morphism $f:T\to T'$ folds a U-turn $(e,e')$ in $T$. Then there exists an initial segment $I$ of $e$ such that $f(I)$ has nontrivial stabilizer. The hypothesis made on $T'$ implies that $f(I)$ is a point, contradicting the definition of a morphism.
\end{proof}

\begin{lemma}\label{morphism-2}
Let $T$ and $T'$ be two very small $F_N$-trees. An $F_N$-equivariant morphism from $T$ to $T'$ cannot identify nontrivial initial segments of edges in $T$ having distinct nontrivial stabilizers. It cannot either identify a nontrivial initial segment of an edge in $T$ with nontrivial stabilizer with one of its translates.
\end{lemma}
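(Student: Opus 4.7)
The plan is to reduce both statements to the very-small hypothesis on $T'$: in both cases, I will exhibit a nontrivial arc of $T'$ whose pointwise stabilizer is forced to contain two elements that, by the very-small hypothesis on $T$, cannot together lie in any maximal cyclic subgroup of $F_N$.

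For the first statement, suppose toward contradiction that an $F_N$-equivariant morphism $f\colon T\to T'$ identifies nontrivial initial segments $I_1\subseteq e_1$ and $I_2\subseteq e_2$, where $e_1,e_2$ are edges of $T$ whose stabilizers are distinct nontrivial subgroups $\langle c_1\rangle$ and $\langle c_2\rangle$. Since $f$ is a morphism, by shrinking $I_1$ and $I_2$ I may assume that $f$ restricts to an isometry on each of them and that $f(I_1)=f(I_2)$ is a nondegenerate arc $J\subseteq T'$. Both $c_1$ and $c_2$ fix $J$ pointwise (via $I_1$ and $I_2$ respectively), so $\mathrm{Stab}(J)\supseteq\langle c_1,c_2\rangle$. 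Now $T$ being very small forces $\langle c_1\rangle$ and $\langle c_2\rangle$ to be \emph{maximal} cyclic in $F_N$; if these two maximal cyclic subgroups are distinct, they cannot be contained in a common cyclic subgroup, hence $\langle c_1,c_2\rangle$ is non-cyclic (in $F_N$, two elements generating a cyclic subgroup share a common root). This contradicts the very-smallness of $T'$.

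For the second statement, suppose $f$ identifies a nontrivial initial segment $I$ of an edge $e$ (with nontrivial stabilizer $\langle c\rangle$) with the initial segment $gI$ of a translate $ge$. If $ge=e$ then $g\in\mathrm{Stab}(e)=\langle c\rangle$ and the identification is trivial, so I may assume $g\notin\langle c\rangle$. Again by shrinking $I$, I may assume $f$ is isometric on $I$ and on $gI$ and that $f(I)=f(gI)$ is a nondegenerate arc $J\subseteq T'$. Equivariance gives $gf(I)=f(gI)=f(I)=J$, and since $f$ is isometric on $I$, the element $g$ actually fixes $J$ pointwise; the element $c$ does too. Thus $\mathrm{Stab}(J)\supseteq\langle c,g\rangle$, and the same dichotomy applies: either this subgroup is non-cyclic, contradicting very-smallness of $T'$, or it is cyclic, in which case maximality of $\langle c\rangle$ inside $F_N$ forces $\langle c,g\rangle=\langle c\rangle$, i.e.\ $g\in\langle c\rangle$, contrary to our standing assumption.

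The only subtle point is the pointwise-versus-setwise distinction in the second assertion: one must verify that after restricting to an initial subsegment on which $f$ is isometric, the identification $f(I)=f(gI)$ combined with $F_N$-equivariance forces $g$ to act trivially on $f(I)$, not merely to preserve it. This is immediate from the isometric restriction (an isometry of a segment that is also equal to its own translate by a length-preserving map forces pointwise fixation at the shared basepoint, and hence throughout), so no genuine obstacle arises; the heart of the argument is purely the very-small constraint on arc stabilizers in $T$ and $T'$.
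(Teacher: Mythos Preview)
Your argument for the first assertion is correct and is exactly the paper's approach. In the second assertion, however, the step ``the element $g$ actually fixes $J$ pointwise'' is not justified: from $gJ=f(gI)=f(I)=J$ you only obtain that $g$ preserves $J$ \emph{setwise}, and an isometry of a nondegenerate segment may reverse it. Your parenthetical invokes a ``shared basepoint'', but nothing in the hypotheses forces $I$ and $gI$ to share an endpoint in $T$, nor forces $f$ to send the initial vertex of $I$ to the same point as the initial vertex of $gI$ rather than to its terminal point. If $g$ reverses $J$, then $g\notin\mathrm{Stab}(J)$ and your inclusion $\langle c,g\rangle\subseteq\mathrm{Stab}(J)$ fails.

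The route the paper has in mind is to reduce the second assertion to the first. The stabilizer of the translate $ge$ is $g\langle c\rangle g^{-1}$, and since in a free group the normalizer of a maximal cyclic subgroup $\langle c\rangle$ equals $\langle c\rangle$ itself, the assumption $g\notin\langle c\rangle$ (equivalently $ge\neq e$) gives $g\langle c\rangle g^{-1}\neq\langle c\rangle$. Now $c$ fixes $J$ pointwise via $I$ and $gcg^{-1}$ fixes $J$ pointwise via $gI$; there is no orientation ambiguity here, since each of these elements already fixes the corresponding segment in $T$ pointwise. One then concludes exactly as in the first part that $\mathrm{Stab}(J)$ has rank at least $2$. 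You could alternatively repair your own route by observing that if $g$ reverses $J$ then $g^2$ fixes $J$ pointwise, and that $g^2\in\langle c\rangle$ would force $g$ into the centralizer of $c$, hence into $\langle c\rangle$, in $F_N$; but the conjugate-stabilizer argument is both shorter and what the paper's one-line proof intends.
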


\begin{proof}
Otherwise, as edge stabilizers in $T$ are maximally cyclic, the stabilizer of the image of these segments would have rank at least $2$. As $T'$ is very small, this image would be a point, contradicting the definition of a morphism.
\end{proof}

\begin{lemma}\label{morphism-3}
Let $T$ and $T'$ be two very small $F_N$-trees. An $F_N$-equivariant morphism from $T$ to $T'$ cannot identify a nontrivial initial segment of an edge in $T$ with nontrivial stabilizer with an arc lying in a vertex tree of $T$ with dense orbits.
\end{lemma}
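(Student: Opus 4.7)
The plan is to argue by contradiction: suppose there is an $F_N$-equivariant morphism $f\colon T\to T'$ that identifies a nontrivial initial segment $I$ of an edge $e\subset T$ with $\text{Stab}(e)=\langle c\rangle$ nontrivial, with an arc $J$ contained in a vertex tree $Y$ of $T$ having dense orbits. Writing $G_v:=\text{Stab}(Y)$, the strategy is to force any $g\in G_v$ moving a point of $J$ by less than a fixed quantity to lie in a cyclic edge stabilizer of $T'$, and then to contradict this via the density of the $G_v$-action on $Y$.

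After possibly passing to subsegments of $I$ and $J$, I may assume $f|_I$ and $f|_J$ are isometries onto a common nondegenerate arc $K:=f(I)=f(J)\subset T'$. By $F_N$-equivariance, $c$ fixes $K$ pointwise. Since $c\neq 1$ and $T'$ is very small, Lemma \ref{dense-arcs} prevents $K$ from having a nondegenerate intersection with any dense-orbits vertex tree of $T'$, so $K$ meets the simplicial part of $T'$. A further shrinking then lets me assume $K$ sits inside the interior of a single edge $e^*$ of $T'$, and $\text{Stab}(e^*)$ is cyclic thanks to maximal cyclicity of arc stabilizers in $T'$ and torsion-freeness of $F_N$ (which together preclude edge-inversions).

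Next I would pick $y$ in the interior of $J$ and set $\delta:=d_{T'}(f(y),\partial e^*)>0$. For any $g\in G_v$ with $d_T(y,gy)<\delta$, the $1$-Lipschitz property of $f$ combined with equivariance yields $d_{T'}(f(y),g\cdot f(y))<\delta$, so $g\cdot f(y)$ still lies in the interior of $e^*$; since $g$ acts by isometry on $T'$ and sends one interior point of $e^*$ to another interior point of $e^*$, it must preserve $e^*$ setwise. Hence the ``small-displacement'' set $S:=\{g\in G_v:\,d_T(y,gy)<\delta\}$ is contained in the cyclic group $\text{Stab}(e^*)\cap G_v$.

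The main obstacle is to show that $S$ cannot actually be cyclic, contradicting the above. Since $Y$ is a very small $G_v$-tree with dense orbits, Lemma \ref{dense-arcs} applied inside $Y$ yields trivial arc stabilizers; in particular, any elliptic $h\in G_v$ fixes at most one point in $Y$. On the one hand, density of $G_v\cdot y$ gives, for each $y'\in B_Y(y,\delta/2)$, some $g\in G_v$ with $gy$ arbitrarily close to $y'$, and such $g$ automatically lies in $S$, so the orbit $\{gy:g\in S\}$ is dense in $B_Y(y,\delta/2)$. On the other hand, if $S\subseteq\langle h\rangle$ for some $h\in G_v$, then $\{h^ny\}_{n\in\mathbb{Z}}$ is either discrete with consecutive distances at least $||h||_Y>0$ (when $h$ is hyperbolic on $Y$) or lies on a single sphere around the unique fixed point of $h$ (when $h$ is elliptic); neither shape can be dense in a nondegenerate open ball of the $\mathbb{R}$-tree $Y$, the latter because such a ball contains points at distinct distances from any fixed basepoint. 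Articulating this ``thinness'' of cyclic orbits in an $\mathbb{R}$-tree cleanly is the step that requires the most care, but once done it closes the contradiction.
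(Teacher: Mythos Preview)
Your argument is correct. It takes, however, a route that is essentially dual to the paper's. The paper argues in two lines: the image $f(J)$ lies in a vertex tree of $T'$ with dense orbits (because $f(T_v)$ does), and since $c$ fixes $f(J)=f(I)$, Lemma~\ref{dense-arcs} forces $f(I)$ to be a point, contradicting the definition of a morphism. The step the paper leaves implicit---that $f(T_v)$ cannot cross a simplicial edge of $T'$---is precisely what your argument establishes, from the other side: you first use Lemma~\ref{dense-arcs} to push $K$ into the simplicial part of $T'$, and then show that the density of $G_v$-orbits near $y$ is incompatible with the fact that small-displacement elements of $G_v$ would all have to lie in the cyclic stabilizer of a single edge $e^*$. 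Your approach is more self-contained and makes explicit the tension between dense orbits and cyclic edge stabilizers; the paper's approach is much shorter but relies on the reader supplying exactly this tension to justify that the image of a dense-orbits vertex tree stays inside a dense-orbits vertex tree. One minor point: when $T'$ itself has dense orbits (empty simplicial part), your step~2 already yields the contradiction directly ($K$ would be a point), so the later steps are not needed in that case; you might say this explicitly rather than letting it be absorbed into ``$K$ meets the simplicial part''.
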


\begin{proof}
Otherwise, the image of this segment would be an arc with nontrivial stabilizer lying in a vertex tree of $T'$ with dense orbits. By Lemma \ref{dense-arcs}, it would thus be reduced to a point, contradicting the definition of a morphism.
\end{proof}

\subsection{The quotient volume of $F_N$-trees}

Let $T\in\overline{cv_N}$. The \emph{volume} of a finite subtree $K\subset T$ (i.e. the convex hull of a finite number of points, which is a finite union of segments) is the sum of the lengths of the segments in $K$. The \emph{quotient volume} of $T$ is defined to be the infimal volume of a finite subtree of $T$ whose $F_N$-translates cover $T$. We collect a few facts which were observed by Algom-Kfir in \cite[Section 3.3]{AK12}.

\begin{prop} \label{qvol} (Algom-Kfir \cite{AK12})
\begin{itemize}
\item For all $F_N$-trees $T$, all minimal $F_N$-trees $T'$ and all $L\in\mathbb{R}$, if there exists an $L$-Lipschitz $F_N$-equivariant map from $T$ to $T'$, then $qvol(T')\le L qvol(T)$.
\item Let $T\in\overline{cv_N}$, and let $(T_n)_{n\in\mathbb{N}}$ be a sequence of trees in $cv_N$ converging to $T$. Then $qvol(T)\ge\limsup_{n\to +\infty}qvol(T_n)$. If in addition $T^{simpl}$ contains an orbit of edges with nontrivial stabilizers, then $qvol(T)>\limsup_{n\to +\infty} qvol(T_n)$. 
\end{itemize}
\end{prop}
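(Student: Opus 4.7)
The plan is to prove the two bullets in turn. For the first, given an $L$-Lipschitz $F_N$-equivariant map $f: T \to T'$ with $T'$ minimal, I would fix a finite subtree $K \subset T$ with $F_N \cdot K = T$ and consider its image $K' := f(K)$. Since $K$ is compact and connected and $f$ is continuous, $K'$ is a compact connected subset of an $\mathbb{R}$-tree, hence a compact subtree, with $1$-dimensional Hausdorff measure at most $L \cdot \mathrm{vol}(K)$ by the Lipschitz bound. To see that the translates of $K'$ cover $T'$, I would use that $F_N \cdot K' = f(T)$, that $L$-Lipschitz equivariance forces $\|g\|_{T'} \le L \|g\|_T$ (so every hyperbolic element of $T'$ is hyperbolic in $T$, and its axis in $T'$ is contained in the $f$-image of its axis in $T$), and minimality of $T'$, yielding $f(T) = T'$. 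Approximating $K'$ by a finite union of segments of total length within $\epsilon$ and then infimizing over $K$ gives $qvol(T') \le L \cdot qvol(T)$.

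For the upper-semicontinuity in the second item, I would fix $\epsilon > 0$ and a finite subtree $K \subset T$ with $F_N K = T$ and $\mathrm{vol}(K) < qvol(T) + \epsilon$ (if $qvol(T) = +\infty$ the claim is vacuous). Since $T_n \to T$ in $\overline{cv_N}$ for the axes topology (equivalently the equivariant Gromov-Hausdorff topology on minimal actions), I would approximate the finitely many extremal points of $K$ by points of $T_n$ and take their convex hull $K_n \subset T_n$; tree-distance convergence then gives $\mathrm{vol}(K_n) \to \mathrm{vol}(K)$. To ensure $F_N \cdot K_n = T_n$ for large $n$, I would take $K$ to also contain lifts of the simplicial edges of the quotient graph of actions of $T$, so that the $K_n$ inherit an analogous combinatorial structure; together with minimality of $T_n$ this forces $F_N K_n = T_n$, and infimizing gives $\limsup qvol(T_n) \le qvol(T)$.

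For the strict inequality when $T^{\mathrm{simpl}}$ contains an orbit of edges with nontrivial stabilizers, I would argue by contradiction. Assuming $\limsup qvol(T_n) = qvol(T)$, the near-optimality of the $K_n$ obtained above, together with Theorem \ref{intro-limits}, would yield in the limit a $1$-Lipschitz $F_N$-equivariant map from an approximating tree in $cv_N$ to $T$; rephrased, this provides a sequence of approximations of $T$ in $cv_N$ equipped with $1$-Lipschitz $F_N$-equivariant maps down to $T$. But then the first assertion of Theorem \ref{intro-approx} holds, so all arc stabilizers of $T$ must be trivial, contradicting the hypothesis on $T^{\mathrm{simpl}}$. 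The main obstacle I foresee is exactly this last reduction: turning the asymptotic $qvol$-equality into genuine $1$-Lipschitz equivariant maps (as opposed to a sequence of maps whose Lipschitz constants merely tend to $1$) requires a careful limiting argument, controlling how the near-optimal coverings $K_n$ interact with the nontrivial edge stabilizers of $T$ so that the resulting map does not acquire a Lipschitz constant strictly greater than $1$ in the limit.
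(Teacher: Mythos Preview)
The paper does not prove this proposition; it is quoted from \cite{AK12} and then used as a black box, most importantly in the proof of Theorem~\ref{approximation-by-simplicial}. There is therefore no proof in the paper to compare your sketch against, so I will comment on it on its own terms.

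Your argument for the first bullet is essentially sound. Surjectivity of $f$ is even simpler than you write: $f(T)$ is connected, hence a subtree of $T'$, and $F_N$-invariant, so equals $T'$ by minimality. One small wrinkle is that $f(K)$ need not literally be a \emph{finite} subtree in the paper's sense (convex hull of finitely many points); you correctly note that an $\epsilon$-approximation handles this.

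For upper-semicontinuity in the second bullet your outline is reasonable. The step ``$F_N\cdot K_n=T_n$ for large $n$'' is the one that needs care; one clean way is to choose finitely many $g_1,\dots,g_m\in F_N$ with $K$ contained in the interior of $\bigcup_i g_iK$ (possible since $K$ is a finite tree whose translates cover), observe that this is an open condition on finitely many pairwise distances, and hence persists for the approximating $K_n\subset T_n$.

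Your approach to the strict inequality, however, has two genuine problems. First, it is \emph{circular} within this paper: the implication of Theorem~\ref{intro-approx} you wish to invoke (existence of a Lipschitz approximation by trees in $cv_N$ $\Rightarrow$ trivial arc stabilizers) is proved in Theorem~\ref{approximation-by-simplicial} precisely by appealing to the strict inequality of Proposition~\ref{qvol}; see the first paragraph of that proof. Second, and independently, your use of Theorem~\ref{intro-limits} is not well-posed: that theorem takes as \emph{input} a sequence of Lipschitz equivariant maps $T_n\to T'_n$ and outputs a single limit map $T\to\overline{T'}$; it does not produce maps $T_n\to T$ from covolume data, and no such maps are available to you a priori. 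The obstacle you flag at the end is thus not a detail to be filled in but the heart of the matter, and the route you propose cannot close it. A non-circular argument must extract the strict gap directly from the tension between a nontrivially stabilized edge in $T$ and freeness of the actions $T_n$, without passing through Theorem~\ref{intro-approx}.
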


\subsection{Finding primitive elements in graphs of groups decompositions of $F_N$}

We now state a lemma that will be useful for detecting primitive elements in graphs of groups decompositions of $F_N$.

\begin{lemma} \label{detect-separability}
Let $X$ be a minimal graph of groups decomposition of $F_N$ containing an edge $e$ with trivial stabilizer. Let $g\in F_N$. If $g$ is elliptic in the Bass-Serre tree of $X$, or if any fundamental domain of its axis crosses the orbit of a lift of $e$ at most once, then $g$ is simple. For all vertex groups $G$ in $X$, there exists a proper free factor of $F_N$ that contains $G$. 
\end{lemma}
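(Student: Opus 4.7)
The strategy is to reduce everything to a one-edge splitting of $F_N$ over the trivial group. Let $T_X$ be the Bass-Serre tree of $X$, and let $T_0$ be the tree obtained from $T_X$ by equivariantly collapsing every edge outside the $F_N$-orbit of $e$. Then $T_0$ is a minimal $F_N$-tree with a single orbit of edges, all of which have trivial stabilizer. Its quotient graph of groups is either (A) a segment with two distinct vertex groups $A,B$, in which case $F_N=A*B$, or (B) a loop with a single vertex group $A$, in which case $F_N=A*\langle t\rangle$ with stable letter $t$. In both cases, every vertex group of $T_0$ is a proper free factor of $F_N$: minimality of $T_X$ forces $A$ and $B$ to both be nontrivial in case (A) (so each is a proper free factor of the free product), and in case (B) the subgroup $A$ is a proper free factor of $F_N=A*\langle t\rangle$ with complement $\langle t\rangle$.

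For the second assertion of the lemma, any vertex group $G_v$ of $X$ fixes a vertex of $T_X$, hence is contained in the stabilizer of the unique component of $T_X\smallsetminus F_N\cdot e$ containing that vertex; this stabilizer is a vertex group of $T_0$, hence a proper free factor of $F_N$. For the first assertion, I distinguish three cases. If $g$ is elliptic in $T_X$, it is conjugate into a vertex group of $X$, hence simple by what we have just shown. If $g$ is hyperbolic and a fundamental domain of its axis contains no edge in the orbit of $e$, then the axis lies entirely in a single component of $T_X\smallsetminus F_N\cdot e$, and $g$ lies in the corresponding vertex stabilizer of $T_0$, again simple.

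The remaining subcase is when a fundamental domain of the axis of $g$ contains exactly one edge in the orbit of $e$: then $g$ is hyperbolic in $T_0$ with translation length $1$. In the separating subcase (A), the Bass-Serre tree of $A*B$ is bipartite (each edge joins an $A$-coset vertex to a $B$-coset vertex, and this bipartition is $F_N$-invariant), so translation lengths along axes are necessarily even; this subcase is thus impossible. In the non-separating subcase (B), $g$ is conjugate in $F_N=A*\langle t\rangle$ to an element of the form $a t^{\epsilon}$ with $a\in A$ and $\epsilon=\pm 1$. I claim $at^\epsilon$ is primitive: the Nielsen transformation $\phi\in\operatorname{Aut}(F_N)$ defined by $\phi|_A=\operatorname{id}_A$ and $\phi(t)=at$ sends the primitive element $t$ to $at$, and precomposing with the automorphism $t\mapsto t^{-1}$ handles $\epsilon=-1$. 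Since $N\ge 2$, the rank-$1$ subgroup $\langle at^\epsilon\rangle$ is then a proper free factor, so $g$ is simple. The main obstacle I anticipate is this last subcase: one must simultaneously rule out the separating possibility via bipartiteness and produce the explicit Nielsen automorphism witnessing primitivity; the other cases are direct bookkeeping in Bass-Serre theory.
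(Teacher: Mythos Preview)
Your proof is correct and follows essentially the same route as the paper's: collapse to the one-edge free splitting determined by $e$, observe that vertex groups lie in the resulting proper free factors, and in the ``exactly once'' case note that $e$ must be nonseparating and that $g$ is conjugate to $at^{\pm 1}$, which is primitive. Your bipartiteness argument to exclude the separating subcase and your explicit Nielsen automorphism are slightly more detailed than the paper's version, but the underlying ideas coincide.
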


\begin{proof}
Let $e$ be an edge in $X$ with trivial stabilizer. Collapsing each component of the complement of $e$ in $X$ to a point yields a free splitting of $F_N$ either of the form $F_N=F_{N-1}\ast$ if $e$ is nonseparating (in which case we denote by $t$ a stable letter), or of the form $F_N=F_k\ast F_{N-k}$ if $e$ is separating. All vertex groups in $X$ are contained in a free factor provided by the splitting. If the axis of a hyperbolic element $g\in F_N$ does not cross any lift of $e$, then $g$ belongs to a proper free factor provided by the splitting. If any fundamental domain of the axis of $g$ in the Bass-Serre tree $T$ of $X$ crosses the orbit of a lift of $e$ exactly once, then $e$ is nonseparating and $g$ is conjugated to an element of the form $tg'$ with $g'\in F_{N-1}$, so $g$ is primitive (because if $\{x_1,\dots,x_{N-1}\}$ is a free basis of $F_{N-1}$, then $\{x_1,\dots,x_{N-1},tg'\}$ is a free basis of $F_N$). 
\end{proof}

\section{Some equivalence relations on $\overline{cv_N}$}\label{sec-beginning}

\subsection{Primitive-equivalence versus simple-equivalence} \label{sec-equivalences}

Recall that an element $g\in F_N$ is \emph{primitive} if it belongs to some free basis of $F_N$ (we denote by $\mathcal{P}_N$ the set of primitive elements of $F_N$). It is \emph{simple} if it belongs to some proper free factor of $F_N$. Two trees $T,T'\in\overline{cv_N}$ are \emph{primitive-equivalent} if for all $g\in \mathcal{P}_N$, we have $||g||_T=||g||_{T'}$. They are \emph{simple-equivalent} if for all simple elements $g\in F_N$, we have $||g||_T=||g||_{T'}$. 

\begin{prop} \label{primitive-separable}
Two elements $T,T'\in\overline{cv_N}$ are primitive-equivalent if and only if they are simple-equivalent.
\end{prop}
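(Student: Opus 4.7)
The implication from simple-equivalence to primitive-equivalence is immediate for $N\ge 2$: every primitive $g\in F_N$ is simple, since extending $g$ to a basis $\{g,x_2,\dots,x_N\}$ exhibits $g$ in the proper free factor $\langle g,x_2,\dots,x_{N-1}\rangle$.

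For the converse, assume $T,T'\in\overline{cv_N}$ are primitive-equivalent, and let $g\in F_N$ be simple. Since a basis of any proper free factor of $F_N$ extends to a basis of $F_N$, I may choose a basis $\{x_1,\dots,x_N\}$ of $F_N$ with $g\in\langle x_1,\dots,x_{N-1}\rangle$. For each $n\ge 1$, set $h_n:=x_Ng^n$. The endomorphism $\phi_n$ of $F_N$ defined by $\phi_n(x_i)=x_i$ for $i<N$ and $\phi_n(x_N)=x_Ng^n$ fixes $g$, and a direct computation shows that $\phi_{-n}$ is its two-sided inverse; hence $\phi_n$ is an automorphism, $\{x_1,\dots,x_{N-1},h_n\}$ is a basis of $F_N$, and each $h_n$ is primitive. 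Primitive-equivalence then yields $||h_n||_T=||h_n||_{T'}$ for every $n\ge 1$.

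The strategy is now to recover $||g||_S$ from the sequence $(||h_n||_S)_{n\in\mathbb{N}}$ via the asymptotic formula
$$\lim_{n\to\infty}\frac{||h_n||_S}{n}=||g||_S,$$
valid for every $S\in\overline{cv_N}$; applying it to both $T$ and $T'$ will then give $||g||_T=||g||_{T'}$. When $g$ is elliptic in $S$, pick a point $y\in S$ fixed by $g$: then $h_ny=x_Ny$, so $||h_n||_S\le d_S(y,x_Ny)$ is bounded independently of $n$, and dividing by $n$ yields the limit $0=||g||_S$. When $g$ is hyperbolic in $S$ with axis $A_g$ and $\ell:=||g||_S>0$, the inequality $d_S(y,h_ny)\le d_S(y,x_Ny)+n\ell$ for $y\in A_g$ provides the upper bound $\limsup_n||h_n||_S/n\le\ell$.

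The main obstacle is the matching lower bound $\liminf_n||h_n||_S/n\ge\ell$ in the hyperbolic case. The idea is that for large $n$, the translation of length $n\ell$ performed by $g^n$ along $A_g$ dominates the bounded perturbation contributed by $x_N$, forcing $h_n$ to be hyperbolic with axis close to $A_g$ on compact subsets and with translation length $n\ell+O(1)$, where the additive constant depends only on $||x_N||_S$ and the relative position of the characteristic set of $x_N$ with respect to $A_g$. This can be extracted from the classical formulas of \cite{CM87} for the translation length of a product of two isometries of an $\mathbb{R}$-tree in terms of lengths and relative positions of characteristic sets; dividing by $n$ and letting $n\to\infty$ completes the proof.
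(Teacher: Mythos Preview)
Your proof is correct and follows essentially the same approach as the paper: both use that $x_Ng^n$ is primitive when $g\in\langle x_1,\dots,x_{N-1}\rangle$ and recover $\|g\|_S$ as $\lim_{n\to\infty}\|x_Ng^n\|_S/n$. The only difference is that the paper states this limit as a one-line fact, whereas you sketch the easy direction and defer the lower bound to the Culler--Morgan formulas.
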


\begin{proof}
Simple-equivalent trees are obviously primitive-equivalent, as primitive elements are simple. Assume that $||g||_{T'}=||g||_T$ for all $g\in \mathcal{P}_N$. Let $w\in F_N$ be simple, i.e there exists a free basis $\{a_1,\dots, a_N\}$ of $F_N$ such that $w$ belongs to the free factor of $F_N$ generated by $a_1,\dots,a_{N-1}$. Then for all $k\in\mathbb{N}$, we have $a_Nw^k\in\mathcal{P}_N$, because $\{a_1,\dots,a_{N-1},a_Nw^k\}$ is again a free basis of $F_N$. So 

\begin{displaymath}
\begin{array}{rl}
||w||_{T'}&=\lim_{k\to +\infty} \frac{||a_Nw^k||_{T'}}{k}\\
&=\lim_{k\to +\infty} \frac{||a_Nw^k||_{T}}{k}\\
&=||w||_T.
\end{array}
\end{displaymath}

\noindent This shows that $T$ and $T'$ are simple-equivalent.
\end{proof}

\subsection{Special-pull-equivalent trees}\label{sec-slide-equiv}

The following notion is illustrated in Figure \ref{fig-turns}. The \emph{corank} of a free factor $F$ of $F_N$ is the rank of any complementary free factor of $F_N$, i.e. it is equal to $N$ minus the rank of $F$.

\begin{de} \label{de-pull}
Let $T,\widehat{T}\in\overline{cv_N}$. The tree $T$ is a \emph{pull} of $\widehat{T}$ if there exist 

\begin{itemize}
\item an edge $e$ in $\widehat{T}$ with trivial stabilizer, and extremal vertices $v_1$ and $v_2$, and 
\item for each $i\in\{1,2\}$, a (possibly degenerate) subsegment $J_i\subseteq e$ that contains $v_i$, such that $J_1\cap J_2$ contains at most one point, and
\item for each $i\in\{1,2\}$ such that $J_i$ is nondegenerate, an element $g_i$ in the stabilizer of $v_i$, which is not a proper power,
\end{itemize}

\noindent so that $T$ is obtained as a quotient of $\widehat{T}$ by equivariantly identifying $J_i$ with $g_iJ_i$ for each $i\in\{1,2\}$. 
\end{de}

We will also say that $T$ is obtained from $\widehat{T}$ by pulling the edge $e$, with pulling elements $g_1$ and $g_2$. 

\begin{figure}
\begin{center}
\input{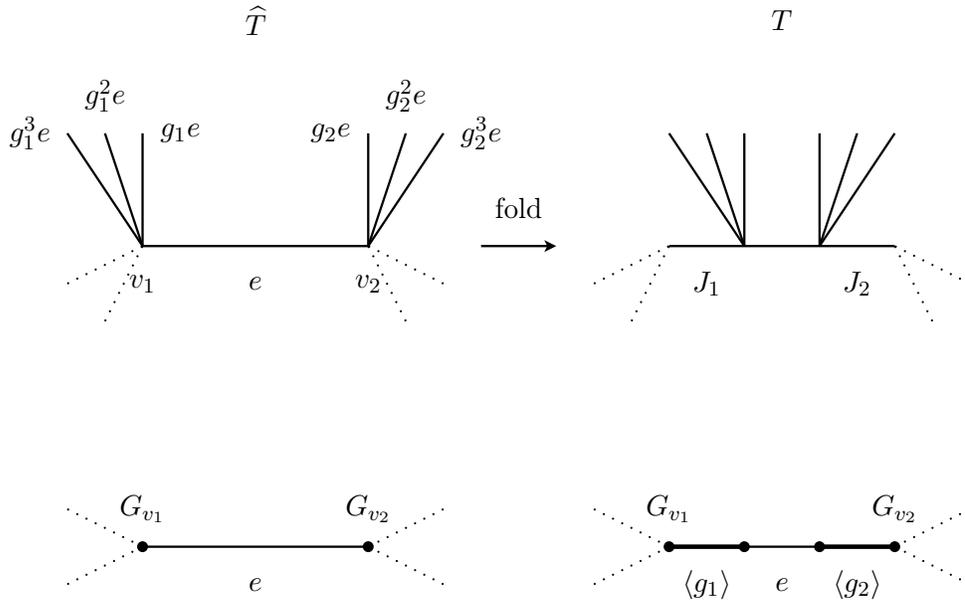}
\caption{The tree $T$ is obtained by pulling $\widehat{T}$.}
\label{fig-turns}
\end{center}
\end{figure}

\begin{de} \label{de-NS-pull}
Let $T,\widehat{T}\in\overline{cv_N}$. The tree $T$ is a \emph{special pull} of $\widehat{T}$ if $T$ is a pull of $\widehat{T}$ and, with the above notations, the edge $e$ projects to a nonseparating edge in the underlying graph of the decomposition of $\widehat{T}$ as a graph of actions with dense orbits, and neither $g_1$ nor $g_2$ belongs to a corank $2$ free factor of $F_N$.
\end{de}

\begin{de}\label{de-NS}
Two trees $T,T'\in\overline{cv_N}$ are \emph{special-pull-equivalent} if either $T=T'$, or there exists a tree $\widehat{T}\in\overline{cv_N}$ such that $T$ and $T'$ are special pulls of $\widehat{T}$.
\end{de}

In the second case of Definition \ref{de-NS}, the tree $\widehat{T}$ has a single orbit of edges with trivial stabilizer, otherwise the pulling element would be contained in a corank $2$ free factor of $F_N$. That special-pull-equivalence is indeed an equivalence relation will be proved in Lemma \ref{NS-pull-equivalence}.

\paragraph*{Description of special-pull-equivalence in the case $N=2$.}
If $N=2$, then any pull is special. Let $T,\widehat{T}\in\overline{cv_2}$ be such that $T$ is a special pull of $\widehat{T}$. We assume that there does not exist any tree $\widetilde{T}\neq\widehat{T}\in \overline{cv_2}$ such that both $T$ and $\widehat{T}$ are special pulls of $\widetilde{T}$ with same pulling elements. As displayed on Figure \ref{fig-cv2-1}, the tree $\widehat{T}$ is then the Bass-Serre tree of a splitting of the form $F_2=\langle a\rangle\ast$, where $a$ is a primitive element of $F_2$, and $T$ is obtained by (partially or totally) pulling the edge of $\widehat{T}$, with $a$ as a pulling element.

\begin{figure}
\begin{center}
\input{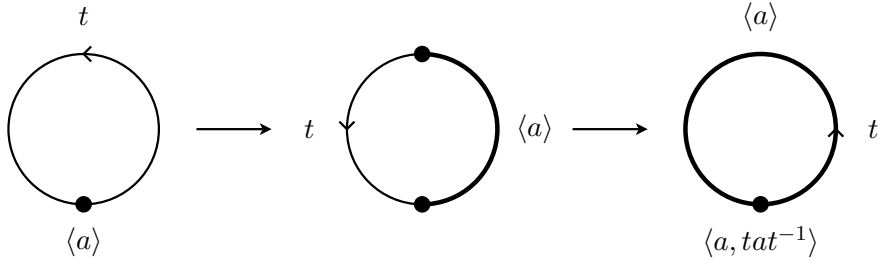}
\caption{Pulls in $\overline{cv_2}$.}
\label{fig-cv2-1}
\end{center}
\end{figure}

An exhaustive description of the boundary of $CV_2$ was given by Culler and Vogtmann in \cite{CV91}. With their terminology, the quotient $\overline{CV_2}/\sim$ is given by collapsing all spikes in $\overline{CV_2}$, and is thus homeomorphic to a disk with "fins" attached on top, see Figure \ref{fig-cv2}. 

\begin{figure}
\begin{center}
\def\JPicScale{.8}
\input{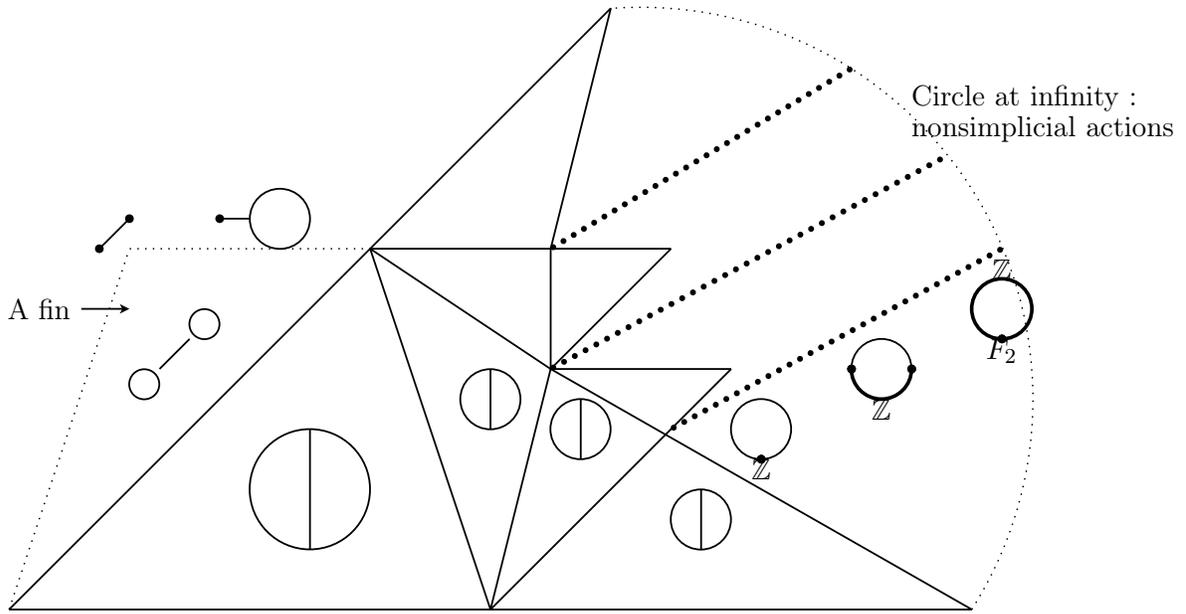}
\caption{The quotient space $\overline{CV_2}/\sim$ is obtained by collapsing all peaks in bold dotted lines to points, and hence is homeomorphic to a disk with "fins" attached on top.}
\label{fig-cv2}
\end{center}
\end{figure}

\paragraph*{Description of special-pull-equivalence in the case $N\ge 3$.}

We now assume that $N\ge 3$, and we give a description of special-pull-equivalence in terms of graphs of actions. The discussion below is illustrated in Figures \ref{fig-case-1} to \ref{fig-case-3}. Let $\widehat{T}\in\overline{cv_N}$ be a tree with exactly one orbit of edges with trivial stabilizer $e$, and let $T\neq\widehat{T}$ be a special pull of $\widehat{T}$. We assume that there does not exist any tree $\widetilde{T}\neq\widehat{T}\in cv_N$ such that both $T$ and $\widehat{T}$ are special pulls of $\widetilde{T}$ with same pulling elements. The tree $\widehat{T}$ splits as a graph of actions having 

\begin{itemize}
\item a single vertex, whose corresponding vertex tree is a (non necessarily minimal) $A$-tree $T_0$, where $A$ is a corank one free factor of $F_N$, and 
\item a single loop-edge with trivial stabilizer. 
\end{itemize}

As $N\ge 3$, the group $A$ is not cyclic, so it has at most one fixed point in $\widehat{T}$, and the $A$-minimal subtree $T_0^{min}$ of $T_0$ is well-defined. Minimality of $T$ implies that $T_0$ is obtained from $T_0^{min}$ by possibly adding some completion points, and attaching at most two $A$-orbits of edges (the discussion below will show that we can actually attach at most one $A$-orbit of edges when passing from $T_0^{min}$ to $T_0$). The valence one extremities of these edges are attaching points for $e$ in $\widehat{T}$. One of the following situations occurs.
\\
\\
\textit{Case 1} (see Figure \ref{fig-case-1}): The tree $T_0$ is minimal (or more generally, we have $T_0$ is the closure of ${T_0^{min}}$, i.e. $T_0$ is obtained from $T_0^{min}$ by adding completion points, or in other words $T_0\smallsetminus T_0^{min}$ does not contain any simplicial edge).
\\
Then $T$ is obtained from $\widehat{T}$ by pulling $e$, either at one of its extremities or at both of its extremities. (Notice that we cannot perform any pull from a completion point). 
\\
\begin{figure}
\begin{center}
\input{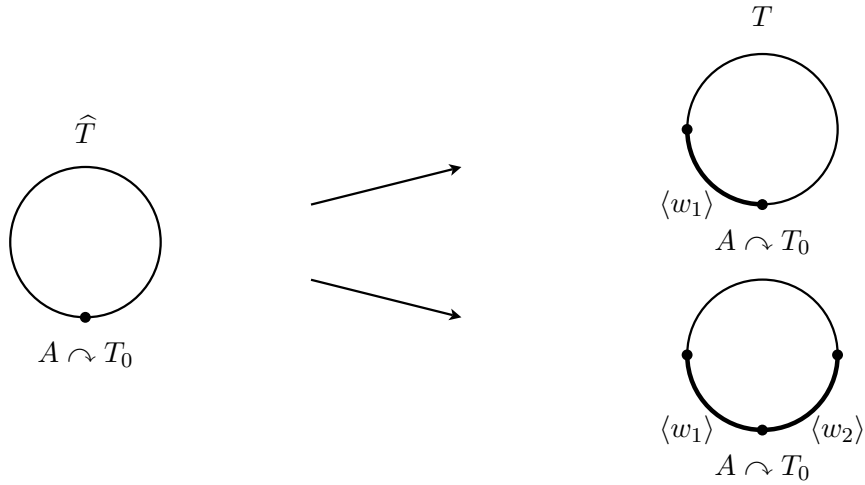}
\caption{The situation in Case 1.}
\label{fig-case-1}
\end{center}
\end{figure}

\noindent \textit{Case 2} (see Figure \ref{fig-case-2}): The tree $T_0$ is not minimal, and $T_0\smallsetminus T_0^{min}$ contains a simplicial edge $e'$ whose stabilizer $\langle w\rangle$ is not contained in any proper free factor of $A$.
\\
Then the valence one extremity of $e'$ in the decomposition of $T_0$ as a graph of actions has valence at least $3$ in the decomposition of $\widehat{T}$ as a graph of actions. Otherwise, the tree $\widehat{T}$ would be obtained from a tree $\widetilde{T}$ by pulling this edge, contradicting the assumption made on $\widehat{T}$. This implies in particular that $T_0$ is obtained from $T_0^{min}$ by attaching a single orbit of edges. When passing from $\widehat{T}$ to $T$, the edge $e$ is pulled at only one of its extremity, otherwise this would create a tripod stabilizer.
\\
\begin{figure}
\begin{center}
\input{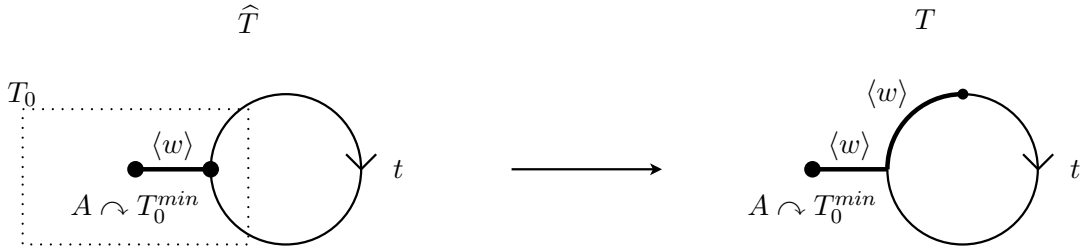}
\caption{The situation in Case 2.}
\label{fig-case-2}
\end{center}
\end{figure}

\noindent \textit{Case 3} (see Figure \ref{fig-case-3}): The tree $T_0$ is not minimal, and $T_0\smallsetminus T_0^{min}$ contains a simplicial edge whose stabilizer is nontrivial, and contained in some proper free factor of $A$.
\\
If there were two such edges, or if $e$ projected to a loop-edge in the decomposition of $\widehat{T}$ as a graph of action, then no special pulling operation could be performed on $\widehat{T}$, so we are in the situation displayed on Figure \ref{fig-case-3}.
\\
\begin{figure}
\begin{center}
\input{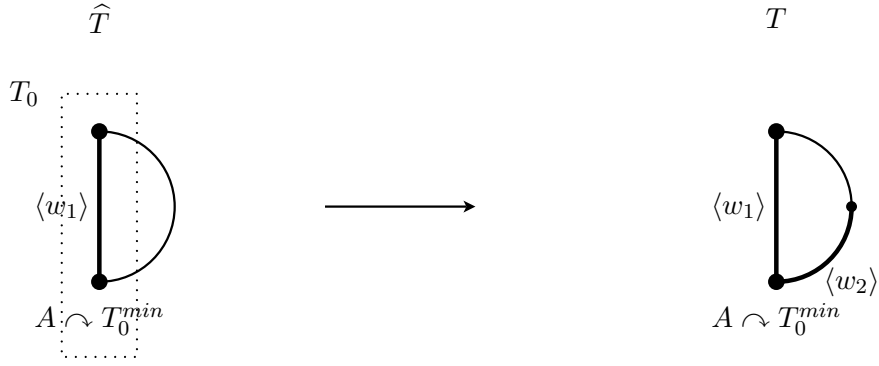}
\caption{The situation in Case 3.}
\label{fig-case-3}
\end{center}
\end{figure}

\noindent \textit{Case 4} : The tree $T_0$ is not minimal, and $T_0\smallsetminus T_0^{min}$ contains a simplicial edge with trivial stabilizer. 
\\
We will show that this case never happens. Indeed, the valence one extremity of this edge in the decomposition of $T_0$ as a graph of actions has valence at least $3$ in the decomposition of $\widehat{T}$ as a graph of actions, and it has trivial stabilizer. In this situation, no special pulling operation can be performed on $\widehat{T}$, a contradiction.
\\
\\
Notice that in all cases, the simplicial part of $T_0\smallsetminus T_0^{min}$ contains at most one orbit of edges.

\paragraph*{Special-pull-equivalence is an equivalence relation.}

\begin{lemma}\label{NS-pull-equivalence}
Special-pull-equivalence is an equivalence relation on $\overline{cv_N}$.
\end{lemma}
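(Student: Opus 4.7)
Reflexivity and symmetry are immediate from the definition: any $T\in\overline{cv_N}$ is a special pull of itself by taking the subsegments $J_1,J_2$ of Definition \ref{de-pull} to be degenerate, and the defining condition for $T$ and $T'$ in Definition \ref{de-NS} is symmetric in the two descendants. The substantive point is transitivity, so suppose $T_1,T_2$ are both special pulls of some $\widehat{T}$, and $T_2,T_3$ are both special pulls of some $\widehat{T}'$. I will produce a tree $\widetilde{T}\in\overline{cv_N}$ such that both $T_1$ and $T_3$ are special pulls of $\widetilde{T}$.

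The plan is to associate to $T_2$ a canonical \emph{maximally un-pulled ancestor} $\widetilde{T}(T_2)$ such that (i) $T_2$ is a special pull of $\widetilde{T}(T_2)$, and (ii) every tree $\widehat{T}$ with $T_2$ a special pull of $\widehat{T}$ is itself a special pull of $\widetilde{T}(T_2)$. Existence of $\widetilde{T}(T_2)$ is guided by the case analysis already carried out in this subsection. Indeed, decomposing $T_2$ as a graph of actions with dense vertex trees singles out a corank one free factor $A$ with a minimal tree $T_0^{\min}$, and by inspection of Cases $1,2,3$ (Case $4$ being precluded) the simplicial part $T_0\smallsetminus T_0^{\min}$ contains at most one orbit of simplicial edges, and the attaching data plus the vertex stabilizers determine a unique ancestor $\widetilde{T}(T_2)$ whose sole orbit of trivial-stabilizer edges is as long as possible while keeping the pulling elements outside every corank $2$ free factor and the quotient edge nonseparating. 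Any candidate $\widehat{T}$ must have the same $T_0^{\min}$, the same attaching points, and the same pulling elements $g_1,g_2$ as $\widetilde{T}(T_2)$, and therefore differs from $\widetilde{T}(T_2)$ only in the lengths of the subsegments $J_i$ being smaller; this exhibits $\widehat{T}$ as a special pull of $\widetilde{T}(T_2)$.

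Next, I would check the routine fact that the composition of two special pulls carried out along the same edge with the same pulling elements is again a single special pull. Concretely, if $T_1$ is obtained from $\widehat{T}$ by pulling subsegments $J_i$ into $g_iJ_i$, and $\widehat{T}$ is itself obtained from $\widetilde{T}(T_2)$ by pulling subsegments $K_i$ (with $J_i$ contained in the portion of the edge surviving the first pull) into $g_iK_i$, then $T_1$ is obtained from $\widetilde{T}(T_2)$ by pulling $J_i\cup K_i$ into $g_i(J_i\cup K_i)$. The conditions that the edge project to a nonseparating edge and that $g_i$ lie outside every corank $2$ free factor are both preserved under this composition, so we still have a special pull.

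Applying the key observation to $T_2,\widehat{T}$ (and separately to $T_2,\widehat{T}'$) shows that $\widehat{T}$ and $\widehat{T}'$ are each special pulls of $\widetilde{T}(T_2)$. Then the composition step, applied to the triples $(T_1,\widehat{T},\widetilde{T}(T_2))$ and $(T_3,\widehat{T}',\widetilde{T}(T_2))$, yields that both $T_1$ and $T_3$ are special pulls of the common tree $\widetilde{T}:=\widetilde{T}(T_2)$, giving transitivity. The main obstacle is verifying the canonicality of $\widetilde{T}(T_2)$ — in particular, that two different ancestors $\widehat{T},\widehat{T}'$ of $T_2$ truly agree on $T_0^{\min}$ and on the attaching/pulling data — and this is exactly what the case-by-case dissection of Section \ref{sec-slide-equiv} is designed to supply.
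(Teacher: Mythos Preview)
Your proof sketch is correct, but the paper takes a shorter route. Rather than constructing a maximal ancestor $\widetilde{T}(T_2)$ and then verifying that compositions of special pulls are again special pulls, the paper argues that $\widehat{T}$ is in fact \emph{uniquely} determined by $T$: the corank one free factor $A$ is recovered as the smallest free factor of $F_N$ containing all arc stabilizers in $T$, the $A$-minimal tree $T_0^{\min}$ is then determined by the restriction of $\|\cdot\|_T$ to $A$, and the case analysis of this subsection reconstructs $\widehat{T}$ from these data. Uniqueness makes transitivity immediate --- if $T_2$ is a special pull of both $\widehat{T}$ and $\widehat{T}'$ then $\widehat{T}=\widehat{T}'$ --- so your composition-of-pulls step becomes unnecessary. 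What your approach buys is more explicit bookkeeping of how different candidate ancestors relate; what the paper's approach buys is brevity, since once you know the ancestor is unique there is nothing left to compose.

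One small slip: reflexivity is not obtained by taking the $J_i$ in Definition~\ref{de-pull} to be degenerate, since a special pull in the sense of Definition~\ref{de-NS-pull} still requires $\widehat{T}$ to contain a nonseparating edge with trivial stabilizer, which an arbitrary $T\in\overline{cv_N}$ need not have. Reflexivity comes for free from the clause ``either $T=T'$'' in Definition~\ref{de-NS}.
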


\begin{proof}
Let $T\in\overline{cv_N}$, and assume that there exists a tree $\widehat{T}\in\overline{cv_N}$ so that $T$ is a special pull of $\widehat{T}$. Then the corank one free factor $A$ (with the notations from the above paragraph) is uniquely determined as being the smallest free factor of $F_N$ containing all arc stabilizers in $T$, and the minimal $A$-tree $T_0^{min}$ is determined by the restriction to $A$ of the translation length function of $T$. It then follows from the description given in the previous paragraph of the relationship between $T_0^{min}$ and $\widehat{T}$ that $\widehat{T}$ is uniquely determined. Lemma \ref{NS-pull-equivalence} follows from this observation.
\end{proof}

We denote by $\sim$ the special-pull-equivalence relation on $\overline{cv_N}$. The \emph{standard element} of a nontrivial class of special-pull-equivalence is the unique element of the class in which the length of the edge with trivial stabilizer (if any) is maximal. Each equivalence class is star-shaped and contractible, and consists of a union of simplices of dimension at most $2$. 

\paragraph*{A few examples.}

The simplest examples of special-pull-equivalent trees arise by pulling the Bass-Serre tree of a splitting of $F_N$ of the form $F_N=A\ast$ (where $A$ is a corank one free factor of $F_N$), with any nonsimple elements of $F_{N-1}$ as pulling elements, see Figure \ref{fig-examples}.

A more elaborated class of examples arises by letting $T_{N-1}$ be any geometric $F_{N-1}$-tree dual to a foliation on a surface $S$ with a single boundary component, and forming a graph of actions whose underlying graph of groups represents the splitting $F_N=F_{N-1}\ast$, with attaching point $x$ given by the boundary curve of $S$, and pulling elements stabilizing $x$ in $T_{N-1}$ (see Figure \ref{fig-examples}, see also Section \ref{sec-approximations} for a more detailed account of this construction). 

There is a way of building new examples by an iterative process. Start from a minimal $F_{N-1}$-tree $Y$ that contains a point $x$ whose stabilizer is not contained in any proper free factor of $F_{N-1}$, form an HNN-extension $F_{N-1}\ast$, and the corresponding graph of actions with attaching point $x$, and fold the corresponding edge $e$ totally over a translate $ge$, where $g$ does not belong to any proper free factor of $F_{N-1}$. This gives a tree $T'$ having a point stabilizer which is not contained in any proper free factor of $F_N$. Hence it can serve as the vertex tree of an $F_{N+1}$-tree whose NS-pull-equivalence class is nontrivial. Iterating this process creates a class of NS-pull-equivalent trees. 

\begin{figure}
\begin{center}
\input{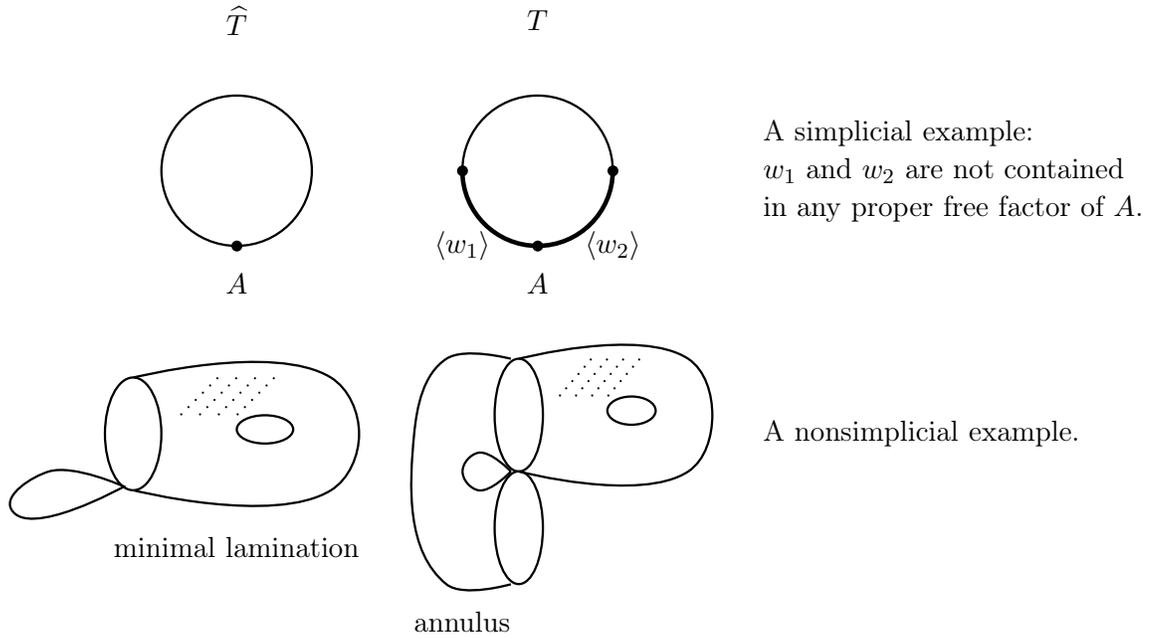}
\caption{Examples of NS-pull-equivalent trees.}
\label{fig-examples}
\end{center}
\end{figure}

\subsection{Special-pull-equivalent trees are simple-equivalent.}\label{sec-Whitehead}

This section is devoted to the proof of the following implication.

\begin{prop} \label{slide-separable}
Any two special-pull-equivalent trees $T,T'\in\overline{cv_N}$ are simple-equivalent.
\end{prop}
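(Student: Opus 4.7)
My plan is to reduce to a single pull and then argue that simple elements cannot detect the folding. By Lemma \ref{NS-pull-equivalence} and transitivity it suffices to prove that if $T$ is a special pull of $\widehat{T}$ with data $(e, J_1, J_2, g_1, g_2)$, then $\|w\|_T = \|w\|_{\widehat{T}}$ for every simple $w \in F_N$. The quotient map $\pi:\widehat{T}\to T$ is an $F_N$-equivariant $1$-Lipschitz morphism, so $\|w\|_T\le\|w\|_{\widehat{T}}$ for all $w$, and if $w$ is elliptic in $\widehat{T}$ it is elliptic in $T$ as well, giving $\|w\|_T=\|w\|_{\widehat{T}}=0$. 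Only the reverse inequality for hyperbolic simple $w$ requires work.

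For such $w$, I would examine the axis $C_{\widehat{T}}(w)$. Since $\pi$ only folds translates of the initial segments $J_i\subseteq e$ with their $g_i$-images $g_iJ_i\subseteq g_ie$, the restriction of $\pi$ to $C_{\widehat{T}}(w)$ is a local isometry unless the axis contains a \emph{folded turn}, namely a pair of consecutive edges at a translate of $v_i$ of the form $(he, hg_i^{\pm 1}e)$. If no such turn appears, $\pi$ maps $C_{\widehat{T}}(w)$ isometrically onto a $w$-invariant line in $T$, which must be $C_T(w)$, yielding $\|w\|_T = \|w\|_{\widehat{T}}$. Thus everything reduces to showing that the axis of a simple element $w$ contains no folded turn.

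The main obstacle is this last step, which I would attack via a Whitehead-style argument applied to the free splitting $F_N = A\ast$ obtained by collapsing the vertex tree of $\widehat{T}$; the fact that $e$ is nonseparating ensures $A$ has corank one, with stable letter $t$. Writing $w$ in cyclic HNN normal form as an alternating word in $t^{\pm 1}$ and elements of $A$, a folded turn in $C_{\widehat{T}}(w)$ at a translate of $v_1$ corresponds exactly to the appearance of a block of the form $t^{\epsilon}g_1^{\pm 1}t^{-\epsilon}$ in this cyclic word (and symmetrically at $v_2$ with $g_2$). If such a block occurs, I would apply the Whitehead automorphism of $F_N$ with multiplier $g_1$ supported on $t$: this strictly reduces the cyclic complexity of $w$ while preserving simplicity. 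Iterating, one would eventually realize $w$ and $g_1$ simultaneously inside a proper free factor of $F_N$ of corank at least two, contradicting the hypothesis that $g_1$ lies in no corank-$2$ free factor. A case-by-case analysis following Cases 1--3 of Section \ref{sec-slide-equiv} will be needed to handle the various shapes of the graph-of-actions decomposition of $\widehat{T}$, and for the case where $T_0$ has dense orbits I would use Theorem \ref{intro-approx} to approximate by simplicial free actions and conclude by continuity of translation lengths.
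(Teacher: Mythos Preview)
Your overall architecture matches the paper's: reduce to showing that the only elements whose translation length changes under the pull are those whose cyclic normal form in $F_N=A\ast\langle t\rangle$ contains a block $t^{\epsilon}g_i^{k}t^{-\epsilon}$, and then argue that such elements are never simple. Two points, however, are genuine gaps.

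First, the heart of your argument, the ``Whitehead automorphism with multiplier $g_1$ supported on $t$'', is not well-defined. Whitehead automorphisms have \emph{basis elements} as multipliers, and $g_1$ is typically a long word in $A$; the substitution $t\mapsto g_1 t$ is an automorphism of $F_N$, but it is not a Whitehead move and there is no evident complexity that it strictly decreases. Without this, your iteration never gets started, and the conclusion that $w$ and $g_1$ end up in a common corank-$2$ free factor is unsupported. The paper handles this step quite differently: it proves a separate lemma (Proposition~\ref{class-separable}) via the Whitehead \emph{graph} cut-point criterion, showing directly that if $g_1\in A$ lies in no proper free factor of $A$ then no cyclically reduced product $v\,t\,g_1\,t^{-1}$ can be primitive, for any $v\in F_N$. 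That argument works by analysing how the Whitehead graph of $v\,t\,g_1\,t^{-1}$ contains $Wh(g_1)$ with only a few extra edges attached at $t^{\pm1}$, forcing connectivity without cut-points.

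Second, the appeal to Theorem~\ref{intro-approx} for the dense-orbit case is a red herring. The comparison of $\|w\|_T$ and $\|w\|_{\widehat T}$ is governed entirely by how the axis of $w$ crosses the orbit of $e$ in $\widehat T$, which is read off from the HNN normal form of $w$; the internal structure of the vertex tree $T_0$ plays no role, and no approximation is needed. A minor correction in the same vein: the folded turns are $(he,hg_i^{k}e)$ for all nonzero $k$, not just $k=\pm1$, since identifying $J_i$ with $g_iJ_i$ collapses the entire $\langle g_i\rangle$-orbit of $J_i$.
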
 

In order to prove Proposition \ref{slide-separable}, we start by checking that a certain class of elements of $F_N$ are nonsimple, using methods due to Whitehead \cite{Whi36} and further developed by Stallings \cite{Sta99}. The \emph{Whitehead graph} of an element $w\in F_N$ with respect to a free basis $B$ of $F_N$, denoted by $Wh_B(w)$, is the graph whose vertices are the elements of $B^{\pm 1}$, two vertices $a$ and $b$ being joined by an edge if $ab^{-1}$ occurs as a subword of the cyclic word that represents $w$ in the basis $B$. A \emph{cutpoint} in a connected graph $X$ is a point $p\in X$ such that $X\smallsetminus\{p\}$ is disconnected.

\begin{prop} \label{Whitehead} (Whitehead \cite{Whi36}, Stallings \cite{Sta99})
An element $w\in F_N$ is simple if and only if its Whitehead graph with respect to any free basis of $F_N$ is either disconnected or contains a cutpoint.
\end{prop}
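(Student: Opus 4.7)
The plan is to prove the equivalence in two steps, using classical technology due to Whitehead and Stallings.

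For the $(\Rightarrow)$ direction, I would first establish the conclusion in a specific well-chosen basis, then propagate to arbitrary bases. If $w$ is simple, then $w$ lies in a proper free factor $A$ of $F_N$. Choose a complementary free factor $C$ so that $F_N = A \ast C$, and let $B_0$ be a free basis of $F_N$ obtained by concatenating a basis of $A$ with a basis of $C$. The cyclically reduced representative of $w$ in $B_0$ uses only letters from $B_0 \cap A$, so every vertex of $Wh_{B_0}(w)$ indexed by an element of $(B_0 \cap C)^{\pm 1}$ is isolated, and the graph is disconnected. To promote this to an arbitrary basis $B$, I would write the change of basis $B_0 \to B$ as a product of Whitehead automorphisms and invoke Whitehead's peak reduction lemma: the property of $Wh_B(w)$ being disconnected or admitting a cut vertex is preserved under each such elementary move.

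For the $(\Leftarrow)$ direction, I would directly construct a free splitting of $F_N$ in which $w$ is elliptic, exhibiting a proper free factor containing (a conjugate of) $w$. Realize $F_N$ as $\pi_1$ of the rose $R_B$ with one petal per element of $B$, and blow up its central vertex into a two-vertex graph joined by one edge of trivial edge group. The $2N$ half-edges of $R_B$ are distributed between the two new vertices according to the combinatorial data of the Whitehead graph: if $Wh_B(w)$ is disconnected, group the components to respect the involution $x \mapsto x^{-1}$ on $B^{\pm 1}$; if instead some $v$ is a cut vertex, attach the two half-edges labeled $v$ and $v^{-1}$ appropriately and use the two sides of $Wh_B(w) \smallsetminus \{v\}$ to partition the remaining half-edges. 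The hypothesis on $Wh_B(w)$ is exactly what ensures that the cyclically reduced loop representing $w$ does not cross the new edge of the blown-up graph, so $w$ is elliptic in the Bass-Serre tree of the corresponding free splitting, and thus lies in a proper free factor of $F_N$.

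The main obstacle is in the $(\Rightarrow)$ direction: proving that the dichotomy ``disconnected or with a cut vertex'' survives every Whitehead automorphism. This is the heart of Whitehead's peak reduction lemma, and its proof is a delicate combinatorial analysis of how a Whitehead automorphism reshapes both the cyclic word representing $w$ and the associated Whitehead graph. The $(\Leftarrow)$ direction, by contrast, reduces to an explicit graph-of-groups construction that is essentially formal once the combinatorial dictionary between $Wh_B(w)$ and free splittings of $F_N$ compatible with the marking $R_B$ has been set up.
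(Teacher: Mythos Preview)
The paper does not give its own proof of this proposition: it is stated with attribution to Whitehead and Stallings and then used as a black box in the proof of Proposition~\ref{class-separable}. So there is no ``paper's proof'' to compare against.

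That said, your sketch is a faithful outline of the classical argument. The $(\Leftarrow)$ direction is exactly the standard construction: a disconnection or cut vertex in $Wh_B(w)$ encodes a one-edge blow-up of the rose $R_B$ relative to which the cyclically reduced loop for $w$ misses the new edge, hence $w$ is elliptic in the associated free splitting. Your description of the partition of half-edges is correct in spirit; one small point worth making explicit is that when $x$ and $x^{-1}$ land on the same side of the partition, the corresponding petal becomes a loop at that vertex, and when they land on opposite sides it becomes an edge joining the two new vertices --- in either case the Euler characteristic is preserved and the graph stays connected provided both sides are nonempty.

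For the $(\Rightarrow)$ direction you correctly identify the crux: one must show that the dichotomy ``disconnected or has a cut vertex'' is invariant under change of basis. Your plan to start from an adapted basis $B_0$ and propagate by Whitehead moves is the standard route, but the invariance is not a formal consequence of peak reduction alone. What is actually used is Whitehead's cut-vertex lemma (if $Wh_B(w)$ has a cut vertex then some Whitehead automorphism strictly shortens $w$) together with the observation that a connected Whitehead graph without cut vertex forces $|w|_B$ to be minimal, and at a minimal-length basis for a simple element the graph must in fact be disconnected. Packaging this as ``peak reduction preserves the property'' is acceptable shorthand, but if you were writing this out in full you would want to state the cut-vertex lemma explicitly, since that is where the real work happens.
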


\begin{prop} \label{class-separable}
Let $A$ be a corank $1$ free factor of $F_N$. For all $w\in A$, the following assertions are equivalent.
\begin{itemize}
\item The element $w$ is contained in some proper free factor of $A$.
\item There exist a basis $B=\{x_1,\dots,x_N\}$ of $F_N$, such that $A$ is the subgroup generated by $x_1,\dots,x_{N-1}$, and an element $v\in F_N$, such that $vx_Nwx_N^{-1}$ is primitive, and the product $vx_Nwx_N^{-1}$ is cyclically reduced when $v$ and $w$ are written as reduced words in the basis $B$.
\end{itemize}
\end{prop}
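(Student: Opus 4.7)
Plan for the proof of Proposition \ref{class-separable}:

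For the direction $(1)\Rightarrow(2)$: Suppose $w$ lies in a proper free factor $F'$ of $A$. Since $\mathrm{rk}(F')\le N-2$, choose a free basis of $A$ of the form $\{y_1,\dots,y_{N-1}\}$ such that $w$ is a reduced word in $\{y_1,\dots,y_{N-2}\}^{\pm 1}$ only (extend a basis of $F'$ to one of $A$, and choose $y_{N-2}$ to lie in the complementary factor if needed). Extend to a basis $B=\{x_1,\dots,x_N\}$ of $F_N$ by setting $x_i:=y_i$ and adjoining some $x_N$, and take $v:=x_{N-1}$. The product $u:=x_{N-1}x_Nwx_N^{-1}$ is reduced (no cancellation occurs at any of the interfaces $x_{N-1}x_N$, $x_Nw$ or $wx_N^{-1}$ because $w$ avoids $x_{N-1}^{\pm 1}$ and $x_N^{\pm 1}$), and cyclically reduced since $x_{N-1}\neq x_N$. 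Finally, $u$ is primitive because the map $\alpha$ fixing $x_i$ for $i\neq N-1$ and sending $x_{N-1}\mapsto u$ is an automorphism of $F_N$: its candidate inverse is given by $x_i\mapsto x_i$ for $i\neq N-1$ and $x_{N-1}\mapsto x_{N-1}\cdot x_Nw^{-1}x_N^{-1}$, which is a well-defined endomorphism since $x_Nw^{-1}x_N^{-1}$ only involves letters fixed by $\alpha$.

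For the direction $(2)\Rightarrow(1)$: The strategy is to exploit the Whitehead-Stallings criterion (Proposition~\ref{Whitehead}). Since $u$ is primitive, it is simple in $F_N$, hence $Wh_B(u)$ is disconnected or contains a cut vertex. Writing $v=v_1\cdots v_p$ and $w=w_1\cdots w_q$ reduced in $B$, and reading off the cyclic consecutive-letter pairs in $u$, the essential observation is that the vertex $x_N$ in $Wh_B(u)$ has precisely two neighbors, namely $w_1^{-1}$ (from the subword $x_Nw_1$) and $w_q$ (from $w_qx_N^{-1}$), and similarly $x_N^{-1}$ has precisely two neighbors $v_p$ and $v_1^{-1}$. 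After a preliminary step in which $w$ is replaced by a cyclic conjugate inside $A$ (so that $w$ itself is cyclically reduced in $A$), these pairs consist of distinct vertices, so $x_N$ and $x_N^{-1}$ are degree-$2$ vertices of $Wh_B(u)$.

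Contracting the degree-$2$ vertex $x_N$ (deleting $x_N$ and adding the edge $\{w_1^{-1},w_q\}$) yields a graph topologically homeomorphic to $Wh_B(u)$, in which the new edge is exactly the cyclic-closure edge of the Whitehead graph $Wh_{B_A}(w)$ with respect to the induced basis $B_A=\{x_1,\dots,x_{N-1}\}$ of $A$. Hence the disconnection or cut-vertex structure of $Wh_B(u)$ induces, by restriction to the $w$-letters, a disconnection or cut vertex in $Wh_{B_A}(w)$ (one handles separately the cases in which the cut vertex of $Wh_B(u)$ equals $x_N^{\pm 1}$, lies in the $w$-letters, or lies only among the $v$-letters; in the last case the $w$-subgraph is itself a disconnected component, giving disconnection of $Wh_{B_A}(w)$).

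Finally, from this combinatorial information on $Wh_{B_A}(w)$ I would apply a length-reducing Whitehead automorphism of $A$, extend it to an automorphism of $F_N$ fixing $x_N$, and thereby replace $(v,w)$ by $(v',w')$ with $|w'|<|w|$, the new $u':=v'x_Nw'x_N^{-1}$ remaining primitive. Iterating this process terminates with $w$ having cyclic length $\le 1$ in some basis of $A$, at which stage $w$ visibly lies in a proper free factor of $A$. The main obstacle is precisely this last step: transferring the combinatorial feature of $Wh_B(u)$ into a genuine reduction of $w$ inside $A$ while preserving the cyclically-reduced form of the corresponding element $u'$ and the hypothesis $w'\in A$, so that the induction can be carried out.
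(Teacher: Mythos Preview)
Your argument for $(1)\Rightarrow(2)$ is correct and coincides with the paper's. The problems lie entirely in your $(2)\Rightarrow(1)$ direction.

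\textbf{The degree-$2$ claim fails.} The hypothesis places $v$ in $F_N$, not in $A$; nothing prevents $v$ from containing the letter $x_N^{\pm 1}$. As soon as it does, the vertex $x_N$ (and likewise $x_N^{-1}$) acquires additional neighbours in $Wh_B(u)$ coming from subwords of $v$, and your contraction step collapses.

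\textbf{Even if $v\in A$, the contraction does not yield $Wh_{B_A}(w)$.} After deleting $x_N$, $x_N^{-1}$ and adding the two bridging edges, the resulting graph on $\{x_1^{\pm 1},\dots,x_{N-1}^{\pm 1}\}$ records all consecutive-letter pairs in the cyclic word $u$, hence it is the Whitehead graph of the \emph{pair} $(v,w)$ in $A$, not of $w$ alone. Since $v$ and $w$ use the same alphabet $B_A$, there is no meaningful ``restriction to the $w$-letters'': edges contributed by $v$ may connect components of $Wh_{B_A}(w)$ or destroy a would-be cutpoint. Thus one cannot read off a cutpoint or disconnection of $Wh_{B_A}(w)$ from that of $Wh_B(u)$, and the planned descent on $|w|$ (which you yourself flag as the main obstacle) never gets off the ground.

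\textbf{How the paper proceeds.} The paper argues the contrapositive and performs a descent on $|v|$ rather than on $|w|$. Assuming $w$ is \emph{not} contained in any proper free factor of $A$, one first replaces the basis of $A$ (via an automorphism of $A$ fixing $x_N$, which preserves primitivity and the cyclically-reduced form of $u$) by one in which $Wh_{B_A}(w)$ is connected without cutpoint. Then $Wh_B(u)$ contains this connected cutpoint-free graph as a subgraph, with $x_N$ attached to it at two vertices and $x_N^{-1}$ attached only via the last letter $c_1$ of $v$; consequently the \emph{only} possible cutpoint of $Wh_B(u)$ is $c_1$. One checks $c_1\neq x_N$, and the substitution $x_N\mapsto c_1x_N$ produces a new basis in which $u$ has the same form $v_1 x_N^{(1)} w (x_N^{(1)})^{-1}$ with $|v_1|<|v|$, while $w$ and $Wh_{B_A}(w)$ are unchanged. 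Iterating gives an infinite strictly decreasing sequence of positive integers, a contradiction. The point is that fixing $w$ (and its good Whitehead graph) throughout, and shrinking $v$, sidesteps exactly the difficulty you ran into.
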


\begin{proof}
First assume that $w$ is contained in a proper free factor of $A$, and let $\{x_1,\dots,x_k\}$ denote a basis of this free factor. Let $\{x_{k+1},\dots,x_{N-1}\}$ be a basis of a complementary free factor of $A$, let $x_N\in F_N$ be such that $F_N=A\ast\langle x_N\rangle$, and let $v:=x_{k+1}$. Then $\{vx_Nwx_N^{-1},x_1,\dots,x_k,x_{k+2},\dots,x_{N-1},x_N\}$ is a free basis of $F_N$, so $vx_Nwx_N^{-1}$ is primitive. In addition, the product $vx_Nwx_N^{-1}$ is cyclically reduced when written as a reduced word in the basis $\{x_1,\dots,x_N\}$ of $F_N$.

Assume now that $w$ is not contained in any proper free factor of $A$. Assume by contradiction that there exists a basis $B=\{x_1,\dots,x_N\}$ of $F_N$ such that $A=\langle x_1,\dots, x_{N-1}\rangle$, and an element $v\in F_N$ such that the product $vx_Nwx_N^{-1}$ is cyclically reduced when $v$ and $w$ are written as reduced words in the basis $B$, and $vx_Nwx_N^{-1}$ is primitive. By Proposition \ref{Whitehead}, we can choose $x_1,\dots, x_{N-1}$ such that the Whitehead graph of $w$ is connected without cutpoint in the basis $\{x_1,\dots,x_{N-1}\}$ of $A$. We denote by $a$ the first letter of $w$ in $B$, by $b$ its last letter, and by $c_1$ the last letter of $v$. The Whitehead graph of $W:=vx_Nwx_N^{-1}$ in $B$ contains $Wh_B(w)$, in which an edge joining $b$ to $a^{-1}$ is replaced by an edge joining $b$ to $x_N$ and an edge joining $a^{-1}$ to $x_N$, and $Wh_B(W)$ also contains an edge joining $x_N^{-1}$ to $c_1$, see Figure \ref{fig-Whitehead}. In particular, it is connected, and its only possible cutpoint is $c_1$, provided there is no edge joining $x_N^{-1}$ to a vertex different from $c_1$. This implies that $c_1^{-1}$ (resp. $c_1$) is the first (resp. last) letter of the reduced word that represents $v$ in the basis $B$, i.e. there exists a subword $\widetilde{v}$ of $v$ so that $vx_Nwx_N^{-1}=(c_1^{-1}\widetilde{v}c_1)x_Nwx_N^{-1}$.

\begin{figure}
\begin{center}
\input{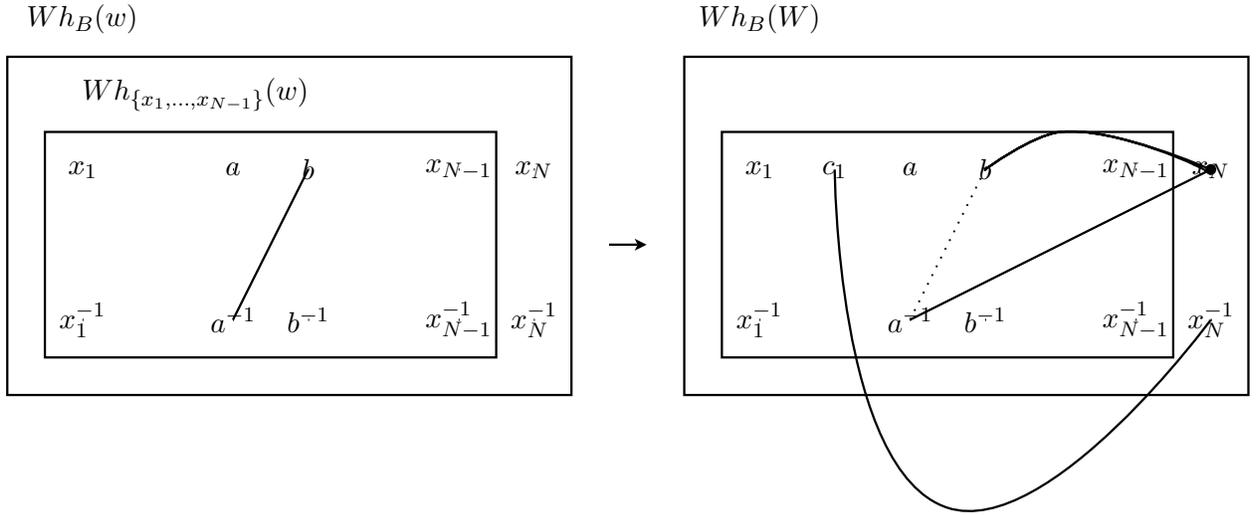}
\caption{The Whitehead graphs $Wh_B(w)$ and $Wh_B(W)$ in the proof of Proposition \ref{class-separable}.}
\label{fig-Whitehead}
\end{center}
\end{figure}

First observe that $c_1\neq x_N$, otherwise all occurrences of $x_N^{-1}$ in the cyclic word that represents $W$ in the basis $B$ should be followed by another occurrence of $x_N^{-1}$, and $W$ would be a power of $x_N$, a contradiction. As $c_1$ is a cutpoint of $Wh_B(w)$, all occurrences of the letter $x_N$ in the reduced word representing $v$ in $B$ are preceded by an occurrence of $c_1$, and all occurrences of $x_N^{-1}$ are followed by an occurrence of $c_1^{-1}$. Let $x^{(1)}_N:=c_1x_N$. In the basis $B_1:=\{x_1,\dots,x_{N-1},x^{(1)}_N\}$, the element $W$ is represented by a reduced cyclic word of the form $v_1x^{(1)}_Nw{x^{(1)}_N}^{-1}$, and the length of $v_1$ in $B_1$ is strictly smaller than the length of $v$ in $B$. In addition, the element $w\in F_N$ is represented by the same reduced word in $B$ and in $B'$. Repeating the above argument shows that there exists $c_2\in B_1$ such that the first letter of $v_1$ is $c_2^{-1}$ and its last letter is $c_2$. Letting $x_N^{(2)}:=c_2x_N^{(1)}$, the element $W$ is represented by a reduced cyclic word of the form $v_2x_N^{(2)}w{x_N^{(2)}}^{-1}$ in the basis $B_2:=\{x_1,\dots,x_{N-1},x_N^{(2)}\}$, and the length of $v_2$ in $B_2$ is strictly smaller than the length of $v_1$ in $B_1$. One can then repeat this process infinitely often, contradicting the fact that the lengths of the words representing $W$ in the bases we get along the process form a strictly decreasing sequence of positive integers.
\end{proof}

\begin{proof} [Proof of Proposition \ref{slide-separable}]
Let $T,T'\in\overline{cv_N}$ be special-pull-equivalent. Assume that $T\neq T'$, and let $\widehat{T}\in\overline{cv_N}$ be a tree with a single orbit of edges with trivial stabilizer $e$, such that $T$ and $T'$ are both pulls of $\widehat{T}$, with pulling elements $g_1, g_2, g'_1$ and $g'_2$. Equivariantly collapsing the complement of $e$ to a point in $\widehat{T}$ yields a splitting $F_N=A\ast$ (we denote by $t$ a stable letter). Any element $w\in F_N$ either belongs to a conjugate of $A$, or of the cyclic subgroup of $F_N$ generated by $t$, or is conjugated to an element of the form $w_1t^{\alpha_1}w_2t^{\alpha_2}\dots w_kt^{\alpha_k}$, with $\alpha_i\in\mathbb{Z}\smallsetminus\{0\}$ and $w_i\in A\smallsetminus\{e\}$ for all $i\in\{1,\dots,k\}$. Such an element has the same translation length in $T$ and $T'$, unless it is of the form $t^{\epsilon}g_i^kt^{-\epsilon}w$ or $t^{\epsilon}{g'_i}^kt^{-\epsilon}w$ for some $\epsilon=\pm 1$, some $k\in\mathbb{Z}\smallsetminus\{0\}$, and some $i\in\{1,2\}$ with $g_i$ (or $g'_i$) nonsimple in $A\smallsetminus\{e\}$. As any element of $F_N$ of this form is nonsimple by Proposition \ref{class-separable}, all simple elements of $F_N$ have the same translation length in $T$ and $T'$.
\end{proof}

\subsection{The primitive compactification of outer space}\label{sec-primitive-compactification}

Our main result gives a description of a new compactification of outer space, which we call the \emph{primitive compactification}, defined by restricting translation lengths functions to the set $\mathcal{P}_N$ of primitive elements of $F_N$ in Culler and Morgan's construction. Our motivation for introducing this compactification comes from our description of the compactification of outer space by horofunctions, which is itself motivated by the desire to study random walks on $\text{Out}(F_N)$. In \cite{Hor14-2}, we will prove that the compactification of outer space by horofunctions is isomorphic to the primitive compactification. Let 

\begin{displaymath}
i_{prim}:CV_N\to\mathbb{PR}^{\mathcal{P}_N}
\end{displaymath} 

\noindent be the map obtained from the map $i$ defined in Section \ref{sec-cvn} by only considering translation lengths of primitive elements of $F_N$. The relation $\sim$ again denotes the primitive-equivalence relation defined above.

\begin{theo}
The map $i_{prim}$ is a homeomorphism onto its image. The closure $\overline{i_{prim}(CV_N)}$ is compact, and homeomorphic to $\overline{CV_N}/{\sim}$.
\end{theo}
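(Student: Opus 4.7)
The plan is to extend $i_{prim}$ continuously to $\overline{CV_N}$ and then use the compactness of $\overline{CV_N}$ together with the very definition of primitive-equivalence. Continuity of $i_{prim}$ on $CV_N$ is immediate from continuity of the translation length functions $T\mapsto \|g\|_T$ in the axes topology, which we recalled in Section \ref{sec-cvn}. The same functions give a continuous extension
\[
\overline{i_{prim}}:\overline{CV_N}\longrightarrow \mathbb{PR}^{\mathcal{P}_N},
\]
obtained by composing the Culler--Morgan embedding $\overline{CV_N}\hookrightarrow \mathbb{PR}^{F_N}$ with projection onto the $\mathcal{P}_N$-coordinates. For this to be well-defined one must check that no $T\in\overline{cv_N}$ has $\|g\|_T=0$ for every $g\in\mathcal{P}_N$; this can be verified via the structural description of simple-equivalence classes provided by Theorem \ref{intro-equivalences}, since a tree on which all primitives are elliptic would also kill all simple elements and therefore be special-pull-equivalent to a trivial action, contradicting minimality.

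By definition of primitive-equivalence, $\overline{i_{prim}}(T)=\overline{i_{prim}}(T')$ if and only if $T\sim T'$, so $\overline{i_{prim}}$ factors through a continuous injection
\[
\widehat{i}:\overline{CV_N}/{\sim}\longrightarrow \mathbb{PR}^{\mathcal{P}_N}.
\]
Since $\overline{CV_N}$ is compact, so is the quotient $\overline{CV_N}/{\sim}$, and its image $\overline{i_{prim}}(\overline{CV_N})$ is compact. Because $\mathbb{PR}^{\mathcal{P}_N}$ is Hausdorff, any continuous injection from a compact space into it is a closed map, hence a homeomorphism onto its image. Therefore $\widehat{i}$ is a homeomorphism onto $\overline{i_{prim}}(\overline{CV_N})$. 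The identification $\overline{i_{prim}}(\overline{CV_N})=\overline{i_{prim}(CV_N)}$ is then immediate: the inclusion $\subseteq$ uses density of $CV_N$ in $\overline{CV_N}$ together with continuity of $\overline{i_{prim}}$, and the reverse inclusion uses that a compact subset of a Hausdorff space is closed. This proves the second assertion of the theorem.

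For the first assertion, I would verify that every $T\in CV_N$ is alone in its $\sim$-class. This follows directly from Definition \ref{de-pull}: a nontrivial pull identifies a nondegenerate segment of an edge with one of its translates by a nontrivial element, which forces that segment to acquire a nontrivial stabilizer in the resulting tree; this is incompatible with the free, simplicial nature of trees in $CV_N$. (Alternatively, one can invoke the Carette--Francaviglia--Kapovich--Martino spectral rigidity of $\mathcal{P}_N$ in $cv_N$.) Hence the quotient map $\overline{CV_N}\to\overline{CV_N}/{\sim}$ restricts to an injection of $CV_N$ into the quotient, and $i_{prim}=\widehat{i}\circ(\text{this injection})$ is the restriction of the homeomorphism $\widehat{i}$ to that subset, so is a homeomorphism onto its image.

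The main obstacle is confirming that $\overline{i_{prim}}$ really is defined on all of $\overline{CV_N}$, i.e.\ that no boundary tree has all primitives with vanishing translation length; the remainder is a formal compact-to-Hausdorff argument. This well-definedness ultimately appeals to the structural results of Theorem \ref{intro-equivalences}, so the present theorem is properly stated and proved only once those results are established.
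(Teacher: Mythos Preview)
Your approach is essentially the paper's: extend $i_{prim}$ continuously to $\overline{CV_N}$, factor through the quotient by primitive-equivalence, and invoke the compact-to-Hausdorff lemma. Two small points are worth tightening.

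First, well-definedness of $\overline{i_{prim}}$ does not require Theorem \ref{intro-equivalences}. If every primitive element were elliptic in some $T\in\overline{cv_N}$, then for a free basis $\{x_1,\dots,x_N\}$ all $x_i$ and all $x_ix_j$ (which are primitive) would be elliptic; this forces the fixed-point sets of the $x_i$ to pairwise intersect, hence to have a common point by the Helly property of trees, contradicting minimality.

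Second, your last sentence contains a genuine slip: $i_{prim}=\widehat{i}\circ q|_{CV_N}$ is \emph{not} a restriction of $\widehat{i}$ (their domains are different), so the fact that $\widehat{i}$ is a homeomorphism onto its image does not by itself make $i_{prim}$ one. You must also check that $q|_{CV_N}:CV_N\to q(CV_N)$ is a homeomorphism onto its image. This holds because $CV_N$ is open in $\overline{CV_N}$ and, by the argument you gave, $\sim$-saturated (no boundary tree is primitive-equivalent to a tree in $CV_N$); hence $q|_{CV_N}$ is an open continuous bijection onto its image. The paper handles this differently: it first proves directly, via a sequential argument using compactness of $\overline{CV_N}$, that $i_{prim}$ is an embedding, and only then identifies the closure with $\overline{CV_N}/{\sim}$.
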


This means that $\overline{i_{prim}(CV_N)}$ is indeed a compactification of $CV_N$.

\begin{proof}
Continuity of $i_{prim}$ follows from the continuity of $i$, and injectivity of $i_{prim}$ was proved in \cite[Theorem 3.4]{CFKM12} as a consequence of White's theorem (this is a particular case of our main result). To show that $i_{prim}$ is an embedding, we let $(T_n)_{n\in\mathbb{N}}\in CV_N^{\mathbb{N}}$ be a sequence that leaves every compact subspace of $CV_N$, and let $T\in CV_N$. By compactness of $\overline{CV_N}$, some subsequence of $(T_n)_{n\in\mathbb{N}}$ converges to a tree $T_{\infty}\in\overline{CV_N}\smallsetminus CV_N$, and Theorem \ref{equivalences} implies that $T_{\infty}$ is not primitive-equivalent to $T$ (this actually only uses the particular case of Theorem \ref{equivalences} where one of the trees belongs to $CV_N$, which can easily be deduced from Theorem \ref{Algom-Kfir}). Therefore, the sequence $(i_{prim}(T_n))_{n\in\mathbb{N}}$ does not converge to $i_{prim}(T)$, showing that $i_{prim}$ is an embedding. Compactness of $\overline{i_{prim}(CV_N)}$ follows from compactness of $\overline{i(CV_N)}$. By definition, the map $i_{prim}$ extends to a bijective continuous map, and hence a homeomorphism, from the compact space $\overline{CV_N}/{\sim}$ to the Hausdorff space $\overline{i_{prim}(CV_N)}$.
\end{proof}

\section{Approximations of trees} \label{sec-approximations}

The next two sections aim at developing techniques that will turn out to be useful for extending White's theorem to trees in the boundary of outer space, and describing the lack of spectral rigidity of the set $\mathcal{P}_N$ of primitive elements of $F_N$ in $\overline{cv_N}$. In the present section, building on ideas of Bestvina and Feighn \cite{BF94} and Guirardel \cite{Gui98}, we provide nice approximations for a wide class of trees in $\overline{cv_N}$ by "nicer" trees, see Theorems \ref{approximation-by-simplicial} and \ref{approximation-by-separable}. 

\subsection{Geometric trees} \label{sec-geometric}

Of particular interest are trees in $\overline{cv_N}$ which are dual to measured foliations on some $2$-complexes, which are called \emph{geometric} trees. Geometric trees can be decomposed in a nice and controlled way, and can be used to approximate every tree $T\in\overline{cv_N}$. We recall a few facts about this class of trees, and refer the reader to \cite{BF95} or \cite{GLP94} for details.

A \emph{system of partial isometries} $X$ of a finite tree or multi-interval $K$ is a finite collection of isometries $\phi_j:A_j\to B_j$ between nonempty finite subtrees of $K$. The subtrees $A_j$ and $B_j$ are called the \emph{bases} of $X$, and $\phi_j$ is called a \emph{singleton} if its bases are reduced to points. The \emph{suspension} of $X$ is the foliated $2$-complex $\Sigma$ built in the following way. Start with the union of $K$ (foliated by points) and \emph{bands} $A_j\times[0,1]$ (foliated by $\{\ast\}\times [0,1]$). For all $t\in A_j$, glue $(t,0)\in A_j\times\{0\}$ with $t\in A_j$ and $(t,1)\in A_j\times\{1\}$ with $\phi_j(t)\in B_j$. There is a natural transverse measure on each band given by the metric on the base $A_j$. This induces a transverse measure on $\Sigma$. We will denote by $(\Sigma,\mathcal{F},\mu)$ (or simply by $\Sigma$ if the context is clear) the band complex $\Sigma$ equipped with its foliation $\mathcal{F}$ and its transverse measure $\mu$.

\indent Associated to a system of $k$ partial isometries $X$ (or its corresponding measured foliated band complex $(\Sigma,\mathcal{F},\mu)$), together with a set $\mathcal{C}$ of curves contained in leaves of $\Sigma$, is a dual $\mathbb{R}$-tree, constructed as follows. Choose a basepoint $\ast$ on $\Sigma$. When $K$ is a finite tree (or a multi-interval in which the extremities of the intervals are joined by singletons, in such a way that collapsing the  subsegments of the leaves determined by these singletons to points yields a finite tree), the fundamental group of $\Sigma$ is naturally identified with the free group $F_k$ having one generator for each partial isometry in $X$. Let $N$ denote the subgroup of $F_k$ normally generated by the free homotopy classes of the curves in $\mathcal{C}$, and let $G(X):=F_k/N$. There is a canonical epimorphism $\rho:F_k\to G(X)$. We denote by $\overline{\Sigma}$ the covering space of $\Sigma$ corresponding to $\rho$. The measured foliation on $\Sigma$ lifts to a measured foliation on $\overline{\Sigma}$, we denote by $\overline{\mu}$ the transverse measure on $\overline{\Sigma}$. Define a pseudo-metric on $\overline{\Sigma}$ by $\delta(x,y):=\inf_{\gamma}\overline{\mu}(\gamma)$, where the infimum is taken over all paths joining $x$ to $y$ in $\overline{\Sigma}$ (and $\overline{\mu}(\gamma)$ is obtained by integrating the measure $\overline{\mu}$ along the path $\gamma$). The metric space obtained by making this pseudo-distance Hausdorff (sometimes called the \emph{leaf space made Hausdorff}) is an $\mathbb{R}$-tree \cite[Proposition 1.7]{LP97}, which we denote by $T(X,\mathcal{C})$ (or equivalently $T(\Sigma,\mathcal{C})$). It is naturally equipped with an isometric action of $G(X)$. An $\mathbb{R}$-tree equipped with an action of a finitely presented group $G$ is called \emph{geometric} if there exists a system of partial isometries $X$, and a set of curves $\mathcal{C}$ contained in leaves of the associated measured foliated band complex, such that $G=G(X)$ and $T=T(X,\mathcal{C})$. Otherwise it is called \emph{nongeometric}. Let $\Sigma$, $\Sigma'$ be two measured foliated band complexes, together with sets of curves $\mathcal{C}$ and $\mathcal{C}'$. We call $(\Sigma,\mathcal{C})$ and $(\Sigma',\mathcal{C}')$ \emph{equivalent} if $T(\Sigma,\mathcal{C})=T(\Sigma',\mathcal{C}')$. Let $\Sigma^*$ denote $\Sigma$ minus its singletons. We say that $\Sigma$ has \emph{pure components} if $K$ is a multi-interval, and in each component of $\Sigma^*$, each finite singular $\mathring{X}$-orbit (i.e. the orbit of each point under the restrictions of the partial isometries in $X$, or their inverses, to the interior of their bases) is reduced to one point in $\partial K$ (an orbit is \emph{singular} if it contains a point in the boundary of some base). 

Given a geometric $F_N$-tree $T$, there is a way of producing a system of isometries $X$ on a finite tree $K$, so that $T=T(X,\emptyset)$. Fix a free basis $\{g_1,\dots,g_N\}$ of $F_N$, and let $K$ be a finite subtree of $T$. For all $i\in\{1,\dots,N\}$, the generator $g_i$ defines a partial isometry of $K$, with domain $g_i^{-1}(K)\cap K$ and image $K\cap g_i(K)$, and we may assume $K$ to be sufficiently big, so that these bases are nondegenerate. If $T$ is geometric, then $K$ can be chosen so that the associated geometric tree is equal to $T$ \cite[Proposition II.1]{GL95}. The following theorem provides a "normal form" for systems of partial isometries dual to a given geometric $F_N$-tree.

\begin{theo} (Imanishi \cite{Ima79}, Gaboriau-Levitt-Paulin \cite{GLP94}) \label{Imanishi}
Let $T$ be a geometric $F_N$-tree. Then there exist a system of partial isometries $X$ having pure components, and a set of curves $\mathcal{C}$ contained in leaves of $\Sigma$, such that $T=T(X,\mathcal{C})$. The subcomplex $\Sigma^*$ is a disjoint union of finitely many open $\mathring{X}$-invariant sets, and if $U$ is one of these sets, then either every leaf contained in $U$ is compact (in which case $U$ is called a \emph{family of finite orbits}), or else every leaf contained in $U$ is dense in $U$ (in which case $U$ is called \emph{minimal}). Furthermore, the system $X$ may be chosen in such a way that all families of finite orbits are orientable (i.e. no $\mathring{X}$-word fixes a point in an orbit and reverses orientation).
\end{theo}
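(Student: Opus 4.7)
The plan is to build the required system in three stages, starting from any presentation of $T$ as a dual tree and successively refining it. Since $T$ is geometric, there exists a finite tree $K_0$ and partial isometries defined by a free basis $\{g_1,\dots,g_N\}$ such that $T = T(X_0,\emptyset)$; this is the starting point supplied by Gaboriau--Levitt.

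First I would reduce to the case where the base is a multi-interval with pure components. Cut $K_0$ at its branch points and reconnect the resulting intervals by singletons, so that the dual tree is unchanged but the new base $K$ is a disjoint union of intervals. Then enlarge the system by further singletons attached at every point of $\partial K$ that is a singular $\mathring{X}$-orbit image not already in $\partial K$: this amounts to subdividing intervals at the (finitely many) boundary points of bases and adding identifications to preserve the homotopy type, so that every singular orbit in $\Sigma^*$ meets $\partial K$ in exactly one point. This gives $(X,\mathcal{C})$ with pure components and $T(X,\mathcal{C})=T$.

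The main obstacle is the Imanishi dichotomy: every $\mathring{X}$-invariant component of $\Sigma^*$ is either a family of finite orbits or minimal. I would argue this by a standard openness/closedness analysis. Let $U_{\mathrm{fin}}\subseteq\Sigma^*$ be the set of points whose leaf is compact. Using pure components, a point has a compact leaf iff its $\mathring{X}$-orbit in the interior of $K$ returns to itself after finitely many steps without hitting $\partial K$; this condition is locally constant because the partial isometries are local isometries on open bases and because the finitely many singular orbits are confined to $\partial K$. Hence $U_{\mathrm{fin}}$ is open and $\mathring{X}$-invariant. Its complement $U_{\mathrm{min}}$ is therefore open and invariant, and within each component of $U_{\mathrm{min}}$ a standard minimality argument (a non-dense leaf would have closure bounded by finite pieces of other leaves, forcing it to be compact by a pigeonhole on the finitely many generators) shows every leaf is dense. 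Finiteness of the decomposition follows from the finiteness of the generating set and of $\partial K$.

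Finally, to ensure orientability of each family of finite orbits, I would treat each non-orientable component separately by passing to its orientation double cover at the level of the system of isometries: replace the offending subinterval $I$ of $K$ by two parallel copies $I^+,I^-$, and for each partial isometry $\phi_j$ meeting $I$, lift $\phi_j$ to two partial isometries between the appropriate signed copies according to whether $\phi_j$ preserves or reverses the chosen local orientation. Connecting the copies by singletons at the boundary realizes the double cover geometrically; the dual tree is unchanged because we have only refined the decomposition into leaves without altering the leaf-space pseudo-metric. After this modification, no $\mathring{X}$-word reverses orientation on a finite orbit, giving the statement.
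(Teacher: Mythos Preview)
The paper does not give its own proof of this theorem: it is stated as a result of Imanishi and Gaboriau--Levitt--Paulin and is cited without argument. So there is nothing in the paper to compare your sketch against; your proposal is an attempt to reconstruct the proof from the cited sources.

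That said, two places in your sketch would not pass as written. First, the minimality half of the dichotomy is asserted rather than proved: the sentence ``a non-dense leaf would have closure bounded by finite pieces of other leaves, forcing it to be compact by a pigeonhole on the finitely many generators'' is not an argument. The actual Imanishi-type proof shows that the frontier of the closure of a leaf in $\Sigma^*$ is a finite union of compact leaves (using purity to control where leaves can accumulate), and then uses this to produce an invariant open set on which the leaf is dense; the pigeonhole you invoke does not by itself rule out a leaf that is neither compact nor dense.

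Second, and more seriously, your orientability step does not preserve the dual tree. Replacing a subinterval $I$ by two copies $I^+,I^-$ and lifting each $\phi_j$ according to its orientation behaviour is exactly the orientation double cover of the relevant piece of $\Sigma$, and a double cover changes $\pi_1$: a band whose core curve is an orientation-reversing loop becomes, after doubling, a band whose core is the square of that loop. Adding singletons at the endpoints of $I^{\pm}$ reconnects the base but does not reinstate the missing generator, so $G(X)$ and hence $T(X,\mathcal{C})$ are altered. The argument used in the literature is different: one subdivides the family of finite orbits at the fixed points of the orientation-reversing $\mathring{X}$-words (the ``core'' of each M\"obius-type piece), which turns each non-orientable piece into orientable pieces while keeping the same suspension up to homotopy and the same leaf space. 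If you want to salvage your approach, you must explain precisely which relations are added to $\mathcal{C}$ so that the quotient group is still $F_N$ with the same translation-length function; as written this is a genuine gap.
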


One can give the following classification of minimal components. Starting from a foliated band complex $\Sigma_0$ associated to a minimal system of partial isometries $X_0$ on a finite tree or multi-interval $K_0$, we define a new band complex $\Sigma_1$ in the following way. Let $K_1$ denote the set of points in $K_0$ which belong to at least two bases of $\Sigma_0$. Let $X_1$ be the system of partial isometries of $K_1$ obtained by restricting the elements of $X_0$ to $K_1$. We define $\Sigma_1$ to be the suspension of $X_1$. Starting from $\Sigma_0$ and iterating this process, we build a sequence of foliated band complexes $\Sigma_i$. If for all $i\in\mathbb{N}$ we have $\Sigma_{i+1}\neq\Sigma_i$, we say that $\Sigma_0$ is \emph{exotic} (or \emph{Levitt}, or \emph{thin}), otherwise $\Sigma_0$ is a measured foliation on a compact surface \cite{GLP94}. (In the case of $F_N$-trees, the homogeneous case described in \cite[Section 4]{GLP94} cannot occur, see \cite[Proposition 1.8]{BF94}). A band $B=b\times [-1,1]$ of a band complex is \emph{very naked} if $b\times (-1,1)$ does not meet the curves in $\mathcal{C}$. Exotic components have the following property.

\begin{prop} \label{isolated-exotic} (Bestvina-Feighn \cite{BF95}, Gaboriau-Levitt-Paulin \cite{GLP94}, see also \cite[Section 7.1]{Gui98})
If $T\in\overline{cv_N}$ contains an exotic minimal component, then there exist a band complex $X$ satisfying the conclusions of Theorem \ref{Imanishi} and a collection of curves $\mathcal{C}$ in $X$ such that $T=T(X,\mathcal{C})$, and $X$ contains a very naked band (contained in an exotic component of $X$).
\end{prop}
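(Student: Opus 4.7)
The plan is to pass from an initial representation of $T$ given by Theorem \ref{Imanishi} to a refined one in which one band of an exotic component is forced to be very naked, by directly exploiting the non-termination of the refinement sequence $\Sigma_i$ that defines the exotic components.

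First, I would apply Theorem \ref{Imanishi} to obtain a band complex $\Sigma_0$ with pure components and a set of curves $\mathcal{C}_0$ in its leaves such that $T=T(\Sigma_0,\mathcal{C}_0)$, and isolate a component $U_0\subseteq\Sigma_0$ that is exotic (which exists by hypothesis, since exotic/minimal/surface is a trichotomy for minimal components, and we have excluded the homogeneous case for $F_N$-trees as in \cite[Proposition 1.8]{BF94}). All subsequent modifications will be performed on $U_0$ and extended by the identity on the other components, so that the trivial arc stabilizer condition and the pure-components property of Theorem \ref{Imanishi} are preserved.

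The second step is to use the defining property of an exotic component. Write $K_0$ for the multi-interval supporting $U_0$ and $X_0$ for the induced system of partial isometries; by definition of exotic, setting $K_{i+1}$ to be the subset of $K_i$ consisting of points lying in at least two bases of the restricted system $X_i$, one has $K_{i+1}\subsetneq K_i$ for every $i$. In particular, for each $i$ there is an open sub-interval $J_i\subseteq K_i\smallsetminus K_{i+1}$ whose points lie in exactly one base of $X_i$. Passing from $X_0$ to $X_i$ amounts to subdividing the original bands of $U_0$ into finitely many sub-bands, an operation that leaves the dual tree unchanged (this is the standard subdivision move on systems of partial isometries, see \cite{GLP94} and \cite[Section 7]{Gui98}). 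Thus we can replace $X_0$ by $X_i$ for $i$ large enough to produce, inside $U_0$, a sub-band $B=b\times[-1,1]$ whose base interval $b$ lies in a single base of $X_i$ and such that $b\times(-1,1)$ is not crossed in its interior by any other band of the refined complex.

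Third, I would modify $\mathcal{C}_0$ so that no curve of $\mathcal{C}$ traverses the interior of $B$. Any curve $\gamma\in\mathcal{C}_0$ lies in a leaf of the foliation; if it crosses $B$, then inside the strip $B$ the leaf is a disjoint union of horizontal segments $\{t\}\times[-1,1]$, and since no other band attaches to the interior of $B$, we can push $\gamma$ off the strip by a leafwise isotopy supported near $B$, at the cost of possibly lengthening $\gamma$ along neighboring bands and singletons. Such a leafwise isotopy does not alter the dual tree $T(\Sigma,\mathcal{C})$. Doing this equivariantly for the $F_N$-translates of $B$ in the covering band complex yields the desired pair $(X,\mathcal{C})$, and the strip $B$ is now a very naked band contained in the exotic component coming from $U_0$.

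The main obstacle is step two: one must perform the refinement so that the resulting band complex still satisfies Theorem \ref{Imanishi} (pure components, orientable families of finite orbits) and so that the band $B$ produced inside the refined $U_0$ lies in a component that remains exotic rather than being accidentally pushed into a surface piece or a family of finite orbits. This is precisely the content of the combinatorial analysis carried out in \cite{BF95,GLP94} and distilled in \cite[Section 7.1]{Gui98}; invoking it makes the construction go through.
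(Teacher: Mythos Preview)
The paper does not supply its own proof of this proposition: it is stated with attributions to \cite{BF95}, \cite{GLP94}, and \cite[Section 7.1]{Gui98} and then used as a black box in the proof of Theorem \ref{approximation-by-simplicial}. So there is no in-paper argument to compare against; what you have written is a plausible reconstruction of the cited proofs, and in outline it matches Guirardel's treatment.

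Two points where your sketch is loose. First, passing from $X_0$ to $X_i$ in the Rips process is not literally ``subdividing the original bands''; it removes free sub-arcs of the base and restricts the partial isometries. What you actually need, and what is true, is that you can \emph{subdivide} a band of the original complex along a leaf to isolate a sub-band whose base lies in the free part $K_0\smallsetminus K_1$; that subdivision clearly leaves the dual tree unchanged, and exoticity of the component guarantees such a free sub-arc exists already at stage $0$. Second, your isotopy argument in step three works, but the cleaner route is to observe that $\mathcal{C}$ is finite and each curve meets any fixed band in finitely many leaf segments; after one further subdivision of your band $B$ you can pick a sub-band disjoint from all of these, avoiding any discussion of pushing curves. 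Either way you should check, or at least remark, that the subdivision moves do not destroy the ``pure components'' conclusion of Theorem \ref{Imanishi}; this is routine but is part of what you are asserting.
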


The structure of band complexes which only have simplicial and surface components is also well-understood, thanks to the following results of Bestvina and Feighn. 

\begin{prop} \label{outer-limits-structure} (Bestvina-Feighn \cite[Proposition 5.1]{BF94})
Let $X$ be a band complex with only simplicial and surface components dual to an $F_N$-tree $T\in\overline{cv_N}$. Then there exists another band complex $X'$ dual to $T$ of the form $X'=(S\cup A\cup\Gamma)\cup_fG$ such that
\begin{itemize}
\item $S$ is a (possibly disconnected) compact surface, none of whose components is homeomorphic to an annulus or a Möbius band, and each connected component of $S$ is equipped with a minimal foliation, and

\item $\Gamma$ is a finite metric graph, and

\item $G$ is a finite graph with no valence $1$ vertices and empty foliation, and

\item $A$ is a finite disjoint union of annuli foliated by essential loops, and 

\item $f:\partial S\cup\partial A\cup F\to G$, where $F$ is a finite subset of $S\cup A\cup \Gamma$ and $f$ is essential on each component of $\partial S\cup\partial A$.
\end{itemize}
\end{prop}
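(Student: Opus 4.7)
The plan is to build $X'$ from $X$ by successively cleaning up each component. First, I would invoke Imanishi's normal form (Theorem \ref{Imanishi}) on $X$ so that $\Sigma^{*}$ decomposes into finitely many $\mathring{X}$-invariant open sets, each either a family of finite orbits or a minimal component. By hypothesis the minimal components are all of surface type, so each is carried by a compact subsurface $S_0$ equipped with a measured foliation.

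Second, for each such $S_0$ I would apply the classical structure theorem for measured foliations on compact surfaces: the foliation splits (up to isotopy) as a disjoint union of subsurfaces $S_1,\ldots,S_k$ each carrying a minimal foliation, together with finitely many annular cylinders foliated by essential closed leaves. I would put each $S_i$ into $S$ and each cylinder into $A$. Any $S_0$ which is itself an annulus or Möbius band is entirely absorbed into $A$, which enforces the stated restriction on components of $S$. In the case where some $S_i$ is not connected to anything else in $X$, its boundary is attached to $G$ through a new vertex, which may later be cleaned up.

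Third, I would turn the families of finite orbits and the ``discrete'' remnants of $X$ into a foliated band complex with empty foliation: finite orbit components have all leaves compact, so collapsing each transverse arc along the holonomy produces a finite metric graph, naturally split into a part $\Gamma$ (metric edges that carry only a length) and a part $G$ (the combinatorial graph along which all the pieces of $S$, $A$, and $\Gamma$ are glued). I would then iteratively collapse valence-one vertices of $G$, absorbing each collapsed edge into the adjacent piece of $\Gamma$, $S$, or $A$. When a boundary curve of $S\cup A$ is freely homotopic in $G$ to a simpler loop or to a point, I homotope $f$ along $G$ to make it essential on that component; if it becomes null-homotopic, the corresponding boundary curve bounds and can be capped off, which again simplifies $S\cup A$.

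The main obstacle is verifying that every modification preserves the dual $\mathbb{R}$-tree, i.e.\ that $T(X',\mathcal{C}')=T(X,\mathcal{C})=T$. Collapsing finite orbit arcs, detaching annular subfoliations from surface components, removing valence-one vertices of $G$, and homotoping $f$ along $G$ to make it essential are all moves that, at the level of the leaf space made Hausdorff, either identify points that were already identified or cut along arcs that project to points in the dual tree. One has to show each move is of this type, which is the core technical content: it amounts to the standard repertoire of Bestvina--Feighn moves on band complexes together with the fact that essential attaching is needed precisely to avoid creating parasitic identifications of leaves that would shrink the tree. Once these invariance statements are in place, a finiteness argument (each move strictly reduces a suitable complexity on the number of components of $S\cup A$, the number of valence-one vertices of $G$, or the combinatorial length of $f$) terminates the procedure at a band complex $X'$ in the desired form.
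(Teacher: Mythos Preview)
The paper does not give its own proof of this proposition: it is quoted verbatim as a result of Bestvina--Feighn \cite[Proposition 5.1]{BF94} and used as a black box (notably in the proofs of Theorems \ref{approximation-by-simplicial} and \ref{approximation-by-separable}). So there is nothing in the paper to compare your argument against.

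That said, your sketch is along the right lines for how Bestvina--Feighn's argument proceeds: normalize via Imanishi, peel off annular pieces from the surface components using the classical decomposition of measured foliations, collapse the finite-orbit part to a graph, and then tidy up $G$ and $f$. The honest assessment you give at the end is accurate: the real content lies in checking that each of the Bestvina--Feighn moves (slides, collapses, absorbing valence-one vertices, homotoping attaching maps) preserves the dual tree, and your write-up does not actually carry any of this out. One point to be careful about: your handling of a M\"obius band component is too casual --- a M\"obius band does not simply get ``absorbed into $A$'', since $A$ consists of annuli; one has to pass to an annular double cover or otherwise rewrite it, and this is exactly the kind of step where tree-invariance needs a genuine argument. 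If you want a self-contained proof rather than a citation, you would need to write these invariance checks out in detail, following \cite{BF94} or \cite{BF95}.
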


\begin{prop} \label{outer-limits} (Bestvina-Feighn \cite[Lemma 4.1]{BF94})
Let $Y$ be a finite graph, and $S$ a compact (possibly disconnected) surface. Let $f:\partial S\to Y$ be a map that is essential on each boundary component. Assume that $X:=S\cup_f Y$ has free fundamental group. Then there exist a finite graph $Y'$ and a homotopy equivalence $\psi:Y\to S^1\vee Y'$ such that $\psi\circ f:\partial S\to S^1\vee Y'$ is homotopic to a map that sends one boundary component of $S$ homeomorphically onto $S^1$, and sends all other boundary components of $S$ into $Y'$.
\end{prop}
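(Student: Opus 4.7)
The plan is to combine a van Kampen computation of $\pi_1(X)$ with the classical result that if $F$ is a finitely generated free group and $F/\langle\langle w\rangle\rangle$ is free, then $w$ is a primitive element of $F$ (i.e.\ lies in some free basis). This algebraic input will locate a boundary circle of $S$ that can serve as the $S^1$-factor in the conclusion.

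First I would compute $\pi_1(X)$. Writing $\pi_1(Y)=F_n$ and using the standard presentation
$\pi_1(S_i)=\langle a_{i,l},b_{i,l},c_{i,j}\mid \prod_l [a_{i,l},b_{i,l}]\cdot\prod_j c_{i,j}\rangle$
for a genus-$g_i$ component $S_i$ with $k_i$ boundary circles, the attachment along $f$ identifies each boundary generator $c_{i,j}$ with $f_*(c_{i,j})\in F_n$. Thus $\pi_1(X)$ has a presentation obtained from $F_n$ by adjoining the handle generators $a_{i,l},b_{i,l}$ and imposing one surface relator per component of $S$, rewritten inside $F_n$. Essentialness of $f$ on each boundary guarantees that none of these relators is a conjugate of a letter, so the presentation is genuinely nontrivial on each surface component.

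The heart of the argument is to produce, from the freeness of $\pi_1(X)$, a boundary circle $c_0$ of $S$ whose image $f_*(c_0)$ is primitive in $F_n$. I would argue inductively on the complexity of $S$. For a disc component, the classical one-relator result cited above applies directly to its unique boundary circle. For a component of higher complexity, one can either cut $S_i$ along a properly embedded arc -- splitting off a disc or lowering the genus -- and simultaneously modify $Y$ so that the resulting 2-complex remains homotopy equivalent to $X$, which reduces to the disc case; or else successively solve each surface relator for a handle generator via Tietze transformations, so that freeness of the final quotient forces some $f_*(c_{i,j})$ to be primitive in the remaining free group.

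Once such a $c_0$ is found, extend $\{f_*(c_0)\}$ to a free basis of $F_n$. Since $Y$ is a graph, this basis change is realized geometrically by a homotopy equivalence $\psi:Y\to S^1\vee Y'$, where the $S^1$-factor represents $f_*(c_0)$. Then $\psi\circ f|_{c_0}$ wraps $c_0$ once around $S^1$ and is homotopic to a homeomorphism. For each remaining boundary component $c$, the loop $\psi\circ f(c)$ in $S^1\vee Y'$ may traverse the $S^1$-factor; composing $\psi$ with Nielsen-type automorphisms that replace each generator $y_l$ of $Y'$ by $y_l\cdot f_*(c_0)^{k_l}$ for suitable integers $k_l$ absorbs these windings into $Y'$. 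After these modifications each remaining boundary maps, up to homotopy, entirely into $Y'$. The main obstacle will be the primitive-boundary step: ensuring across all configurations -- several surface components, higher genus, and shared boundary images -- that freeness of $\pi_1(X)$ really does propagate to primitivity of at least one $f_*(c_{i,j})$ in $F_n$.
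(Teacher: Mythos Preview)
The paper does not supply its own proof of this statement: Proposition~\ref{outer-limits} is quoted from Bestvina--Feighn \cite[Lemma~4.1]{BF94} and used as a black box, so there is nothing in the paper to compare your sketch against.

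On its own merits your outline has the right shape, and you rightly flag the primitivity step as delicate; but there is a second, independent gap in the final ``absorbing windings'' step, which you treat as routine. Suppose you have found $c_0$ with $s:=f_*(c_0)$ primitive and fixed a splitting $\pi_1(Y)=\langle s\rangle \ast \pi_1(Y')$. What you then need is that every remaining $f_*(c)$ be \emph{conjugate into} $\pi_1(Y')$. The Nielsen moves you propose, $y_l\mapsto y_l\,s^{k_l}$, only adjust $s$-exponent sums, and vanishing exponent sum is strictly weaker than lying in a complementary free factor: for instance $[y_1,s]\in\langle s,y_1\rangle$ has zero $s$-exponent but is not conjugate into any complement of $\langle s\rangle$. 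Worse, even ranging over all complements of $\langle s\rangle$, several boundary images need not be simultaneously absorbable: in $F_2=\langle s,y\rangle$ the primitive elements $sy$ and $sy^2$ can never both be conjugated into a single cyclic complement of $\langle s\rangle$, since that would force $sy$ to be conjugate to $(sy^2)^{\pm 1}$. So finding \emph{some} primitive boundary is not enough; one must produce the boundary $c_0$ and the complementary factor \emph{together}, and it is precisely here that the freeness of $\pi_1(X)$ must do all the work. A workable route is to analyse the cyclic graph-of-groups splitting of the free group $\pi_1(X)$ coming from $X=S\cup_f Y$ via Shenitzer--Swarup type results, which hand you the right edge and the right complement simultaneously, rather than trying to decouple the two steps.
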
 

We call the boundary component of $S$ that is sent homeomorphically to $S^1$ a \emph{distinguished circle}. 
\\
\\
\indent We finish this section by explaining how geometric trees can be used to approximate all actions in $\overline{cv_N}$, and give a characterization of geometric trees due to Levitt and Paulin \cite{LP97}. Let $T\in\overline{cv_N}$. Following \cite{GS90}, we say that a sequence $(T_n)_{n\in\mathbb{N}}$ of trees in $\overline{cv_N}$ \emph{converges strongly} to $T$ if there exist surjective $F_N$-equivariant morphisms $f_{np}:T_n\to T_p$ for all $n<p$, and $f_n:T_n\to T$ for all $n\in\mathbb{N}$ such that 
\begin{itemize}
\item for all $n<p$, we have $f_p\circ f_{np}=f_n$, and 
\item for all $n\in\mathbb{N}$ and all $x,y\in T_n$, there exists $p\ge n$ such that $d_{T_p}(f_{np}(x),f_{np}(y))=d_{T}(f_n(x),f_n(y))$. 
\end{itemize}
\noindent The following result is due to Levitt and Paulin \cite[Theorem 2.6]{LP97}, see also \cite[Proposition II.1]{GL95} where the minimality statement appears. The fact that the trees $T_n$ can be chosen to belong to $\overline{cv_N}$ follows from \cite[Proposition 1.8]{BF94} and \cite[Corollary I.6]{GL95}.

\begin{prop}\label{non-geometric} (Levitt-Paulin \cite[Theorem 2.6]{LP97}, Gaboriau-Levitt \cite[Proposition II.1]{GL95}) 
For all $T\in \overline{cv_N}$, there exists a sequence $(T_n)_{n\in\mathbb{N}}\in\overline{cv_N}$ of minimal geometric $F_N$-trees which converges strongly to $T$. A tree is geometric if and only if it cannot occur as such a strong limit in a nonstationary way.
\end{prop}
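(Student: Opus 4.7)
The plan is to build the approximating trees $T_n$ by a resolution procedure using partial isometries of finite subtrees of $T$, and then to characterize geometric trees as those for which this resolution stabilizes.

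For the existence statement, fix a free basis $\{g_1,\dots,g_N\}$ of $F_N$ and a basepoint $x_0\in T$, and let $K_n\subseteq T$ be the convex hull of the ball of radius $n$ in the $F_N$-orbit of $x_0$. Each generator $g_i$ defines a partial isometry $\phi_{i,n}:K_n\cap g_i^{-1}(K_n)\to K_n\cap g_i(K_n)$, and we let $X_n$ denote the resulting system of partial isometries on $K_n$, with dual geometric tree $T_n:=T(X_n,\emptyset)$. By construction there are canonical surjective $F_N$-equivariant morphisms $f_n:T_n\to T$ (extending the inclusion $K_n\hookrightarrow T$), and since $K_n\subseteq K_p$ for $n<p$, the same procedure gives surjective $F_N$-equivariant morphisms $f_{np}:T_n\to T_p$ compatible with the $f_n$ in the required way. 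Minimality and the very small condition on $T_n$ are inherited from $T$ (using that arc and tripod stabilizers of the dual tree are controlled by those in $T$), giving $T_n\in\overline{cv_N}$; for this one invokes the cited results of Gaboriau--Levitt and Bestvina--Feighn.

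The strong convergence is the heart of the argument. Fix a segment $[x,y]\subseteq T_n$. Its image under $f_n$ in $T$ is covered by finitely many segments of $K_n$ identified by elements of $F_N$; choose $p$ large enough so that all these identifications already occur inside $K_p$ (i.e.\ both endpoints of each identified pair lie in $K_p$). Then every folding that occurs along the morphism $f_{np}:T_n\to T_p$ is already visible in $T_p$, so $d_{T_p}(f_{np}(x),f_{np}(y))=d_T(f_n(x),f_n(y))$. Taking $K_n\to T$ exhaustively gives strong convergence.

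For the characterization, one direction is immediate from the first part: if the only strong approximations of $T$ by geometric trees are eventually stationary, then the sequence $(T_n)$ just constructed stabilizes, whence $T=T_n$ for $n$ large and $T$ is geometric. Conversely, suppose $T$ is geometric, realized as $T(X,\mathcal{C})$ for some finite system $X$ on a finite tree $K\subseteq T$, and let $(S_n)$ be any strong limit of geometric trees with morphisms $h_n:S_n\to T$ and $h_{np}:S_n\to S_p$. Choose $n$ large enough that a fundamental domain for $K$ lies in the image of $h_n$ and that all the defining relations of $X$ (together with the curves in $\mathcal{C}$) have representative loops whose lengths are realized exactly by $h_{np}$ for some $p$; by the strong convergence condition we may then upgrade this to have $h_n$ itself isometric on the preimages of $K$. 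At that point $h_n$ factors as an $F_N$-equivariant isometry $S_n\to T$, forcing the sequence to be stationary.

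The main obstacle is this last step: showing that a geometric tree is rigid under strong limits. The delicate point is to argue that once the approximating tree captures both the finite tree $K$ and all the identifications encoded by $X$ and $\mathcal{C}$ (which is a finite amount of data), no further folding can occur without violating the strong convergence property on a segment of $K$. This is where the finiteness built into the definition of geometric (finite $K$, finitely many partial isometries, finitely many curves) is essential, in contrast to nongeometric trees, whose identifications require infinitely many independent relations and therefore cannot be captured by any single $T_n$.
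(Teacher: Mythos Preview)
The paper does not give its own proof of this proposition: it is quoted as a result of Levitt--Paulin \cite[Theorem 2.6]{LP97} and Gaboriau--Levitt \cite[Proposition II.1]{GL95}, with the fact that the $T_n$ may be taken in $\overline{cv_N}$ attributed to \cite[Proposition 1.8]{BF94} and \cite[Corollary I.6]{GL95}. So there is no in-paper argument to compare against; what follows is an assessment of your sketch on its own terms.

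Your construction of the $T_n$ and the strong convergence argument are the standard ones and are essentially correct in outline. One point you gloss over is the assertion that the $T_n$ are very small: this is \emph{not} simply ``inherited from $T$'' --- a priori the dual tree of a system of partial isometries could acquire new arc or tripod stabilizers --- and the paper is explicit that this step requires the cited results of Bestvina--Feighn and Gaboriau--Levitt.

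The genuine gap is in your treatment of the direction ``$T$ geometric $\Rightarrow$ every strong approximation is eventually stationary''. You argue that once $S_n$ ``captures'' the finite tree $K$ and the finitely many identifications encoded by $X$ and $\mathcal{C}$, the morphism $h_n:S_n\to T$ must be an isometry. But this does not follow from what you have written: knowing that $h_n$ is isometric on a copy of $K$, and that finitely many specific pairs of points have the correct distance, does not by itself rule out further folding elsewhere in $S_n$. What is actually needed is the universal property of the geometric tree $T=T(X,\mathcal{C})$: it is the \emph{maximal} quotient (or equivalently, the initial object) among $F_N$-trees admitting a morphism to $T$ that restricts to the inclusion on $K$. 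Once strong convergence guarantees an isometric copy of $K$ inside $S_n$ compatible with $h_n$, this universal property yields a morphism $T\to S_n$ splitting $h_n$; minimality then forces $S_n\cong T$. Your final paragraph gestures at the right finiteness intuition, but the step ``we may then upgrade this to have $h_n$ itself isometric'' and ``$h_n$ factors as an $F_N$-equivariant isometry'' is precisely where the universal property must be invoked, and you have not supplied it.
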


The trees $T_n$ can be constructed from $T$ by applying the construction preceding Theorem \ref{Imanishi} to a well-chosen exhaustion of $T$ by finite trees $K_n$. In particular, the morphisms $f_{np}$ and $f_n$ can be chosen to be injective in restriction to every segment of $K_n$ which has a translate in $K_n$. By choosing $K_n$ to contain an edge in each orbit of edges in $T$, we can thus assume $f_{np}$ and $f_n$ to be injective on segments with nontrivial stabilizers of $T_n$.

\subsection{Approximations of $F_N$-trees with dense orbits by free and simplicial actions} \label{sec-approx-by-simplicial}

A \emph{Lipschitz approximation} of a tree $T\in\overline{cv_N}$ is a sequence of trees $(T_n)_{n\in\mathbb{N}}\in\overline{cv_N}^{\mathbb{N}}$ converging (non-projectively) to $T$, together with $1$-Lipschitz $F_N$-equivariant maps $f_n:T_n\to T$ for all $n\in\mathbb{N}$. We give a characterization of trees in $\overline{cv_N}$ that admit a Lipschitz approximation by free, simplicial actions.

\begin{theo} \label{approximation-by-simplicial}
A tree $T\in\overline{cv_N}$ admits a Lipschitz approximation by elements of $cv_N$ if and only if all arc stabilizers in $T$ are trivial. 
\end{theo}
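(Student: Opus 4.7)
The plan is to prove the two implications separately: the forward direction by a quotient-volume obstruction, and the converse by explicitly building simplicial, free approximations out of a strong approximation by geometric trees.

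\medskip

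\emph{Trivial arc stabilizers are necessary.} Suppose $(T_n)_{n\in\mathbb{N}}$ is a Lipschitz approximation of $T$, with $1$-Lipschitz $F_N$-equivariant maps $f_n:T_n\to T$. The first part of Proposition \ref{qvol} applied to the minimal tree $T$ gives $qvol(T)\le qvol(T_n)$ for every $n$. Now if some arc of $T$ had nontrivial stabilizer, Lemma \ref{dense-arcs} would force this stabilizer to come from an edge of the simplicial part $T^{simpl}$ (since it cannot arise inside a vertex tree with dense orbits). The second part of Proposition \ref{qvol} would then yield $qvol(T)>\limsup_n qvol(T_n)$, contradicting the previous inequality.

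\medskip

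\emph{Trivial arc stabilizers are sufficient.} Assume now that all arc stabilizers of $T$ are trivial. The plan is in two steps: first approximate $T$ strongly by geometric trees, then approximate each such geometric tree by a free simplicial action. For the first step, apply Proposition \ref{non-geometric} to obtain a strong approximation $(T_n)_{n\in\mathbb{N}}\in\overline{cv_N}^{\mathbb{N}}$ of $T$ by minimal geometric $F_N$-trees, with $1$-Lipschitz morphisms $f_n:T_n\to T$. Choosing the finite subtrees defining the construction so as to contain an edge in each orbit of $T^{simpl}$ ensures that $f_n$ is injective on arcs with nontrivial stabilizer in $T_n$. Since $T$ has trivial arc stabilizers and $f_n$ is $F_N$-equivariant, any arc of $T_n$ with nontrivial stabilizer would have to be collapsed by $f_n$; combined with injectivity, this forces arc stabilizers in $T_n$ to be trivial.

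\medskip

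For the second step, fix $T_n$ and decompose it using Theorem \ref{Imanishi} into simplicial, surface and exotic components (all with trivial arc stabilizers). For surface components, which by Propositions \ref{outer-limits-structure} and \ref{outer-limits} are carried by compact surfaces with distinguished boundary circles and minimal foliations, perturb each transverse measure into one carried by a finite train track with rational weights; the dual tree becomes simplicial and the perturbation provides a $1$-Lipschitz equivariant map to $T_n$. For exotic components, Proposition \ref{isolated-exotic} supplies a very naked band, which can be shortened and then collapsed to yield a $1$-Lipschitz equivariant approximation by a tree with fewer exotic bands; iterate to eliminate exotic components. The resulting tree is a very small simplicial $F_N$-tree with trivial edge stabilizers but possibly nontrivial (necessarily free) vertex stabilizers; blowing up each such vertex by a very small free simplicial subtree for its stabilizer gives a tree in $cv_N$ together with a $1$-Lipschitz $F_N$-equivariant map to $T_n$. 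A diagonal extraction then produces the desired Lipschitz approximation of $T$; pointwise convergence of translation lengths follows from the $1$-Lipschitz inequality together with the strong convergence $T_n\to T$.

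\medskip

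The main obstacle I anticipate is controlling the exotic components: one must check that the band-shortening/collapse procedure can be carried out in a $1$-Lipschitz equivariant way while preserving the very small property of the action and without creating spurious arc stabilizers. Once this is settled, the gluing of simplicial and surface pieces and the blow-up step are routine.
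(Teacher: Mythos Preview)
Your forward direction matches the paper's and is correct. Your reduction to geometric trees via Proposition~\ref{non-geometric} is a nice clarification --- the paper applies the band-complex machinery directly to $T$, implicitly treating the geometric case; your argument that the strong-limit morphisms $f_n$ are injective on arcs with nontrivial stabilizer (using the remark after Proposition~\ref{non-geometric}) and hence that each $T_n$ inherits trivial arc stabilizers is sound and makes the reduction explicit.

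The genuine gap is in your treatment of the surface components. ``Perturb the transverse measure to one carried by a finite train track with rational weights'' does yield a simplicial dual tree converging in the axes topology, but it does \emph{not} produce a $1$-Lipschitz $F_N$-equivariant map to the original tree: proximity of translation-length functions does not give domination, and there is no natural inclusion of band complexes underlying a measure-perturbation. What is needed is an honest morphism $T_\delta\to T_n$, and that is exactly what the paper extracts from the \emph{narrowing} construction. Propositions~\ref{outer-limits-structure} and~\ref{outer-limits} are not decorative here: the distinguished boundary circle from Proposition~\ref{outer-limits} is the place from which one narrows the surface by width $\delta$, and the inclusion $\Sigma_\delta\subset\Sigma$ then induces the required morphism of dual trees (as in Lemma~\ref{narrowing}). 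Because every half-leaf in a minimal surface component is dense, this narrowing replaces the surface piece by a simplicial piece with trivial edge stabilizers. You should replace your measure-perturbation step by this narrowing-from-the-boundary argument.

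Two minor points. For exotic components, the decreasing invariant is not ``number of exotic bands'' but the number $E(\Sigma)$ of ends of singular leaves, and Guirardel's analysis is what guarantees you can choose $\delta$ so that $E(\Sigma_\delta)<E(\Sigma)$; your sketch is otherwise in the right direction. Finally, your anticipated obstacle lies in the surface step, not the exotic one: once you use narrowing for both, the $1$-Lipschitz maps come for free from the inclusion of band complexes, and no separate ``very small'' check is needed beyond what the construction already gives.
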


Let $T\in\overline{cv_N}$ be a geometric tree, and let $X$ be a system of partial isometries associated to $T$ given by Theorem \ref{Imanishi}, together with a set of curves $\mathcal{C}$ contained in the leaves of $\Sigma$. Assume that some band $B$ of $\Sigma$ is very naked. For small $\delta>0$, let $\Sigma_{\delta}$ be a band complex obtained by narrowing $B$ of width $\delta$ from one of its boundary leaves, see Figure \ref{fig-narrowing}. The inclusion $\Sigma_{\delta}\subset\Sigma$ is a homotopy equivalence, so there is an epimorphism $\rho_{\delta}:\pi_1(\Sigma_{\delta})\to F_N$, whose kernel is normally generated by the free homotopy classes of the curves in $\mathcal{C}$, which are still contained in leaves of $\Sigma_{\delta}$. Denote by $\overline{\Sigma}$ and $\overline{\Sigma_{\delta}}$ the covering spaces corresponding to $\rho$ and $\rho_{\delta}$, respectively. Let $T_{\delta}$ be the minimal subtree of the $F_N$-tree obtained by making the leaf space of $\overline{\Sigma_{\delta}}$ Hausdorff. There is a natural $F_N$-equivariant morphism of $\mathbb{R}$-trees from $T_{\delta}$ to $T$ induced by the inclusion $\overline{\Sigma_{\delta}}\subset\overline{\Sigma}$. 

\begin{figure}
\begin{center}
\input{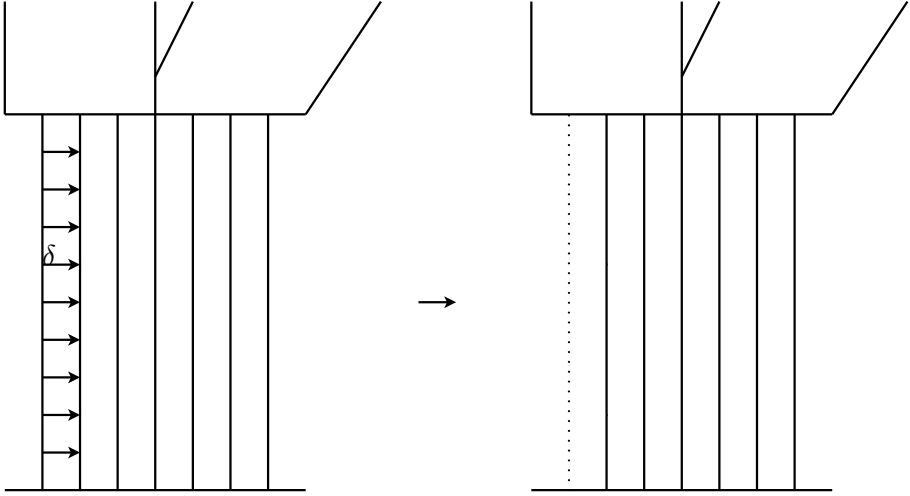}
\caption{Narrowing a band.}
\label{fig-narrowing}
\end{center}
\end{figure}

\begin{lemma}\label{narrowing} (Guirardel \cite[Section 7.2]{Gui98})
The trees $T_{\delta}$ converge to $T$ as $\delta$ goes to $0$.
\end{lemma}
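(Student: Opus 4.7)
The plan is to establish pointwise convergence of translation length functions, which is equivalent to convergence in the axes topology on $\overline{cv_N}$. The easy direction is immediate: the inclusion $\overline{\Sigma_\delta}\subset\overline\Sigma$ induces the $F_N$-equivariant morphism of $\mathbb{R}$-trees $f_\delta:T_\delta\to T$ mentioned in the setup, which is $1$-Lipschitz, so $||g||_{T_\delta}\geq ||g||_T$ for every $g\in F_N$. The real content of the lemma is the matching upper bound $\limsup_{\delta\to 0}||g||_{T_\delta}\leq ||g||_T$ for each $g$.

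Fix $g\in F_N$ and $\epsilon>0$. Recalling that $||g||_T$ equals the infimum of $\overline\mu(\tilde\gamma)$ over rectifiable paths $\tilde\gamma$ in $\overline\Sigma$ joining some $\tilde x\in\overline\Sigma$ to $g\tilde x$, I would choose such a path with $\tilde x\in K\subset\overline{\Sigma_\delta}$ and $\overline\mu(\tilde\gamma)\leq ||g||_T+\epsilon$, put in tight normal form as a finite concatenation of arcs in leaves, arcs in (lifts of) $K$, and finitely many horizontal band crossings; write $n$ for the number of crossings of lifts of the very naked band $B$. The plan is then to construct a path $\tilde\gamma'$ in $\overline{\Sigma_\delta}$, homotopic rel endpoints to $\tilde\gamma$ inside $\overline\Sigma$, whose transverse measure exceeds $\overline\mu(\tilde\gamma)$ by at most $2n\delta$. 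For each $B$-crossing of $\tilde\gamma$ whose entry point $x\in A$ falls in the narrowed-off strip $A\setminus A'$ (of width $\delta$), I replace the crossing by a three-step detour: a short arc in the lift of $K$ from $x$ to its nearest point $x'\in A'$, a horizontal crossing of the narrowed band from $x'$ to $\phi(x')$, and a short arc in $K$ from $\phi(x')$ back to $\phi(x)$, where $\phi:A\to B$ denotes the partial isometry defining $B$. Since $\phi$ is an isometry, each detour adds at most $2\delta$ to the transverse measure; and since $B$ is very naked, the rectangle in the band swept out by the detour meets no curve from $\mathcal{C}$, so the modification is a genuine homotopy inside $\overline\Sigma$ and the resulting $\tilde\gamma'$ lies in $\overline{\Sigma_\delta}$ with the same endpoints $\tilde x,g\tilde x$. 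Thus $||g||_{T_\delta}\leq \overline\mu(\tilde\gamma')\leq ||g||_T+\epsilon+2n\delta$, and taking $\delta<\epsilon/(2n)$ gives $||g||_{T_\delta}\leq ||g||_T+2\epsilon$. Letting $\epsilon\to 0$ (after $\delta\to 0$) completes the argument.

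The main obstacle is the preliminary reduction of $\tilde\gamma$ to a tight normal form in which band crossings are well-defined and finite in number, so that the detour construction can be performed locally at each crossing without destroying the global homotopy class. This requires invoking standard structural facts about rectifiable paths in foliated $2$-complexes. Once this reduction is in hand, the very-naked hypothesis on $B$ does precisely the work needed to ensure that each detour is a homotopy within $\overline\Sigma$, so that $\tilde\gamma'$ remains based at $\tilde x$ and $g\tilde x$ and lies in $\overline{\Sigma_\delta}$; the rest is bookkeeping. Note that the argument covers elliptic elements uniformly: for such $g$ one takes $\epsilon$ small and accepts that the number $n=n(\epsilon)$ of band crossings on the near-optimal path may depend on $\epsilon$, since for fixed $\epsilon$ one can still choose $\delta<\epsilon/(2n(\epsilon))$.
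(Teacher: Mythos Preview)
The paper does not give its own proof of this lemma; it is stated as a citation to Guirardel \cite[Section 7.2]{Gui98} and then used immediately in the proof of Theorem~\ref{approximation-by-simplicial}. So there is no in-paper argument to compare against.

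Your argument is essentially the standard one and is correct. The morphism $f_\delta$ gives the inequality $||g||_T\le ||g||_{T_\delta}$, and the detour construction gives the matching upper bound. A couple of small remarks: first, the homotopy rel endpoints from $\tilde\gamma$ to $\tilde\gamma'$ is automatic once you work in the cover --- any homotopy in $\Sigma$ lifts --- so the ``very naked'' hypothesis is not really doing the work at that step. Its actual role is upstream in the setup: it guarantees that narrowing $B$ does not sever any curve of $\mathcal{C}$, so that $\mathcal{C}$ still lies in $\Sigma_\delta$, the epimorphism $\rho_\delta$ has the same kernel, and hence $\overline{\Sigma_\delta}$ genuinely sits inside $\overline\Sigma$ as claimed. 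Second, the reduction to staircase normal form (finitely many band crossings, transverse measure not increased) is indeed the standard fact you flag; once a near-optimal path is chosen, one pushes all transverse motion into $K$ and all leafwise motion into full band crossings, which is a compactness argument on the path. With those two clarifications your proof goes through as written.
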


\begin{proof} [Proof of Theorem \ref{approximation-by-simplicial}]
First assume that $T$ admits a Lipschitz approximation by a sequence $(T_n)_{n\in\mathbb{N}}$ of trees in $cv_N$. As there exist $1$-Lipschitz $F_N$-equivariant maps $f_n:T_n\to T$, by Proposition \ref{qvol}, we have $qvol(T)\le qvol(T_n)$ for all $n\in\mathbb{N}$. However, if $T$ has a nontrivial arc stabilizer, then $qvol(T)>\limsup_{n\to +\infty} qvol(T_n)$. Hence $T$ has trivial arc stabilizers.

Conversely, let $T$ be a tree in $\overline{cv_N}$ with trivial arc stabilizers. First assume that $T$ contains an exotic component. Then Proposition \ref{isolated-exotic} yields an equivalent band complex which contains a very naked band $B$, to which we can apply the narrowing process. Guirardel shows in \cite[Section 7]{Gui98} that we can choose $\delta>0$ arbitrarily small and get a tree $T_{\delta}$, in which the exotic component of $T$ has been replaced by new simplicial and exotic components, and the number $E(\Sigma_{\delta})$ of ends of singular leaves satisfies $E(\Sigma_{\delta})<E(\Sigma)$. Iterating the construction a finite number of times yields an approximation of $T$ in which the minimal component $T_v$ has been replaced by a simplicial part with trivial edge stabilizers. Iterating this process, we can approximate all exotic components in $T$ without creating arc stabilizers. Hence we are left with a band complex which can be assumed to have the form prescribed by Proposition \ref{outer-limits-structure}.

As $T$ has trivial arc stabilizers, this band complex contains no annulus. Assume that it contains some surface component, and let $C$ be a distinguished circle provided by Proposition \ref{outer-limits}. One can "narrow" the surface that contains $C$ from its boundary along width $\delta>0$ to either create compact leaves, or leaves having a single end (except for at most finitely many of them), see Figure \ref{fig-narrowing-surface}. However, in a minimal surface component, all half-leaves are dense, so in the new band complex $\Sigma_{\delta}$ created in this way, the surface containing $C$ has been replaced by a simplicial component, with trivial arc stabilizers. As in Lemma \ref{narrowing}, the trees $T_{\delta}$ dual to the band complex $\Sigma_{\delta}$ converge to $T$ as $\delta$ tends to $0$, and they come with $F_N$-equivariant morphisms from $T_{\delta}$ to $T$.  Iterating this process, we successively approximate all the surface components by simplicial components with trivial edge stabilizers. Finally, we can approximate all vertices with nontrivial stabilizer in the quotient graph by roses having arbitrarily small petals to get a Lipschitz approximation of $T$ by elements of $cv_N$.
\end{proof}

\begin{figure}
\begin{center}
\input{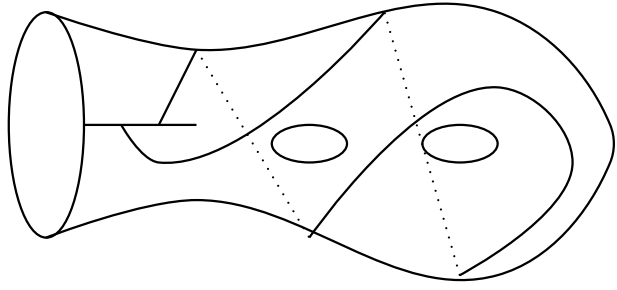}
\caption{Narrowing a surface component creates compact leaves.}
\label{fig-narrowing-surface}
\end{center}
\end{figure}

As a consequence of Theorem \ref{approximation-by-simplicial}, we show that any Lipschitz $F_N$-equivariant map between $F_N$-trees with dense orbits preserves alignment. In particular, any $F_N$-equivariant morphism between minimal $F_N$-trees with dense orbits is an isometry. Let $T,T'\in\overline{cv_N}$, and $f:T\to T'$ be an $F_N$-equivariant map. The \emph{bounded cancellation constant} of $f$, denoted by $BCC(f)$, is defined to be the supremum of all real numbers $B$ with the property that there exist $a,b,c\in T$ with $b\in [a,c]$, such that $d_{T'}(f(b),[f(a),f(c)])=B$. Notice that an $F_N$-equivariant map $f:T\to T'$ preserves alignment if and only if $BCC(f)=0$. We denote by $\text{Lip}(f)$ the Lipschitz constant of $f$.

\begin{prop} \label{bcc} (Bestvina-Feighn-Handel \cite[Lemma 3.1]{BFH97})
Let $T\in cv_N$ and $T'\in \overline{cv_N}$, and let $f:T\to T'$ be an $F_N$-equivariant map. Then $BCC(f)\le Lip(f) qvol(T)$.
\end{prop}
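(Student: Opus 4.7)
Plan: Fix $a,b,c \in T$ with $b \in [a,c]$; write $L := \text{Lip}(f)$ and $B := d_{T'}(f(b),[f(a),f(c)])$. Let $p$ denote the projection of $f(b)$ onto $[f(a),f(c)]$, and let $Y$ be the component of $T' \smallsetminus \{p\}$ containing $f(b)$, so that $d_{T'}(p,f(b))=B$. It suffices to prove $B \le L \cdot qvol(T)$ for each such triple; taking the supremum over admissible $(a,b,c)$ then yields $BCC(f) \le L \cdot qvol(T)$.

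The first step localizes the cancellation by producing two witness points in $T$. Since $f$ is continuous and $T'$ is an $\mathbb{R}$-tree, the image $f([a,b])$ is a connected subset containing $f(a)$ and $f(b)$, and therefore contains the arc $[f(a),f(b)] \ni p$. Let $t_1 \in [a,b]$ be the point closest to $b$ whose image lies on $[f(a),f(c)]$; the set $f((t_1,b])$ is connected, disjoint from $[f(a),f(c)]$, and contains $f(b) \in Y$, so $f((t_1,b]) \subseteq Y$ and continuity forces $f(t_1)=p$. Defining $t_2 \in [b,c]$ symmetrically, the Lipschitz inequality yields $d_T(t_i,b) \ge d_{T'}(p,f(b))/L = B/L$, whence
\[ d_T(t_1,t_2) = d_T(t_1,b) + d_T(b,t_2) \ge 2B/L, \]
and by construction $f([t_1,t_2]) \subseteq \overline{Y}$ with $f(t_1)=f(t_2)=p$.

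The crux is then to show $d_T(t_1,t_2) \le 2\,qvol(T)$. Since $T \in cv_N$ is free and simplicial, the quotient map $\pi: T \to T/F_N$ is locally isometric and the image graph has total edge-length $qvol(T)$, so $\pi([t_1,t_2])$ has $1$-dimensional measure at most $qvol(T)$. Suppose for contradiction that $d_T(t_1,t_2) > 2\,qvol(T)$: integrating the multiplicity of $\pi|_{[t_1,t_2]}$ then forces some fibre to have cardinality at least three, giving three distinct points $x_1,x_2,x_3 \in [t_1,t_2]$ in a single $F_N$-orbit. By equivariance, $f(x_1),f(x_2),f(x_3)$ are three $F_N$-translates of one point of $T'$, all contained in the branch $\overline{Y}$ at $p$. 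Using that $T'$ is very small, one argues that three distinct translates of one point cannot all lie in a single branch at $p$ without forcing a nontrivial element of $F_N$ to fix an arc of $\overline{Y}$ with the wrong stabilizer type, or to violate triviality of tripod stabilizers via the bridges $[f(x_i),f(x_j)]$. This contradiction gives $d_T(t_1,t_2) \le 2\,qvol(T)$, and combining with step 1 produces $B \le L\,qvol(T)$.

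The main obstacle is the last combinatorial step: the pigeonhole cleanly yields an orbit coincidence along $[t_1,t_2]$, but converting this into an outright contradiction in $T'$ requires a careful case analysis of the $F_N$-action on branches at $p$ according to whether the relevant group elements act elliptically or hyperbolically on $T'$. A natural alternative route, which I would pursue in parallel, is to first replace $f$ by an $L$-Lipschitz equivariant approximation that is a morphism (using that $T$ is simplicial, e.g.\ via the canonical straightening that is geodesic on each edge and coincides with $f$ on vertices), and then run the classical Bestvina--Feighn--Handel fold-counting argument combined with Lemmas~\ref{morphism-1}--\ref{morphism-3}, which directly forbid the folds that a length $> 2\,qvol(T)$ would force on $[t_1,t_2]$.
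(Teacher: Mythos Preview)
The paper does not give its own proof of this proposition; it simply cites \cite[Lemma~3.1]{BFH97}. So there is no ``paper's approach'' to compare with, and your attempt must stand on its own.

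Your first reduction is fine: producing $t_1\in[a,b]$ and $t_2\in[b,c]$ with $f(t_1)=f(t_2)=p$, $f((t_1,t_2))\subseteq Y$, and $d_T(t_1,t_2)\ge 2B/L$ is correct. The problem is the second step. The assertion that ``three distinct translates of one point cannot all lie in a single branch at $p$'' is simply false, and no amount of case analysis on elliptic versus hyperbolic elements will rescue it. For a concrete obstruction: if $g\in F_N$ is hyperbolic in $T'$ with axis contained in $\overline{Y}$ (which happens whenever the axis misses the point $p$), then for any $x$ on this axis the points $x,\,gx,\,g^2x$ are three translates of one point all lying in $\overline{Y}$. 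Nothing about very smallness prevents this; trivial tripod stabilizers says no element \emph{fixes} a tripod, not that three translates cannot form one. So your pigeonhole produces three orbit-equivalent points on $[t_1,t_2]$, but there is no mechanism to extract a contradiction from their images landing in $\overline{Y}$. In fact it is not clear that the intermediate inequality $d_T(t_1,t_2)\le 2\,qvol(T)$ you are aiming for is even true.

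Your alternative route is closer to the actual argument, but as stated it still tries to bound $d_T(t_1,t_2)$ and invokes Lemmas~\ref{morphism-1}--\ref{morphism-3}, which concern edges with nontrivial stabilizer and are irrelevant here since $T\in cv_N$ is free. The standard proof bypasses $d_T(t_1,t_2)$ entirely: straighten $f$ on edges, rescale $T$ edge-by-edge so that the resulting map $h:T_1\to T'$ is a morphism (this preserves alignment, so $BCC(f)=BCC(h)$ and $qvol(T_1)\le L\cdot qvol(T)$), then factor $h$ as a sequence of Stallings folds. Each fold has bounded cancellation at most the length it removes from the quotient volume, and since $BCC$ is subadditive under composition of $1$-Lipschitz maps, summing gives $BCC(h)\le qvol(T_1)\le L\cdot qvol(T)$.
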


\begin{cor} \label{alignment-preserving}
Let $T,T'\in\overline{cv_N}$ have dense orbits, and let $f:T\to \overline{T'}$ be a Lipschitz $F_N$-equivariant map. Then $f$ preserves alignment. 
\end{cor}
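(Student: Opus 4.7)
The plan is to combine the Lipschitz approximation of Theorem \ref{approximation-by-simplicial} with the bounded cancellation estimate of Proposition \ref{bcc}. Since $T$ has dense orbits and is very small, Lemma \ref{dense-arcs} gives that arc stabilizers in $T$ are trivial, so Theorem \ref{approximation-by-simplicial} supplies simplicial trees $T_n \in cv_N$ converging to $T$ together with $1$-Lipschitz $F_N$-equivariant maps $g_n : T_n \to T$. Setting $h_n := f \circ g_n : T_n \to \overline{T'}$ gives $F_N$-equivariant Lipschitz maps with $\text{Lip}(h_n) \le \text{Lip}(f)$. The crux is that $qvol(T_n) \to 0$, whence Proposition \ref{bcc} (which extends without change to maps whose target is the completion of a tree in $\overline{cv_N}$, since $T'$ is dense in $\overline{T'}$) gives
\[ BCC(h_n) \le \text{Lip}(f) \cdot qvol(T_n) \longrightarrow 0. \]

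I would then take $a, b, c \in T$ with $b \in [a, c]$ and produce approximating sequences $a_n, c_n \in T_n$ with $g_n(a_n) \to a$ and $g_n(c_n) \to c$. Such sequences exist: by $F_N$-equivariance, the image $g_n(T_n)$ is $F_N$-invariant and contains the orbit of $g_n(x_0^{(n)})$ for any base point, so it is dense in $T$ by density of orbits in $T$. The image $g_n([a_n,c_n])$ is a connected subset of $T$ joining $g_n(a_n)$ to $g_n(c_n)$, hence contains the segment $[g_n(a_n), g_n(c_n)]$, and for $n$ large this segment passes arbitrarily close to $b$, so I can choose $b_n \in [a_n, c_n]$ with $g_n(b_n) \to b$. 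Applying the bounded cancellation inequality for $h_n$ to the triple $(a_n, b_n, c_n)$ and letting $n \to \infty$, using continuity of $f$ together with $BCC(h_n) \to 0$, yields $d_{\overline{T'}}(f(b), [f(a), f(c)]) = 0$, which is exactly the alignment-preserving property.

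The main obstacle is verifying $qvol(T_n) \to 0$. The cleanest conceptual route is to show that a tree with dense orbits has quotient volume zero and then invoke the semicontinuity clause of Proposition \ref{qvol}; however, deducing $qvol(T) = 0$ directly from density of a single orbit is not entirely formal, as density produces small $\epsilon$-dense subsets but not obviously a \emph{finite subtree} of small volume whose translates already cover $T$. A safer path is to inspect the narrowing construction underlying the proof of Theorem \ref{approximation-by-simplicial}, where the bands of the dual band complex are made arbitrarily thin; one then checks that the resulting simplicial approximations $T_n$ can be arranged to have arbitrarily small quotient volume by construction, which is all the above argument requires.
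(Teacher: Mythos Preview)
Your approach is essentially the paper's: approximate $T$ by simplicial trees $T_n\in cv_N$ using Theorem~\ref{approximation-by-simplicial}, compose with $f$, and use that $BCC\to 0$ because $qvol(T_n)\to 0$. The paper handles the three places where you hesitate more directly, so let me point those out.

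For $qvol(T_n)\to 0$, the paper simply invokes Proposition~\ref{qvol}: since $T$ has dense orbits one has $qvol(T)=0$ (this is part of the Algom--Kfir package cited there), and semicontinuity gives $\limsup qvol(T_n)\le qvol(T)=0$. You do not need to inspect the narrowing construction. For the approximations $a_n,b_n,c_n$, note that $g_n(T_n)$ is an $F_N$-invariant subtree of the \emph{minimal} tree $T$, hence $g_n$ is surjective; so you can take exact preimages $a_n,c_n$ of $a,c$ and then an exact preimage $b_n\in[a_n,c_n]$ of $b$, avoiding the density/limit argument entirely. Finally, rather than assert that Proposition~\ref{bcc} extends to targets in $\overline{T'}$, the paper tightens $h_n$ on edges and then perturbs the images of vertices by at most $C/4$ to land in $T'$ (possible since $T'$ is dense in $\overline{T'}$), so that Proposition~\ref{bcc} applies verbatim; the $C/4$ error is absorbed into the final contradiction $BCC(h_n)\ge C/2$.
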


\begin{proof}
Let $a,b,c\in T$ with $b\in [a,c]$, and let $C:=d_{\overline{T'}}(f(b),[f(a),f(c)])$. Assume by contradiction that $C>0$. As $T$ has dense orbits, all arc stabilizers in $T$ are trivial (Lemma \ref{dense-arcs}), so Theorem \ref{approximation-by-simplicial} provides a Lipschitz approximation $(T_n)_{n\in\mathbb{N}}$ of $T$ by free and simplicial $F_N$-trees. By Proposition \ref{qvol}, the quotient volume of $T_n$ converges to $0$ as $n$ goes to infinity. By definition of a Lipschitz approximation, for all $n\in\mathbb{N}$, there exists a $1$-Lipschitz $F_N$-equivariant map $f_n:T_n\to T$. Minimality of $T$ implies that $f_n$ is surjective for all $n\in\mathbb{N}$. Composing $f_n$ with $f$ yields a $\text{Lip}(f)$-Lipschitz $F_N$-equivariant map $f'_n:T_n\to \overline{T'}$. Tightening $f'_n$ on edges if necessary (which does not increase its Lipschitz constant), we can assume that $f'_n$ is linear on edges. Slightly perturbing $f'_n$ on the vertices of $T_n$, and extending it linearly on the edges of $T_n$ again if necessary, we get the existence of a $\text{Lip}(f)$-Lipschitz $F_N$-equivariant map $f''_n:T_n\to T'$, with $d_{\overline{T'}}(f'_n(x),f''_n(x))\le\frac{C}{4}$ for all $x\in T_n$. By Proposition \ref{bcc}, the bounded cancellation constant $BCC(f''_n)$ tends to $0$ as $n$ goes to infinity. For all $n\in\mathbb{N}$, let $a_n$ (resp. $c_n$) be a preimage of $a$ (resp. $c$) by $f_n$ in $T_n$. Then there exists $b_n\in [a_n,c_n]$ such that $f_n(b_n)=b$. We have $d_{\overline{T'}}(f'_n(b_n),[f'_n(a_n),f'_n(c_n)])=C$, so $d_{T'}(f''_n(b_n),[f''_n(a_n),f''_n(c_n)])\ge\frac{C}{2}$. This implies that $BCC(f''_n)\ge \frac{C}{2}$ for all $n\in\mathbb{N}$, a contradiction. Hence $f$ preserves alignment.
\end{proof} 

\begin{cor}\label{morphism-isometry}
Let $T,T'\in\overline{cv_N}$ have dense orbits. Then any $F_N$-equivariant morphism from $T$ to $T'$ is an isometry.
\qed
\end{cor}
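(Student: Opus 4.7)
The plan is to combine Corollary \ref{alignment-preserving} with the definition of morphism recalled in Section \ref{sec-morphism}. Since $f:T\to T'$ is a morphism it is in particular $1$-Lipschitz, and $T,T'$ both have dense orbits, so Corollary \ref{alignment-preserving} applies directly and yields that $f$ preserves alignment.

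The key step is to upgrade this to $f$ being distance-preserving on every segment. Given a segment $[a,c]\subseteq T$, I would use the morphism property to choose a subdivision $a=x_0,x_1,\dots,x_k=c$ such that $f$ restricts to an isometry on each $[x_{i-1},x_i]$. Alignment-preservation applied to the triple $(a,x_i,c)$ places each $f(x_i)$ in $[f(a),f(c)]$; applied to $(a,x_{i-1},x_i)$ and $(x_{i-1},x_i,c)$ it forces the points $f(x_i)$ to appear in the natural order along $[f(a),f(c)]$. Summing,
\[
d_{T'}(f(a),f(c)) \;\geq\; \sum_{i=1}^{k} d_{T'}(f(x_{i-1}),f(x_i)) \;=\; \sum_{i=1}^{k} d_{T}(x_{i-1},x_i) \;=\; d_T(a,c),
\]
while the reverse inequality is the $1$-Lipschitz property. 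Hence $f$ is an isometric embedding.

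To conclude that $f$ is an isometry (in particular surjective), I would invoke minimality of both trees. For every $g\in F_N$ hyperbolic in $T$, the image $f(A_T(g))$ is a $g$-invariant line in $T'$ on which $g$ translates by $\|g\|_T=\|g\|_{T'}$, hence coincides with the axis $A_{T'}(g)$. Since $T$ (resp.\ $T'$) is the union of axes of elements hyperbolic in $T$ (resp.\ $T'$), and since an element is hyperbolic in $T$ if and only if it is hyperbolic in $T'$ with the same translation length, one obtains $f(T)=T'$.

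No serious obstacle is anticipated: the proof is a direct consequence of Corollary \ref{alignment-preserving} together with the morphism structure. The only delicate point is the monotone ordering of the $f(x_i)$ along $[f(a),f(c)]$, which is precisely what alignment-preservation delivers.
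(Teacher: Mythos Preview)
Your proposal is correct and is precisely the argument the paper has in mind: the paper records the corollary with a bare \qed because it follows immediately from Corollary~\ref{alignment-preserving} together with the definition of a morphism and minimality of $T'$, and you have unpacked exactly these three ingredients. The only cosmetic remark is that, once the $f(x_i)$ are shown to be ordered along $[f(a),f(c)]$, the displayed inequality is in fact an equality; but the inequality you wrote is all that is needed.
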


\subsection{Approximations by trees having two edges with trivial stabilizers} \label{sec-approx-by-separable}

An $F_N$-tree $T\in\overline{cv_N}$ is \emph{good} if there exists a Lipschitz approximation $(T_n)_{n\in\mathbb{N}}\in\overline{cv_N}^{\mathbb{N}}$ of $T$ such that for all $n\in\mathbb{N}$, the tree $T_n^{simpl}$ contains at least two $F_N$-orbits of edges with trivial stabilizers. The following statement will be used in Section \ref{sec-rigidity} to describe the lack of rigidity of the set $\mathcal{P}_N$ in $\overline{cv_N}$. We recall the definition of a pull from Section \ref{sec-slide-equiv}.

\begin{theo} \label{approximation-by-separable}
Every tree $T\in\overline{cv_N}$ is a pull of a good tree. More precisely, for all $T\in\overline{cv_N}$, either $T$ is good, or there exists a good tree $T'\in\overline{cv_N}$ which has exactly one orbit of edges with trivial stabilizer, such that $T$ is a pull of $T'$.
\end{theo}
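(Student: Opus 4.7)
The plan is to analyze $T$ via Levitt's graph-of-actions decomposition (Theorem~\ref{graph-of-actions}) and proceed by cases on the structure of $T$, using vertex blow-ups and approximations to establish goodness in most cases, and unpulling to reduce the remaining cases.

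First, if $T$ has a non-point dense-orbit vertex tree $T_{v_0}$ in its decomposition, I would show $T$ is itself good (trivial pull). Since a cyclic group cannot act minimally with dense orbits on a non-trivial $\mathbb{R}$-tree, the vertex group $V_{v_0}$ has rank $r \geq 2$. Theorem~\ref{approximation-by-simplicial} then provides a Lipschitz approximation $K_n$ of $T_{v_0}$ by free simplicial $V_{v_0}$-trees; by taking each quotient $K_n/V_{v_0}$ to have at least $2$ edges (possible for $r \geq 2$), the approximation carries $\geq 2$ orbits of trivial-stabilizer edges. Substituting $K_n$ for $T_{v_0}$ in $T$'s graph of actions, with compatible attaching points, yields a Lipschitz approximation of $T$ whose simplicial part has $\geq 2$ orbits of trivial-stabilizer edges.

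Second, if $T$ is simplicial, I would use vertex blow-ups. (a) If $T/F_N$ already has $\geq 2$ orbits of trivial-stabilizer edges, the constant sequence works. (b) If some vertex has stabilizer of rank $\geq 2$, blow it up to a small rose with $\geq 2$ petals (free action, total length $\epsilon \to 0$), adding $\geq 2$ trivial-stabilizer edge orbits. (c) If some cyclic vertex $v$ has all its adjacent edges of trivial stabilizer, blow up $v$ to a short line with translation by its generator (length $\epsilon \to 0$); the new trivial-stabilizer edge combines with an existing one adjacent to $v$ to yield $\geq 2$ orbits. In the remaining bad situation --- $T$ simplicial, every vertex stabilizer cyclic or trivial, every cyclic vertex adjacent to a cyclic-stabilizer edge, and $T$ has at most one orbit of trivial-stabilizer edges --- I would construct $T'$ by unpulling. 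If $T$ has a trivial-stabilizer edge $e$ with endpoints $v_i$ (cyclic stabilizers $\langle g_i\rangle$) each adjacent to a $\langle g_i\rangle$-stabilizer edge $e_i$ going to $v'_i$, I define $T'$ by replacing $e_1 \cup e \cup e_2$ with a single longer trivial-stabilizer edge $e'$ whose endpoints are $v'_1, v'_2$. Then $T$ is recovered from $T'$ by pulling $e'$ with elements $g_1, g_2$, the former $e_i$ being exactly the pulled segments $J_i$ of Definition~\ref{de-pull}. In the variant where $T$ has zero trivial-stabilizer edges, an analogous "full pull" construction merges two cyclic-stabilizer edges meeting at a common vertex into a single $e'$ in $T'$ that gets fully pulled in $T$. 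The tree $T'$ has exactly one orbit of trivial-stabilizer edges, and its new endpoints $v'_i$ are cyclic vertices all of whose $T'$-adjacent edges have trivial stabilizer (the $e_i$'s having been absorbed into $e'$), so by subcase (c) applied to $T'$, the tree $T'$ is good.

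The main obstacle will be verifying the unpulling construction in the bad case: showing $T'$ is a valid very small $F_N$-tree, that the $F_N$-equivariant identification of $T$ with a pull of $T'$ matches Definition~\ref{de-pull} exactly, and that the new endpoints $v'_i$ of $e'$ in $T'$ satisfy the blow-up condition of subcase (c). Careful bookkeeping of vertex and edge stabilizers will be needed, particularly to handle loop edges (when $v_1 = v_2$), the choice among potentially multiple cyclic-stabilizer edges adjacent to each $v_i$, consistency between the unpullings at the two ends of $e$, and the zero-trivial-stabilizer case where the vertex $v$ at which the "full pull" occurs must carry the two $g_i$'s in a way compatible with the very small structure of $T'$.
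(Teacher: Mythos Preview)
Your first case contains a fatal error. You claim that any $T$ with a non-point dense-orbit vertex tree $T_{v_0}$ is automatically good, by substituting a free simplicial approximation $K_n$ of $T_{v_0}$ (from Theorem~\ref{approximation-by-simplicial}) into the graph of actions. This substitution breaks down whenever an edge $e$ with nontrivial stabilizer $\langle g\rangle$ is attached to $T_{v_0}$: the attaching point $p_e$ must be fixed by $g$, but in the free tree $K_n$ the element $g$ is hyperbolic and fixes no point, so no ``compatible attaching point'' exists. You cannot simply drop the stabilizer of $e$ either, since that changes the orbit structure of edges and does not yield an $F_N$-tree admitting a $1$-Lipschitz equivariant map to $T$.

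The claim is not merely unproved but false. Remark~\ref{rk-surface} records that the constructions in Section~\ref{sec-slide-equiv}---trees with a surface vertex component and an adjacent annulus edge---give trees with a genuine dense-orbit vertex tree that are \emph{not} good: they are nontrivial pulls of good trees, and by Proposition~\ref{case-good-approx} two distinct primitive-equivalent trees cannot both be good. The paper handles precisely these trees in Case~1.2 of its proof via band-complex techniques (Propositions~\ref{outer-limits-structure} and~\ref{outer-limits}): when the distinguished circle lies in an annulus, narrowing it is exactly the unpulling move you describe in your simplicial case, but now carried out in the presence of a surface component. Your case split hides this difficulty by assuming that the presence of any dense-orbit piece already forces goodness; that is where the argument collapses, and it is exactly the case that requires the most work.
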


\begin{proof}
We argue differently depending on whether $T$ is geometric or not.
\\
\\
\noindent\textit{Case 1} : The tree $T$ is geometric.
\\
\\
\textit{Case 1.1} : The tree $T$ contains an exotic component.
\\ Applying the same narrowing process as in the proof of Theorem \ref{approximation-by-simplicial} to this exotic component yields a Lipschitz approximation $(T_n)_{n\in\mathbb{N}}$ of $T$, in which the exotic component -- dual to some subtree $T_v$ of $T$ -- is replaced by a family of finite orbits, dual to some tree $T^1$ with trivial edge stabilizers. If for some $n\in\mathbb{N}$, the tree $T_n^{simpl}$ contains at most one orbit of edges with trivial stabilizer, then $T^1$ is the Bass-Serre tree of a one-edge free splitting, and Lemmas \ref{dense-arcs} and \ref{morphism-1} imply that a morphism $f:T^1\to T_v$ cannot fold any U-turn. If $T^1$ is the Bass-Serre tree of a splitting of the form $F_i\ast F_{k-i}$, then $f$ might only reduce the length of the unique orbit of edges in $T^1$, and $T_n$ cannot converge to $T$. If $T^1$ the Bass-Serre tree of a splitting of the form $F_{k-1}\ast$, then $f$ can either reduce the length of the unique orbit of edges in $T^1$, or create a second orbit of edges with trivial stabilizers, in which case we can assume $T_n^{simpl}$ to contain two $F_N$-orbits of edges with trivial stabilizers. 
\\
\\
\textit{Case 1.2} : The tree $T$ is dual to a band complex $\Sigma$ which has the structure prescribed by Proposition \ref{outer-limits-structure}. 
\\ If $\Sigma$ contains no surface component and no annulus, then $T$ is simplicial and has trivial edge stabilizers. So either $T$ contains two $F_N$-orbits of edges with trivial stabilizers, or $T$ is the Bass-Serre tree of a one-edge free splitting of $F_N$, in which case $T$ can be approximated by blowing up its vertex groups, adding a small loop with trivial stabilizer of length going to $0$. Otherwise, let $C$ be a distinguished circle provided by Proposition \ref{outer-limits}. If $C$ belongs to a surface component, then as in Case 1.1 we get a Lipschitz approximation of $T$ by trees having at least two orbits of edges with trivial stabilizers. We now assume that $C$ belongs to an annulus $A$. In this case, narrowing a band corresponds to unfolding an edge in the dual tree, see Figure \ref{fig-narrow-simplicial}, and this operation creates an orbit of edges $e$ with trivial stabilizer. This operation does not affect minimality of the dual tree. If there is another simplicial orbit of edges with trivial stabilizer in the tree dual to $\Sigma$, then $T$ is good. It may happen that some extremity of $e$ has cyclic stabilizer, and is such that there are exactly two $F_N$-orbits of edges coming out of it, one of which has nontrivial stabilizer. We let $T'$ be the tree obtained from $T$ by totally unfolding the edges with nontrivial stabilizers coming out of such extremities of $e$. This operation does not create obtrusive powers or tripod stabilizers, so the tree $T'$ is again very small, and by definition $T$ is a pull of $T'$. In addition, if we equivariantly remove the edge with trivial stabilizer of $T'$ we have just constructed, we get (at least) one tree, to which we can apply the above argument (this tree might not be minimal for the action of its stabilizer, if $e$ projects to a loop-edge in the associated graph of actions, but the above argument still works in this case). If all distinguished circles of $T'$ are contained in surface components, then the above argument shows that $T'$ is good. Otherwise, one can again unfold an annulus. This operation creates a second edge with trivial stabilizer, again showing that $T'$ is good.
\\
\\
\textit{Case 2} : The tree $T$ is nongeometric.\\
\\
Let $(T_n)_{n\in\mathbb{N}}\in\overline{cv_N}^{\mathbb{N}}$ be a sequence of minimal geometric $F_N$-trees converging strongly to $T$, given by Proposition \ref{non-geometric}. Denote by $f_n:T_n\to T$ and $f_{n,p}:T_n\to T_p$ the corresponding morphisms, which might be assumed to be injective on the edges in $T_n$ with nontrivial stabilizers. In particular, the sequence $(T_n)_{n\in\mathbb{N}}$ is a Lipschitz approximation of $T$, so it is enough to show that $T_n$ can be assumed to contain two $F_N$-orbits of edges with trivial stabilizers for all $n\in\mathbb{N}$. Assume by contradiction that for some $n\in\mathbb{N}$, the tree $T_n$ contains at most one edge $e_n$ with trivial stabilizer in its simplicial part. The morphism $f_{n,n+1}$ cannot identify 

\begin{itemize}
\item two initial subsegments of edges in the simplicial part of $T_n$ with distinct nontrivial stabilizer (Lemma \ref{morphism-2}), nor
\item a nontrivial subsegment of an edge with nontrivial stabilizer with a path lying in a vertex tree of $T_n$ with dense orbits (Lemma \ref{morphism-3}), nor
\item two arcs lying in a vertex tree of $T_n$ with dense orbits (Corollary \ref{morphism-isometry}).
\end{itemize}

\noindent If $f_{n,n+1}$ identifies a subsegment $J$ of $e_n$ with a subsegment $J'$ of one of the translates of $e_n$, such that $J'$ meets the $F_N$-orbit of $J$ in a single point (this might happen if $e_n$ projects to a loop-edge in the associated graph of actions), then we can replace $T_n$ by a tree with two $F_N$-orbits of edges with trivial stabilizers. So up to reducing the length of $e_n$ in $T_n$, we might assume that to pass from $T_n$ to $T_{n+1}$, we need only fold a subsegment of the edge $e_n$ along some path in $T_n$, at each of its extremities, and iterating the same argument shows that for all $k\ge n$, the tree $T_k$ has a unique edge with trivial stabilizer, and in order to pass from $T_k$ to $T_{k+1}$, one has to fold a subsegment of $e_k$ equivariantly along some path in $T_k$. Assume that the sequence $(T_k)_{k\in\mathbb{N}}$ is nonstationary. Then we can find $x\in e_n$ and a nonstationary sequence $(x_k)_{k\in\mathbb{N}}$ of elements of $e_n$ converging to $x$ such that the subsegment $[a,x_k]$ of $e_n$ is folded when passing from $T_n$ to $T_k$ (where $a$ denotes one of the extremities of $e_n$). For all $k\in\mathbb{N}$, let $y_k$ be a point in $T_n$ that is identified with $x_k$ during the folding process. In particular, the sequence $(y_k)_{k\in\mathbb{N}}$ is bounded, and the segments $[x,y_k]$ form an increasing sequence of segments in $T_n$. So $(y_k)_{k\in\mathbb{N}}$ converges to some point $y\in T_n$, and $f_n(x)=f_n(y)$. However, for all $k\in\mathbb{N}$, we have $d(f_{n,k}(x),f_{n,k}(y))>0$, contradicting strong convergence of the sequence $(T_k)_{k\in\mathbb{N}}$ to $T$. 
\end{proof}

\begin{figure}
\begin{center}
\def\JPicScale{.5}
\input{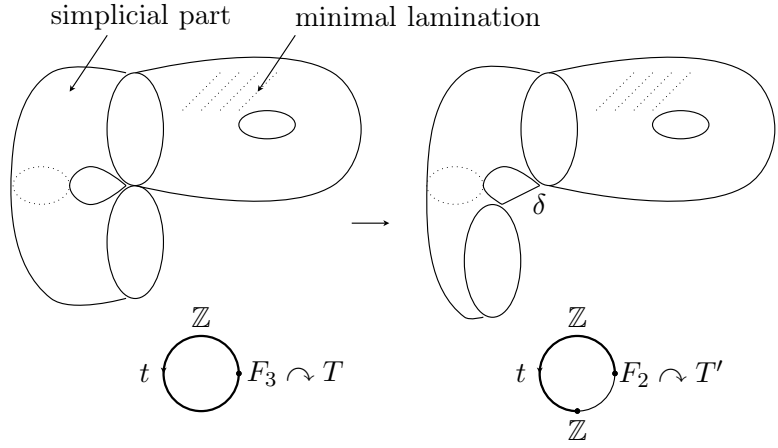}
\caption{Narrowing a band in a simplicial component.}
\label{fig-narrow-simplicial}
\end{center}
\end{figure}

\begin{rk}\label{rk-surface}
The proof of Theorem \ref{approximation-by-separable} shows that if $T$ is either nongeometric, or contains an exotic component, then $T$ is good. The constructions made in Section \ref{sec-slide-equiv} provide examples of trees with simplicial and surface components which are obtained as pulls of good trees but are not good, as will follow from Proposition \ref{case-good-approx}.
\end{rk}

\section{Limits of Lipschitz maps between very small $F_N$-trees} \label{sec-limits}

The goal of this section is to explain how to construct Lipschitz $F_N$-equivariant maps between very small $F_N$-trees, by using a limiting process (Theorem \ref{limits}). We start by recalling some facts about the equivariant Gromov-Hausdorff topology on the space of $F_N$-trees.

\subsection{Equivariant Gromov-Hausdorff topology} \label{sec-Gromov}

In \cite{Pau88}, Paulin introduced yet another topology on $\overline{cv_N}$. Let $T$ and $T'$ be two $F_N$-trees, let $K\subset T$ and $K'\subset T'$ be finite subsets, let $P\subset F_N$ be a finite subset of $F_N$, and let $\epsilon>0$. A \emph{$P$-equivariant $\epsilon$-relation} between $K$ and $K'$ is a subset $R\subseteq K\times K'$ whose projection to each factor is surjective, such that for all $(x,x'),(y,y')\in R$ and all $g,h\in P$, we have $|d_T(gx,hy)-d_{T'}(gx',hy')|<\epsilon$. We denote by $O(T,K,P,\epsilon)$ the set of $F_N$-trees $T'$ for which there exists a finite subset $K'\subset T'$ and a $P$-equivariant $\epsilon$-relation $R\subseteq K\times K'$. Paulin showed that these sets define a basis of open sets for a topology on the set of $F_N$-trees, called the \emph{equivariant Gromov-Hausdorff topology} \cite{Pau88}. This topology is equivalent to the axes topology on $\overline{cv_N}$ \cite{Pau89}.

Let $T$ be an $F_N$-tree, and let $(T_n)_{n\in\mathbb{N}}$ be a sequence of $F_N$-trees that converges to $T$ in the equivariant Gromov-Hausdorff topology. Let $x\in T$. Let $(K^k)_{k\in\mathbb{N}}$ be an increasing sequence of finite subsets of $T$ containing $x$, such that the finite trees spanned by the subsets $K^k$ yield an exhaustion of $T$, and let $F_N=\bigcup_{k\in\mathbb{N}}P^k$ be an exhaustion of $F_N$ by finite subsets. For all $k\in\mathbb{N}$, let $n_k$ be the smallest integer such that $T_{n_k}\in O(T,K^k,P^k,\frac{1}{k})$. For all $n\in \{n_k,\dots,n_{k+1}-1\}$, we can find a finite subset $K_n\subset T_n$ and a $P^{k}$-equivariant $\frac{1}{k}$-relation $R_n\subseteq K^k\times K_n$. Choose $x_n\in K_n$ such that $(x,x_n)\in R_n$. We say that the sequence $(x_n)_{n\in\mathbb{N}}\in\prod_{n\in\mathbb{N}} T_n$ is an \emph{approximation} of $x$ in the trees $T_n$, relative to the exhaustions determined by $K^k$ and $P^k$. 

\begin{lemma} \label{Gromov}
Let $T$ be an $F_N$-tree, and let $(T_n)_{n\in\mathbb{N}}$ be a sequence of $F_N$-trees that converges to $T$ in the equivariant Gromov-Hausdorff topology. Let $x,y\in T$, let $g\in F_N$, let $M\in\mathbb{R}$. Let $(x_n)_{n\in\mathbb{N}},(y_n)_{n\in\mathbb{N}},(z_n)_{n\in\mathbb{N}}\in\prod_{n\in\mathbb{N}} T_n$ be approximations of $x$, $y$ and $gx$ relative to the same exhaustions. Then
\begin{itemize}
\item the distance $d_{T_n}(x_n,y_n)$ converges to $d_T(x,y)$, and  
\item the distance $d_{T_n}(gx_n,z_n)$ converges to $0$, and
\item if $x\in\mathcal{N}_M(C_T(g))$, then for sufficiently large $n\in\mathbb{N}$, we have $x_n\in\mathcal{N}_{M+1}(C_{T_n}(g))$.
\end{itemize}
\end{lemma}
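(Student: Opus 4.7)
The first two bullets follow directly from unwinding the definition of approximations. I would take $k$ large enough that $K^k$ contains $x$, $y$ and $gx$, and that $P^k$ contains both the identity $e$ and the element $g$. Then for every $n\ge n_k$, the pairs $(x,x_n)$, $(y,y_n)$ and $(gx,z_n)$ all lie in the \emph{same} $P^k$-equivariant $\frac{1}{k}$-relation $R_n\subseteq K^k\times K_n$; this is precisely what the hypothesis that the three approximations are taken relative to the same exhaustions is designed to guarantee. Applying the defining inequality of such a relation with group elements $e,e$ to the pair $(x,x_n),(y,y_n)$ gives $|d_T(x,y)-d_{T_n}(x_n,y_n)|<\tfrac{1}{k}$, which yields the first bullet. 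For the second, I would apply the same inequality but with group elements $g,e\in P^k$ to the pair $(x,x_n),(gx,z_n)$: this yields $|d_T(gx,gx)-d_{T_n}(gx_n,z_n)|<\tfrac{1}{k}$, i.e.\ $d_{T_n}(gx_n,z_n)<\tfrac{1}{k}$.

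For the third bullet, I would first observe that exactly the same argument, applied to the single pair $(x,x_n)$ used twice with group elements $e,g$, gives $d_{T_n}(x_n,gx_n)\to d_T(x,gx)$. Next, I would invoke the continuity of translation lengths with respect to the equivariant Gromov-Hausdorff topology (a standard fact due to Paulin \cite{Pau89}, which also underlies the equivalence of the axes and equivariant Gromov-Hausdorff topologies on $\overline{cv_N}$), to conclude that $||g||_{T_n}\to ||g||_T$. Using the classical identity $d(y,gy)=||g||+2\,d(y,C(g))$ valid for any isometry of an $\mathbb{R}$-tree (recalled from \cite[1.3]{CM87}), both in $T$ and in $T_n$, I would then conclude
\[
d_{T_n}(x_n,C_{T_n}(g))=\tfrac{1}{2}\bigl(d_{T_n}(x_n,gx_n)-||g||_{T_n}\bigr)\xrightarrow[n\to +\infty]{}\tfrac{1}{2}\bigl(d_T(x,gx)-||g||_T\bigr)=d_T(x,C_T(g))\le M,
\]
so that $x_n\in\mathcal{N}_{M+1}(C_{T_n}(g))$ for all sufficiently large $n$.

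There is no serious obstacle to this proof; the entire argument is careful bookkeeping in the definitions. The only subtlety is making sure that the three approximations are witnessed by a \emph{single} relation $R_n$ for each $n$, which is exactly what the phrase ``relative to the same exhaustions'' provides. Once this is observed, each of the three claims reduces to a direct application of the definition of a $P$-equivariant $\epsilon$-relation, together with the standard description of the action of an isometry on an $\mathbb{R}$-tree via its translation length and characteristic set.
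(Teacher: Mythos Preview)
Your proof is correct and follows exactly the approach of the paper, which simply says that the first two assertions ``follow from the definition of the equivariant Gromov-Hausdorff topology'' and that the third uses the identity $d_T(x,gx)=2d_T(x,C_T(g))+||g||_T$ together with continuity of translation lengths. You have merely unpacked these hints in full detail, including the key observation that ``relative to the same exhaustions'' guarantees a common relation $R_n$ witnessing all three approximations.
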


\begin{proof}
The first two assertions follow from the definition of the equivariant Gromov-Hausdorff topology. To prove the third assertion, one uses the fact that in an $F_N$-tree $T$, we have $d_T(x,gx)=2d_T(x,C_T(g))+||g||_T$ (see \cite[1.3]{CM87}), and the continuity of translation lengths in the equivariant Gromov-Hausdorff topology.
\end{proof}

Let $(T_n)_{n\in\mathbb{N}}$ be a sequence of $F_N$-trees. A sequence $(x_n)_{n\in\mathbb{N}}\in\prod_{n\in\mathbb{N}}T_n$ is \emph{bounded} if for all $g\in F_N$, the distance $d_{T_n}(x_n,gx_n)$ is bounded.

\begin{prop}\label{bounded-sequence}
Let $T$ be a very small $F_N$-tree, and let $(T_n)_{n\in\mathbb{N}}$ be a sequence of $F_N$-trees that converges to $T$ in the equivariant Gromov-Hausdorff topology. A sequence $(x_n)_{n\in\mathbb{N}}\in\prod_{n\in\mathbb{N}}T_n$ is bounded if and only if there exist $x\in T$, exhaustions of $T$ and $F_N$, and an approximation $(x'_n)_{n\in\mathbb{N}}\in\prod_{n\in\mathbb{N}}T_n$ of $x$ relative to these exhaustions, such that $d_{T_n}(x_n,x'_n)$ is bounded.
\end{prop}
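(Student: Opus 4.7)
Here is my plan.

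\textbf{The easy direction ($\Leftarrow$).} Assume $(x'_n)$ is an approximation of some $x \in T$ relative to exhaustions $(K^k)$, $(P^k)$, and $d_{T_n}(x_n, x'_n) \le C$ for all $n$. For $g \in F_N$, write
\[
d_{T_n}(x_n, gx_n) \le 2C + d_{T_n}(x'_n, gx'_n).
\]
Enlarging the exhaustion if necessary so that $g \in P^k$ for all $k$ past some threshold, pick an approximation $(z_n)$ of $gx$ relative to the same exhaustions; Lemma \ref{Gromov} yields $d_{T_n}(gx'_n, z_n) \to 0$ and $d_{T_n}(x'_n, z_n) \to d_T(x, gx)$, so $d_{T_n}(x'_n, gx'_n)$ converges (and is bounded). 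Hence $(x_n)$ is bounded.

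\textbf{The hard direction ($\Rightarrow$).} Assume $L_g := \sup_n d_{T_n}(x_n, gx_n) < \infty$ for every $g \in F_N$. The plan is to locate a single point $x \in T$ that serves as an ``asymptotic center'' for the sequence $(x_n)$, and show that any approximation of $x$ stays at bounded distance from $x_n$. Pick two hyperbolic elements $g, h \in F_N$ in $T$ whose axes $C_T(g)$ and $C_T(h)$ are disjoint; such a pair exists when $T$ is minimal and nontrivial (any two elements whose axes shared a nontrivial subarc would have that subarc stabilized by a non-cyclic subgroup, contradicting the hypothesis that $T$ is very small, unless they commute). For $n$ large, $g, h$ remain hyperbolic in $T_n$ with translation lengths close to the ones in $T$, and from the formula $d_{T_n}(y, gy) = \|g\|_{T_n} + 2 d_{T_n}(y, C_{T_n}(g))$ we obtain a constant $M$, independent of $n$, with $x_n \in \mathcal{N}_M(C_{T_n}(g)) \cap \mathcal{N}_M(C_{T_n}(h))$.

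\textbf{Controlling the intersection region.} The crucial step is a uniform bound, in $n$, on the diameter of $\mathcal{N}_M(C_{T_n}(g)) \cap \mathcal{N}_M(C_{T_n}(h))$. By Lemma \ref{intersection-nbd} this diameter is governed by the overlap $C_{T_n}(g) \cap C_{T_n}(h)$ (or by the length of the bridge between the axes, if they are disjoint). Applying the Gromov-Hausdorff convergence to the elements $g, h$ and to products such as $gh$, $gh^{-1}$, or the commutator $[g,h]$, whose translation lengths in a tree encode precisely how the axes of $g$ and $h$ are positioned, one obtains such a uniform bound $D$: in $T$ the bridge and overlap are finite and vanishing respectively, so by continuity of translation lengths these quantities stay bounded in $T_n$. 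Choose now a point $x \in \mathcal{N}_M(C_T(g)) \cap \mathcal{N}_M(C_T(h)) \subset T$ (for instance on the bridge between the two axes), fix exhaustions with $x \in K^0$ and $g, h \in P^0$, and let $(x'_n)$ be an approximation of $x$. By the third bullet of Lemma \ref{Gromov}, for $n$ large $x'_n \in \mathcal{N}_{M+1}(C_{T_n}(g)) \cap \mathcal{N}_{M+1}(C_{T_n}(h))$, so that $d_{T_n}(x_n, x'_n) \le 2(M+1) + D$, which is exactly the required bound.

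\textbf{Main obstacle.} The delicate point is producing the uniform bound $D$: the trees $T_n$ need not be very small themselves, so one cannot appeal directly to the cyclic-arc-stabilizer argument inside $T_n$. Instead, the bound must be extracted from the convergence of translation lengths of well-chosen elements (most naturally $g, h$, a commutator, and a product), together with the tree-geometric identities translating these numbers into the length of the overlap/bridge. Once this quantitative input is in place, the rest of the argument is a direct combination of Lemmas \ref{Gromov} and \ref{intersection-nbd}.
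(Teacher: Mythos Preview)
Your easy direction is correct and matches the paper. Your hard direction has the right architecture---control both $x_n$ and an approximation $x'_n$ inside the intersection of neighborhoods of two characteristic sets, then invoke Lemma~\ref{intersection-nbd}---but the way you set it up introduces two unnecessary difficulties, one of which is an actual error.

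First, the justification you give for finding hyperbolic $g,h$ with \emph{disjoint} axes is wrong: two hyperbolic isometries whose axes overlap on a segment do \emph{not} stabilize that segment (each translates along it), so no contradiction with very-smallness arises. Disjointness of axes is also not what you need. What matters is only that the overlap $C_{T_n}(g)\cap C_{T_n}(h)$ stays bounded in $n$, and for this the paper takes the more direct route: choose $a,b\in F_N$ with $[a,b]$ hyperbolic in $T$ (this is exactly the irreducibility of $T$; cf.\ the remark following the proposition). Then for large $n$ the commutator $[a,b]$ is also hyperbolic in $T_n$, and by \cite[1.10]{CM87} this forces $C_{T_n}(a)\cap C_{T_n}(b)$ to have length at most $\|a\|_{T_n}+\|b\|_{T_n}$, which is bounded by convergence of translation lengths. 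This is precisely the bound $D$ you identify as the ``main obstacle'', and you even list the commutator among the elements one might use---but you treat it as one option among several rather than as the decisive input. With this choice the obstacle evaporates: no delicate decoding of axis configurations from lengths of several products is needed.

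Second, the paper does not need to place $x$ in any special position: \emph{any} $x\in T$ works, with $M'$ chosen so that $x\in\mathcal{N}_{M'}(C_T(a))\cap\mathcal{N}_{M'}(C_T(b))$, and then Lemma~\ref{Gromov} puts $x'_n$ in the $(M'+1)$-neighborhood of both axes in $T_n$. Both $x_n$ and $x'_n$ then lie in a neighborhood of a set of uniformly bounded diameter, and Lemma~\ref{intersection-nbd} finishes. Your choice of $x$ on the bridge is harmless but not needed.
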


In particular, Proposition \ref{bounded-sequence} shows the existence of bounded sequences in any converging sequence of very small $F_N$-trees. Note that its proof is not specific to the case of $F_N$-trees, and only requires the tree $T$ to be \emph{irreducible}, i.e. there exist two hyperbolic isometries in $T$ whose commutator is also hyperbolic in $T$.

\begin{proof}
First assume that there exists $x\in T$, exhaustions of $T$ and $F_N$, and an approximation $(x'_n)_{n\in\mathbb{N}}\in\prod_{n\in\mathbb{N}}T_n$ of $x$ relative to these exhaustions, such that $d_{T_n}(x_n,x'_n)$ is bounded. It follows from the first two assertions of Lemma \ref{Gromov} that for all $g\in F_N$, the distance $d_{T_n}(x'_n,gx'_n)$ is bounded. The triangular inequality, together with the fact that the $F_N$-action on $T_n$ is isometric for all $n\in\mathbb{N}$, implies that $d_{T_n}(x_n,gx_n)$ is bounded.

Conversely, assume that for all $g\in F_N$, the distance $d_{T_n}(x_n,gx_n)$ is bounded. Let $x\in T$, and let $(x'_n)_{n\in\mathbb{N}}\in\prod_{n\in\mathbb{N}}T_n$ be an approximation of $x$ in the trees $T_n$ relative to some exhaustions (without loss of generality, we can assume that for all $g\in F_N$, there exists $k\in\mathbb{N}$ such that $gx\in K^k$). Let $a,b\in F_N$ be such that the commutator $[a,b]$ is hyperbolic in $T$. Using \cite[1.3]{CM87}, we can find $M\in\mathbb{R}$ such that for all $n\in\mathbb{N}$, we have $x_n\in\mathcal{N}_M(C_{T_n}(a))\cap\mathcal{N}_M(C_{T_n}(b))$. Let $M'\in\mathbb{R}$ be such that $x\in\mathcal{N}_{M'}(C_T(a))\cap\mathcal{N}_{M'}(C_T(b))$. For $n\in\mathbb{N}$ sufficiently large, we have $x'_n\in \mathcal{N}_{M'+1}(C_{T_n}(a))\cap\mathcal{N}_{M'+1}(C_{T_n}(b))$ (Lemma \ref{Gromov}). As $[a,b]$ is hyperbolic in $T$, it is also hyperbolic in $T_n$ for all $n$ sufficiently large, and as $||a||_{T_n}$ and $||b||_{T_n}$ are bounded, this implies that the intersection $C_{T_n}(a)\cap C_{T_n}(b)$ has bounded length. By Proposition \ref{intersection-nbd}, both $x_n$ and $x'_n$ lie in a neighborhood of $C_{T_n}(a)\cap C_{T_n}(b)$ (or of any point in the bridge between $C_{T_n}(a)$ and $C_{T_n}(b)$) in $T_n$ of bounded diameter, so $d_{T_n}(x_n,x'_n)$ is bounded. 
\end{proof}

\subsection{Limits of Lipschitz $F_N$-equivariant maps between very small $F_N$-trees}

Given an $\mathbb{R}$-tree $T$, recall that $\overline{T}$ denotes the metric completion of $T$. We aim at showing the following result. 

\begin{theo} \label{limits}
Let $T$ and $T'$ be two very small $F_N$-trees, let $(T_n)_{n\in\mathbb{N}}$ (resp. $(T'_n)_{n\in\mathbb{N}}$) be a sequence of $F_N$-trees converging to $T$ (resp. $T'$) in the equivariant Gromov-Hausdorff topology, and let $(M_n)_{n\in\mathbb{N}}$ be a sequence of real numbers, satisfying $M:=\liminf_{n\to +\infty} M_n<+\infty$. Assume that for all $n\in\mathbb{N}$, there exists an $M_n$-Lipschitz $F_N$-equivariant map $f_n:T_n\to T'_n$. Then there exists an $M$-Lipschitz $F_N$-equivariant map $f:T\to \overline{T'}$.
\end{theo}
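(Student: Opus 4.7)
The argument is a compactness-plus-diagonal extraction which builds the map $f$ first on a countable dense equivariant set and then extends by continuity. After passing to a subsequence I may assume $M_n\to M$. I fix a countable $F_N$-invariant dense subset $D=F_N\cdot\{x_k\}_{k\ge 1}\subseteq T$, and for each generator $x_k$ an approximation $(x_{k,n})_n$ of $x_k$ in the trees $T_n$, extended equivariantly so that the approximation of $gx_k$ is $(gx_{k,n})_n$. Combining $F_N$-equivariance, the Lipschitz bound, and Lemma \ref{Gromov} yields
\begin{displaymath}
d_{T'_n}(f_n(x_{k,n}),gf_n(x_{k,n}))=d_{T'_n}(f_n(x_{k,n}),f_n(gx_{k,n}))\le M_n\,d_{T_n}(x_{k,n},gx_{k,n})\xrightarrow[n\to\infty]{}M\,d_T(x_k,gx_k),
\end{displaymath}
so each image sequence $(f_n(x_{k,n}))_n$ is bounded in $\prod_n T'_n$ in the sense of Section \ref{sec-Gromov}.

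The heart of the proof is to extract from each such bounded sequence a well-defined limit point of $\overline{T'}$. I would fix a countable dense $F_N$-invariant subset $D'\subseteq T'$ together with a chosen approximation for each of its points, and perform a diagonal extraction over $D\times D'$ so that for every $k$ and every $y\in D'$ the distance $d_{T'_n}(f_n(x_{k,n}),y_n)$ converges to a limit $\delta_k(y)\ge 0$. The function $\delta_k$ extends by density to a $1$-Lipschitz map $T'\to\mathbb{R}$. Boundedness of $(f_n(x_{k,n}))$ together with Proposition \ref{bounded-sequence} shows that $\delta_k$ stays within a bounded additive shift of the distance to a fixed point of $T'$, and the four-point condition characterising $\mathbb{R}$-trees, which passes to the limit under equivariant Gromov-Hausdorff convergence, forces $\delta_k$ itself to be the distance function to a unique point $p_k$ of the completion $\overline{T'}$. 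I set $f(x_k):=p_k$ and extend equivariantly to all of $D$, using equivariance of the chosen approximations to guarantee consistency.

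Once $f$ is defined on $D$, the Lipschitz estimate passes to the limit via the same characterisation:
\begin{displaymath}
d_{\overline{T'}}(f(x),f(y))=\lim_n d_{T'_n}(f_n(x_n),f_n(y_n))\le \lim_n M_n\,d_{T_n}(x_n,y_n)=M\,d_T(x,y),
\end{displaymath}
and equivariance is immediate from $f_n(gx_n)=gf_n(x_n)$ combined with uniqueness of $p_k$. Since $D$ is dense in $T$, $f|_D$ is $M$-Lipschitz and $F_N$-equivariant, and $\overline{T'}$ is complete, $f$ extends uniquely to an $M$-Lipschitz $F_N$-equivariant map $T\to\overline{T'}$.

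The main obstacle is precisely the identification of the ``limit'' of a bounded sequence across the distinct trees $T'_n$ with an honest point of $\overline{T'}$: the points $f_n(x_{k,n})$ live in \emph{a priori} unrelated metric spaces, so one must package their convergence through the collection of distance functions to approximating sequences in $T'$ and then invoke the tree axioms at the limit. That this limit generically lands in $\overline{T'}$ rather than in $T'$ itself is exactly what allows the images to follow sequences whose natural limit points have been added only upon taking the metric completion.
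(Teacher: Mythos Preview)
Your overall architecture is reasonable and the reduction to a countable dense set plus diagonal extraction is sound, but the crucial step --- identifying the limiting distance function $\delta_k$ with the distance to a point of $\overline{T'}$ --- does not go through as stated. The four-point condition, passed to the limit, only tells you that the numbers $\delta_k(y_1),\delta_k(y_2),\delta_k(y_3)$ together with the pairwise distances $d_{T'}(y_i,y_j)$ embed isometrically in \emph{some} $\mathbb{R}$-tree; it does not force the ``missing'' point to lie in $\overline{T'}$. A concrete obstruction: if $T'$ has a vertex $v$ with infinitely many distinct directions and the images $f_n(x_{k,n})$ visit these directions in turn at fixed distance $1$ from $v$, then after extraction $\delta_k(y)=1+d_{T'}(v,y)$ for every $y\in T'$, which is not the distance function of any point of $\overline{T'}$. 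Very small $F_N$-trees with dense orbits typically have infinite local branching, so this phenomenon is not exotic.

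What is missing is precisely a projection step. The paper handles this by packaging the limit into an ultralimit: one forms $T_\omega:=\lim_\omega(T_n,p_n)$ and $T'_\omega:=\lim_\omega(T'_n,f_n(p_n))$, obtains an $M$-Lipschitz equivariant map $f_\omega:T_\omega\to T'_\omega$ tautologically, and then uses Proposition~\ref{GHvsUL} to embed $T\hookrightarrow T_\omega$ and $\overline{T'}\hookrightarrow T'_\omega$ as closed invariant subtrees. The desired map is the composition $T\hookrightarrow T_\omega\xrightarrow{f_\omega}T'_\omega\xrightarrow{\pi}\overline{T'}$, where $\pi$ is closest-point projection onto the closed subtree $\overline{T'}$, which is $1$-Lipschitz because we are in an $\mathbb{R}$-tree. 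Your argument can be repaired along the same lines: define $p_k\in\overline{T'}$ as the unique minimiser of $\delta_k$ (existence and uniqueness use completeness and the tree inequalities you have), and then prove directly that $x_k\mapsto p_k$ is $M$-Lipschitz --- but carrying this out amounts to redoing the ultralimit-plus-projection argument by hand, and the Lipschitz estimate for the projection is the part you have not supplied.
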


Again, Theorem \ref{limits} can be generalized to more general contexts. We only need to require the existence of hyperbolic isometries whose commutator is again hyperbolic in the trees $T$ and $T'$.

\begin{rk} \label{rk-completion}
It is not true in general that we can find an $M$-Lipschitz $F_N$-equivariant map $f:T\to T'$ without passing to the completion, see Example \ref{completion-needed}. However, this is possible in some particular cases, for example if the tree $T$ is simplicial. Indeed, in this case, one can always slightly move the $f$-image of a vertex in $T$ to make it lie in $T'$ without increasing the Lipschitz constant of $f$ (no element of $F_N$ fixes a point in $\overline{T'}\smallsetminus T'$), and tighten $f$ on the edges of $T$ to make the image $f(T)$ entirely lie in $T'$ (which again does not increase the Lipschitz constant of $f$). 
\end{rk}

Our proof of Theorem \ref{limits} uses the theory of ultralimits of metric spaces. Given a nonprincipal ultrafilter $\omega$ on $\mathbb{N}$, we first show that if $(T_n)_{n\in\mathbb{N}}$ is a sequence of very small $F_N$-trees converging to $T$, and $(p_n)_{n\in\mathbb{N}}\in\prod_{n\in\mathbb{N}} T_n$ is a bounded sequence, then the $\omega$-limit of the pointed metric spaces $(T_n,p_n)$ is a complete $F_N$-tree $T_{\omega}$, which contains an isometrically embedded copy of $T$ (hence of $\overline{T}$) as an $F_N$-invariant subtree (Proposition \ref{GHvsUL}). Taking the ultralimit of the maps $f_n$ provides a Lipschitz $F_N$-equivariant map $f_{\omega}:T_{\omega}\to T'_{\omega}$. We get the desired map $f:T\to\overline{T'}$ by precomposing $f_{\omega}$ with the embedding $T\hookrightarrow T_{\omega}$, and postcomposing it with the projection $T'_{\omega}\to \overline{T'}$.
\\
\\
\indent We start by recalling the construction of ultralimits of metric spaces and maps between them. We refer the reader to \cite[Chapter 9]{Kap09} for an introduction to this topic. A \emph{nonprincipal ultrafilter} on the set $\mathbb{N}$ of natural numbers is a map $\omega:2^{\mathbb{N}}\to\{0,1\}$ such that
\begin{itemize}
\item for all $A,B\subseteq\mathbb{N}$, we have $\omega(A\cup B)=\omega(A)+\omega(B)-\omega(A\cap B)$, and
\item we have $\omega(\emptyset)=0$ and $\omega(\mathbb{N})=1$, and
\item for all finite sets $A\subseteq\mathbb{N}$, we have $\omega(A)=0$.
\end{itemize}

\noindent The existence of nonprincipal ultrafilters follows from the axiom of choice. We fix once and for all such a nonprincipal ultrafilter $\omega$ on $\mathbb{N}$. Given a sequence $(x_n)_{n\in\mathbb{N}}\in\mathbb{R}^{\mathbb{N}}$, there exists a unique $x_{\omega}\in\overline{\mathbb{R}}:=\mathbb{R}\cup\{\pm\infty\}$ such that for every neighborhood $U$ of $x_{\omega}$ in $\overline{\mathbb{R}}$, we have $\omega(\{n\in\mathbb{N}|x_n\in U\})=1$. We call $x_{\omega}$ the \emph{$\omega$-limit} of the sequence $(x_n)_{n\in\mathbb{N}}$, and denote it by $\lim_{\omega}x_n$.

Let $((X_n,d_n,p_n))_{n\in\mathbb{N}}$ be a sequence of pointed metric spaces, and let 

\begin{displaymath}
X:=\{(x_n)_{n\in\mathbb{N}}\in\prod_{n\in\mathbb{N}} X_n|\lim_{\omega}d_n(x_n,p_n)<+\infty\}.
\end{displaymath}

\noindent Define a pseudo-metric on $X$ by $d_{\omega}((x_n)_{n\in\mathbb{N}},(y_n)_{n\in\mathbb{N}}):=\lim_{\omega}d_n(x_n,y_n)\in [0,+\infty)$. The \emph{$\omega$-limit} of the pointed metric spaces $(X_n,p_n)$, denoted by $\lim_{\omega}(X_n,p_n)$, is defined to be the Hausdorff quotient of $X$ for this pseudo-metric. It is a well-known fact that the $\omega$-limit of any sequence of pointed metric spaces is complete.

\indent The class of $\mathbb{R}$-trees is closed under taking ultralimits (see \cite[Lemma 4.6]{Sta07}, for instance). Let $(T_n)_{n\in\mathbb{N}}$ be a sequence of $F_N$-trees converging to a very small $F_N$-tree $T$ in the equivariant Gromov-Hausdorff topology, and let $(p_n)_{n\in\mathbb{N}}$ be a bounded sequence. Whenever a sequence $(x_n)_{n\in\mathbb{N}}\in\prod_{n\in\mathbb{N}} T_n$ is such that the distance $d_{T_n}(x_n,p_n)$ is bounded, then the distance $d_{T_n}(gx_n,p_n)\le d_{T_n}(gx_n,gp_n)+d_{T_n}(gp_n,p_n)$ is also bounded. Hence there is a natural isometric $F_N$-action on $T_{\omega}$ defined by $g(x_n)_{n\in\mathbb{N}}=(gx_n)_{n\in\mathbb{N}}$. From now on, whenever an $\mathbb{R}$-tree $T_{\omega}$ is obtained as an ultralimit of a converging sequence of $F_N$-trees (in the equivariant Gromov-Hausdorff topology) with respect to a bounded sequence, we will equip it with the $F_N$-action described above. 

\begin{prop} \label{GHvsUL}
Let $(T_n)_{n\in\mathbb{N}}$ be a sequence of $F_N$-trees, converging in the equivariant Gromov-Hausdorff topology to a very small $F_N$-tree $T$. Let $(p_n)_{n\in\mathbb{N}}\in\prod_{n\in\mathbb{N}} T_n$ be a bounded sequence, and denote by $T_{\omega}$ the $\omega$-limit of $(T_n,p_n)_{n\in\mathbb{N}}$. Then $\overline{T}$ isometrically embeds into $T_{\omega}$ as a closed $F_N$-invariant subtree.
\end{prop}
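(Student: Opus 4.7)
The plan is to construct an isometric, $F_N$-equivariant embedding $\iota:T\to T_\omega$ by taking ultralimits of equivariant Gromov--Hausdorff approximations, and then extend to $\overline{T}$ using completeness of $T_\omega$.

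First, fix exhaustions $(K^k)_{k\in\mathbb{N}}$ of $T$ by finite subsets and $(P^k)_{k\in\mathbb{N}}$ of $F_N$ by finite subsets, as in the construction preceding Lemma \ref{Gromov}. For each $x\in T$, choose once and for all an approximation $(x_n)_{n\in\mathbb{N}}\in\prod_{n\in\mathbb{N}}T_n$ of $x$ relative to these exhaustions. Applying the first bullet of Lemma \ref{Gromov} with $y=gx$ for various $g\in F_N$ gives $d_{T_n}(x_n,gx_n)\to d_T(x,gx)$, so $(x_n)_{n\in\mathbb{N}}$ is a bounded sequence. Since $(p_n)_{n\in\mathbb{N}}$ is bounded by hypothesis, the argument from the second paragraph of the proof of Proposition \ref{bounded-sequence} (apply it to a pair $a,b\in F_N$ with $[a,b]$ hyperbolic in $T$, combine with Lemma \ref{intersection-nbd} and the very small hypothesis on $T$ to bound $C_{T_n}(a)\cap C_{T_n}(b)$) shows that $d_{T_n}(x_n,p_n)$ is bounded in $n$. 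Hence $\lim_\omega d_{T_n}(x_n,p_n)<+\infty$, so $(x_n)$ represents a well-defined element $\iota(x)\in T_\omega$.

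The first bullet of Lemma \ref{Gromov} applied to two points $x,y\in T$ with their chosen approximations $(x_n),(y_n)$ yields $\lim_\omega d_{T_n}(x_n,y_n)=d_T(x,y)$, showing that $\iota$ is isometric. Equivariance of $\iota$ follows from the second bullet of Lemma \ref{Gromov}: for any $g\in F_N$, letting $(z_n)$ denote the chosen approximation of $gx$, we have $d_{T_n}(gx_n,z_n)\to 0$, so $g\cdot\iota(x)=[(gx_n)]=[(z_n)]=\iota(gx)$ in $T_\omega$.

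Since $T_\omega$ is complete (every ultralimit of metric spaces is) and $\iota$ is an isometric embedding, it extends uniquely to an isometric embedding $\overline{\iota}:\overline{T}\to T_\omega$. This extension remains $F_N$-equivariant by continuity, since the $F_N$-action on $T_\omega$ is by isometries. The image $\overline{\iota}(\overline{T})$ is closed in $T_\omega$ because $\overline{T}$ is complete and $\overline{\iota}$ is isometric, and it is a subtree of $T_\omega$ because any isometric embedding between $\mathbb{R}$-trees sends the arc $[x,y]$ to the arc $[\overline{\iota}(x),\overline{\iota}(y)]$, so the image is convex. The one genuinely nontrivial step is verifying that approximating sequences truly land in $T_\omega$: this is precisely where the very small hypothesis on $T$ enters, ensuring via axis intersection bounds that any two bounded sequences are at finite $\omega$-distance from each other and hence from the basepoint $(p_n)$.
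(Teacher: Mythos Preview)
Your proof is correct and follows essentially the same route as the paper's. The only cosmetic difference is that the paper invokes Proposition~\ref{bounded-sequence} directly---producing an approximation $(p'_n)$ of some $p\in T$ with $d_{T_n}(p_n,p'_n)$ bounded, and then comparing any $(x_n)$ to $(p'_n)$ via Lemma~\ref{Gromov}---whereas you first observe that $(x_n)$ is itself bounded and then reuse the axis-intersection argument from inside the proof of Proposition~\ref{bounded-sequence} to compare $(x_n)$ and $(p_n)$ directly; you also spell out the routine verifications (closedness, convexity of the image) that the paper leaves implicit.
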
 

\begin{proof}
Using Proposition \ref{bounded-sequence}, we can find an approximation $(p'_n)_{n\in\mathbb{N}}\in\prod_{n\in\mathbb{N}} T_n$ of some point $p\in T$ such that $d_{T_n}(p_n,p'_n)$ is bounded. For $x\in T$, let $(x_n)_{n\in\mathbb{N}}\in\prod_{n\in\mathbb{N}} T_n$ be an approximation of $x$ with respect to the same exhaustions as those used to define the approximation $(p'_n)_{n\in\mathbb{N}}$ of $p$ (we can assume that $x\in K^k$ for all $k\in\mathbb{N}$). By Lemma \ref{Gromov}, the distance $d_{T_n}(x_n,p'_n)$ is bounded. The triangle inequality then implies that $d_{T_n}(x_n,p_n)$ is bounded, so we get a map   

\begin{center}
$\begin{array}{cccc}
\psi:&T&\to&T_{\omega}\\
&x&\mapsto & (x_n)_{n\in\mathbb{N}}
\end{array}$.
\end{center}

The first assertion of Lemma \ref{Gromov} shows the map $\psi$ is an isometric embedding, and the second shows that $\psi$ is $F_N$-equivariant. In particular, the tree $T$ isometrically embeds as an $F_N$-invariant subtree in $T_{\omega}$. The $\mathbb{R}$-tree $T_{\omega}$ is complete, so the completion $\overline{T}$ also isometrically embeds as a (closed) $F_N$-invariant subtree of $T_{\omega}$.
\end{proof}

Let $M\in\mathbb{R}$, and let $(X_n,d_n,p_n)$ and $(X'_n,d'_n,p'_n)$ be two sequences of pointed metric spaces, together with $M$-Lipschitz maps $f_n:X_n\to X'_n$. Assume that for all sequences $(x_n)_{n\in\mathbb{N}}\in\prod_{n\in\mathbb{N}} X_n$ such that $d_n(p_n,x_n)$ is bounded, we have $\lim_{\omega}d'_n(p'_n,f_n(x_n))<+\infty$. Then we can define a map $f_{\omega}:X_{\omega}\to X'_{\omega}$ by setting $f_{\omega}((x_n)_{n\in\mathbb{N}}):=(f_n(x_n))_{n\in\mathbb{N}}$. This applies for example to the case where $p'_n=f_n(p_n)$ for all $n\in\mathbb{N}$. The map $f_{\omega}$ is also $M$-Lipschitz (this applies more generally to the case where the maps $f_n$ are $M_n$-Lipschitz with $\lim_{\omega} M_n=M$). 

\begin{proof}[Proof of Theorem \ref{limits}]
Up to passing to a subsequence, we may assume that the sequence $(M_n)_{n\in\mathbb{N}}$ converges to $M$. Let $(p_n)_{n\in\mathbb{N}}\in\prod_{n\in\mathbb{N}} T_n$ be a bounded sequence, and for all $n\in\mathbb{N}$, let $q_n:=f_n(p_n)$, then $(q_n)_{n\in\mathbb{N}}$ is bounded. Letting $T_{\omega}:=\lim_{\omega}(T_n,p_n)$ and $T'_{\omega}:=\lim_{\omega}(T'_n,q_n)$, we thus get an $F_N$-equivariant $M$-Lipschitz map $f_{\omega}:T_{\omega}\to T'_{\omega}$ by setting $f_{\omega}((x_n)_{n\in\mathbb{N}}):=(f_n(x_n))_{n\in\mathbb{N}}$ for all $(x_n)_{n\in\mathbb{N}}\in T_{\omega}$. By Proposition \ref{GHvsUL}, the tree $T$ (resp. $\overline{T'}$) isometrically embeds in $T_{\omega}$ (resp. $T'_{\omega}$) as an $F_N$-invariant subtree. Denote by $i:T\hookrightarrow T_{\omega}$ the inclusion map (which is obviously $F_N$-equivariant and $1$-Lipschitz), and by $\pi:T'_{\omega}\to \overline{T'}$ the closest point projection, which is also easily seen to be $F_N$-equivariant and $1$-Lipschitz. The map $\pi\circ f_{\omega}\circ i:T\to \overline{T'}$ is the desired $M$-Lipschitz $F_N$-equivariant map from $T$ to $\overline{T'}$.
\end{proof}

\section{The case of trees with dense orbits} \label{sec-dense-case}

In this section, we prove our two main results (Theorems \ref{intro-equivalences} and \ref{intro-White}) in the case of trees with dense orbits.

\subsection{An easy inequality in the extension of White's theorem}

Given $T,T'\in\overline{cv_N}$, we define $\text{Lip}(T,T')$ to be the infimum of a Lipschitz constant of an $F_N$-equivariant map $f:T\to \overline{T'}$ if such a map exists, and $\text{Lip}(T,T')=+\infty$ otherwise. We define ${\Lambda}(T,T'):=\sup_{g\in F_N}\frac{||g||_{T'}}{||g||_T}$ (where we take the conventions $\frac{0}{0}=0$ and $\frac{1}{0}=+\infty$). Given a subset $\mathcal{C}\subseteq F_N$, we define ${\Lambda}_{\mathcal{C}}(T,T'):=\sup_{g\in\mathcal{C}}\frac{||g||_{T'}}{||g||_T}$ (in particular, we have ${\Lambda}_{F_N}(T,T')={\Lambda}(T,T')$). Given a map $f$ between $\mathbb{R}$-trees, we denote by $\text{Lip}(f)$ the Lipschitz constant of $f$. We start by recalling the proof of the following inequality, which shows in particular that strong domination implies weak domination.

\begin{prop} \label{easy-comparison}
For all $T,T'\in\overline{cv_N}$, we have ${\Lambda}(T,T')\le\text{Lip}(T,T')$. In particular, for all $\mathcal{C}\subseteq F_N$, we have ${\Lambda}_{\mathcal{C}}(T,T')\le\text{Lip}(T,T')$.
\end{prop}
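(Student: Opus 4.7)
The plan is to reduce the inequality to the standard observation that a Lipschitz $F_N$-equivariant map between $F_N$-trees cannot increase translation lengths by more than its Lipschitz constant. If $\text{Lip}(T,T')=+\infty$ there is nothing to prove, so I will fix some $L\ge\text{Lip}(T,T')$ and an $F_N$-equivariant $L$-Lipschitz map $f:T\to\overline{T'}$, and show that $||g||_{T'}\le L||g||_T$ for every $g\in F_N$ (interpreted with the stated conventions). Taking the supremum over $g\in F_N$ and then the infimum over $L$ will yield $\Lambda(T,T')\le\text{Lip}(T,T')$, and the statement for an arbitrary subset $\mathcal{C}\subseteq F_N$ is then immediate since $\Lambda_{\mathcal{C}}\le\Lambda$.

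First, suppose $g$ is hyperbolic in $T$, i.e.\ $||g||_T>0$. Pick a point $x$ on the axis $C_T(g)$, so that $d_T(x,gx)=||g||_T$. By $F_N$-equivariance of $f$ and the Lipschitz bound,
\[ d_{\overline{T'}}(f(x),gf(x))=d_{\overline{T'}}(f(x),f(gx))\le L\,d_T(x,gx)=L||g||_T. \]
Since the translation length is the infimum of displacement, this gives $||g||_{\overline{T'}}\le L||g||_T$. The $F_N$-action on $\overline{T'}$ extends the action on $T'$, and translation lengths are preserved under completion, so $||g||_{T'}=||g||_{\overline{T'}}\le L||g||_T$, as desired.

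Next, suppose $g$ is elliptic in $T$, i.e.\ $||g||_T=0$. As recalled in Section~\ref{sec-CVn}, in any $F_N$-tree an element with zero translation length actually fixes a point; pick $x\in T$ with $gx=x$. By equivariance, $f(x)\in\overline{T'}$ is fixed by $g$, hence $||g||_{T'}=||g||_{\overline{T'}}=0$. Thus the ratio $\frac{||g||_{T'}}{||g||_T}$ falls under the convention $\frac{0}{0}=0$ and causes no issue; in particular the convention $\frac{1}{0}=+\infty$ is never triggered, since $||g||_T=0$ forces $||g||_{T'}=0$.

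There is really no obstacle here; this is the easy direction of Theorem~\ref{intro-White}. The only point worth care is the handling of the conventions on $\frac{0}{0}$ and $\frac{1}{0}$, which reduces to verifying that no element of $F_N$ can be elliptic in $T$ while being hyperbolic in $T'$. This is guaranteed by the fact that very small $F_N$-trees have the property that zero translation length implies the existence of a fixed point, together with the equivariance of $f$. The harder direction, which establishes the reverse inequality and which the rest of the paper is devoted to, is not addressed here.
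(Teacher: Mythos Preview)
Your proof is correct and follows essentially the same approach as the paper: pick a point $x\in C_T(g)$, use equivariance and the Lipschitz bound to get $||g||_{T'}\le\text{Lip}(f)\,||g||_T$, and pass to the infimum. The paper treats the elliptic and hyperbolic cases uniformly by taking $x\in C_T(g)$ (so that $d_T(x,gx)=||g||_T$ in both cases), whereas you split into cases and discuss the conventions explicitly; the content is the same.
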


\begin{proof}
Let $T, T'\in\overline{cv_N}$. Assume that $\text{Lip}(T,T')<+\infty$ (otherwise the claim is obvious), and let $f:T\to \overline{T'}$ be a Lipschitz $F_N$-equivariant map. Let $g\in F_N$, and let $x\in C_T(g)$. Then

\begin{displaymath}
\begin{array}{rl}
||g||_{T'}&\le d_{\overline{T'}}(f(x),gf(x))\\
&\le \text{Lip}(f) d_T(x,gx)\\
&= \text{Lip}(f) ||g||_T,
\end{array}
\end{displaymath}
 
\noindent so for all $g\in F_N$, we have $\frac{||g||_{T'}}{||g||_T}\le \text{Lip}(f)$. The claim follows.
\end{proof}

\subsection{Extending White's theorem to trees with dense orbits}

Given $\mathcal{C}\subseteq F_N$, we say that a tree $T\in\overline{cv_N}$ \emph{satisfies White's theorem relatively to $\mathcal{C}$} if for all $T'\in\overline{cv_N}$, we have $\text{Lip}(T,T')={\Lambda}_{\mathcal{C}}(T,T')$.

\begin{prop}\label{blueprint-1}
Let $\mathcal{C}\subseteq F_N$, and $T\in\overline{cv_N}$. If $T$ admits a Lipschitz approximation by a sequence of trees $T_n\in\overline{cv_N}$ which all satisfy White's theorem relatively to $\mathcal{C}$, then $T$ satisfies White's theorem relatively to $\mathcal{C}$.
\end{prop}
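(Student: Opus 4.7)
The plan is to combine Proposition \ref{easy-comparison} with Theorem \ref{limits}, using the Lipschitz approximation to transfer the relevant bounds from the $T_n$ down to $T$. Since Proposition \ref{easy-comparison} already yields $\Lambda_{\mathcal{C}}(T,T') \le \text{Lip}(T,T')$ for every $T' \in \overline{cv_N}$, only the reverse inequality needs attention; we may further assume $\Lambda := \Lambda_{\mathcal{C}}(T,T') < +\infty$, as otherwise there is nothing to prove.

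The first step is to bound $\Lambda_{\mathcal{C}}(T_n,T')$ uniformly in $n$. The Lipschitz approximation provides $1$-Lipschitz $F_N$-equivariant maps $f_n : T_n \to T$, so Proposition \ref{easy-comparison} gives $\Lambda(T_n, T) \le \text{Lip}(T_n, T) \le 1$; in particular $||g||_T \le ||g||_{T_n}$ for every $g \in F_N$. With the conventions $0/0 = 0$ and $1/0 = +\infty$, this forces $||g||_{T'}/||g||_{T_n} \le ||g||_{T'}/||g||_T$ for every $g \in \mathcal{C}$, whence $\Lambda_{\mathcal{C}}(T_n,T') \le \Lambda$ for every $n$. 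The hypothesis that each $T_n$ satisfies White's theorem relatively to $\mathcal{C}$ then promotes this to $\text{Lip}(T_n,T') = \Lambda_{\mathcal{C}}(T_n,T') \le \Lambda$, so for each $n$ I may choose an $F_N$-equivariant map $h_n : T_n \to \overline{T'}$ with $\text{Lip}(h_n) \le \Lambda + 1/n$.

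The second step is to pass to the limit. Since the axes topology and the equivariant Gromov-Hausdorff topology agree on $\overline{cv_N}$, the convergence $T_n \to T$ also holds in the Gromov-Hausdorff sense. Applying Theorem \ref{limits} to $(T_n)_{n\in\mathbb{N}}$ together with the constant target sequence $T'_n := T'$ and the maps $h_n$ produces an $F_N$-equivariant map $h : T \to \overline{T'}$ with $\text{Lip}(h) \le \liminf_n (\Lambda + 1/n) = \Lambda$. Consequently $\text{Lip}(T,T') \le \Lambda = \Lambda_{\mathcal{C}}(T,T')$, which combined with Proposition \ref{easy-comparison} gives the desired equality.

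The only subtlety worth addressing is that the maps $h_n$ take values in $\overline{T'}$, whereas the statement of Theorem \ref{limits} is phrased for maps $T_n \to T'_n$. This is not a real obstruction: in the ultralimit proof of Theorem \ref{limits}, the target enters only through the $\omega$-limit $T'_\omega$, which is already complete and into which $\overline{T'}$ embeds as an $F_N$-invariant closed subtree by Proposition \ref{GHvsUL}. Running the same construction verbatim with $(\overline{T'}, h_n(p_n))$ in place of $(T'_n, f_n(p_n))$, and then composing the resulting $f_\omega$ with the $1$-Lipschitz $F_N$-equivariant closest-point projection $T'_\omega \to \overline{T'}$, produces the map $h$ we need.
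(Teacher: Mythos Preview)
Your proof is correct and follows essentially the same route as the paper: use Proposition~\ref{easy-comparison} for one inequality, bound $\Lambda_{\mathcal{C}}(T_n,T')\le\Lambda_{\mathcal{C}}(T,T')$ via the $1$-Lipschitz maps $T_n\to T$, invoke White's theorem for $T_n$ to get $\text{Lip}(T_n,T')\le\Lambda_{\mathcal{C}}(T,T')$, and then apply Theorem~\ref{limits}. Your extra care about the target being $\overline{T'}$ rather than $T'$ is a point the paper glosses over, and your resolution via the ultralimit construction is perfectly sound.
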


\begin{proof}
Proposition \ref{easy-comparison} shows that ${\Lambda}_{\mathcal{C}}(T,T')\le\text{Lip}(T,T')$, and if ${\Lambda}_{\mathcal{C}}(T,T')=+\infty$, then the reverse inequality is obvious. We can thus assume that ${\Lambda}_{\mathcal{C}}(T,T')<+\infty$. As the trees $T_n$ satisfy White's theorem relatively to $\mathcal{C}$, for all $n\in\mathbb{N}$, we have 
\begin{displaymath}
\begin{array}{rl}
\text{Lip}(T_n,T')&= {\Lambda}_{\mathcal{C}}(T_n,T')\\
&\le {\Lambda}_{\mathcal{C}}(T_n,T){\Lambda}_{\mathcal{C}}(T,T')\\
&\le {\Lambda}_{\mathcal{C}}(T,T'),
\end{array}
\end{displaymath}

\noindent since it follows from Proposition \ref{easy-comparison} and the definition of a Lipschitz approximation that $\Lambda_{\mathcal{C}}(T_n,T)\le 1$. As ${\Lambda}_{\mathcal{C}}(T,T')<+\infty$, Theorem \ref{limits} thus shows the existence of a ${\Lambda}_{\mathcal{C}}(T,T')$-Lipschitz $F_N$-equivariant map from $T$ to $\overline{T'}$, hence $\text{Lip}(T,T')\le{\Lambda}_{\mathcal{C}}(T,T')$.
\end{proof}

\begin{cor}\label{White-dense}
Let $T,T'\in\overline{cv_N}$, and assume that $T$ has dense orbits. Then 

\begin{displaymath}
\text{Lip}(T,T')=\Lambda(T,T')={\Lambda}_{\mathcal{P}_N}(T,T').
\end{displaymath}
\end{cor}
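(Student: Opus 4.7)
The plan is to combine three ingredients already in place: the approximation result for trees with trivial arc stabilizers, White's theorem for free simplicial actions (in the form of Theorem \ref{Algom-Kfir}), and the limiting machinery encoded in Proposition \ref{blueprint-1}.

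First, since $T\in\overline{cv_N}$ has dense orbits, Lemma \ref{dense-arcs} guarantees that all arc stabilizers in $T$ are trivial. Hence Theorem \ref{approximation-by-simplicial} provides a Lipschitz approximation $(T_n)_{n\in\mathbb{N}}$ of $T$ by free and simplicial actions $T_n\in cv_N$.

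Next, I would show that each $T_n$ satisfies White's theorem relatively to $\mathcal{P}_N$. Indeed, for any $T'\in\overline{cv_N}$, Theorem \ref{Algom-Kfir} (applicable since $T_n$ is simplicial) gives
\begin{displaymath}
\text{Lip}(T_n,T')=\sup_{g\in F_N\smallsetminus\{e\}}\frac{||g||_{T'}}{||g||_{T_n}},
\end{displaymath}
and the supremum is achieved on an element $g$ which is a candidate in $T_n/F_N$. As recalled after the statement of Theorem \ref{White}, candidates are primitive elements of $F_N$, so the supremum can be restricted to $\mathcal{P}_N$ without loss; that is, $\text{Lip}(T_n,T')={\Lambda}_{\mathcal{P}_N}(T_n,T')$.

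Now I apply Proposition \ref{blueprint-1} with $\mathcal{C}=\mathcal{P}_N$: since $T$ admits a Lipschitz approximation by trees each satisfying White's theorem relatively to $\mathcal{P}_N$, the tree $T$ itself satisfies it, i.e.\ $\text{Lip}(T,T')={\Lambda}_{\mathcal{P}_N}(T,T')$. The proof concludes by sandwiching: $\mathcal{P}_N\subseteq F_N$ gives ${\Lambda}_{\mathcal{P}_N}(T,T')\le {\Lambda}(T,T')$, while Proposition \ref{easy-comparison} gives ${\Lambda}(T,T')\le \text{Lip}(T,T')$, so the three quantities are equal.

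There is essentially no obstacle here, as all the heavy lifting has been done in the earlier sections: Theorem \ref{approximation-by-simplicial} produces the approximation, Proposition \ref{blueprint-1} (whose proof relies on Theorem \ref{limits}) transfers White's theorem to the limit, and the primitivity of candidates upgrades ${\Lambda}(T_n,T')$ to ${\Lambda}_{\mathcal{P}_N}(T_n,T')$ in the approximating trees. The only subtlety worth flagging is that we do not need any candidate-type statement for $T$ itself: the upgrade from $\Lambda$ to $\Lambda_{\mathcal{P}_N}$ takes place at the level of the $T_n$ and then passes to the limit through the chain of inequalities.
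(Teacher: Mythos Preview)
Your proof is correct and follows essentially the same approach as the paper: invoke Lemma \ref{dense-arcs} and Theorem \ref{approximation-by-simplicial} to get a Lipschitz approximation by trees in $cv_N$, observe via Theorem \ref{Algom-Kfir} that these satisfy White's theorem relatively to $\mathcal{P}_N$, and conclude with Proposition \ref{blueprint-1}. Your explicit sandwiching argument for the middle equality $\Lambda(T,T')$ is a welcome bit of extra detail that the paper leaves implicit.
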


\begin{proof}
Theorem \ref{approximation-by-simplicial} and Lemma \ref{dense-arcs} show that $T$ admits a Lipschitz approximation by trees in $cv_N$, and Theorem \ref{Algom-Kfir} shows that trees in $cv_N$ satisfy White's theorem relatively to $\mathcal{P}_N$ (this actually only uses the particular case of Theorem \ref{Algom-Kfir} where the simplicial tree belongs to $CV_N$). Corollary \ref{White-dense} thus follows from Proposition \ref{blueprint-1}.
\end{proof}

\subsection{Simple-equivalent trees with dense orbits are equal.}

\begin{prop}\label{blueprint-2}
Let $T,T'\in\overline{cv_N}$ be two trees that both satisfy White's theorem relatively to $\mathcal{P}_N$. If $T$ and $T'$ are simple-equivalent, then $T=T'$.
\end{prop}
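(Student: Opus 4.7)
The plan is to boost simple-equivalence into translation-length equality on all of $F_N$ via the Lipschitz geometry supplied by the White's-theorem hypothesis, and then invoke Culler--Morgan rigidity. Since simple-equivalence implies primitive-equivalence (Proposition \ref{primitive-separable}), we have $||g||_T = ||g||_{T'}$ for every $g \in \mathcal{P}_N$; with the conventions $0/0 = 0$ and $1/0 = +\infty$, this immediately gives ${\Lambda}_{\mathcal{P}_N}(T,T') \le 1$ and ${\Lambda}_{\mathcal{P}_N}(T',T) \le 1$. The hypothesis that each of $T$ and $T'$ satisfies White's theorem relatively to $\mathcal{P}_N$ therefore yields $\text{Lip}(T,T') \le 1$ and $\text{Lip}(T',T) \le 1$, and hence the existence of $1$-Lipschitz $F_N$-equivariant maps $f \colon T \to \overline{T'}$ and $f' \colon T' \to \overline{T}$.

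I would then use $f$ and $f'$ to compare translation lengths of \emph{arbitrary} elements of $F_N$, along the lines of Proposition \ref{easy-comparison}. Given any $g \in F_N$, pick $x \in T$ with $d_T(x,gx) = ||g||_T$ (a point on the axis of $g$ if $g$ is hyperbolic in $T$, or a fixed point if $g$ is elliptic). Equivariance and the $1$-Lipschitz property give
\[
d_{\overline{T'}}(f(x), g f(x)) = d_{\overline{T'}}(f(x), f(gx)) \le d_T(x,gx) = ||g||_T,
\]
and since $||g||_{T'} = ||g||_{\overline{T'}}$ (the infimum defining translation length is unchanged by passing from a $g$-invariant subtree to its closure), we deduce $||g||_{T'} \le ||g||_T$. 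The symmetric argument with $f'$ gives the reverse inequality, so $||g||_T = ||g||_{T'}$ for every $g \in F_N$. Trees in $\overline{cv_N}$ are minimal by definition, so Culler and Morgan's rigidity theorem (Theorem \ref{rigidity}) then supplies a unique $F_N$-equivariant isometry $T \to T'$, i.e.\ $T = T'$.

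I do not foresee a serious obstacle: the nontrivial content, namely the equality $\text{Lip} = \Lambda_{\mathcal{P}_N}$, is packaged into the White's-theorem hypothesis itself, and the remaining steps reduce to a one-line translation-length comparison together with the Culler--Morgan rigidity theorem already recorded in Section \ref{sec-CVn}.
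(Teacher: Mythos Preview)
Your proof is correct and follows essentially the same route as the paper's: obtain $1$-Lipschitz $F_N$-equivariant maps in both directions from the White's-theorem hypothesis, use them (via the content of Proposition~\ref{easy-comparison}) to get equality of all translation lengths, and conclude by Culler--Morgan rigidity. The only cosmetic difference is that you spell out the translation-length comparison inline rather than citing Proposition~\ref{easy-comparison}, and you note explicitly that simple-equivalence implies primitive-equivalence (which is immediate since primitive elements are simple, so the citation of Proposition~\ref{primitive-separable} is not even needed).
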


\begin{proof}
The hypotheses ensure the existence of $1$-Lipschitz $F_N$-equivariant maps from $T$ to $\overline{T'}$ and from $T'$ to $\overline{T}$, so Proposition \ref{easy-comparison} implies that $||g||_T=||g||_{T'}$ for all $g\in F_N$. Theorem \ref{rigidity} thus implies that $T=T'$.
\end{proof}

From Corollary \ref{White-dense} and Proposition \ref{blueprint-2}, we deduce the following corollary.

\begin{cor}\label{rigidity-for-dense}
Let $T,T'\in\overline{cv_N}$ be two trees with dense orbits. If $T$ and $T'$ are simple-equivalent, then $T=T'$. \qed
\end{cor}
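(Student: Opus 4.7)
The plan is to deduce this directly by combining the two immediately preceding results, namely Corollary \ref{White-dense} and Proposition \ref{blueprint-2}, with the primitive-versus-simple equivalence from Proposition \ref{primitive-separable} and Culler--Morgan rigidity (Theorem \ref{rigidity}). The strategy is simply to verify the hypotheses of Proposition \ref{blueprint-2} for trees with dense orbits.

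First I would note that by Corollary \ref{White-dense}, any tree with dense orbits satisfies White's theorem relatively to $\mathcal{P}_N$: for all $S \in \overline{cv_N}$, we have $\mathrm{Lip}(T,S) = \Lambda_{\mathcal{P}_N}(T,S)$, and likewise with $T$ replaced by $T'$. Thus both $T$ and $T'$ fulfill the hypothesis of Proposition \ref{blueprint-2}.

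Next, since $T$ and $T'$ are simple-equivalent, Proposition \ref{primitive-separable} shows they are also primitive-equivalent, i.e.\ $\|g\|_T = \|g\|_{T'}$ for every $g \in \mathcal{P}_N$. Therefore $\Lambda_{\mathcal{P}_N}(T,T') = \Lambda_{\mathcal{P}_N}(T',T) = 1$ (using the conventions on $0/0$), and applying Corollary \ref{White-dense} in both directions yields $1$-Lipschitz $F_N$-equivariant maps $T \to \overline{T'}$ and $T' \to \overline{T}$. By Proposition \ref{easy-comparison} (applied in both directions), it follows that $\|g\|_T = \|g\|_{T'}$ for every $g \in F_N$, not merely for simple ones. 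Finally, Culler--Morgan's rigidity theorem (Theorem \ref{rigidity}), applied to the minimal subtrees $T^{\min}$ and $(T')^{\min}$ (whose translation length functions coincide with those of $T$ and $T'$), gives an $F_N$-equivariant isometry between $T^{\min}$ and $(T')^{\min}$; since $T$ and $T'$ have dense orbits, they are contained in the metric completions of their respective minimal subtrees, so the isometry extends uniquely to give $T = T'$.

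There is no real obstacle here: all the hard work has been done in establishing Corollary \ref{White-dense}, whose proof relied on the approximation Theorem \ref{approximation-by-simplicial} (using triviality of arc stabilizers in trees with dense orbits, Lemma \ref{dense-arcs}) together with the limit construction of Theorem \ref{limits} and Algom-Kfir's version of White's theorem. The only point worth a moment's care is the harmless passage from minimal subtrees to the trees themselves in the last step, which is immediate since for dense-orbit trees one has $T^{\min} \subseteq T \subseteq \overline{T^{\min}}$.
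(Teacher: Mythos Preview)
Your proof is correct and follows essentially the same approach as the paper, which simply records that the corollary is an immediate consequence of Corollary \ref{White-dense} and Proposition \ref{blueprint-2}; you have in effect unpacked the proof of Proposition \ref{blueprint-2} inline. One small remark: trees in $\overline{cv_N}$ are minimal by definition, so your closing caution about passing from $T^{\min}$ to $T$ is unnecessary---Theorem \ref{rigidity} applies to $T$ and $T'$ directly.
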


\subsection{Computing stretching factors between trees with dense orbits}

We give a formula for $\text{Lip}(T,T')$ for trees $T,T'\in\overline{cv_N}$ having dense orbits, in terms of length measures on $T$ and $T'$. This notion was introduced by Paulin in \cite{Pau95}, and developed by Guirardel in \cite[Section 5]{Gui00}. An \emph{invariant length measure} $\mu$ on $T$ is a collection of finite Borel measures $\mu_I$ for all segments $I\subseteq T$ such that
\begin{itemize}
\item for all segments $J\subseteq I$, we have $\mu_J=(\mu_{I})_{|J}$, and 
\item for all segments $I\subseteq T$ and all $g\in F_N$, we have $\mu_{gI}=(g_{|I})_{\ast}\mu_I$.
\end{itemize}

Given a segment $I\subseteq T$, we will simply write $\mu(I)$ to denote $\mu_I(I)$. We denote by $\mu^T$ the \emph{Lebesgue measure} on $T$ given by $\mu^T([x,y]):=d_T(x,y)$ for all $x,y\in T$. A subset $E\subseteq T$ is \emph{measurable} if each intersection of $E$ with an arc of $T$ is measurable. A measurable subset $E$ has \emph{$\mu$-measure $0$} if for every arc $I\subseteq T$, we have $\mu_I(E\cap I)=0$. It has \emph{full $\mu$-measure} if $T\smallsetminus E$ has $\mu$-measure $0$. A measure $\mu$ on an $F_N$-tree $T$ is \emph{ergodic} if every $F_N$-invariant measurable subset of $T$ has either zero or full $\mu$-measure. We denote by $\mathcal{M}_0(T)$ the space of nonatomic measures on $T$. The following theorem, due to Guirardel, states that any tree in $\overline{cv_N}$ with dense orbits is finite-dimensional from the measure-theoretic viewpoint.

\begin{theo} \label{decomposition-measure} (Guirardel \cite[Corollary 5.4]{Gui00})
For all very small $F_N$-trees $T$ with dense orbits, the set $\mathcal{M}_0(T)$ is a finite-dimensional convex set. Furthermore, the tree $T$ has at most $3N-4$ nonatomic ergodic measures up to homothety, and every measure in $\mathcal{M}_0(T)$ is a sum of these ergodic measures.
\end{theo}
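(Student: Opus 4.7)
My plan is to reduce to the case of a geometric tree, and then apply Imanishi's decomposition (Theorem \ref{Imanishi}) together with classical finite-dimensionality results for invariant transverse measures on minimal measured foliations. The strategy is standard in the theory of $F_N$-trees: pass from $T$ to a dual band complex, decompose the band complex into its minimal components, bound each separately, and sum.

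First I would use Proposition \ref{non-geometric} to obtain a strong approximation of $T$ by geometric $F_N$-trees $(T_n)_{n\in\mathbb{N}}$ with morphisms $f_n\colon T_n\to T$ that eventually become isometric on any prescribed finite subtree. A nonatomic invariant length measure $\mu$ on $T$ pulls back to an invariant length measure $\mu_n$ on each $T_n$: any segment in $T_n$ subdivides into finitely many subsegments on which $f_n$ is an isometry, so one defines $\mu_n$ by pulling back $\mu$ on each such piece and summing. Compatibility of these pullbacks with the connecting morphisms $f_{np}$ gives the reverse bijection in the limit, reducing the problem to a uniform bound on $\dim \mathcal{M}_0(T_n)$ plus a passage-to-the-limit argument based on upper semicontinuity of dimension on the space of normalized length measures.

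Next, when $T$ is geometric I would realize it as the dual tree to a band complex $\Sigma$ put in the normal form of Theorem \ref{Imanishi} (using Lemma \ref{dense-arcs} to guarantee trivial arc stabilizers). Invariant length measures on $T$ then correspond to invariant transverse measures to the foliation on $\Sigma$. Families of finite orbits support only measures concentrated on closed leaves, which project to atoms in $T$, so a nonatomic measure is uniquely determined by its restriction to the union of the minimal components. Since there are only finitely many such components, it suffices to bound the space of invariant transverse measures on each. For a surface component, Katok's theorem bounds the dimension of that space in terms of the topological complexity of the surface; for an exotic (thin, Levitt) component, analogous bounds are provided by Gaboriau--Levitt--Paulin in terms of the first Betti number of the associated graph. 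Summing across components and using that $\pi_1(\Sigma)$ surjects onto $F_N$ so its rank is at most $N$ would yield the bound $3N-4$. Convexity of $\mathcal{M}_0(T)$ follows directly from the defining axioms of length measures, and the decomposition of an arbitrary nonatomic measure into a sum of ergodic ones is a standard consequence of Choquet ergodic-decomposition applied to the simplex of normalized measures.

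The hardest step, in my view, is obtaining the sharp constant $3N-4$ rather than some weaker $O(N)$ bound: this requires a careful budget argument tracking the cost in rank of $F_N$ of each independent ergodic direction, and uses crucially that no band complex dual to a minimal $F_N$-tree can support several unrelated minimal components each of maximal complexity. A secondary subtlety is the limit step from $T_n$ back to $T$: the ergodic decomposition is not obviously continuous under strong limits of geometric trees, so one must combine upper semicontinuity of $\dim \mathcal{M}_0$ with a compactness argument for normalized length measures on a bounded portion of the tree in order to transport both the bound and the decomposition across the limit.
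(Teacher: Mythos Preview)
The paper does not contain a proof of this theorem: it is stated as a result of Guirardel \cite[Corollary 5.4]{Gui00} and invoked as a black box in Section~\ref{sec-dense-case}. There is therefore nothing in the present paper to compare your proposal against.

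That said, your sketch contains a genuine error. You write that ``$\pi_1(\Sigma)$ surjects onto $F_N$ so its rank is at most $N$''; the implication goes the wrong way. A surjection $\pi_1(\Sigma)\twoheadrightarrow F_N$ gives $\mathrm{rank}(\pi_1(\Sigma))\ge N$, not $\le N$, so this cannot be the source of an upper bound on the total complexity of the minimal components. The actual bound in Guirardel's argument (and in the Gaboriau--Levitt index theory underlying it) comes instead from an index inequality of the form $\sum_v i(v)\le 2N-2$ over branch points of $T$, which controls simultaneously the number and the complexity of the minimal components; the constant $3N-4$ is extracted from that, not from the rank of $\pi_1(\Sigma)$. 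Your Step~1 (reducing to geometric trees by strong limit and transporting the bound back) is also more delicate than you suggest: nonatomic measures do pull back along morphisms, but the map $\mathcal{M}_0(T)\to\mathcal{M}_0(T_n)$ you describe need not be injective, and the upper-semicontinuity claim requires a compactness argument for normalized measures that you have not supplied. In Guirardel's treatment the geometric case is not used as an approximation device; the index bound is proved directly for arbitrary very small trees with dense orbits.
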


Let $T,T'\in\overline{cv_N}$ be two trees with dense orbits. Denote by $k$ the dimension of $\mathcal{M}_0(T)$, and let  $\{\mu_i\}_{i=1,\dots,k}$ be a collection of ergodic measures on $T$ given by Theorem \ref{decomposition-measure}. The measures $\mu_i$ are pairwise mutually singular, and there exist disjoint measurable sets $E_1,\dots,E_k$ that cover $T$ such that for all $i\in\{1,\dots,k\}$, the set $E_i$ has full $\mu_i$-measure. As the Lebesgue measure $\mu^T$ is nonatomic, it decomposes as $\mu^T=\sum_{i=1}^{k}\lambda_i\mu_i$, with $\lambda_i\ge 0$ for all $i\in\{1,\dots,k\}$. The measures $\mu_i$ for which $\lambda_i>0$ are absolutely continuous with respect to $\mu^T$. In particular, they are \emph{regular}, i.e. for all Borel subsets $X\subseteq T$ and all $\epsilon>0$, there exists an open subset $U\subseteq T$ containing $X$ such that for all segments $I\subseteq T$, we have $\mu_i(X\cap I)\ge\mu_i(U\cap I)-\epsilon$. If there exists a Lipschitz $F_N$-equivariant, alignment-preserving map $f:T\to T'$, then the measure $\mu^{f}$ defined on $T$ by $\mu^f([x,y]):=d_{T'}(f(x),f(y))$ is absolutely continuous with respect to $\mu^T$. In particular, it decomposes as $\mu^{f}=\sum_{i=1}^{k}\lambda'_i\mu_i$, where $\lambda'_i\ge 0$ for all $i\in\{1,\dots,k\}$ and $\lambda'_i=0$ whenever $\lambda_i= 0$. It follows from Proposition \ref{dense-rigidity} that there exists at most one Lipschitz, $F_N$-equivariant, alignment-preserving map $f:T\to T'$. If such a map exists, and if $\mu^T$ denotes the Lebesgue measure on $T$, we denote by $\mu^{T\to T'}$ the measure $\mu^f$ on $T$. 

\begin{prop} \label{stretching-dense}
Let $T,T'\in\overline{cv_N}$ be two $F_N$-trees with dense orbits. Then $\text{Lip}(T,T')<+\infty$ if and only if there exists a Lipschitz $F_N$-equivariant, alignment-preserving map from $T$ to $T'$. In this case, denote by $k$ the dimension of $\mathcal{M}_0(T)$, let $\mu^T=\sum_{i=1}^k \lambda_i \mu_i$ be the decomposition of the Lebesgue measure on $T$ on its ergodic components, and let $\mu^{T\to T'}=\sum_{i=1}^k \lambda'_i \mu_i$ be the decomposition of $\mu^{T\to T'}$. Then

\begin{displaymath}
\text{Lip}(T,T')=\max_{i\in\{1,\dots,k\}}\frac{\lambda'_i}{\lambda_i}.
\end{displaymath} 
\end{prop}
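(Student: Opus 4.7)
The plan is to construct an alignment-preserving $F_N$-equivariant Lipschitz map by a limiting procedure, convert it into a length measure, and then compute the Lipschitz constant using the ergodic decomposition. The forward implication of the equivalence is tautological. Conversely, if $M:=\text{Lip}(T,T')<+\infty$, pick Lipschitz $F_N$-equivariant maps $f_n\colon T\to\overline{T'}$ with $\text{Lip}(f_n)\to M$ and apply Theorem \ref{limits} to the constant sequences $T_n=T$, $T'_n=T'$ to obtain an $M$-Lipschitz $F_N$-equivariant limit map $f\colon T\to\overline{T'}$, which is unique by Proposition \ref{dense-rigidity} and alignment-preserving by Corollary \ref{alignment-preserving}. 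Alignment preservation makes $\mu^f([x,y]):=d_{\overline{T'}}(f(x),f(y))$ additive on concatenated segments, hence it extends to an $F_N$-invariant nonatomic length measure on $T$, and the Lipschitz bound $\mu^f(I)\le M\,\mu^T(I)$ on every segment $I$ gives $\mu^f\ll\mu^T$. Decomposing via Theorem \ref{decomposition-measure} as $\mu^f=\sum_i\lambda'_i\mu_i$, mutual singularity of the $\mu_i$'s combined with $\mu^f\ll\mu^T$ forces $\lambda'_i=0$ whenever $\lambda_i=0$.

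For the formula, the upper bound $\text{Lip}(f)\le\max_j\lambda'_j/\lambda_j$ follows from
\begin{displaymath}
\mu^f(I)=\sum_i\lambda'_i\mu_i(I)\le\Bigl(\max_j\frac{\lambda'_j}{\lambda_j}\Bigr)\sum_i\lambda_i\mu_i(I)=\Bigl(\max_j\frac{\lambda'_j}{\lambda_j}\Bigr)\mu^T(I)
\end{displaymath}
(with the convention $0/0=0$), valid for every non-degenerate segment $I\subseteq T$. For the matching lower bound, fix an index $j$ with $\lambda_j>0$. Writing each $\mu_i$ with $\lambda_i>0$ as $h_i\mu^T$, the identity $\sum_i\lambda_ih_i\equiv 1$ together with mutual singularity of the $\mu_i$'s on the sets $E_i$ yields $h_j=1/\lambda_j$ and $h_i\equiv 0$ ($i\neq j$) $\mu^T$-almost everywhere on $E_j$. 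At a Lebesgue point $x\in E_j$ of all these densities, shrinking segments $I_n$ around $x$ satisfy $\mu_i(I_n)/\mu^T(I_n)\to h_i(x)$, and the analogous computation for $\mu^f$ gives $\mu^f(I_n)/\mu^T(I_n)\to\lambda'_j/\lambda_j$; hence $\text{Lip}(f)\ge\lambda'_j/\lambda_j$. Finally $\text{Lip}(T,T')=\text{Lip}(f)$ by the choice of $f$.

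The main obstacle is to execute the Lebesgue differentiation step in the tree setting: one must verify that the Radon-Nikodym densities exist as genuine Borel functions on segments and that differentiating along nested shrinking segments (rather than Euclidean balls) still recovers them pointwise almost everywhere. This reduces to the classical one-dimensional differentiation theorem by parametrizing a chosen segment by arc length, so the measure-theoretic core of the argument carries over unchanged; the only subtlety is the simultaneous selection of a Lebesgue point for all finitely many densities $h_i$, which is immediate since a countable intersection of full-measure sets has full measure.
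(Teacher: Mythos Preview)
Your argument is essentially correct, with one omission: the statement asserts an alignment-preserving map from $T$ to $T'$, not to $\overline{T'}$, and you only produce the latter. The paper closes this gap by observing that if some $f(x)\in\overline{T'}\smallsetminus T'$, then by alignment preservation an entire component of $T\smallsetminus\{x\}$ maps to $f(x)$; since $T$ has dense orbits this would force $f(T)\subseteq\overline{T'}\smallsetminus T'$, a contradiction. You should include this step, both because the statement requires it and because the very definition of $\mu^{T\to T'}$ in the paper presupposes a map landing in $T'$.

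Your lower-bound argument is genuinely different from the paper's. You invoke the Radon--Nikodym theorem to write $\mu_i=h_i\,\mu^T$, identify the densities on the sets $E_j$, and then apply one-dimensional Lebesgue differentiation along a segment through a common Lebesgue point. The paper instead argues directly with regularity: given $i$ with $\lambda_i>0$ and an arc $I$ with $\mu_i(I)>0$, it takes the Borel set $X\subseteq I$ where the other ergodic measures vanish, uses regularity (which follows from $\mu_i\ll\mu^T$) to find an open $U\supseteq X$ with $\mu_i^c(U)<\epsilon\,\mu_i(I)$ for $\mu_i^c:=\mu^T-\lambda_i\mu_i$, and then picks a component interval $I'\subseteq U$ with $\mu_i^c(I')<\epsilon\,\mu_i(I')$, yielding $\mu^{T\to T'}(I')/\mu^T(I')\ge\lambda'_i/(\lambda_i+\epsilon)$. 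The paper's route avoids any appeal to differentiation theorems and stays entirely at the level of measures of intervals; your route is perhaps more conceptual but imports more machinery (existence and a.e.\ recovery of densities). Both are valid.
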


\begin{proof}
If $\text{Lip}(T,T')<+\infty$, then there exists a Lipschitz $F_N$-equivariant map $f:T\to\overline{T'}$, which is unique by Proposition \ref{dense-rigidity}, and preserves alignment by Proposition \ref{alignment-preserving}. This implies that $f(T)\subseteq T'$, otherwise we would find $x\in T$ with $f(x)\in\overline{T'}\smallsetminus T'$, and some component of $T\smallsetminus\{x\}$ would be entirely mapped to $f(x)$. However, as $T$ has dense orbits, this would imply that $f(T)\subseteq \overline{T'}\smallsetminus T'$, a contradiction. Let $C:=\max_{i\in\{1,\dots,k\}}\frac{\lambda'_i}{\lambda_i}$. By definition of $\mu^{T\to T'}$, for all $x,y\in T$, we have 

\begin{displaymath}
\begin{array}{rl}
d_{{T'}}(f(x),f(y))&=\mu^{T\to T'}([x,y])\\
&=\sum_{i=1}^k \lambda'_i \mu_i([x,y])\\
&\le C\mu^T([x,y])\\
&=Cd_T(x,y),
\end{array}
\end{displaymath}

\noindent so $\text{Lip}(f)\le C$. Let $i\in\{1,\dots,k\}$ be such that $\lambda_i>0$, and let $I\subseteq T$ be an arc such that $\mu_i(I)>0$. We denote by $\mu_i^c$ the measure $\mu^T-\lambda_i\mu_i$. The measures $\mu_1,\dots,\mu_k$ are pairwise mutually singular, so there exists a Borel subset $X$ of $I$ such that $\mu_i(X)=\mu_i(I)$ and $\mu_j(X)=0$ for all $j\neq i$. As $\mu_i$ is regular, for all $\epsilon>0$, there exists an open set $U\subseteq I$ that contains $X$, such that $\mu_i^c(U)<\epsilon\mu_i(I)$. Since $U\subseteq I$ is open, it is the disjoint union of a countable collection of open intervals. At least one of these intervals $I'$ must satisfy $\mu_i^c(I')<\epsilon\mu_i(I')$. By definition of $\mu^{T\to T'}$, we have 

\begin{displaymath}
\begin{array}{rl}
\text{Lip}(f)&\ge \frac{\mu^{T\to T'}(I')}{\mu^T(I')}\\
&=\frac{\lambda'_i\mu_i(I')+\sum_{j\neq i}\lambda'_j\mu_j(I')}{\lambda_i\mu_i(I')+\mu_i^c(I')}\\
&\ge\frac{\lambda'_i\mu_i(I')}{\lambda_i\mu_i(I')+\mu_i^c(I')}\\
&\ge \frac{\lambda'_i}{\lambda_i+\epsilon}.
\end{array}
\end{displaymath}

\noindent By choosing $\epsilon>0$ arbitrarily small, we thus get that $\frac{\lambda'_i}{\lambda_i}\le \text{Lip}(f)$. This holds for all $i\in\{1,\dots,k\}$ for which $\lambda_i>0$, and in addition we have $\lambda'_i=0$ whenever $\lambda_i=0$. This shows that $\text{Lip}(f)\ge C$, and hence $\text{Lip}(f)=\text{Lip}(T,T')=C$.
\end{proof}

\section{Generalizing White's theorem to arbitrary trees in the boundary of outer space} \label{sec-White}

We now generalize Theorems \ref{White} and \ref{Algom-Kfir} to arbitrary actions in the boundary of outer space. This answers a question by Algom-Kfir \cite[Question 4.6]{AK12}.

\begin{theo} \label{strong-White}
For all $T,T'\in\overline{cv_N}$, we have $\text{Lip}(T,T')={\Lambda}(T,T')$.
\end{theo}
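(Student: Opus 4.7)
The plan is to prove the nontrivial inequality $\mathrm{Lip}(T,T') \leq \Lambda(T,T')$; the reverse is Proposition \ref{easy-comparison}. I may assume $\Lambda := \Lambda(T,T') < +\infty$, so in particular $\|g\|_{T'}=0$ whenever $\|g\|_T=0$. Using Theorem \ref{graph-of-actions}, I decompose $T$ as a graph of actions with simplicial skeleton $T^{simpl}$ and vertex trees $(T_v,G_v)$ having dense orbits or reduced to a point; each simplicial edge $e$ of length $\ell(e)$ carries attaching points $p_e^\pm \in T_{v^\pm}$ fixed by the edge stabilizer $G_e$, which is either trivial or maximal cyclic.

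The first step is to define $f$ on each vertex tree. Since $\|g\|_{T_v}=\|g\|_T$ for every $g \in G_v$, restriction yields $\Lambda_{G_v}(T_v,T')\leq \Lambda$. The dense-orbit case of White's theorem (Corollary \ref{White-dense}, applied to the $G_v$-action; its proof goes through verbatim for finitely generated free subgroups, as it only relies on Theorems \ref{approximation-by-simplicial}, \ref{Algom-Kfir}, and \ref{limits}) then produces a $\Lambda$-Lipschitz $G_v$-equivariant map $f_v : T_v \to \overline{T'}$, which preserves alignment by Corollary \ref{alignment-preserving}. When $T_v$ is reduced to a point, $f_v$ is the constant map to a $G_v$-fixed point in $\overline{T'}$, which exists because every element of $G_v$ is elliptic in $T$ and hence in $T'$. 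Proposition \ref{dense-rigidity} ensures that these maps are equivariantly compatible along the $F_N$-orbits of vertices in the graph of actions.

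The second step is to extend across the simplicial edges. For each edge $e$, I send $e$ linearly at constant speed onto the geodesic in $\overline{T'}$ joining $f_{v^+}(p_e^+)$ and $f_{v^-}(p_e^-)$, which defines an $F_N$-equivariant map $f : T \to \overline{T'}$. Since any segment in $T$ splits as an alternating concatenation of sub-arcs in vertex trees (each stretched by at most $\Lambda$) and unions of simplicial edges, the triangle inequality reduces the bound $\mathrm{Lip}(f) \leq \Lambda$ to the edge stretch estimate
\begin{displaymath}
d_{\overline{T'}}\bigl(f_{v^+}(p_e^+),f_{v^-}(p_e^-)\bigr) \leq \Lambda \cdot \ell(e)
\end{displaymath}
for every simplicial edge $e$.

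The main obstacle is this edge stretch estimate, which I plan to obtain by a candidate-element construction tailored to the graph of actions, extending the classical White arguments. For every $\epsilon>0$, I construct $w_\epsilon \in F_N$ whose axis in $T$ crosses $e$ once per fundamental domain and visits $\epsilon$-neighborhoods of $p_e^\pm$ inside $T_{v^\pm}$. When $G_e$ is trivial, collapsing $e$ yields a one-edge free splitting with stable letter $t$, and I take $w_\epsilon = g_+ \, t \, g_- \, t^{-1}$ (or an analogous amalgam product when $e$ is separating), with $g_\pm \in G_{v^\pm}$ hyperbolic in $T_{v^\pm}$ whose axes pass within $\epsilon$ of $p_e^\pm$; such $g_\pm$ exist because $T_{v^\pm}$ has dense orbits (so commutators of elements whose translates approximate $p_e^\pm$ provide hyperbolic isometries with axes arbitrarily close to $p_e^\pm$). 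A direct computation in the graph of actions yields $\|w_\epsilon\|_T = 2\ell(e) + \|g_+\|_{T_{v^+}} + \|g_-\|_{T_{v^-}} + O(\epsilon)$, while tracking the axis of $w_\epsilon$ in $\overline{T'}$ (which passes within $O(\epsilon)$ of both $f_{v^\pm}(p_e^\pm)$ and of the images of the axes of $g_\pm$ under $f_{v^\pm}$) gives
\begin{displaymath}
\|w_\epsilon\|_{T'} \geq 2\, d_{\overline{T'}}(f_{v^+}(p_e^+),f_{v^-}(p_e^-)) + \|g_+\|_{T'} + \|g_-\|_{T'} - O(\epsilon).
\end{displaymath}
Combining with $\|w_\epsilon\|_{T'} \leq \Lambda \|w_\epsilon\|_T$ and $\|g_\pm\|_{T'} \leq \Lambda \|g_\pm\|_{T_{v^\pm}}$, then letting $\epsilon \to 0$, delivers the claim. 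The case $G_e = \langle c \rangle$ nontrivial runs in parallel: since $\Lambda<+\infty$ forces $c$ to be elliptic in $T'$, both $f_{v^\pm}(p_e^\pm)$ lie in the connected subtree $\mathrm{Fix}(c) \subset \overline{T'}$, and one substitutes a suitable power of $c$ for the stable letter in the construction of $w_\epsilon$, running the analogous translation-length estimate.
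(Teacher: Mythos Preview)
Your strategy---build a $\Lambda$-Lipschitz map $f:T\to\overline{T'}$ directly by gluing the canonical vertex maps $f_v$ along linear extensions on simplicial edges---is different from the paper's. The paper first produces \emph{some} Lipschitz map (Proposition~\ref{weak-strong-C}, whose construction is essentially your first two steps without the sharp constant), then takes an optimal map and runs a legal-turn analysis (Propositions~\ref{gates-1}, \ref{gates-2}, \ref{witness2}) to manufacture $\epsilon$-witnesses showing $\mathrm{Lip}(T,T')\le\Lambda(T,T')$.

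There are two genuine gaps in the edge-stretch estimate.

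\textbf{Point vertex trees.} When $T_{v^\pm}$ is reduced to a point, there are no hyperbolic elements $g_\pm\in G_{v^\pm}$, so your witness $w_\epsilon$ cannot be built as described. More seriously, in that case $f_{v}(p_e^\pm)$ is an arbitrary $G_v$-fixed point $q_v\in\overline{T'}$, and the edge-stretch bound $d_{\overline{T'}}(q_{v^+},q_{v^-})\le\Lambda\,\ell(e)$ depends on which fixed point you pick. Your witness translation length $\|w_\epsilon\|_{T'}$ is independent of this choice, so it cannot possibly certify the bound for a given choice; at best it shows the fixed \emph{sets} of $g_+$ and $tg_-t^{-1}$ are within $\Lambda\,\ell(e)$ of one another. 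Making a coherent choice across all vertices so that every edge is stretched by at most $\Lambda$ is exactly the content of the optimal-map/legal-turn argument that the paper carries out (and which, in the purely simplicial case, is Theorem~\ref{Algom-Kfir}).

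\textbf{The lower bound for $\|w_\epsilon\|_{T'}$.} Even when both $T_{v^\pm}$ have dense orbits, your inequality $\|w_\epsilon\|_{T'}\ge 2\,d_{\overline{T'}}(f_{v^+}(p_e^+),f_{v^-}(p_e^-)) + \|g_+\|_{T'}+\|g_-\|_{T'} - O(\epsilon)$ is not justified. You only know that $f_{v^\pm}(p_e^\pm)$ lies within $\Lambda\epsilon$ of the characteristic set $C_{T'}(g_\pm)$; nothing prevents $C_{T'}(g_+)$ (a line, or a large fixed subtree if $g_+$ happens to be elliptic in $T'$) from extending close to $t\,f_{v^-}(p_e^-)$, in which case the product formula gives a much smaller translation length. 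One can repair this by choosing \emph{two} non-commuting elements $g_-^{(1)},g_-^{(2)}\in G_{v^-}$ with small displacement at $p_e^-$: very-smallness and Lemma~\ref{intersection-nbd} then force their characteristic sets in $T'$ to diverge near $f_{v^-}(p_e^-)$, so at least one of them stays far from $f_{v^+}(p_e^+)$. This is essentially the argument behind Proposition~\ref{gates-2}. The treatment of nontrivial edge groups (``substitute a power of $c$ for the stable letter'') is not meaningful as written and needs an analogous analysis.
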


The proof of Theorem \ref{strong-White} will be carried out in Sections \ref{sec-optimal} to \ref{sec-weak-strong}. White's theorem for trees in $cv_N$ is actually a bit stronger, as it provides a finite set (depending on $T$ but not on $T'$) of (conjugacy classes of) primitive elements of $F_N$ called candidates, represented by loops in $T/F_N$ having a particular shape, on which the supremum in the definition of ${\Lambda}(T,T')$ is achieved for all $T'\in cv_N$. In particular, this gives an explicit procedure for computing $\text{Lip}(T,T')$ for all $T,T'\in cv_N$. In Section \ref{sec-candidates}, we will give a generalization of the notion of candidates in a tree $T\in\overline{cv_N}$, and show that for all $T'\in\overline{cv_N}$, the supremum in the definition of ${\Lambda}(T,T')$ can be taken over the set of candidates in $T$ (Theorem \ref{White-candidates}). In Section \ref{sec-separable-candidates}, we show that in the case of good trees, this supremum can be taken over the set of primitive elements of $F_N$. This will turn out to be a crucial tool for tackling the problem of spectral rigidity of the set of primitive elements of $F_N$ in $\overline{cv_N}$.

\subsection{A few examples}

When $T,T'\in cv_N$, both the infimum in the definition of $\text{Lip}(T,T')$ and the supremum in the definition of ${\Lambda}(T,T')$ are achieved. This remains true more generally when $T$ is simplicial, and in this case we can replace $\overline{T'}$ by $T'$ in the definition of $\text{Lip}(T,T')$, see \cite[Proposition 4.5]{AK12} and Remark \ref{rk-completion} of the present paper. When $T,T'\in\overline{cv_N}$ are arbitrary trees, the infimum in the definition of $\text{Lip}(T,T')$ is still realized as long as there exists a Lipschitz $F_N$-equivariant map $f:T\to \overline{T'}$ (Proposition \ref{optimal}). However the supremum in the definition of $\Lambda(T,T')$ may not be realized (even if it is finite), as shown in the following example.

\begin{ex} \label{sup-not-realized} (see Figure \ref{fig-not-realized}).
We provide an example of a pair of trees $T_1,T_2\in\overline{cv_N}$ for which the supremum in the definition of ${\Lambda}(T_1,T_2)$ is not achieved. For all $i\in\{1,2\}$, let $T_i$ be the tree associated to a graph of actions with a single edge of length $i$ having trivial stabilizer, and two vertices, one having cyclic stabilizer generated by an element $t\in F_N$, and the other being a nontrivial $G$-tree $T_0$ with dense orbits (where $G$ is a complementary free factor in $F_N$ of the cyclic group generated by $t$), whose attaching point $p$ is not fixed by any element of $F_N$ (the existence of such a point $p$ follows from \cite{Jia91}). 

\begin{figure} 
\begin{center}
\input{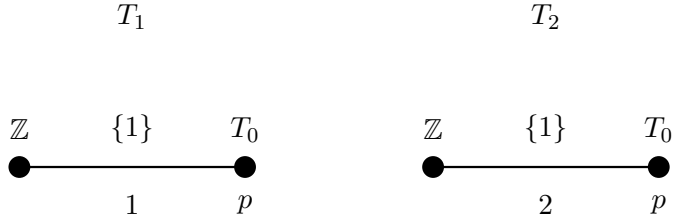}
\caption{The trees $T_i$ in Example \ref{sup-not-realized}.}
\label{fig-not-realized}
\end{center}
\end{figure}

There is an obvious $2$-Lipschitz $F_N$-equivariant map from $T_1$ to $T_2$ which stretches the edges in the simplicial part of $T$ by a factor of $2$, hence $\text{Lip}(T_1,T_2)\le 2$. As $T_0$ has dense orbits, for all $\epsilon>0$, there exists $g\in G$ such that $d_{T_0}(p,gp)<\epsilon$. So for all $i\in\{1,2\}$, we have $2i<||tg||_{T_i}<2i+\epsilon$, hence $\frac{||tg||_{T_2}}{||tg||_{T_1}}\ge\frac{4}{2+\epsilon}$, which becomes arbitrary close to $2$ as $\epsilon$ goes to $0$. So ${\Lambda}(T_1,T_2)\ge 2$, and hence by Proposition \ref{easy-comparison} we have $\text{Lip}(T_1,T_2)={\Lambda}(T_1,T_2)=2$. However, any element $g\in F_N$ either belongs to a conjugate of $G$, or of the cyclic group generated by $t$ (in which case $||g||_{T_1}=||g||_{T_2}$), or is conjugated to an element represented by a reduced word of the form $t^{\alpha_1}g_1t^{\alpha_2}\dots t^{\alpha_k}g_k$, with $\alpha_i\neq 0$ and $g_i\in G\smallsetminus\{e\}$ for all $i\in\{1,\dots,k\}$. In this last case, we have $l_i:=d_{T_0}(p,g_ip)>0$ for all $i\in\{1,\dots,k\}$, because $p$ is not fixed by any element of $G$, and

\begin{displaymath}
||g||_{T_1}=2k+\sum_{i=1}^kl_i,
\end{displaymath}

\noindent and similarly 

\begin{displaymath}
||g||_{T_2}=4k+\sum_{i=1}^kl_i<2||g||_{T_1}.
\end{displaymath} 

\noindent So no element in $F_N$ is stretched exactly by an amount of $2$ from $T_1$ to $T_2$. 
\end{ex}

\begin{ex} \label{completion-needed} (see Figure \ref{fig-completion-needed}).
We give an example of a pair of trees $T,T'\in\overline{cv_N}$ for which $\overline{T'}$ cannot be replaced by $T'$ in the definition of $\text{Lip}(T,T')$. More precisely, we give an example of a pair of trees $T,T'\in\overline{cv_N}$ for which there exists an $F_N$-equivariant Lipschitz map from $T$ to $\overline{T'}$, but no such map from $T$ to $T'$. 

Let $T_0$ be a minimal nontrivial $F_{N-1}$-tree with dense orbits, then $T_0$ is strictly contained in its metric completion (see \cite[Example II.6]{GL95}). Let $p\in\overline{T_0}\smallsetminus T_0$, and let $T$ be the tree associated to a graph of actions having
\begin{itemize}
\item two vertices $v_1$ and $v_2$, with $v_1$ having nontrivial cyclic vertex group, and $v_2$ having $\overline{T_0}$ as its vertex tree, and 
\item one single edge $e$ of length $2$ with trivial edge group, whose origin is $v_1$, and whose terminal vertex is $v_2$, with attaching point $p$.
\end{itemize}
\noindent Let $p'\in T_0$ be such that $d_{T}(p,p')=1$, and let $T'$ be the $F_N$-tree obtained by equivariantly folding half of the edge $e$ along the segment $[p,p']$, and passing to a minimal subtree.

\begin{figure}
\begin{center}
\input{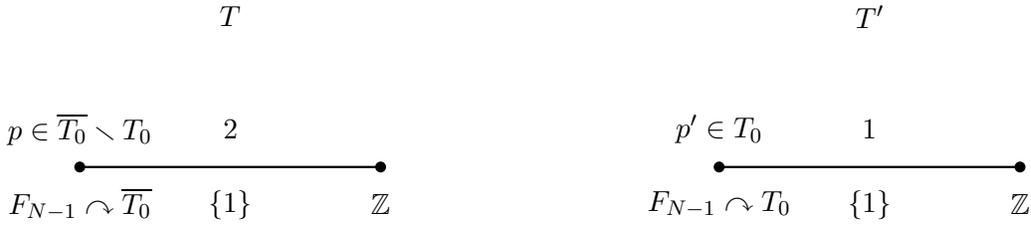}
\caption{The trees $T$ and $T'$ in Example \ref{completion-needed}.}
\label{fig-completion-needed}
\end{center}
\end{figure}

The definition of $T'$ provides a $1$-Lipschitz $F_N$-equivariant morphism from $T$ to $\overline{T'}$. However, we claim that for all $M\in\mathbb{R}$, there is no $M$-Lipschitz $F_N$-equivariant map from $T$ to $T'$. Indeed, suppose $f:T\to T'$ is $F_N$-equivariant and Lipschitz. Then $f(T_0)$ is an $F_{N-1}$-tree with dense orbits contained in $T'$, so $f(T_0)\subseteq T_0$, and by Proposition \ref{dense-rigidity}, the map $f$ restricts to the identity on $T_0$. As $p$ is the limit of a sequence $(p_n)_{n\in\mathbb{N}}$ of elements in $T_0$, its image $f(p)$ should be the limit of $f(p_n)=p_n$ in $T'$. However, the sequence $(p_n)_{n\in\mathbb{N}}$ does not converge in $T'$.
\end{ex}

\subsection{Optimal maps and legal turns}\label{sec-optimal}

Let $T,T'\in\overline{cv_N}$. A map $f:T\to\overline{T'}$ is \emph{piecewise-linear} if it is Lipschitz, and linear in restriction to the edges in the simplicial part of $T$. Let $X$ denote the underlying graph of the canonical decomposition of $T$ as a graph of actions with vertex trees having dense orbits (Proposition \ref{graph-of-actions}). The length of a segment $\gamma\subset T$ is denoted by $l_T(\gamma)$, and similarly the length of a path $\gamma$ in $X$ is denoted by $l_{X}(\gamma)$. We define $T_f$ to be the (possibly empty) subset of $T$ consisting of the edges $e$ in the simplicial part of $T$ for which $\frac{l_{\overline{T'}}(f(e))}{l_T(e)}=\text{Lip}(f)$. We denote by $T_f^{simpl}$ the projection of $T_f$ to $T^{simpl}$, and by $X_f$ its projection to $X$. An \emph{optimal map} $f:T\to \overline{T'}$ is an $F_N$-equivariant, piecewise-linear map such that $\text{Lip}(f)=\text{Lip}(T,T')$, and $X_f$ is minimal for the inclusion among all $F_N$-equivariant, piecewise-linear $\text{Lip}(f)$-Lipschitz maps. Note that in the case where $T$ has dense orbits, this last condition is empty.

\begin{prop} \label{optimal}
Let $T,T'\in \overline{cv_N}$. If $\text{Lip}(T,T')<+\infty$, then there exists an optimal map $f:T\to \overline{T'}$.
\end{prop}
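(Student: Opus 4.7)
My plan is to construct an optimal map in three stages: first produce a Lipschitz $F_N$-equivariant map $f: T \to \overline{T'}$ with $\text{Lip}(f) = \text{Lip}(T,T')$; second, tighten it on the simplicial part to be piecewise-linear without increasing its Lipschitz constant; third, choose among all maps satisfying the first two properties one whose set $X_f$ of maximally stretched edge-orbits is minimal for inclusion.

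For the first stage I would invoke Theorem \ref{limits} with the constant sequences $T_n = T$ and $T'_n = \overline{T'}$: both $T$ and $\overline{T'}$ are very small $F_N$-trees (the very small condition on arc and tripod stabilizers is inherited by the metric completion), and each constant sequence trivially converges to itself in the equivariant Gromov-Hausdorff topology. Since $\text{Lip}(T,T') < +\infty$, by definition of the infimum there is a sequence of $F_N$-equivariant maps $f_n: T \to \overline{T'}$ with $\text{Lip}(f_n) \to \text{Lip}(T,T')$, and Theorem \ref{limits} then produces an $F_N$-equivariant $\text{Lip}(T,T')$-Lipschitz map $f: T \to \overline{T'}$.

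For the second stage I would tighten $f$ on the simplicial part of $T$: choose one representative edge $e$ per $F_N$-orbit, with endpoints $v_1, v_2$, redefine $f$ on $e$ as the unique linear parameterization sending $e$ onto the geodesic $[f(v_1), f(v_2)] \subset \overline{T'}$ (constant if $f(v_1) = f(v_2)$), and extend by equivariance. This is consistent on edges with nontrivial stabilizer $\langle g \rangle$, because $F_N$-equivariance of the original $f$ forces $f(v_1)$ and $f(v_2)$ to be fixed by $g$, so $g$ fixes the geodesic between them pointwise. Leave $f$ unchanged on the vertex trees with dense orbits. The resulting map $\tilde{f}$ is piecewise-linear and $F_N$-equivariant, and its Lipschitz constant does not increase: on any simplicial edge the new ratio is $d_{\overline{T'}}(f(v_1), f(v_2))/l_T(e) \le \text{Lip}(f)$, and for arbitrary $x, y \in T$ the triangle inequality along the geodesic $[x,y]$, decomposed into its simplicial subsegments and its subsegments lying in vertex trees, gives $d_{\overline{T'}}(\tilde{f}(x), \tilde{f}(y)) \le \text{Lip}(f)\, d_T(x,y)$.

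For the third stage, observe that the collection $\mathcal{F}$ of all $\text{Lip}(T,T')$-Lipschitz, piecewise-linear, $F_N$-equivariant maps from $T$ to $\overline{T'}$ is non-empty by the previous two stages. Since the quotient graph $X = T/F_N$ has only finitely many edges, the family $\{X_f : f \in \mathcal{F}\}$ is a non-empty finite collection of subsets of a finite set, hence admits a minimal element for the inclusion order; any $f \in \mathcal{F}$ realizing this minimum is by definition optimal. The delicate point in this argument is really the first stage: the infimum defining $\text{Lip}(T,T')$ need not be attained by maps into $T'$ itself (as Example \ref{completion-needed} illustrates), and only by working with the completion $\overline{T'}$, via the ultralimit technology packaged in Theorem \ref{limits}, can one extract an honest minimizer.
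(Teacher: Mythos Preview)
Your proof is correct and follows essentially the same approach as the paper's own proof: invoke Theorem~\ref{limits} on a minimizing sequence of maps (with constant source and target sequences) to realize the infimal Lipschitz constant, tighten on simplicial edges to obtain piecewise-linearity, and use finiteness of the edge set of $X$ to select a map with minimal $X_f$. Your write-up is in fact more detailed than the paper's in justifying the tightening step and the existence of a minimal $X_f$.
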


\begin{rk}
Again, this extends to more general contexts than $F_N$-actions. For example, this is true if $T$ and $T'$ are two trees belonging to an irreducible deformation space, see \cite[Theorem 20]{Mei13}.
\end{rk}

\begin{proof}
Applying Theorem \ref{limits} to a sequence of $F_N$-equivariant maps $f_n:T\to\overline{T'}$ with $\text{Lip}(f_n)\le\text{Lip}(T,T')+\frac{1}{n}$ gives the existence of a $\text{Lip}(T,T')$-Lipschitz $F_N$-equivariant map $f:T\to \overline{T'}$. Tightening $f$ on the edges in the simplicial part of $T$ cannot increase its Lipschitz constant, hence we may choose $f:T\to \overline{T'}$ to be piecewise-linear. As $X$ is a finite graph, we can also choose $f$ so that $X_f$ is minimal. 
\end{proof}

We fix once and for all two trees $T,T'\in\overline{cv_N}$ such that $\text{Lip}(T,T')<+\infty$, together with an optimal map $f:T\to\overline{T'}$. A \emph{turn} at a vertex $v$ of $T^{simpl}$ is a pair $(e=[a,b],e'=[c,d])$ of distinct edges in the simplicial part of $T$ such that $[b,c]$ projects to $v$ (in other terms, the projections of $e$ and $e'$ to $T^{simpl}$ share a common vertex). Let $v$ be a vertex of $T_f^{simpl}$ such that $T_v$ is reduced to a point. A turn $(e,e')$ at $v$ is \emph{legal} for $f$ if $e,e'\subseteq T_f$, and $f(e)\cap f(e')=\{f(b)\}$, and \emph{illegal} otherwise. It is \emph{legal up to $G_v$} for $f$ if there exists $g\in G_v$ such that $(e,ge')$ is legal for $f$. The following proposition, already used by Algom-Kfir in her proof of \cite[Proposition 4.5]{AK12}, gives control over legal turns at a vertex $v$ of $T_f^{simpl}$ for which $T_v$ is reduced to a point. We provide a proof for completeness.

\begin{prop}\label{gates-1}
Let $f:T\to \overline{T'}$ be an optimal map, and assume that $T_f\neq\emptyset$. Let $v$ be a vertex in $T_f^{simpl}$ such that $T_v$ is reduced to a point. Then there exists a turn at $v$ which is legal for $f$, and if $G_v$ has rank at least $2$, then all turns at $v$ are legal up to $G_v$ for $f$. In addition, for all edges $e,e',e''$ in $T_f$ adjacent to $v$, if $(e,e')$ and $(e',e'')$ are both illegal for $f$, then $(e,e'')$ is also illegal for $f$.
\end{prop}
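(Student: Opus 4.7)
For each edge $e \in T_f$ adjacent to $v$, the restriction $f|_e$ is an injective linear map with uniform stretch factor $\text{Lip}(f)$, so its image is a nondegenerate geodesic in $\overline{T'}$ based at $f(v)$ and determines a well-defined direction $d(e)$ at $f(v)$. In an $\mathbb{R}$-tree, two arcs sharing a common endpoint $p$ meet in a single segment based at $p$, so for $e, e' \in T_f$ adjacent to $v$ the turn $(e,e')$ is illegal precisely when $d(e) = d(e')$. The transitivity clause of the proposition then follows at once from the transitivity of equality on directions at $f(v)$.

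For the existence of a legal turn at $v$, I would argue by contradiction. If every turn at $v$ is illegal, then all edges of $T_f$ adjacent to $v$ share a single direction $d$ at $f(v)$. Because $f$ is $F_N$-equivariant and $G_v$ permutes this nonempty set of edges, the singleton $\{d\}$ is $G_v$-invariant; an isometry fixing $f(v)$ and preserving $d$ fixes pointwise a nondegenerate initial segment of the ray in direction $d$, so $G_v$ is contained in an arc stabilizer of $\overline{T'}$. By very smallness this stabilizer is cyclic or trivial, so $G_v$ has rank at most $1$, which already settles the rank-$\geq 2$ case of the statement. When $G_v$ is trivial or cyclic, I perturb: pick small $\delta > 0$, let $v'$ be the point at distance $\delta$ along the $G_v$-fixed initial segment, and define a new map $f_\delta$ by sending the $F_N$-orbit of $v$ onto the orbit of $v'$ (well-defined since $v' \in \text{Fix}(G_v)$), extending linearly on simplicial edges adjacent to the orbit of $v$ and leaving $f$ unchanged elsewhere. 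Each edge of $T_f$ adjacent to $v$ has its image length decreased by $\delta$, while every edge adjacent to $v$ that is not in $T_f$ has its image length altered by at most $\delta$; since there are only finitely many $F_N$-orbits of edges and every edge outside $T_f$ has strictly positive slack $\text{Lip}(f) - l_{\overline{T'}}(f(e))/l_T(e)$, choosing $\delta$ small enough produces an $F_N$-equivariant piecewise-linear map $f_\delta$ with $\text{Lip}(f_\delta) \leq \text{Lip}(f)$ and $X_{f_\delta} \subsetneq X_f$, contradicting the optimality of $f$.

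For the "legal up to $G_v$" statement when $G_v$ has rank $\geq 2$: the argument above shows that no direction at $f(v)$ is fixed by $G_v$. Given a turn $(e,e')$ at $v$ with $e, e' \in T_f$, I take $g = 1$ if $d(e) \neq d(e')$; otherwise I choose $g \in G_v$ not fixing $d(e')$, so that $d(e) = d(e') \neq g \cdot d(e') = d(g e')$. In either case $(e, g e')$ is legal. The main technical hurdle is the perturbation step: one must carefully verify that the modified map $f_\delta$ is genuinely well-defined, $F_N$-equivariant, and Lipschitz, which relies on both the hypothesis $T_v = \{v\}$ (so that every edge adjacent to $v$ gives a well-defined direction at $f(v)$ whose image length depends controllably on the position of $f(v)$) and on the finiteness of $F_N$-orbits of edges in $T^{simpl}$ to yield a uniform choice of $\delta$.
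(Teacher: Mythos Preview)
Your proof is correct and follows essentially the same line as the paper's. Both argue by contradiction for the existence of a legal turn (all $T_f$-edges at $v$ share a direction, that direction is $G_v$-fixed, then perturb $f(v)$ along it to shrink $X_f$), and both deduce transitivity of illegality from the fact that illegality among $T_f$-edges is exactly equality of germs at $f(v)$. The paper outsources the perturbation step to \cite{FM11,AK11}, whereas you spell it out; for the rank-$\ge 2$ clause you use ``no direction is $G_v$-fixed'' while the paper phrases it as ``the set of $g$ making $(e,ge')$ illegal is at most cyclic'', but these are the same observation (arc stabilizers in $\overline{T'}$ are at most cyclic).
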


\begin{proof}
If the $f$-images of all edges in $T_f$ adjacent to $v$ have a common initial germ, and have a common initial segment with their $g$-translate for all $g\in G_v$, then all $f$-images of edges in $T_f$ adjacent to $v$ in a single $F_N$-orbit have a common initial segment, which is an arc fixed by $G_v$ in $\overline{T'}$. As there are finitely many such orbits, this implies that all $f$-images of edges in $T_f$ adjacent to $v$ have a common initial segment, which is an arc fixed by $G_v$ in $\overline{T'}$. One can then slightly homotope $f$ to either decrease $\text{Lip}(f)$ or $X_f$, contradicting optimality of $f$ (see the proof of \cite[Proposition 3.15]{FM11} or \cite[Proposition 2.3]{AK11} for details). So we can find a turn at $v$ which is legal for $f$. Assume in addition that $G_v$ has rank at least $2$. For all edges $e,e'$ in $T_f$ adjacent to $v$ (possibly with $e=e'$), the $f$-images of $e$ and $e'$ are not reduced to points. As $T'$ is very small, the subgroup $G'_v$ of elements $g\in G_v$ such that $f(e)$ and $gf(e')$ share a nondegenerate initial segment is at most cyclic, and for all $g\in G_v\smallsetminus G'_v$, the turn $(e,ge')$ is legal for $f$. The assertion stating that illegality at $v$ is a transitive relation follows from the definition of illegal turns. 
\end{proof}

One has to be slightly more careful when defining legality of turns at vertices $v$ of $T_f^{simpl}$ for which $T_v$ is not reduced to a point. Let $v$ be such a vertex. For $\epsilon> 0$, a turn $(e=[a,b],e'=[c,d])$ at $v$ is said to be \emph{$\epsilon$-legal} for $f$ if $e,e'\subseteq T_f$, and $d_T(b,c)<\epsilon$, and $l_{\overline{T'}}(f(e)\cap f(e'))<\epsilon$. It is \emph{legal up to $G_v$} for $f$ if for all $\epsilon>0$, there exists $g\in G_v$ such that $(e,ge')$ is $\epsilon$-legal for $f$. We aim at giving an analogue of Proposition \ref{gates-1} in this situation. The following lemma, illustrated in Figure \ref{fig-overlap}, will turn out to be useful.

\begin{lemma}\label{overlap}
Let $T$ be an $\mathbb{R}$-tree, let $l,\epsilon\in\mathbb{R}$ with $\epsilon<\frac{l}{10}$, and let $a,a',a'',b,b',b''\in T$. Assume that $l_T([a,b]\cap [a',b']), l_T([a,b]\cap [a'',b''])\ge l$ and $l_T([a,a']),l_T([a,a''])\le\epsilon$. Then $l_T([a',b']\cap [a'',b''])\ge l-\epsilon$.
\end{lemma}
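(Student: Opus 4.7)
My approach would be to use the closest-point projection $\pi : T \to [a,b]$, which in an $\mathbb{R}$-tree is a $1$-Lipschitz retraction onto the convex subtree $[a,b]$. The key structural fact I would invoke is that for any two points $x, y \in T$, the geodesic $[x,y]$ decomposes as the concatenation of the bridges $[x, \pi(x)]$, $[\pi(x), \pi(y)] \subseteq [a,b]$ and $[\pi(y), y]$; in particular, $[x,y] \cap [a,b] = [\pi(x), \pi(y)]$ whenever $\pi(x) \neq \pi(y)$.

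Applying this to the pair $(a', b')$, the hypothesis $l_T([a,b] \cap [a', b']) \geq l > 0$ gives $d_T(\pi(a'), \pi(b')) \geq l$; and likewise $d_T(\pi(a''), \pi(b'')) \geq l$. Since $\pi$ is $1$-Lipschitz and fixes $a$, we get $d_T(a, \pi(a')) \leq d_T(a, a') \leq \epsilon$, and analogously $d_T(a, \pi(a'')) \leq \epsilon$. Orienting $[a,b]$ from $a$ to $b$ and using $\epsilon < l/10$ (only $\epsilon < l$ is really needed here), I would check that $\pi(b')$, respectively $\pi(b'')$, must lie on the $b$-side of $\pi(a')$, resp.\ $\pi(a'')$: if $\pi(b')$ lay between $a$ and $\pi(a')$ on $[a,b]$, one would have $d_T(\pi(a'), \pi(b')) \leq d_T(a, \pi(a')) \leq \epsilon$, contradicting $d_T(\pi(a'), \pi(b')) \geq l$.

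The conclusion is then a direct length computation on $[a,b]$. Both $[a',b']$ and $[a'',b'']$ contain, as a sub-subsegment, the overlap of $[\pi(a'), \pi(b')]$ and $[\pi(a''), \pi(b'')]$ inside $[a,b]$, which runs from $\max(\pi(a'), \pi(a''))$ to $\min(\pi(b'), \pi(b''))$ in the $a$-to-$b$ order. Its left endpoint lies within distance $\epsilon$ of $a$, while its right endpoint lies at distance at least $l$ from $a$ (since, e.g., $d_T(a, \pi(b')) = d_T(a, \pi(a')) + d_T(\pi(a'), \pi(b')) \geq l$, using the orientation established in the previous step, and similarly for $\pi(b'')$). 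Therefore $l_T([a',b'] \cap [a'',b'']) \geq l - \epsilon$.

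I do not expect any real obstacle here: the argument is essentially bookkeeping on $[a,b]$ via the projection $\pi$, and the quantitative assumption $\epsilon < l/10$ intervenes only to guarantee that the projections land in the correct cyclic order on $[a,b]$.
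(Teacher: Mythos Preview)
Your argument is correct and in fact cleaner than the paper's. The paper proceeds by a case analysis: it first observes that the tripods spanned by $\{a,a',a''\}$ and by $\{b,b',b''\}$ are disjoint (this is where $\epsilon<l/10$ is invoked, rather generously), and then splits into cases according to which of the three directions the bridge between these tripods leaves from on each side, checking the inequality pictorially in each configuration. Your approach replaces this case split by a single computation: projecting to $[a,b]$ reduces everything to interval arithmetic on a line, and the order argument you give handles all configurations at once. A minor bonus is that your proof makes transparent that only $\epsilon<l$ is needed, whereas the paper's tripod-disjointness step obscures the sharp threshold. Both arguments are elementary, but yours is more uniform and would be preferable in an exposition.
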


\begin{proof}
As $\epsilon<\frac{l}{10}$, one can check that the tripods $\{a,a',a''\}$ and $\{b,b',b''\}$ do not intersect. One then argue depending on whether the intersection of the bridge between them with the tripod $\{a,a',a''\}$ (resp. $\{b,b',b''\}$) is in the direction of $a$, $a'$ or $a''$ (resp. $b$, $b'$ or $b''$). The various possibilities are displayed on Figure \ref{fig-overlap}.
\end{proof}

\begin{figure}
\begin{center}
\input{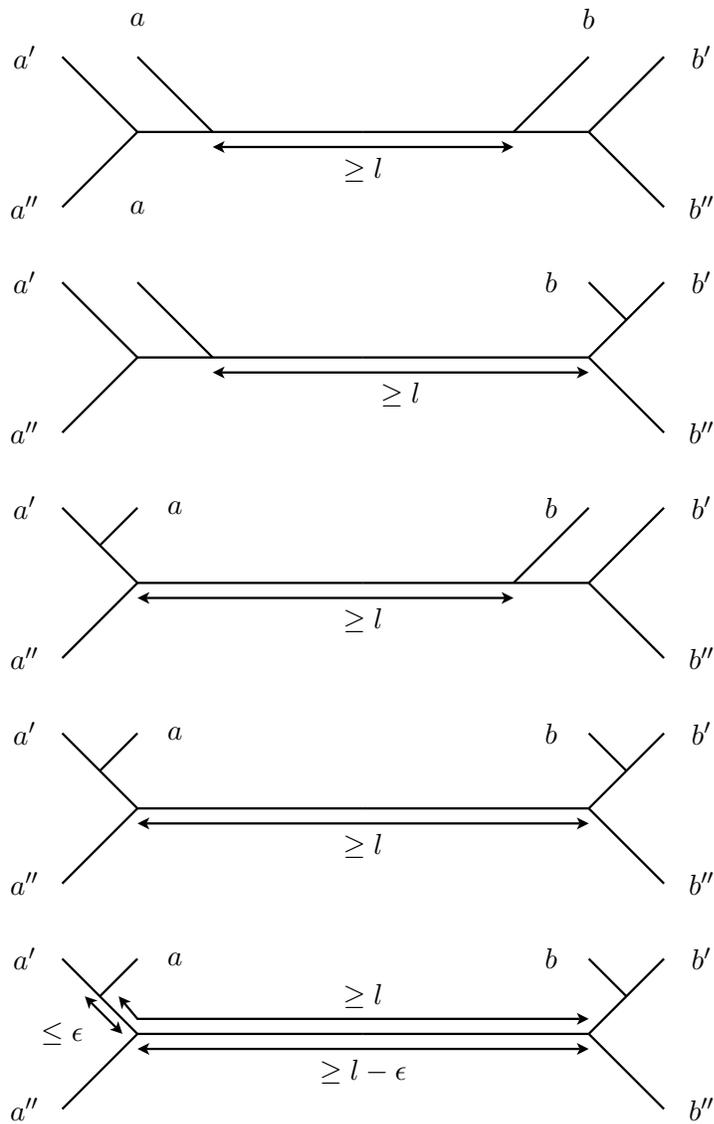}
\caption{The situation in Lemma \ref{overlap}.}
\label{fig-overlap}
\end{center}
\end{figure}

\begin{prop} \label{gates-2}
Let $f:T\to \overline{T'}$ be an optimal map, and assume that $T_f\neq\emptyset$. Let $v$ be a vertex in $T_f^{simpl}$ such that $T_v$ is not reduced to a point. Then all turns $(e,e')$ at $v$ with $e,e'\subseteq T_f$ are legal up to $G_v$ for $f$.
\end{prop}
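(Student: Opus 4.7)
The proof naturally divides into setting up a contradiction via density and Lemma \ref{overlap}, and then squeezing out a geometric contradiction from the resulting "almost-fixing" elements.

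The plan is to argue by contradiction. Suppose $(e = [a,b], e' = [c,d])$ is a turn at $v$ with $e, e' \subseteq T_f$ that is not legal up to $G_v$ for $f$. Then there exists $\epsilon_0 > 0$ such that for every $g \in G_v$ with $d_T(b, gc) < \epsilon_0$, the overlap $l_{\overline{T'}}(f(e) \cap f(ge'))$ is at least $\epsilon_0$. Because $T_v$ is a vertex tree in the canonical graph of actions decomposition of $T$ (Theorem \ref{graph-of-actions}) and is not a point, the $G_v$-orbit of $c$ is dense in $T_v$; in particular the rank of $G_v$ is at least $2$ (a cyclic action could not have dense orbits on a nondegenerate tree), and Lemma \ref{dense-arcs} ensures that arc stabilizers in $T_v$ are trivial.

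Fix $\epsilon > 0$ much smaller than $\epsilon_0/\text{Lip}(f)$. By density, there exist $g_1, g_2 \in G_v$ with $g_1 c \neq g_2 c$ and $d_T(b, g_i c) < \epsilon$ for $i = 1, 2$. Since $f$ is $\text{Lip}(f)$-Lipschitz, the three points $f(b), g_1 f(c), g_2 f(c) \in \overline{T'}$ are pairwise within $2\text{Lip}(f)\epsilon$, while the segments $f(e), f(g_1 e'), f(g_2 e')$ satisfy $l_{\overline{T'}}(f(e) \cap f(g_i e')) \geq \epsilon_0$. Applying Lemma \ref{overlap} with $l = \epsilon_0$ to $f(e), f(g_1 e'), f(g_2 e')$ yields $l_{\overline{T'}}(f(g_1 e') \cap f(g_2 e')) \geq \epsilon_0 - \text{Lip}(f)\epsilon$. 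Translating by $g_1^{-1}$, the element $h := g_1^{-1} g_2 \in G_v$ is nontrivial (since $hc \neq c$), satisfies $d_T(c, hc) < 2\epsilon$, and gives $l_{\overline{T'}}(f(e') \cap f(he')) \geq \epsilon_0 - \text{Lip}(f)\epsilon$.

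The main obstacle will be to extract a genuine contradiction from this family of "almost-fixing" elements. By letting $\epsilon$ tend to zero one obtains a sequence $(h_n)_{n\in\mathbb{N}}$ of distinct nontrivial elements of $G_v$ with $h_n c \to c$ and $l_{\overline{T'}}(f(e') \cap f(h_n e')) \geq \epsilon_0/2$. For each such $h_n$, the segment $I_n := f(e') \cap f(h_n e')$ is a subarc of $f(e')$ starting within $\text{Lip}(f) \cdot d_T(c, h_nc)$ of $f(c)$; the same arc $I_n$ is a subarc of $f(h_n e') = h_n f(e')$ starting near $h_n f(c)$, so that the isometries $h_n$ move every point of $I_n$ by at most $O(\text{Lip}(f)\epsilon_n)$. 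Consequently each $h_n$ is either hyperbolic in $\overline{T'}$ with vanishingly small translation length, or elliptic with fixed-point set passing within $O(\text{Lip}(f)\epsilon_n)$ of the long subarc $I_n$. Varying the pairs $(g_1, g_2)$ independently in the dense $G_v$-orbit yields in fact infinitely many such elements whose characteristic sets cluster near $f(c)$; since arc stabilizers in $\overline{T'}$ are cyclic by very-smallness, and tripod stabilizers trivial, two such elements arising from sufficiently independent density choices must generate a non-cyclic subgroup pointwise stabilizing, in the limit, a common arc, contradicting the very-smallness of $\overline{T'}$. (Equivalently, one may convert the abundance of overlaps into a strict homotopy of $f$ supported in a neighborhood of the orbit of $b$ that either decreases $\text{Lip}(f)$ or strictly shrinks $X_f$, contradicting optimality.)
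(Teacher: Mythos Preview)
Your setup is right and matches the paper's approach: argue by contradiction, use density of $G_v$-orbits in $T_v$ to produce many translates $ge'$ with $gc$ close to $b$, and apply Lemma \ref{overlap} to force long overlaps among the images $g\,f(e')$. The divergence, and the gap, is in the endgame.

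In your final paragraph you pass to a sequence $(h_n)$ of nontrivial elements with $d_T(c,h_nc)\to 0$ and long overlaps $f(e')\cap h_nf(e')$, and then assert that ``two such elements arising from sufficiently independent density choices must generate a non-cyclic subgroup pointwise stabilizing, in the limit, a common arc''. This is not a proof: you have not explained why two of your $h_n$ generate a non-cyclic group, and ``in the limit'' does not produce a nontrivial element of $F_N$ fixing an actual arc in $\overline{T'}$. The parenthetical alternative (homotoping $f$ near the orbit of $b$) is also only a sketch; with $T_v$ having dense orbits there is no obvious local move that decreases $\text{Lip}(f)$ or $X_f$.

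The paper closes this gap by choosing the elements more carefully at the outset. Rather than taking two arbitrary $g_1,g_2$ with $g_ic$ close to $b$ and forming $h=g_1^{-1}g_2$, the paper first fixes one $g_0\in G_v$ with $d_T(b,g_0c)$ small, and separately chooses \emph{hyperbolic} elements $g_1,g_2\in G_v$ that generate a rank-$2$ subgroup and satisfy $d_T(c,g_ic)<\epsilon'$ (such elements exist since the set of $g\in G_v$ with $d_T(c,gc)<\epsilon'$ is not contained in any cyclic subgroup, by density of orbits). Then $g_0,\,g_0g_1,\,g_0g_2$ all bring $c$ close to $b$, so the contradiction hypothesis gives long overlaps $f(e)\cap g_0g_if(e')$; Lemma \ref{overlap} then yields that $f(e'),\,g_1f(e'),\,g_2f(e')$ pairwise overlap along arcs of length $\gtrsim\epsilon$. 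Since $\|g_i\|_{T'}\le\text{Lip}(f)\,d_T(c,g_ic)$ is tiny, the characteristic sets $C_{\overline{T'}}(g_1),C_{\overline{T'}}(g_2)$ overlap by more than $\|g_1\|_{T'}+\|g_2\|_{T'}$, and the standard Culler--Morgan argument (\cite[1.10]{CM87}) forces a nontrivial element of $\langle g_1,g_2\rangle$ to fix an arc in $\overline{T'}$, contradicting Lemma \ref{dense-arcs}. This is exactly the precise finite-stage contradiction your last paragraph is reaching for; inserting it would complete your argument along the paper's lines.
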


\begin{proof}
We denote by $b$ (resp. $c$) the attaching point of $e$ (resp. $e'$) to $T_v$. Assume towards a contradiction that there exists $\epsilon>0$ such that for all $g\in G_v$, the turn $(e,ge')$ is not $\epsilon$-legal. Let $\epsilon':=\frac{\epsilon}{100\max\{M,1\}}$, where $M:=\text{Lip}(f)$. Let $g\in G_v$ be a hyperbolic element in $T_v$ such that $d_T(b,gb)<\epsilon'$. As $T_v$ has dense orbits, there exists $g_0\in G_v$ such that $d_T(b,g_0c)<\epsilon'$, and hyperbolic elements $g_1,g_2\in G_v$ which generate a rank $2$ subgroup of $G_v$, such that for all $i\in\{1,2\}$, we have $d_T(c,g_ic)<\epsilon'$ (in particular $||g_i||_{T}<\epsilon'$ and $||g_i||_{T'}<\frac{\epsilon}{100}$). By the triangle inequality and the fact that the $F_N$-action is isometric, we also have $d_T(b,g_0g_ic)<2\epsilon'$ for all $i\in\{1,2\}$. The hypothesis thus implies that $l_{\overline{T'}}(f(e)\cap g_0f(e'))\ge\epsilon$ and $l_{\overline{T'}}(f(e)\cap g_0g_if(e'))\ge\epsilon$ for all $i\in\{1,2\}$. In addition, as $f$ is $M$-Lipschitz, we have $d_{\overline{T'}}(f(b),f(g_0c))<\frac{\epsilon}{100}$, and $d_{\overline{T'}}(f(b), f(g_0g_ic))<\frac{\epsilon}{50}$, so Lemma \ref{overlap} implies that the segments $f(e'), f(g_1e')$ and $f(g_2e')$ pairwise intersect along a subsegment of length greater than $\frac{49\epsilon}{50}$. However, as $||g_i||_{T'}<\frac{\epsilon}{100}$ for all $i\in\{1,2\}$, this implies that the axes of $g_1$ and $g_2$ in $T'$ have a nontrivial overlap, of length greater than $||g_1||_{T'}+||g_2||_{T'}$. Hence some nontrivial element in the rank $2$ subgroup generated by $g_1$ and $g_2$ fixes a nondegenerate subsegment of these axes, contradicting Lemma \ref{dense-arcs}. 
\end{proof}

\subsection{Case where $\text{Lip}(T,T')<+\infty$} \label{sec-strong-weak}

We first prove Theorem \ref{strong-White} in the case where $\text{Lip}(T,T')<+\infty$.

\begin{prop} \label{strong-implies-weak}
Let $T,T'\in\overline{cv_N}$. If $\text{Lip}(T,T')<+\infty$, then $\text{Lip}(T,T')\le{\Lambda}(T,T')$.
\end{prop}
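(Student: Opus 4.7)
We may assume $M := \text{Lip}(T,T') > 0$. By Proposition~\ref{optimal}, fix an optimal map $f : T \to \overline{T'}$ with $\text{Lip}(f) = M$. If $T$ has dense orbits, Corollary~\ref{White-dense} gives the conclusion directly, so we may assume the tension subgraph $T_f$ meets the simplicial part of $T$. The strategy is to exhibit, for every $\varepsilon > 0$, a hyperbolic element $g \in F_N$ with $\|g\|_{T'}/\|g\|_T \ge M - O(\varepsilon)$; letting $\varepsilon \to 0$ will then yield $\Lambda(T,T') \ge M$, which combined with Proposition~\ref{easy-comparison} gives equality.

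Fix $\varepsilon > 0$. Starting from any oriented edge $e_0 \subseteq T_f$, I recursively build a \emph{legal edge-path} in $T_f$ as follows. Having chosen $e_k$ with terminal vertex $v_k$ of $T^{simpl}$, I use Propositions~\ref{gates-1} and~\ref{gates-2} to find an edge $e_{k+1} \subseteq T_f$ adjacent to $v_k$ and an element $h_k \in G_{v_k}$ such that the turn $(e_k, h_k e_{k+1})$ is legal (when $T_{v_k}$ is a point) or $\varepsilon$-legal (when $T_{v_k}$ has dense orbits), and then replace $e_{k+1}$ by $h_k e_{k+1}$; the case of a vertex with cyclic or trivial $G_{v_k}$ and $T_{v_k}$ a point still admits such a continuation thanks to the transitivity of illegality in Proposition~\ref{gates-1}, which guarantees that from any incoming edge some outgoing leg of the existing legal turn is reached by a legal turn. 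Because $T_f^{simpl}/F_N$ is a finite graph, the sequence $(e_k)_{k\ge 0}$ revisits an $F_N$-orbit of oriented edges after finitely many steps; extracting a periodic closed loop in the quotient yields a bi-infinite periodic path $\widetilde{\gamma} \subseteq T$ on which a hyperbolic element $g \in F_N$ acts by translation, with $\widetilde{\gamma} \subseteq C_T(g)$.

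To compare translation lengths, denote by $e_0, \dots, e_{l-1}$ the edges in one fundamental domain of $\widetilde{\gamma}$ and set $L := \sum_{k=0}^{l-1} l_T(e_k)$. Since each connecting arc $[b_k, c_{k+1}]$ inside a vertex tree has length at most $\varepsilon$ (being trivial or controlled by $\varepsilon$-legality), we get $\|g\|_T \le L + l\varepsilon$. For the lower bound on $\|g\|_{T'}$, I use the identity $\|g\|_{T'} = \lim_n d_{\overline{T'}}(f(p), f(g^n p))/n$ for any $p \in \widetilde{\gamma}$, and estimate the piecewise-linear image path: each edge $e_k \subseteq T_f$ contributes length $M \cdot l_T(e_k)$, while the cancellation at each junction is bounded by $l_{\overline{T'}}(f(e_k) \cap f(h_k e_{k+1})) + l_{\overline{T'}}(f([b_k, c_{k+1}])) \le (1+M)\varepsilon$, by the definition of $\varepsilon$-legality together with $\text{Lip}(f)=M$. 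Hence $\|g\|_{T'} \ge ML - 2l(1+M)\varepsilon$. The finite graph $T_f^{simpl}/F_N$ has edge lengths bounded below by a constant $\ell_{\min} > 0$, so $L \ge l\,\ell_{\min}$, and
\[\frac{\|g\|_{T'}}{\|g\|_T} \;\ge\; \frac{ML - 2l(1+M)\varepsilon}{L + l\varepsilon} \;\ge\; M - \frac{C\varepsilon}{\ell_{\min}}\]
for some constant $C$ depending only on $M$. Letting $\varepsilon \to 0$ concludes the proof.

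The main difficulty is handling vertices $v$ where $T_v$ is not a point: there, exact legal continuations do not exist, and both the detour through $T_v$ and the overlap of consecutive $f$-images contribute $\varepsilon$-error terms to the estimates on $\|g\|_T$ and $\|g\|_{T'}$ respectively. The key point is that Proposition~\ref{gates-2} controls both quantities simultaneously by the same $\varepsilon$, and the existence of a uniform lower bound $\ell_{\min}$ on the edge lengths of the finite graph $T_f^{simpl}/F_N$ ensures that these errors remain of lower order than the main stretching contribution once a full fundamental domain is traversed.
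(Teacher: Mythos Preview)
Your approach is essentially the same as the paper's: build an $\varepsilon$-legal loop in $T_f$ using Propositions~\ref{gates-1} and~\ref{gates-2}, then estimate the stretch along this loop. The paper packages these two steps as Lemmas~\ref{legal-loop} and~\ref{witness}, and your edge-length estimates are equivalent to theirs.

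There is, however, a case missing from your dichotomy. You write ``If $T$ has dense orbits, Corollary~\ref{White-dense} gives the conclusion directly, so we may assume the tension subgraph $T_f$ meets the simplicial part of $T$.'' But $T$ failing to have dense orbits does \emph{not} force $T_f\neq\emptyset$: it can happen that $T$ has a nontrivial simplicial part, yet the optimal map $f$ achieves its Lipschitz constant only on some vertex tree $T_v$ with dense orbits, and every simplicial edge is stretched by strictly less than $M$. The paper handles this in Proposition~\ref{witness2}: when $T_f=\emptyset$, one finds a vertex $v$ with $\text{Lip}(f_{|T_v})=M$, observes that $f_{|T_v^{\min}}$ is the unique $G_v$-equivariant Lipschitz map from $T_v^{\min}$ to its image (Proposition~\ref{dense-rigidity}), hence $\text{Lip}(T_v^{\min},f(T_v^{\min}))=M$, and then applies Corollary~\ref{White-dense} to the pair $(T_v^{\min},f(T_v^{\min}))$ to obtain the required witness inside $G_v$. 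You should insert this case before assuming $T_f\neq\emptyset$.
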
 

Let $T\in\overline{cv_N}$ and $g\in F_N$. The \emph{combinatorial length} of $g$ in $T$, denoted by $l^{comb}_T(g)$, is defined as the length of $g$ in the simplicial tree obtained from $T^{simpl}$ by making all edge lengths equal to $1$. As there are finitely many orbits of branch points in $T$ by \cite[Corollary III.3]{GL95}, the number of orbits of edges in $T^{simpl}$ is finite. An element $g\in F_N$ is \emph{$\epsilon$-legal} for $f$ if 
\begin{itemize}
\item its axis $C_T(g)$ crosses an edge in the simplicial part of $T$, and 
\item whenever $C_T(g)$ crosses a turn at a vertex $v$ of $T^{simpl}$ whose corresponding vertex tree in $T$ is reduced to a point, then this turn is legal, and
\item whenever $C_T(g)$ crosses a turn at a vertex $v$ of $T^{simpl}$ whose corresponding vertex tree in $T$ is not reduced to a point, then this turn is $\epsilon$-legal.
\end{itemize}

\begin{lemma} \label{legal-loop}
Let $T,T'\in\overline{cv_N}$ be such that $\text{Lip}(T,T')<+\infty$, and let $f:T\to\overline{T'}$ be an optimal map. If $T_f\neq\emptyset$, then there exists $K>0$ such that for all $\epsilon>0$, there exists an element $g\in F_N$ with $l^{comb}_T(g)\le K$, which is $\epsilon$-legal for $f$.
\end{lemma}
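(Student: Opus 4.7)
The plan is to construct an $\epsilon$-legal element of $F_N$ by building an infinite $\epsilon$-legal path in $T_f$ edge by edge, and then closing it up via a pigeonhole argument in the finite quotient graph $X_f$.

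First, I would pick any edge $e_0 \subseteq T_f$ (available since $T_f\ne\emptyset$) and inductively extend it to a sequence $e_0,e_1,\ldots$ of edges in $T_f$ such that every consecutive turn $(e_i,e_{i+1})$ is $\epsilon$-legal for $f$. To extend the path, let $v$ be the endpoint of $e_n$ opposite to its shared vertex with $e_{n-1}$; then $v$ lies in $T_f^{simpl}$. If $T_v$ reduces to a point, I apply Proposition \ref{gates-1}: there exist two edges of $T_f$ at $v$ forming a legal turn, and then, using the transitivity of illegality in the same proposition, I show that there exists $e_{n+1}\in T_f$ at $v$ distinct from $e_n$ with $(e_n,e_{n+1})$ legal. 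Indeed, fixing any legal turn $(e',e'')$ at $v$, at least one of $(e_n,e')$ and $(e_n,e'')$ must be legal, for otherwise transitivity would force $(e',e'')$ to be illegal. If instead $T_v$ is nondegenerate, Proposition \ref{gates-2} provides $g\in G_v$ such that $(e_n,g\cdot e')$ is $\epsilon$-legal for any chosen edge $e'$ of $T_f$ at $v$, and I set $e_{n+1}:=g\cdot e'$.

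Iterating indefinitely, I obtain an infinite sequence $(e_n)_{n\in\mathbb{N}}$ of edges in $T_f$ with all consecutive turns $\epsilon$-legal. Let $K$ denote the (finite) number of $F_N$-orbits of edges in $T_f$, i.e.\ the number of edges of $X_f$. By the pigeonhole principle applied to the projection of $(e_n)$ to $X_f$, there exist indices $0\le i<j\le K$ with $e_j=h\cdot e_i$ for some $h\in F_N$. This element $h$ is hyperbolic in $T^{simpl}$ (since the segment from $e_i$ to $e_j$ has positive combinatorial length $j-i$), and the $h^k$-translates of the segment $e_i,\ldots,e_{j-1}$ concatenate into the axis of $h$ in $T$. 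By $F_N$-equivariance of $f$, legality of turns is $F_N$-invariant, so this axis crosses only $\epsilon$-legal turns. Thus $h$ is $\epsilon$-legal with $l^{comb}_T(h)=j-i\le K$.

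The main obstacle in this plan is ensuring that the extension step produces an $\epsilon$-legal turn incident to the specific edge $e_n$, rather than merely some legal turn somewhere at $v$. When $T_v$ is a point, this is overcome by the transitivity of illegality in Proposition \ref{gates-1}, which upgrades the abstract existence of a legal turn at $v$ to the existence of one starting at $e_n$. When $T_v$ is nondegenerate, the obstacle is surmounted by the flexibility of translating the candidate edge by elements of $G_v$ provided by Proposition \ref{gates-2}. Note that $K$ depends only on $T$ and $f$ (via the finite graph $X_f$), not on $\epsilon$, which gives the uniform bound claimed in the lemma.
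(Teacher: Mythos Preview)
Your approach is essentially the same as the paper's: build an $\epsilon$-legal path using Propositions \ref{gates-1} and \ref{gates-2}, then close it up by pigeonhole in the finite quotient. There is, however, one small slip. You take $K$ to be the number of $F_N$-orbits of (unoriented) edges in $T_f$, but your closing-up step needs $e_j=h\cdot e_i$ as \emph{oriented} edges; otherwise the $h$-translate of the segment $e_i,\ldots,e_{j-1}$ begins with $\overline{e_j}$ rather than $e_j$, the concatenation backtracks, and you cannot conclude that this segment is a fundamental domain for the axis of $h$ (indeed $h$ could even be elliptic). The fix is exactly what the paper does: let $K$ be the number of orbits of \emph{oriented} edges, so that pigeonhole on $e_0,\ldots,e_K$ yields two edges in the same oriented orbit. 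With that adjustment your argument is correct and matches the paper's proof.
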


\begin{proof}
Let $K$ be the (finite) number of orbits of oriented edges in $T^{simpl}$, let $\epsilon>0$, and let $x\in T_f$. Starting from $x$ and using Propositions \ref{gates-1} and \ref{gates-2}, we construct a path in $T$ by only crossing $\epsilon$-legal turns for $f$ (legal turns at vertices whose corresponding vertex tree is reduced to a point). After crossing at most $K$ turns, we have necessarily crossed the same orbit of oriented edges twice, so we have constructed a segment of the form $[v,gv]$ for some $g\in F_N$. In particular, we have $l_T^{comb}(g)\le K$, and $g$ is $\epsilon$-legal for $f$. 
\end{proof}

\noindent An element $g\in F_N$ is called an \emph{$\epsilon$-witness} for the pair $(T,T')$ if $\frac{||g||_{T'}}{||g||_T}\ge \text{Lip}(T,T')-\epsilon$.

\begin{lemma} \label{witness}
Let $T,T'\in\overline{cv_N}$ be such that $\text{Lip}(T,T')<+\infty$, and let $f:T\to\overline{T'}$ be an optimal map. Assume that $T_f\neq\emptyset$. For all $\epsilon>0$ and all $K\in\mathbb{N}$, there exists $\epsilon'>0$ such that any element $g\in F_N$ with $l^{comb}_T(g)\le K$ and which is $\epsilon'$-legal for $f$, is an $\epsilon$-witness for the pair $(T,T')$.
\end{lemma}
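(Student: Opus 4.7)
The plan is, given $\epsilon > 0$ and $K \in \mathbb{N}$, to choose $\epsilon'$ small enough (depending only on $\epsilon$, $K$, and the fixed data of $T$, $T'$, $f$) so that any $\epsilon'$-legal element $g \in F_N$ with $l^{comb}_T(g) \le K$ satisfies $||g||_{T'}/||g||_T \ge M - \epsilon$, where $M := \text{Lip}(T,T') = \text{Lip}(f)$. The rationale is that $\epsilon'$-legality prevents $f$ from creating significant cancellation in the image of the axis of $g$ in $\overline{T'}$, so that the translation length of $g$ in $T'$ is close to $M$ times its translation length in $T$.

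Concretely, pick a basepoint $x \in T_f$ on the axis $C_T(g)$, and decompose a fundamental domain $\gamma := [x, gx]$ as a concatenation of at most $K$ simplicial edges $e_1, \ldots, e_k \subseteq T_f$ (each stretched by exactly $M$ under $f$) interleaved with arcs $s_0, s_1, \ldots, s_k$ lying inside vertex trees of the canonical graph-of-actions decomposition of $T$. The $\epsilon'$-legality of $g$ forces $l_T(s_i) < \epsilon'$, with $s_i$ degenerate whenever the corresponding vertex tree is reduced to a point. In particular $\sum_i l_T(e_i) \ge l_{\min}$, where $l_{\min} > 0$ is the smallest edge length in $T^{simpl}$ (positive because there are finitely many orbits of edges by \cite[Corollary III.3]{GL95}), and $||g||_T - \sum_i l_T(e_i) \le (K+1)\epsilon'$.

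The lower bound on $||g||_{T'}$ will follow from the identity $||g||_{T'} = \lim_{n\to\infty} \frac{1}{n} d_{\overline{T'}}(f(x), g^n f(x))$. The image $f(\gamma_n)$ of $\gamma_n := [x, g^n x]$ is a concatenation of $nk$ geodesic segments $f(e_i)$ of length $M l_T(e_i)$, interleaved with short arcs $f(s_i)$ of length at most $M\epsilon'$; and $d_{\overline{T'}}(f(x), g^n f(x))$ equals the length of this path minus twice its total backtracking $BT_n$. The key estimate will be that $BT_n \le n \cdot C(K,M)\epsilon'$ for some constant depending only on $K$ and $M$. Granting this, a direct computation yields
\[
||g||_{T'} \ge M \sum_i l_T(e_i) - 2C(K,M)\epsilon' \ge M \cdot ||g||_T - \bigl(M(K+1) + 2C(K,M)\bigr)\epsilon',
\]
and dividing by $||g||_T \ge l_{\min}$ and choosing $\epsilon'$ smaller than $\epsilon\, l_{\min}/(M(K+1) + 2C(K,M))$ then gives $||g||_{T'}/||g||_T \ge M - \epsilon$, as required.

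The main obstacle is proving the backtracking estimate $BT_n \le n \cdot C(K,M) \epsilon'$. The plan is to analyze each junction between consecutive pieces $f(e_i)$, $f(s_i)$, $f(e_{i+1})$ using the constraints from $\epsilon'$-legality: the inequality $d_{\overline{T'}}(f(b_i), f(c_{i+1})) \le M l_T(s_i) \le M\epsilon'$ (from $f$ being $M$-Lipschitz) controls the displacement of $f$ along vertex tree arcs, while the inequality $l_{\overline{T'}}(f(e_i) \cap f(e_{i+1})) < \epsilon'$ (combined with the legality of Propositions \ref{gates-1} and \ref{gates-2} for point-vertex-trees) controls the overlap of consecutive edge images. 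A careful tree-geometry analysis of the configuration of the four relevant points in $\overline{T'}$ at each of the $k \le K$ junctions per fundamental domain shows that the cumulative backtracking per fundamental domain is $O(KM\epsilon')$, which extrapolates to the desired bound on $BT_n$.
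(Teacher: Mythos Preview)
Your approach is essentially the same as the paper's: decompose a fundamental domain of $C_T(g)$ into simplicial edges lying in $T_f$ and short vertex-tree arcs, then use $\epsilon'$-legality to bound cancellation in the image under $f$. One point the paper makes explicit and you leave implicit: a threshold of the form $\epsilon' < \frac{M\lambda}{3}$ (with $\lambda = l_{\min}$) is needed so that the backtracking at one junction cannot swallow an entire edge image $f(e_i)$ (which has length $\ge M\lambda$) and cascade into the next junction---without this, your local estimates at individual junctions do not sum to the global bound $BT_n \le n\, C(K,M)\epsilon'$. This is easily fixed by adding that threshold to your choice of $\epsilon'$. Conversely, your use of the limit formula $||g||_{T'} = \lim_{n\to\infty} \frac{1}{n} d_{\overline{T'}}(f(x), g^n f(x))$ is a clean way to extract a \emph{lower} bound on $||g||_{T'}$ from the path estimate; the paper's phrasing (``the length of any fundamental domain of the $f$-image of $C_T(g)$'') is somewhat less precise on exactly this point.
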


\begin{proof}
Let $\lambda>0$ be the smallest length of an edge in $T^{simpl}$, and let $\epsilon'>0$ be smaller than $\frac{M}{3}\lambda$, where $M:=\text{Lip}(f)$. Let $g\in F_N$ be $\epsilon'$-legal for $f$ and such that $l^{comb}_T(g)\le K$. Let $v\in C_T(g)$, and let $\gamma$ be the projection of $[v,gv]$ to $X$. Then $||g||_T\le l_X(\gamma)+K\epsilon'$. In addition, every edge in $T_f$ is mapped by $f$ to a segment of length at least $M\lambda$. As $\epsilon'\le\frac{M}{3}\lambda$, the control we have over cancellation for $\epsilon'$-legal turns ensures that after tightening, the length of any fundamental domain of the $f$-image of $C_T(g)$ is at least $\text{Lip}(T,T')l_X(\gamma)-2K\epsilon'$, and we have $||g||_{T'}\ge \text{Lip}(T,T')l_X(\gamma)-2K\epsilon'$, see Figure \ref{fig-cancellation}. Hence

\begin{displaymath}
\frac{||g||_{T'}}{||g||_T}\ge\frac{\text{Lip}(T,T')l_X(\gamma)-2K\epsilon'}{l_X(\gamma)+K\epsilon'}.
\end{displaymath}

\noindent By making $\epsilon'$ arbitrarily small, we can make $\frac{||g||_{T'}}{||g||_T}$ arbitrarily close to $\text{Lip}(T,T')$ (we can assume that $l_X(\gamma)$ is bounded below because $X$ is a finite graph).
\end{proof}

\begin{figure}
\begin{center}
\input{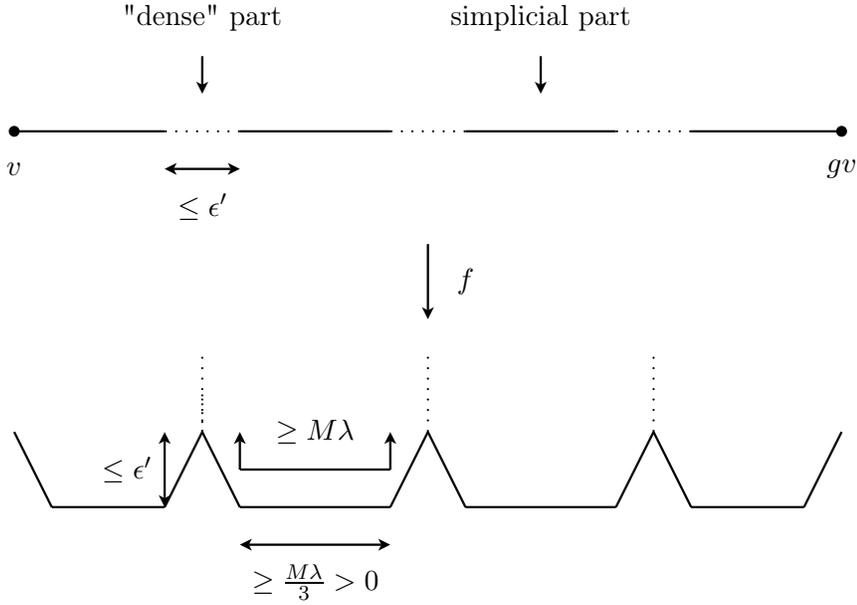}
\caption{The control over cancellation in an $\epsilon'$-legal path.}
\label{fig-cancellation}
\end{center}
\end{figure}

\begin{prop} \label{witness2}
Let $T,T'\in\overline{cv_N}$ be such that $\text{Lip}(T,T')<+\infty$. For all $\epsilon>0$, there exists an $\epsilon$-witness $g$ for the pair $(T,T')$. If $f:T\to\overline{T'}$ is an optimal map, and $T_f=\emptyset$, then we can choose $g$ to be contained in a vertex stabilizer of $T^{simpl}$.
\end{prop}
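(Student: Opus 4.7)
The plan is to exhibit an optimal map $f: T \to \overline{T'}$ (provided by Proposition \ref{optimal}, since $\text{Lip}(T,T')<+\infty$), set $M := \text{Lip}(f) = \text{Lip}(T,T')$, and split the argument according to whether $T_f$ is empty.

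When $T_f \neq \emptyset$, the witness is produced by a purely combinatorial argument: given $\epsilon > 0$, one first applies Lemma \ref{legal-loop} to obtain the combinatorial length bound $K$; then Lemma \ref{witness} with this $K$ and the chosen $\epsilon$ yields $\epsilon' > 0$; a second appeal to Lemma \ref{legal-loop} produces $g \in F_N$ of combinatorial length at most $K$ which is $\epsilon'$-legal for $f$, and Lemma \ref{witness} guarantees that such a $g$ is an $\epsilon$-witness.

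When $T_f = \emptyset$, the stretch of $f$ on every edge of the simplicial part of $T$ is bounded above by some $\delta < M$ (the quotient graph $X$ being finite). Decomposing the geodesic between any two points of $T$ into maximal subsegments lying in a single edge or in a single vertex tree, the ratio $d_{\overline{T'}}(f(x), f(y))/d_T(x,y)$ is dominated by the largest per-subsegment stretch; since edge stretches never exceed $\delta$, pairs whose stretch approaches $M$ must concentrate in a single vertex tree. By pigeonhole over the finitely many orbits of vertex trees, some vertex $v$ of $T^{simpl}$ --- whose vertex tree $T_v$ is necessarily nondegenerate, hence with dense $G_v$-orbits by Theorem \ref{graph-of-actions} --- satisfies $\text{Lip}(f|_{T_v}) = M$. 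One then sets $Y \subseteq \overline{T'}$ to be the closure of $f(T_v)$: this is a closed, $G_v$-invariant subtree of $\overline{T'}$, very small as a $G_v$-tree (its arc stabilizers being inherited from those of $T'$), and with dense $G_v$-orbits inherited from $T_v$ through $f|_{T_v}$.

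Since $T_v$ has dense $G_v$-orbits and is nondegenerate, $G_v$ is free of rank at least $2$, and the $G_v$-version of Proposition \ref{dense-rigidity} applies to conclude that $f|_{T_v}$ is the \emph{unique} Lipschitz $G_v$-equivariant map $T_v \to Y$; hence $\text{Lip}_{G_v}(T_v, Y) = M$. The $G_v$-version of Corollary \ref{White-dense} --- whose proof, via Theorems \ref{approximation-by-simplicial} and \ref{Algom-Kfir} and the blueprint of Proposition \ref{blueprint-1}, carries over to any free group of finite rank --- then yields $\Lambda_{G_v}(T_v, Y) = M$, producing for each $\epsilon > 0$ an element $g \in G_v$ with $||g||_Y/||g||_{T_v} \geq M - \epsilon$. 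Being hyperbolic in $Y$, such a $g$ has its unique axis inside the $g$-invariant subtree $Y \subseteq \overline{T'}$, so $||g||_Y = ||g||_{T'}$; similarly the axis of $g$ in $T_v$ coincides with its axis in $T$, giving $||g||_{T_v} = ||g||_T$. Therefore $||g||_{T'}/||g||_T \geq M - \epsilon$, and this $g$ is the desired $\epsilon$-witness, lying in the vertex stabilizer $G_v$ as required. The central subtlety is the passage to $Y$: treating the closure of $f(T_v)$ as an auxiliary very small $G_v$-tree with dense orbits allows Proposition \ref{dense-rigidity} to force the individual Lipschitz constant of $f|_{T_v}$ to coincide with the infimum $\text{Lip}_{G_v}(T_v, Y)$, bypassing any direct modification of $f$ on the orbit of $T_v$ within $T$ --- a modification that would otherwise require delicate control of the edge stretches caused by shifts in the attaching-point images.
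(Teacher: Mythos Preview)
Your proof is correct and follows essentially the same line as the paper's: in the case $T_f\neq\emptyset$ you combine Lemmas \ref{legal-loop} and \ref{witness} exactly as the paper does, and in the case $T_f=\emptyset$ you locate a vertex tree $T_v$ on which $f$ realises its full Lipschitz constant, then use the uniqueness statement of Proposition \ref{dense-rigidity} together with Corollary \ref{White-dense} (applied to the free group $G_v$) to produce the witness. The only cosmetic difference is that you work with $T_v$ and the closure $Y=\overline{f(T_v)}$, whereas the paper passes to the $G_v$-minimal subtree $T_v^{\min}$ and its image $f(T_v^{\min})$; since $T_v^{\min}\subseteq T_v\subseteq\overline{T_v^{\min}}$ these choices are interchangeable, and your additional care in verifying that $Y$ is very small with dense $G_v$-orbits is a welcome explicitness that the paper leaves implicit.
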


\begin{proof}
Let $f:T\to\overline{T'}$ be an optimal map (whose existence is provided by Proposition \ref{optimal}). If $T_f\neq\emptyset$, the claim follows from Lemmas \ref{legal-loop} and \ref{witness}. Otherwise, as $X$ is a finite graph, there exists a vertex $v$ in $T^{simpl}$ corresponding to a tree $T_v$ with dense orbits such that $f_{|T_v}:T_v\to f(T_v)$ has Lipschitz constant $\text{Lip}(T,T')$. Denoting by $T_v^{\min}$ the minimal $G_v$-subtree of $T_v$, we get a $G_v$-equivariant map $f_{|T_v^{\min}}:T_v^{\min}\to f(T_v^{\min})$, and $\text{Lip}(f_{|T_v^{\min}})=\text{Lip}(f_{|T_v})$. But $T_v^{\min}$ is a tree with dense orbits, and so is $f(T_v^{\min})$. Hence by Proposition \ref{dense-rigidity}, there exists a unique $\text{Lip}(T,T')$-Lipschitz $G_v$-equivariant map from $T_v^{\min}$ to $f(T_v^{\min})$, and this map is equal to $f_{|T_v^{\min}}$. Hence $\text{Lip}(T_v^{\min},f(T_v^{\min}))=\text{Lip}(f_{|T_v})=\text{Lip}(T,T')$. By Corollary \ref{White-dense}, we have $\text{Lip}(T_v^{\min},f(T_v^{\min}))=\sup_{g\in G_v}\frac{||g||_{T'}}{||g||_T}\le\sup_{g\in F_N}\frac{||g||_{T'}}{||g||_T}$, whence $\text{Lip}(T,T')\le{\Lambda}(T,T')$, and the claim follows.
\end{proof}

\begin{proof}[Proof of Proposition \ref{strong-implies-weak}]
Let $T,T'\in\overline{cv_N}$ be such that $\text{Lip}(T,T')<+\infty$. By Proposition \ref{witness2}, for all $\epsilon>0$, there exists an $\epsilon$-witness for the pair $(T,T')$, so ${\Lambda}(T,T')\ge\text{Lip}(T,T')$. The reverse inequality follows from Proposition \ref{easy-comparison}.
\end{proof}

\subsection{End of the proof of Theorem \ref{strong-White}} \label{sec-weak-strong}

In this section, we finish the proof of Theorem \ref{strong-White}. Proposition \ref{White-blueprint} will be used in the following sections in various contexts to get refinements of Theorem \ref{strong-White} (see Theorem \ref{White-candidates} and Corollaries \ref{White-separable} and \ref{White-primitive}). 

\begin{prop} \label{weak-strong-C}
Let $T\in\overline{cv_N}$, and let $\mathcal{C}(T)\subseteq F_N$ be a subset that contains all vertex stabilizers of $T^{simpl}$. For all $T'\in\overline{cv_N}$, if ${\Lambda}_{\mathcal{C}(T)}(T,T')<+\infty$, then $\text{Lip}(T,T')<+\infty$.
\end{prop}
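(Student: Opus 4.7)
The plan is to construct a Lipschitz $F_N$-equivariant map $f\colon T\to\overline{T'}$ piece-by-piece from the canonical graph-of-actions decomposition of $T$ given by Theorem~\ref{graph-of-actions}. Its underlying quotient graph $X=T^{simpl}/F_N$ is finite, and its vertex trees $T_v$ either have dense $G_v$-orbits or are reduced to a point. I will first define $f$ equivariantly on the union of all vertex trees, then interpolate linearly across each simplicial edge.

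Fix an orbit representative vertex $v$. If $T_v$ is a point, then $\|g\|_T=0$ for every $g\in G_v$, so the hypothesis $\Lambda_{\mathcal{C}(T)}(T,T')<+\infty$ combined with $G_v\subseteq\mathcal{C}(T)$ forces $\|g\|_{T'}=0$; hence every $g\in G_v$ has a non-empty fixed subtree in $T'$, and for any $g,g'\in G_v$ the fact that $gg'$ is elliptic forces the fixed subtrees of $g$ and $g'$ in $T'$ to meet (lest $gg'$ be hyperbolic). Since $G_v$ is finitely generated, Helly's theorem for $\mathbb{R}$-trees then supplies a global fixed point $p_v\in T'$, and we set $f_v$ to be the constant map to $p_v$. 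If $T_v$ is not a point, then $G_v$ has rank at least $2$ (a cyclic group cannot act with dense orbits on a non-degenerate $\mathbb{R}$-tree), and Corollary~\ref{White-dense}, applied in its $G_v$-equivariant version exactly as in the proof of Proposition~\ref{witness2}, yields
\[
\mathrm{Lip}(T_v^{\min},T')=\sup_{g\in G_v}\frac{\|g\|_{T'}}{\|g\|_T}\le\Lambda_{\mathcal{C}(T)}(T,T')<+\infty,
\]
hence a Lipschitz $G_v$-equivariant map $f_v\colon T_v^{\min}\to\overline{T'}$, which extends uniquely and continuously to $\overline{T_v^{\min}}\supseteq T_v$. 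Propagating each $f_v$ by $F_N$-equivariance defines $f$ on every vertex tree.

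For each $F_N$-orbit of edges in $T^{simpl}$, pick a representative $e=[a,b]$ with $a,b$ in vertex trees on which $f$ is already defined, and extend $f$ along $e$ as the arc-length-parameterized linear interpolation between $f(a)$ and $f(b)$ in the $\mathbb{R}$-tree $\overline{T'}$; extend by $F_N$-equivariance. Letting $M$ be the maximum of the Lipschitz constants of the finitely many $f_v$ and of the finitely many edge ratios $d_{\overline{T'}}(f(a),f(b))/l_T(e)$, we have $M<+\infty$. Since any geodesic in $T$ decomposes as a concatenation of finitely many subsegments, each contained in a vertex tree or along an edge, on each of which $f$ is $M$-Lipschitz, the triangle inequality yields $d_{\overline{T'}}(f(x),f(y))\le M\,d_T(x,y)$ for all $x,y\in T$. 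Hence $\mathrm{Lip}(T,T')\le M<+\infty$.

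The main obstacle lies in the degenerate case where $T_v$ is a point but $G_v$ has rank at least $2$: one must combine the non-empty fixed subtrees $\mathrm{Fix}_{T'}(g_i)$ of a finite generating set of $G_v$ into a common fixed point of $G_v$ via Helly's theorem, crucially using that in an $\mathbb{R}$-tree the product of two elliptic isometries is elliptic precisely when their fixed subtrees meet. The only other subtle point is verifying that Corollary~\ref{White-dense} transfers from $F_N$-actions to actions of the free subgroup $G_v\le F_N$, but this is essentially formal because every ingredient of its proof (Theorem~\ref{approximation-by-simplicial}, Theorem~\ref{Algom-Kfir}, Proposition~\ref{blueprint-1}) is intrinsic to the acting group, and is exactly the transfer already performed in the proof of Proposition~\ref{witness2}.
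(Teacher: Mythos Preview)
Your proof is correct and follows essentially the same approach as the paper: build the map on vertex trees using Corollary~\ref{White-dense} for $G_v$-actions, send point-vertex-trees to $G_v$-fixed points in $T'$, and extend linearly on edges using finiteness of the quotient graph. Your treatment is in fact slightly more careful than the paper's in two places --- you explicitly invoke the Helly/Serre argument to produce a common fixed point when $T_v$ is a point (the paper just speaks of ``the minimal $G_v$-invariant subtree of $T'$'' without separating the elliptic case), and you flag the transfer of Corollary~\ref{White-dense} from $F_N$ to $G_v$, which the paper uses silently here as in Proposition~\ref{witness2}.
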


\begin{proof} 
Let $T_v$ be a vertex tree of $T^{simpl}$ whose stabilizer $G_v$ is nontrivial, and let $T_v^{\min}$ be the minimal $G_v$-invariant subtree of $T_v$. Let $T'_v$ be the minimal $G_v$-invariant subtree of $T'$. As $G_v\subseteq\mathcal{C}(T)$, we have ${\Lambda}_{G_v}(T_v^{\min},T'_v)<+\infty$, so by Corollary \ref{White-dense}, there exists a Lipschitz $F_N$-equivariant map from $T_v^{\min}$ to $\overline{T'_v}$, and hence from $T_v$ to $\overline{T'_v}$. Notice that if an attaching point $p\in T_v$ is fixed by $g\in G_v$, then $f(p)$ is also fixed by $g$. Hence we can define a Lipschitz $F_N$-equivariant map from $T$ to $\overline{T'}$ by sending every vertex tree $T_v$ with dense orbits into the corresponding tree $\overline{T_v'}\subseteq \overline{T'}$ (in particular, every vertex tree which is reduced to a point with nontrivial vertex group $G_v$ is sent to a point fixed by $G_v$), sending the points in $T$ projecting to vertices in $T^{simpl}$ with trivial stabilizer arbitrarily in an $F_N$-equivariant way, and extending linearly on edges. The map we get is Lipschitz because there is a finite number of orbits of vertices and of orbits of edges in $T^{simpl}$.    
\end{proof}

In particular, Proposition \ref{weak-strong-C} applied to $\mathcal{C}(T):=F_N$, together with Proposition \ref{easy-comparison}, implies the following corollary.

\begin{cor}
For all $T,T'\in\overline{cv_N}$, we have $\text{Lip}(T,T')<+\infty$ if and only $\Lambda(T,T')<+\infty$. \qed
\end{cor}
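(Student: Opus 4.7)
The plan is to derive this immediately by combining the two inequalities already established in the excerpt, with the forward direction being the easy comparison and the reverse direction being the content of Proposition \ref{weak-strong-C}.

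First I would handle the forward implication. If $\text{Lip}(T,T')<+\infty$, then Proposition \ref{easy-comparison} gives $\Lambda(T,T')\le\text{Lip}(T,T')<+\infty$, so nothing needs to be done beyond quoting that result.

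For the reverse implication, I would apply Proposition \ref{weak-strong-C} with the choice $\mathcal{C}(T):=F_N$. This choice is admissible since $F_N$ vacuously contains all vertex stabilizers of $T^{simpl}$, and under this choice we have $\Lambda_{\mathcal{C}(T)}(T,T')=\Lambda(T,T')$ by the very definitions. Thus $\Lambda(T,T')<+\infty$ directly yields $\text{Lip}(T,T')<+\infty$.

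There is no real obstacle here: the statement is essentially packaged as a restatement of the two facts just proved, and the only thing to check is the trivial observation that $F_N$ qualifies as an admissible subset $\mathcal{C}(T)$ in Proposition \ref{weak-strong-C}. In fact, together the two inequalities show the stronger quantitative fact $\Lambda(T,T')\le\text{Lip}(T,T')$ and $\text{Lip}(T,T')$ finite whenever $\Lambda(T,T')$ is, which is exactly the content needed to complete the proof of Theorem \ref{strong-White} (Theorem \ref{intro-White}) in the remaining case $\Lambda(T,T')=+\infty$, where Proposition \ref{strong-implies-weak} alone did not apply.
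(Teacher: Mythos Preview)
Your proposal is correct and matches the paper's own argument exactly: the paper states the corollary as an immediate consequence of Proposition \ref{weak-strong-C} applied with $\mathcal{C}(T):=F_N$ together with Proposition \ref{easy-comparison}, which is precisely what you do.
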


The following proposition will be applied in the sequel to various choices of the set $\mathcal{C}(T)$ to get refinements of Theorem \ref{strong-White}. 

\begin{prop}\label{White-blueprint}
Let $T,T'\in\overline{cv_N}$. Let $\mathcal{C}(T)$ be a subset of $F_N$ that contains all vertex stabilizers of $T^{simpl}$. Assume in addition that either $\text{Lip}(T,T')=+\infty$, or that for all $\epsilon>0$, there exists $g\in\mathcal{C}(T)$ which is an $\epsilon$-witness for the pair $(T,T')$. Then $\text{Lip}(T,T')={\Lambda}_{\mathcal{C}(T)}(T,T')$.
\end{prop}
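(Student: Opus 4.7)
The plan is to prove the two inequalities $\Lambda_{\mathcal{C}(T)}(T,T') \le \text{Lip}(T,T')$ and $\text{Lip}(T,T') \le \Lambda_{\mathcal{C}(T)}(T,T')$ separately. The first of these is essentially free: since $\mathcal{C}(T) \subseteq F_N$, we have $\Lambda_{\mathcal{C}(T)}(T,T') \le \Lambda(T,T')$, and Proposition \ref{easy-comparison} gives $\Lambda(T,T') \le \text{Lip}(T,T')$. So the core of the proof lies in the reverse inequality, which I would handle by splitting into the two cases set up in the hypothesis.

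For the case $\text{Lip}(T,T') = +\infty$, I would argue by contraposition using Proposition \ref{weak-strong-C}. That proposition, applied precisely to a subset $\mathcal{C}(T)$ containing all vertex stabilizers of $T^{simpl}$, says that if $\Lambda_{\mathcal{C}(T)}(T,T') < +\infty$, then $\text{Lip}(T,T') < +\infty$. Thus if $\text{Lip}(T,T') = +\infty$, we necessarily have $\Lambda_{\mathcal{C}(T)}(T,T') = +\infty$, and the desired inequality $\text{Lip}(T,T') \le \Lambda_{\mathcal{C}(T)}(T,T')$ holds trivially in $[0,+\infty]$.

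For the case $\text{Lip}(T,T') < +\infty$, I would directly unpack the $\epsilon$-witness hypothesis. By assumption, for every $\epsilon > 0$ there exists $g \in \mathcal{C}(T)$ with $\frac{||g||_{T'}}{||g||_T} \ge \text{Lip}(T,T') - \epsilon$, which immediately yields $\Lambda_{\mathcal{C}(T)}(T,T') \ge \text{Lip}(T,T') - \epsilon$. Letting $\epsilon \to 0$ gives $\Lambda_{\mathcal{C}(T)}(T,T') \ge \text{Lip}(T,T')$, concluding the argument.

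There is no real obstacle here: the proposition is a packaging result, designed to serve as a template for deducing refinements of Theorem \ref{strong-White} (such as restricting the supremum to candidates, primitives, or simple elements). All of the substantive work has been done upstream — the existence of $\epsilon$-witnesses inside suitable classes $\mathcal{C}(T)$ is established via the legal-turn machinery of Section \ref{sec-optimal} (Lemmas \ref{legal-loop} and \ref{witness}) and the trivial case $T_f = \emptyset$ handled in Proposition \ref{witness2}, while the finiteness transfer from $\Lambda_{\mathcal{C}(T)}$ to $\text{Lip}$ is Proposition \ref{weak-strong-C}. The proof of Proposition \ref{White-blueprint} itself is thus a short formal assembly of these ingredients.
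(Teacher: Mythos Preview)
Your proof is correct and follows essentially the same approach as the paper: both use Proposition \ref{easy-comparison} for one inequality and Proposition \ref{weak-strong-C} together with the $\epsilon$-witness hypothesis for the other. The only cosmetic difference is that the paper splits into cases according to whether $\Lambda_{\mathcal{C}(T)}(T,T')$ is finite, whereas you split on whether $\text{Lip}(T,T')$ is finite; the two case divisions are logically equivalent here.
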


\begin{proof}
Let $T,T'\in\overline{cv_N}$. Proposition \ref{easy-comparison} shows that ${\Lambda}_{\mathcal{C}(T)}(T,T')\le\text{Lip}(T,T')$, and if ${\Lambda}_{\mathcal{C}(T)}(T,T')=+\infty$, then the reverse inequality is obvious. So we may assume that ${\Lambda}_{\mathcal{C}(T)}(T,T')<+\infty$. Proposition \ref{weak-strong-C} then shows that $\text{Lip}(T,T')<+\infty$, and the conclusion follows from the assumption made on $\mathcal{C}(T)$.
\end{proof}

\begin{proof}[Proof of Theorem \ref{strong-White}]
Theorem \ref{strong-White} follows from Proposition \ref{White-blueprint} applied to $\mathcal{C}(T):=F_N$ and Proposition \ref{witness2}.
\end{proof}

\subsection{Candidates} \label{sec-candidates}

We extend the notion of candidates from Section \ref{sec-metric} to arbitrary trees in $\overline{cv_N}$, compare with \cite[Definition 4.4]{AK12}. An element $g\in F_N$ is a \emph{candidate} in $T$ if there exists $v\in C_T(g)$ such that the segment $[v,gv]$ projects to a loop $\gamma$ in $X$ which is either

\begin{itemize}
\item an embedded loop, or
\item an embedded bouquet of two circles, or
\item a barbell graph, or
\item a \emph{simply-degenerate barbell}, i.e. $\gamma$ is of the form $u\eta\overline{\eta}$, where $u$ is an embedded loop in $X$ and $\eta$ is an embedded path in $X$ with two distinct endpoints which meets $u$ only at its origin, and whose terminal endpoint is a vertex in $X$ with nontrivial stabilizer, or
\item a \emph{doubly-degenerate barbell}, i.e. $\gamma$ is of the form $\eta\overline{\eta}$, where $\eta$ is an embedded path in $X$ whose two distinct endpoints have nontrivial stabilizers, or
\item a vertex in $X$.
\end{itemize} 

\noindent We display the possible shapes of the loop $\gamma$ on Figure \ref{fig-candidates}. In the case of (possibly simply- or doubly-degenerate) barbells, we call $\eta$ the \emph{central path} of $\gamma$. By a more careful analysis of the path built in the proof of Lemma \ref{legal-loop}, we show the following result, which was already noticed by Algom-Kfir \cite[Proposition 4.5]{AK12}. Our strategy of proof follows \cite[Proposition 2.3]{AK11}.

\begin{figure}
\begin{center}
\input{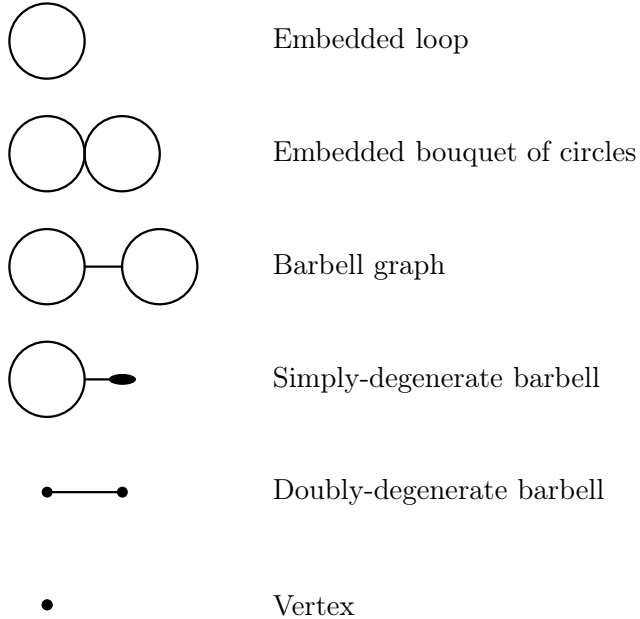}
\caption{The shape of loops in $X$ that represent candidates in $T$.}
\label{fig-candidates}
\end{center}
\end{figure}

\begin{prop} \label{candidate}
Let $T,T'\in\overline{cv_N}$ be such that $\text{Lip}(T,T')<+\infty$. Then for all $\epsilon>0$, there exists an element $g\in F_N$ which is a candidate in $T$ and is an $\epsilon$-witness for the pair $(T,T')$. More precisely, let $f:T\to\overline{T'}$ be an optimal map. 
\begin{itemize}
\item If $X_f=\emptyset$, then there exists $g\in F_N$ whose characteristic set in $T$ projects to a point in $X$, and which is an $\epsilon$-witness for the pair $(T,T')$.
\item If $X_f\neq\emptyset$, then there exists $g\in F_N$ which is a candidate in $T$, and which is $\epsilon$-legal for $f$. 
\end{itemize}
\end{prop}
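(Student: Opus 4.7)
The plan is to follow the strategy of Lemma~\ref{legal-loop} and Proposition~\ref{witness2}, but to refine the construction so that the loop we produce in $X$ has one of the six shapes of Figure~\ref{fig-candidates}. First, I would dispose of the case $X_f=\emptyset$. Exactly as in Proposition~\ref{witness2}, there must be a vertex $v$ of $T^{simpl}$ whose vertex tree $T_v$ has dense orbits and satisfies $\mathrm{Lip}(f_{|T_v})=\mathrm{Lip}(T,T')$. Applying Corollary~\ref{White-dense} to the $G_v$-action on $T_v^{\min}$ gives, for every $\epsilon>0$, an element $g\in G_v$ with $\|g\|_{T'}/\|g\|_T\ge \mathrm{Lip}(T,T')-\epsilon$. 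Since $g\in G_v$, its characteristic set lies in $T_v^{\min}\subseteq T_v$ and projects to the vertex $v$ of $X$, giving a candidate of the vertex type.

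Now suppose $X_f\neq\emptyset$. Given $\epsilon>0$, let $K$ be the number of $F_N$-orbits of oriented edges in $T^{simpl}$, and choose $\epsilon'>0$ small enough that Lemma~\ref{witness} turns $\epsilon'$-legality of an element with combinatorial length $\le 3K$ into $\epsilon$-witness status. Pick an edge $e_0\subset T_f$ and extend it forward inside $T_f$, crossing only $\epsilon'$-legal turns: this is possible at every vertex encountered, using Proposition~\ref{gates-1} for vertices whose vertex tree is a point and Proposition~\ref{gates-2} for vertices whose vertex tree has dense orbits (possibly after replacing the outgoing edge by a translate under the stabilizer $G_v$). After at most $K$ steps an oriented edge orbit is revisited, producing a segment $[v,gv]$ whose $X$-projection is a loop of combinatorial length $\le K+1$.

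To arrange a candidate shape, I would run this extension procedure in both directions from $e_0$ and analyse the first self-intersection of the $X$-projection. If the projection closes up into an embedded circle we are done. Otherwise the projection decomposes as an embedded path $\eta$ (the central path) ending at an embedded loop $u$; I then extend further back from the other end of $\eta$ until either (a) the extension reaches $u$, producing a bouquet of two circles at a common vertex, (b) the extension forms a second embedded loop $u'$ disjoint from $u$, producing a genuine barbell, or (c) the extension stalls at a vertex $w$ whose vertex tree is a point with $G_w\ne 1$, or has dense orbits. In case (c) I close up the central path using an element of $G_w$: when $T_w$ is a point we take any generator of $G_w$ (applying Proposition~\ref{gates-1} to keep the turn legal up to $G_w$), and when $T_w$ has dense orbits we use Proposition~\ref{gates-2} together with Corollary~\ref{White-dense} inside $T_w^{\min}$ to choose a $G_w$-element whose axis contributes stretch at the optimal ratio while the induced turn at $w$ is $\epsilon'$-legal. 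This yields a simply-degenerate barbell; if both ends of $\eta$ stall in this way we obtain a doubly-degenerate barbell.

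The main obstacle is the bookkeeping in case (c): the element used to close up is not an edge in $X$, and one must verify that after tightening, the resulting conjugacy class in $F_N$ actually has an axis that realizes the constructed $X$-loop, and that the overall combinatorial length stays bounded by some fixed multiple of $K$ independent of $\epsilon$. The $\epsilon'$-legality hypothesis at a dense-orbit vertex is exactly the content of the ``legal up to $G_v$'' notion from Proposition~\ref{gates-2}, and the overlap estimates in that proposition, combined with the bound on the number of such vertices visited, ensure that the cancellation computation of Lemma~\ref{witness} goes through uniformly. Together with the $X_f=\emptyset$ case above, this produces the required candidate $\epsilon$-witness.
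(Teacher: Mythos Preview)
Your outline tracks the paper's strategy in broad strokes, but it skips the central difficulty and mishandles the degenerate cases.

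The main gap is the legality of the \emph{closing} turn. When your forward path first revisits a vertex orbit, say $v_k=gv_0$, you have an $\epsilon'$-legal path $e_0\gamma_0e_1\dots e_{k-1}$, but the turn $(e_{k-1},g_kge_0)$ that would close it into a loop need not be legal for any $g_k\in G_{v_k}$. You write ``if the projection closes up into an embedded circle we are done,'' but the embedded circle in $X$ is only useful if it is realized by an $\epsilon'$-legal element of $F_N$, and this is exactly what may fail. Running the procedure in both directions does not help: when the forward and backward paths meet in $X$, the turn between their terminal edges was never constructed and again need not be legal. The paper's argument addresses this directly: if no $g_k$ makes the closing turn legal, then $G_{v_k}$ is at most cyclic and $T_{v_k}$ is a point, and one continues \emph{forward} with a new edge $e_k$. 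At the next revisit $v_l=g'v_j$, the transitivity of illegality in Proposition~\ref{gates-1} forces at least one of $(e_{l-1},g'e_{j-1})$ or $(e_{l-1},g'e_j)$ to be legal, since $(g'e_{j-1},g'e_j)$ is already legal. This dichotomy at $v_l$, together with whether each of the two sub-loops is embedded or degenerate, is what produces the eight-case analysis yielding the candidate shapes.

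Your case (c) also reflects a misunderstanding. The path never ``stalls'': Propositions~\ref{gates-1} and~\ref{gates-2} guarantee a legal continuation at every vertex. Simply- and doubly-degenerate barbells arise when one (or both) of the sub-loops above is \emph{degenerate}, meaning a single edge crossed in both directions; this happens precisely when the legal turn taken at some vertex $v$ is of the form $(e,he)$ with $h\in G_v$. There is no need for Corollary~\ref{White-dense} here, and the element $h\in G_v$ at a degenerate endpoint should have \emph{small} displacement (so the turn is $\epsilon'$-legal), not ``contribute stretch at the optimal ratio.'' The stretch comes from the simplicial edges via Lemma~\ref{witness}, not from inside the vertex trees.
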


\begin{proof}
Let $\epsilon>0$. If $X_f=\emptyset$, the claim follows from Proposition \ref{witness2}, so we assume that $X_f\neq\emptyset$. Choose a vertex $v_0\in T_f^{simpl}$, and an edge $e_0$ in $T_f$ whose projection to $T^{simpl}$ is adjacent to $v_0$. Propositions \ref{gates-1} and \ref{gates-2} enable us to construct a path in $T$ of the form $e_0\gamma_0e_1\gamma_1e_2\dots$, where for all integers $i$, 

\begin{itemize}
\item the subpath $e_i$ is an edge in $T_f$, with origin $x_i$ and terminal endpoint $x'_i$, and
\item the subpath $\gamma_i=[x'_i,x_{i+1}]$ lies in a vertex tree of $T$ (it projects to a vertex $v_{i+1}\in T^{simpl}$), and
\item the turn $(e_i,e_{i+1})$ is $\epsilon$-legal for $f$ (and legal for $f$ when $T_{v_{i+1}}$ is reduced to a point).
\end{itemize}

\noindent As the number of orbits of vertices in the simplicial part of $T$ is finite, there exist integers $i,k\in\mathbb{N}$ and an element $g\in F_N$, such that $v_{i+k}=gv_i$. After possibly renumbering the edges, we get a path in $T$ of the form $e_0\gamma_0e_1\dots e_{k-1}$, such that all the turns $(e_i,e_{i+1})$ are $\epsilon$-legal for $f$ (legal at vertices whose vertex tree is reduced to a point), and $v_i\neq v_j$ for all $i\neq j\in\{1,\dots,k\}$, but $(e_{k-1},ge_0)$ might not be $\epsilon$-legal (or legal) for $f$. This path projects to a loop $\gamma$ in $X_f$ which is either embedded, or consists of a single edge crossed successively in both directions, in which case we say it is \emph{degenerate}, see Figure \ref{case-0} (the degenerate case occurs when $k=2$ and the edges $e_0$ and $e_1$ belong to the same orbit of edges).

\begin{figure}
\begin{center}
\input{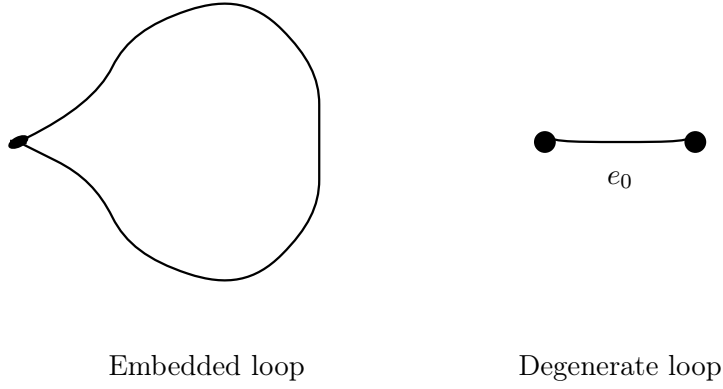}
\caption{The projection to $X$ of the path $e_0\gamma_1e_1\dots e_{k-1}$.}
\label{case-0}
\end{center}
\end{figure}

If there exists $g_k\in G_{v_k}$ so that the turn $(e_{k-1},g_kge_0)$ is $\epsilon$-legal (or legal) for $f$ (which happens for instance as soon as $G_{v_k}$ has rank at least $2$ by Propositions \ref{gates-1} and \ref{gates-2}), then $g_kg$ is a candidate in $T$ which is $\epsilon$-legal for $f$. From now on, we assume that for all $g_k\in G_{v_k}$, the turn $(e_{k-1},g_kge_0)$ is not $\epsilon$-legal for $f$, so in particular the vertex group $G_{v_k}$ is at most cyclic. Proposition \ref{gates-1} shows that for all $g_k\in G_{v_k}$, the turn $(e_{k-1},g_ke_{k-1})$ is not legal for $f$, but ensures the existence of an edge $e_{k}$ in $T_f$ adjacent to $v_{k}$ (not in the same $F_N$-orbit as $e_{k-1}$), such that the turn $(e_{k-1},e_k)$ is legal for $f$. Take this direction, and continue crossing turns which are $\epsilon$-legal for $f$ (legal for $f$ at vertices of $T^{simpl}$ with trivial vertex trees) till you reach a vertex $v_l$ whose orbit has already been visited (i.e. $v_l=g'v_j$ for some $j\in\{0,\dots, l-1\}$ and some $g'\in F_N$). Discussing on the rank of $G_{v_l}$, Propositions \ref{gates-1} and \ref{gates-2} ensure the existence of $g_l\in G_{v_l}$ such that one of the turns $(e_{l-1},g_lg'e_{j-1})$ or $(e_{l-1},g_lg'e_{j})$ is $\epsilon$-legal for $f$ (and legal for $f$ if $T_{v_l}$ is reduced to a point). As above, the path $e_j\dots e_{l-1}$ projects to a loop in $X_f$ which is either embedded or degenerate. Also notice that for all $g\in G_{v_k}$, the turn $(e_0,ge_k)$ is legal for $f$, otherwise Proposition \ref{gates-1} would imply that $(e_{k-1},e_k)$ is not legal for $f$, a contradiction. We give a description of all possible situations, see Figure \ref{cases} where we display the projection to $X$ of the path we have constructed. For simplicity of notations, we will denote a path in $T$ by the sequence of the simplicial edges it crosses.
\\
\\
\textit{Case 1} : The turn $(e_{l-1},g_lg'e_{j})$ is $\epsilon$-legal for some $g_l\in G_{v_l}$, and the path $e_j\dots e_{l-1}$ projects to an embedded loop.
\\
Then the path $e_j\dots e_{l-1}$ is a fundamental domain for the axis of an element $g\in F_N$ which is $\epsilon$-legal in $T$, and it projects to an embedded loop.
\\
\\
\textit{Case 2} : The turn $(e_{l-1},g_lg'e_{j})$ is $\epsilon$-legal for some $g_l\in G_{v_l}$, and the path $e_j\dots e_{l-1}$ projects to a degenerate loop.
\\
Then the path $e_j\dots e_{l-1}$ is a fundamental domain for the axis of an element $g\in F_N$ which is $\epsilon$-legal in $T$, and it projects to a doubly-degenerate barbell.
\\
\\
\noindent\textit{Case 3} : We have $j\in\{1,\dots,k-1\}$, and the turn $(e_{l-1},g_lg'e_{j-1})$ is $\epsilon$-legal for some $g_l\in G_{v_l}$.
\\
Then the path $e_0\dots e_{j-1}(g_lg')^{-1}(\overline{e_{l-1}}\dots \overline{e_{k}})$ is a fundamental domain for the axis of an element $g\in F_N$ which is $\epsilon$-legal in $T$, and it projects to an embedded loop.
\\
\\
\noindent\textit{Case 4} : We have $j=k$, the path $e_0\dots e_{k-1}$ projects to an embedded loop, the path $e_j\dots e_{l-1}$ projects to an embedded loop, and the turn $(e_{l-1},g_lg'e_{j-1})$ is $\epsilon$-legal for some $g_l\in G_{v_l}$.
\\
Then the path $e_0\dots e_{j-1}(g_lg')^{-1}(\overline{e_{l-1}}\dots \overline{e_{k}})$ is a fundamental domain for the axis of an element $g\in F_N$ which is $\epsilon$-legal in $T$, and it projects to a bouquet of two circles.
\\
\\
\noindent\textit{Case 5} : We have $j\in\{k,\dots,l-1\}$, the path $e_0\dots e_{k-1}$ projects to a degenerate loop, the path $e_j\dots e_{l-1}$ projects to an embedded loop, and the turn $(e_{l-1},g_lg'e_{j-1})$ is $\epsilon$-legal for some $g_l\in G_{v_l}$.
\\
Then the path $e_0\dots e_{j-1}(g_lg')^{-1}(\overline{e_{l-1}}\dots \overline{e_{k}})$ is a fundamental domain for the axis of an element $g\in F_N$ which is $\epsilon$-legal in $T$, and it projects to a simply-degenerate barbell.
\\
\\
\noindent\textit{Case 6} : We have $j\in\{k+1,\dots,l-1\}$, the path $e_0\dots e_{k-1}$ projects to an embedded loop, the path $e_j\dots e_{l-1}$ projects to an embedded loop, and the turn $(e_{l-1},g_lg'e_{j-1})$ is $\epsilon$-legal for some $g_l\in G_{v_l}$.
\\
Then the path $e_0\dots e_{k-1}e_k\dots e_{j-1}e_{j}\dots e_{l-1}g_lg'(\overline{e_{j-1}}\dots\overline{e_k})$ is a fundamental domain for the axis of an element $g\in F_N$ which is $\epsilon$-legal in $T$, and it projects to a barbell.
\\
\\
\noindent\textit{Case 7} : We have $j\in\{k,\dots,l-1\}$, the path $e_0\dots e_{k-1}$ projects to an embedded loop, the path $e_j\dots e_{l-1}$ projects to a degenerate loop, and the turn $(e_{l-1},g_lg'e_{j-1})$ is $\epsilon$-legal for some $g_l\in G_{v_l}$.
\\
Then the path $e_0\dots e_{k-1}e_k\dots e_{j-1}e_{j}\dots e_{l-1}g_lg'(\overline{e_{j-1}}\dots\overline{e_k})$ is a fundamental domain for the axis of an element $g\in F_N$ which is $\epsilon$-legal in $T$, and it projects to a simply-degenerate barbell.
\\
\\
\noindent\textit{Case 8} : We have $j\in\{k,\dots,l-1\}$, the path $e_0\dots e_{k-1}$ projects to a degenerate loop, the path $e_j\dots e_{l-1}$ projects to a degenerate loop, and the turn $(e_{l-1},g_lg'e_{j-1})$ is $\epsilon$-legal for some $g_l\in G_{v_l}$.
\\
Then the path $e_0\dots e_{k-1}e_k\dots e_{j-1}e_{j}\dots e_{l-1}g_lg'(\overline{e_{j-1}}\dots\overline{e_k})$ is a fundamental domain for the axis of an element $g\in F_N$ which is $\epsilon$-legal in $T$, and it projects to a doubly-degenerate barbell.
\\
\\
\indent In all cases, we have found an element $g\in F_N$ which is a candidate in $T$, and which is $\epsilon$-legal for $f$. In addition, there exists $K\in\mathbb{N}$ such that for all $g\in F_N$, if $g$ is a candidate in $T$, then $l_T^{comb}(g)\le K$ (we recall the notation $l_T^{comb}$ from Section \ref{sec-strong-weak}). The conclusion thus follows from Lemma \ref{witness}.
\end{proof}

\begin{figure}
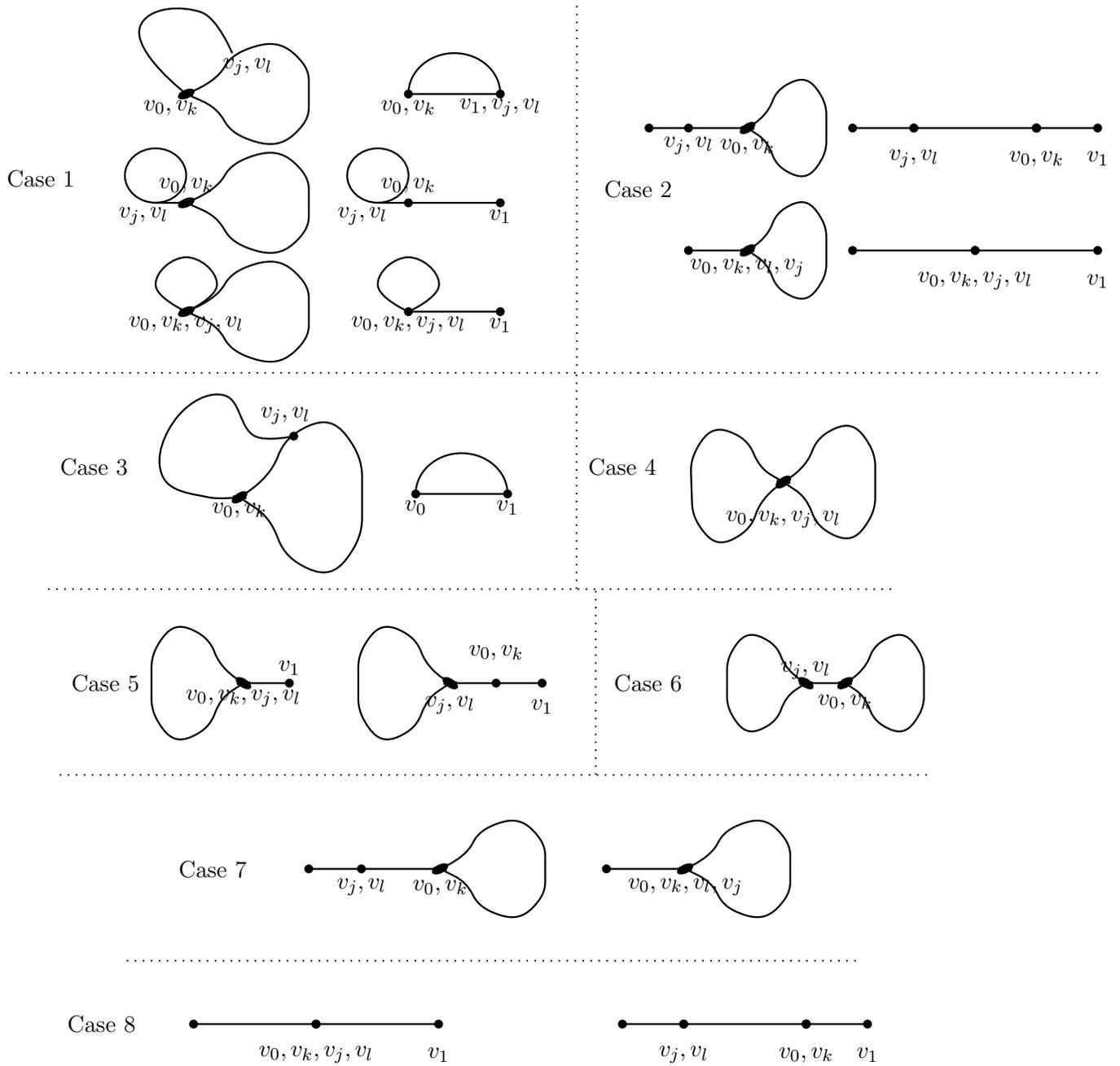

\begin{center}
\def\JPicScale{1}
\input{Figure-15-1,2.pst}
\\
\input{Figure-15-3,4.pst}
\\
\input{Figure-15-5,6.pst}
\\
\input{Figure-15-7.pst}
\\
\input{Figure-15-8.pst}
\caption{The projection to $X$ of the path constructed in the different cases of the proof of Proposition \ref{candidate}.}
\label{cases}
\end{center}
\end{figure}

\begin{theo}\label{White-candidates}
For all $T,T'\in\overline{cv_N}$, we have

\begin{displaymath}
\text{Lip}(T,T')=\sup_{g \text{~candidate in $T$}}\frac{||g||_{T'}}{||g||_T}.
\end{displaymath}
\end{theo}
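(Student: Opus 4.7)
The strategy is to apply Proposition \ref{White-blueprint} with the set $\mathcal{C}(T)$ taken to be the set of candidates in $T$. Proposition \ref{White-blueprint} requires two inputs: that $\mathcal{C}(T)$ contains all vertex stabilizers of $T^{simpl}$, and that either $\text{Lip}(T,T')=+\infty$, or else for every $\epsilon>0$ there exists an $\epsilon$-witness for the pair $(T,T')$ lying in $\mathcal{C}(T)$. Both inputs are essentially already provided by the material above.

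First, I verify the vertex-stabilizer condition. If $g\in G_v$ for some vertex $v$ of $T^{simpl}$, then either $g$ is elliptic in $T$ and fixes a point $x\in T_v$ (so that $[x,gx]$ is a point, which projects to the vertex $v$ of $X$), or $g$ is hyperbolic in $T$ with axis contained entirely in the vertex subtree $T_v\subset T$ (again so that a fundamental domain of the axis projects to the single vertex $v$ of $X$). In both cases, $g$ satisfies the last bullet in the definition of candidates, so $\mathcal{C}(T)$ contains all vertex stabilizers of $T^{simpl}$.

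Next, I handle the witness condition. If $\text{Lip}(T,T')=+\infty$, there is nothing to prove. Otherwise, Proposition \ref{optimal} supplies an optimal map $f:T\to\overline{T'}$, to which Proposition \ref{candidate} applies. If $X_f=\emptyset$, Proposition \ref{candidate} directly gives, for every $\epsilon>0$, an $\epsilon$-witness $g$ whose characteristic set in $T$ projects to a point of $X$; such a $g$ is a candidate. If on the contrary $X_f\neq\emptyset$, Proposition \ref{candidate} produces, for every $\epsilon'>0$, a candidate $g$ which is $\epsilon'$-legal for $f$. A key observation here is that the combinatorial length $l_T^{comb}(g)$ of a candidate is bounded above by a constant $K$ depending only on the finite graph $X$ (each of the possible shapes---embedded loop, bouquet of two circles, ordinary or (doubly-)degenerate barbell, or vertex---uses at most a bounded number of edges of $X$). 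Lemma \ref{witness} then guarantees that for every $\epsilon>0$ there exists $\epsilon'>0$ such that any $\epsilon'$-legal element of combinatorial length at most $K$ is an $\epsilon$-witness, so the candidate produced by Proposition \ref{candidate} with parameter $\epsilon'$ is an $\epsilon$-witness for $(T,T')$ in $\mathcal{C}(T)$.

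Having verified both hypotheses, Proposition \ref{White-blueprint} immediately yields
\[
\text{Lip}(T,T')\;=\;\Lambda_{\mathcal{C}(T)}(T,T')\;=\;\sup_{g\text{ candidate in }T}\frac{||g||_{T'}}{||g||_T},
\]
which is the desired statement. There is no real obstacle: all the technical work has already been absorbed into Propositions \ref{optimal}, \ref{candidate}, \ref{White-blueprint} and Lemma \ref{witness}; the only novelty here is the observation that the set of candidates is a legitimate choice of $\mathcal{C}(T)$ in Proposition \ref{White-blueprint}.
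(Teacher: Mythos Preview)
Your proof is correct and follows essentially the same route as the paper: apply Proposition \ref{White-blueprint} with $\mathcal{C}(T)$ the set of candidates, checking that candidates contain all vertex stabilizers (by the ``vertex in $X$'' clause of the definition) and that Proposition \ref{candidate} supplies the required $\epsilon$-witnesses. The only minor redundancy is that, in the case $X_f\neq\emptyset$, you invoke the ``more precisely'' part of Proposition \ref{candidate} and then reapply Lemma \ref{witness} together with the bound on combinatorial length of candidates --- but the first sentence of Proposition \ref{candidate} already packages exactly this conclusion, so you could have cited it directly as the paper does.
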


\begin{proof}
Let $T,T'\in\overline{cv_N}$, and let $\mathcal{C}(T)$ be the set of elements of $F_N$ which are candidates in $T$. By definition, the set $\mathcal{C}(T)$ contains all vertex groups of $T^{simpl}$, and Proposition \ref{candidate} shows that $\mathcal{C}(T)$ satisfies the assumption of Proposition \ref{White-blueprint}. The conclusion thus follows from Proposition \ref{White-blueprint}.
\end{proof}

\subsection{The case of good trees} \label{sec-separable-candidates}

Let $T\in\overline{cv_N}$. We now carry on some further analysis on the set of candidates to show that when $T^{simpl}$ contains at least two $F_N$-orbits of edges with trivial stabilizers, for all $\epsilon>0$, we can find an element of $F_N$ which is simple, is a candidate in $T$, and is an $\epsilon$-witness for the pair $(T,T')$. 

\begin{prop} \label{strong-witness}
Let $T,T'\in\overline{cv_N}$ be such that $\text{Lip}(T,T')<+\infty$. Assume that $T^{simpl}$ contains at least two orbits of edges with trivial stabilizers. Then for all $\epsilon>0$, there exists $g\in F_N$ which is simple, is a candidate in $T$, and is an $\epsilon$-witness for the pair $(T,T')$.
\end{prop}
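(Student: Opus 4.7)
The plan is to adapt the construction in the proof of Proposition \ref{candidate} so as to produce an $\epsilon$-witness candidate that is moreover simple, exploiting the hypothesis that $T^{simpl}$ has two distinct orbits of trivial-stabilizer edges, which I will denote $[\epsilon_1]$ and $[\epsilon_2]$ (reserving $e_i$-style notation to avoid clashing with $\epsilon$). Fix an optimal map $f:T\to\overline{T'}$ via Proposition \ref{optimal}. The key input is Lemma \ref{detect-separability}: every vertex stabilizer of $T^{simpl}$ is contained in a proper free factor of $F_N$, and an element of $F_N$ whose axis crosses some orbit $[\epsilon_i]$ at most once is simple.

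First I dispose of the easy cases. If $T_f=\emptyset$, then Proposition \ref{witness2} provides an $\epsilon$-witness $g$ lying in some vertex stabilizer of $T^{simpl}$; this $g$ is simple by the second assertion of Lemma \ref{detect-separability}, and it is a candidate of \emph{vertex} type. If $T_f\neq\emptyset$ but $T_f^{simpl}$ contains no trivial-stabilizer edge, then any candidate produced by the proof of Proposition \ref{candidate} has its simplicial axis entirely contained in $T_f^{simpl}$, and hence crosses neither $[\epsilon_1]$ nor $[\epsilon_2]$; by Lemma \ref{detect-separability} it is simple.

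The main case is when $T_f^{simpl}$ contains a trivial-stabilizer edge. I run the construction of Proposition \ref{candidate} with starting edge $e_0$ chosen in some orbit, say $[\epsilon_1]$. The resulting candidate $g$ has its axis passing through $e_0$, and falls into one of the shapes listed in Section \ref{sec-candidates}: embedded loop, bouquet of circles, or (possibly degenerate) barbell $\gamma_1\eta\gamma_2\bar\eta$. Counting crossings of each orbit in a fundamental domain (each edge in $\gamma_1\cup\gamma_2$ contributing once, each edge in $\eta$ contributing twice), and using that $\gamma_1$ and $\eta$ are edge-disjoint in $X$ (they meet only at $v_0$), the inclusion $e_0\in\gamma_1$ forces $[\epsilon_1]\notin\eta$. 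A straightforward case-by-case inspection then shows that $g$ is simple unless the barbell falls into a residual bad configuration in which $\gamma_2$ also contains $[\epsilon_1]$ and, simultaneously, $[\epsilon_2]$ lies in $\eta$ (or in both $\gamma_1$ and $\gamma_2$).

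To dispose of these residual configurations, my plan is to exploit the flexibility built into the extension procedure of Proposition \ref{candidate}. Because we are in a situation where the wrap-around turn at $v_k$ is not legal up to $G_{v_k}$ (otherwise the construction would have terminated earlier in the embedded-loop case), Proposition \ref{gates-1} forces $G_{v_k}$ to be cyclic, and Propositions \ref{gates-1} and \ref{gates-2} then guarantee a range of legal alternative extensions at the vertices encountered. I plan to reroute the extension so that it closes on a repeated vertex orbit before ever crossing $[\epsilon_2]$ in the central path — producing either an embedded loop or a bouquet of two circles whose simplicity via Lemma \ref{detect-separability} is then automatic — or, failing that, so that $[\epsilon_2]$ ends up in a loop component rather than in $\eta$. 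The main obstacle will be making the rerouting concrete and verifying that the new path is still $\epsilon$-legal for $f$ of bounded combinatorial length, so that Lemma \ref{witness} ensures the new candidate is still an $\epsilon$-witness.
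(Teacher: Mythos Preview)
Your opening moves are sound: when $T_f=\emptyset$, Proposition \ref{witness2} gives a witness in a vertex stabilizer of $T^{simpl}$, which is simple by Lemma \ref{detect-separability}; and when $T_f^{simpl}$ misses both trivial-stabilizer orbits, the candidate from Proposition \ref{candidate} crosses each of them zero times, hence is simple. The difficulty is in your main case, and there your argument has a genuine gap.

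Your key assertion that the chosen edge $e_0$ ends up in a loop $\gamma_1$ of the candidate is not justified, and it fails precisely in the cases that matter. First, note that the proof of Proposition \ref{candidate} \emph{renumbers} the edges once the first repeated vertex orbit is found, so your chosen starting edge may be dropped from the cycle altogether. Second, even when it survives, in Cases 5 and 8 of that proof the first segment $e_0\dots e_{k-1}$ is a \emph{degenerate} loop: it does not become a loop of the barbell at all, but rather the terminal edge of the central path $\eta$, so $e_0\in\eta$ and $[\epsilon_1]$ is crossed twice. Conversely, your ``residual bad configuration'' with $[\epsilon_1]$ in both $\gamma_1$ and $\gamma_2$ is impossible, since $\gamma_1,\gamma_2,\eta$ are pairwise edge-disjoint in $X$; so whenever $e_0$ genuinely lies in a loop you are already done, with nothing residual to handle. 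The real bad configuration is a (possibly degenerate) barbell in which \emph{both} $[\epsilon_1]$ and $[\epsilon_2]$ lie in $\eta$, and your rerouting sketch does not address it: at a vertex with cyclic stabilizer, Proposition \ref{gates-1} may leave you exactly one legal outgoing direction besides the incoming one, and nothing prevents that direction from being $[\epsilon_2]$.

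The paper takes a different route which supplies the missing idea. It does not try to control where the construction starts; it takes \emph{any} $\epsilon$-legal candidate from Proposition \ref{candidate}, reduces to the case of a barbell with all trivial-stabilizer edges in $\eta$, and then analyses the internal structure of $\eta$. If some interior vertex of $\eta$ has stabilizer of rank $\ge 2$, one cuts the barbell there and one of the two pieces misses a trivial-stabilizer edge. Otherwise every interior vertex group is at most cyclic, and a short combinatorial argument forces two trivial-stabilizer edges of $\eta$ to meet at a cyclic vertex $v$ (possibly across an interposed edge with cyclic stabilizer, which one first collapses). The decisive step is then a \emph{folding trick}: using Proposition \ref{gates-1} one replaces $g$ by a nearby $\epsilon$-legal candidate whose two traversals of $\eta$ cross the \emph{same} $G_v$-orbit of turn $(e,t^ke')$ at $v$; equivariantly folding short initial segments of $e$ and $t^ke'$ produces a new $F_N$-tree in whose quotient the candidate avoids an edge with trivial stabilizer (Figure \ref{slide}), and Lemma \ref{detect-separability} applied there yields simplicity. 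This is the substantive step your proposal is missing.
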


\begin{proof}
Let $f:T\to\overline{T'}$ be an optimal map (which exists by Proposition \ref{optimal}), and let $\epsilon>0$. If $X_f=\emptyset$, the claim follows from Proposition \ref{candidate} and Lemma \ref{detect-separability}, so we assume that $X_f\neq\emptyset$. By Proposition \ref{candidate}, there exists $g\in F_N$ which is a candidate in $T$ and is $\epsilon$-legal for $f$. Let $v\in C_T(g)$ be such that the projection $\gamma$ of $[v,gv]$ to $X$ has one of the forms prescribed by the definition of a candidate. If $\gamma$ is either an embedded loop or an embedded bouquet of two circles, then $\gamma$ crosses each edge of $X$ at most once. As $T$ contains an edge with trivial stabilizer in its simplicial part, Lemma \ref{detect-separability} ensures that $g$ is simple. The same argument also shows that $g$ is simple in the case where $\gamma$ does not cross some edge with trivial stabilizer of $X$, or when $\gamma$ is a (possibly simply-degenerate) barbell, one of whose loops crosses an edge of $X$ with trivial stabilizer. Hence we can assume that $\gamma$ is a (possibly simply- or doubly-degenerate) barbell, and that all edges in $X$ with trivial stabilizer belong to the central path $\gamma'$ of $\gamma$.

Assume that $\gamma'$ contains a vertex $v$ whose stabilizer has rank at least $2$, and such that if $v$ is an endpoint of $\gamma'$, then $v$ is adjacent to a loop of the barbell. Then $v$ separates $\gamma$ into two shorter simply- or doubly-degenerate barbells or embedded loops, at least one of which, which we denote by $\gamma''$, avoids an edge of $X$ with trivial stabilizer. Propositions \ref{gates-1} and \ref{gates-2} show that there exists $g\in F_N$ whose axis in $T$ projects to $\gamma''$ and which is $\epsilon$-legal for $f$, and $g$ is simple by Lemma \ref{detect-separability}.

We now restrict to the case where all nonextremal vertices of $\gamma'$ have vertex group at most cyclic. Assume that two edges in $\gamma'$ with nontrivial cyclic stabilizers have a common vertex $v$ (whose vertex group is cyclic). Then there exists an edge $e'$ in $X\smallsetminus\gamma$ adjacent to $v$, and $e'$ has trivial stabilizer because $T$ is very small, and the vertex group $G_v$ is cyclic. Lemma \ref{detect-separability} ensures that $g$ is simple. From now on, we assume that $\gamma$ does not contain two consecutive edges with nontrivial cyclic stabilizers.

Now assume that $\gamma'$ contains two edges with trivial edge groups having a common vertex $v$ (whose vertex group is at most cyclic). If $G_v$ is trivial, then any third edge coming out of $v$ has trivial stabilizer, and Lemma \ref{detect-separability} shows that $g$ is simple. We now assume that $G_v$ is infinite cyclic. Let $e,e'$ be two consecutive edges in $C_T(g)$ (not in the same $F_N$-orbit) adjacent to a vertex $\widetilde{v}\in T$ that projects to $v$. Denote by $t$ a generator of the cyclic group $G_{\widetilde{v}}$. If $(e,te)$ is legal for $f$, then again we can replace $g$ by another candidate $g'$ which is $\epsilon$-legal for $f$, and is represented by a loop which does not cross the orbit of $e'$. We now assume that for all $t\in G_{\widetilde{v}}$, the turn $(e,te)$ is not legal for $f$. If for some $t'\in G_{\widetilde{v}}$, the turn $(e,t'e')$ were not legal for $f$, then by Proposition \ref{gates-1}, the turn $(t'e,t'e')$ would not be legal for $f$, contradicting the fact that $g$ is $\epsilon$-legal for $f$. So for all $t\in G_{\widetilde{v}}$, the turn $(e,te')$ is legal for $f$. Let $\widetilde{\gamma}$ be a fundamental domain of the axis of $g$ that projects to $\gamma$ and crosses twice a turn at a vertex in the orbit of $\widetilde{v}$. The previous argument shows that up to replacing $g$ by another candidate $g'$ which is also $\epsilon$-legal for $f$, we can assume that these two turns belong to the same $F_N$-orbit (of the form $(e,t^ke')$ for some $k\in\mathbb{Z}$). We claim that $g'$ is simple. Indeed, by equivariantly folding small initial segments of the edges $e$ and $t^ke'$, one constructs a new $F_N$-tree that projects to a graph of groups in which $g'$ is represented by a loop that avoids an edge with trivial stabilizer, see Figure \ref{slide}. By Lemma \ref{detect-separability}, this shows that $g'$ is simple.  

\begin{figure}
\begin{center}
\input{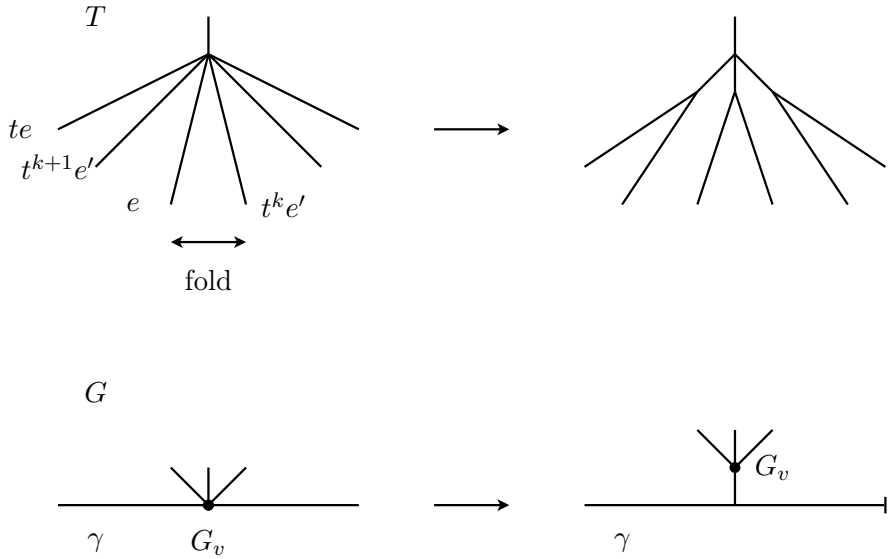}
\caption{Sliding the vertex $v$ to detect simple elements.}
\label{slide}
\end{center}
\end{figure}

We are thus left with the case where $\gamma'$ contains an edge $e$ with nontrivial cyclic edge group, which is surrounded in $\gamma'$ by two edges with trivial edge groups. Denote by $e_1$ and $e_2$ the two edges in a lift of $\gamma$ to $T$ that are adjacent to a lift of $e$. By the same argument as above, we can assume that for all $t\in G_e$, the turns $(e_1,te_1)$ and $(e_2,te_2)$ are not legal for $f$. So for all $t\in G_e$, the turns $(te_1,e)$ and $(e,te_2)$ are legal for $f$, otherwise Proposition \ref{gates-1} would imply that $(e,e_1)$ or $(e,e_2)$ is not legal for $f$, contradicting the fact that $g$ is $\epsilon$-legal for $f$. As above we can construct a candidate $g'$ which is $\epsilon$-legal for $f$, and is simple. Indeed, we can choose $g'$ such that when equivariantly collapsing $e$ to a vertex, and applying the same folding argument as above, we get a new graph of groups in which $g'$ is represented by a loop that avoids an edge with trivial edge group. This again shows that $g'$ is simple.

Hence we have found an element $g\in F_N$ which is simple, is a candidate in $T$, and is $\epsilon$-legal for $f$. The conclusion thus follows from Lemma \ref{witness} since there is a bound on the combinatorial length of a candidate in $T$.
\end{proof}

\begin{cor} \label{White-separable}
Let $T,T'\in\overline{cv_N}$. If $T^{simpl}$ contains two distinct orbits of edges with trivial stabilizers, then 
\begin{displaymath}
\text{Lip}(T,T')=\sup_{g \text{~simple}}\frac{||g||_{T'}}{||g||_T}.
\end{displaymath}
\end{cor}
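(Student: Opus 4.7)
The plan is to deduce Corollary \ref{White-separable} directly from the general machinery developed in the preceding subsections, specifically Proposition \ref{White-blueprint}, applied with $\mathcal{C}(T)$ equal to the set of simple elements of $F_N$. Recall that Proposition \ref{White-blueprint} requires two things: first, that $\mathcal{C}(T)$ contains every vertex stabilizer of $T^{simpl}$; second, that either $\text{Lip}(T,T')=+\infty$, or for every $\epsilon>0$ there exists $g\in\mathcal{C}(T)$ which is an $\epsilon$-witness for the pair $(T,T')$. If both hypotheses are verified, the conclusion $\text{Lip}(T,T')=\Lambda_{\mathcal{C}(T)}(T,T')$ is exactly the statement of the corollary.

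First I would check that every vertex stabilizer of $T^{simpl}$ consists of simple elements. Since by hypothesis $T^{simpl}$ contains (at least) one orbit of edges with trivial stabilizer, the second assertion of Lemma \ref{detect-separability} applies: every vertex group of $T^{simpl}$ is contained in a proper free factor of $F_N$, and is therefore composed of simple elements. This verifies the first hypothesis of Proposition \ref{White-blueprint}.

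For the second hypothesis, I would dispose of the case $\text{Lip}(T,T')=+\infty$ trivially (there is nothing to prove), and otherwise invoke Proposition \ref{strong-witness}, whose hypotheses are precisely those of the corollary: $T,T'\in\overline{cv_N}$ with $\text{Lip}(T,T')<+\infty$, and $T^{simpl}$ containing at least two orbits of edges with trivial stabilizers. That proposition produces, for every $\epsilon>0$, an element $g\in F_N$ which is simple and is an $\epsilon$-witness for $(T,T')$ (the fact that $g$ is additionally a candidate in $T$ is not needed here). This verifies the second hypothesis.

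There is essentially no obstacle: the real work has already been done in Proposition \ref{strong-witness}, where the combinatorial analysis of the shape of candidates was refined—using the presence of two orbits of edges with trivial stabilizers together with the detection criterion of Lemma \ref{detect-separability} and the sliding argument around vertices with cyclic stabilizer—to produce a candidate that is moreover simple. Once that proposition is available, Corollary \ref{White-separable} is a one-line application of Proposition \ref{White-blueprint}.
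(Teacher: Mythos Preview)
Your proposal is correct and follows essentially the same approach as the paper: apply Proposition \ref{White-blueprint} with $\mathcal{C}(T)$ the set of simple elements, using Lemma \ref{detect-separability} to verify that vertex stabilizers are simple and Proposition \ref{strong-witness} to produce simple $\epsilon$-witnesses when $\text{Lip}(T,T')<+\infty$.
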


\begin{proof}
Let $\mathcal{C}$ be the set of simple elements of $F_N$. Lemma \ref{detect-separability} shows that $\mathcal{C}$ contains all the vertex groups of $T^{simpl}$, and Proposition \ref{strong-witness} shows that $\mathcal{C}$ satisfies the assumption of Proposition \ref{White-blueprint}. Hence Corollary \ref{White-separable} follows from Proposition \ref{White-blueprint}.
\end{proof}

\begin{cor} \label{White-good}
Let $T,T'\in\overline{cv_N}$. If $T$ is good, then 
\begin{displaymath}
\text{Lip}(T,T')=\sup_{g \text{~simple}}\frac{||g||_{T'}}{||g||_T}.
\end{displaymath}
\end{cor}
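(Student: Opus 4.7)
The plan is to deduce Corollary \ref{White-good} directly from Corollary \ref{White-separable} and Proposition \ref{blueprint-1} by invoking the definition of a good tree. Let $\mathcal{S}\subseteq F_N$ denote the set of simple elements.

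First I would unpack the hypothesis: since $T$ is good, there exists a Lipschitz approximation $(T_n)_{n\in\mathbb{N}}\in\overline{cv_N}^{\mathbb{N}}$ of $T$ such that $T_n^{simpl}$ contains at least two $F_N$-orbits of edges with trivial stabilizers for every $n$. Corollary \ref{White-separable} then applies to each $T_n$ and shows that $\text{Lip}(T_n,T'')=\Lambda_{\mathcal{S}}(T_n,T'')$ for every $T''\in\overline{cv_N}$, i.e.\ each $T_n$ satisfies White's theorem relatively to $\mathcal{S}$ in the sense of Section \ref{sec-weak-strong}.

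Next I would appeal to Proposition \ref{blueprint-1}, applied with $\mathcal{C}=\mathcal{S}$: since $T$ admits a Lipschitz approximation by trees each of which satisfies White's theorem relatively to $\mathcal{S}$, the tree $T$ itself satisfies White's theorem relatively to $\mathcal{S}$. This yields exactly
\begin{displaymath}
\text{Lip}(T,T')=\Lambda_{\mathcal{S}}(T,T')=\sup_{g\text{ simple}}\frac{||g||_{T'}}{||g||_T},
\end{displaymath}
which is the desired identity.

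There is no real obstacle here: the statement is a formal consequence of a result already established for a restricted class of trees (namely Corollary \ref{White-separable}) together with the transfer principle encoded in Proposition \ref{blueprint-1}. The only point that deserves a verification is that the proof of Proposition \ref{blueprint-1} did not use any special feature of a particular $\mathcal{C}$ (such as containing vertex stabilizers), and indeed inspection of its statement confirms that it holds for an arbitrary subset of $F_N$; hence taking $\mathcal{C}=\mathcal{S}$ is legitimate and the argument goes through without modification.
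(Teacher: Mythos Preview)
Your proof is correct and essentially identical to the paper's own argument: the paper's proof is the single sentence ``This follows from Corollary \ref{White-separable} and Proposition \ref{blueprint-1} applied to the set of simple elements of $F_N$,'' which you have unpacked accurately.
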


\begin{proof}
This follows from Corollary \ref{White-separable} and Proposition \ref{blueprint-1} applied to the set of simple elements of $F_N$.
\end{proof}

\begin{cor} \label{White-primitive}
Let $T,T'\in\overline{cv_N}$. If $T$ is good, then 
\begin{displaymath}
\text{Lip}(T,T')=\sup_{g\in\mathcal{P}_N}\frac{||g||_{T'}}{||g||_T}.
\end{displaymath}
\end{cor}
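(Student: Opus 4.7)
The plan is to deduce this corollary from Corollary \ref{White-good}, by approximating simple elements of $F_N$ by primitive ones, using the device already exploited in the proof of Proposition \ref{primitive-separable}. Proposition \ref{easy-comparison} gives $\sup_{g\in\mathcal{P}_N}\frac{||g||_{T'}}{||g||_T}\le\text{Lip}(T,T')$ for free, so combining with Corollary \ref{White-good}, it suffices to prove that for every simple $w\in F_N$,
\[
\frac{||w||_{T'}}{||w||_T}\le \sup_{g\in\mathcal{P}_N}\frac{||g||_{T'}}{||g||_T}.
\]

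To establish this, I would fix a simple $w\in F_N$ and a basis $\{a_1,\dots,a_N\}$ of $F_N$ with $w\in\langle a_1,\dots,a_{N-1}\rangle$. For each $k\in\mathbb{N}$, the set $\{a_1,\dots,a_{N-1},a_Nw^k\}$ is again a free basis of $F_N$, so $a_Nw^k\in\mathcal{P}_N$. If $||w||_T>0$, then (as in the proof of Proposition \ref{primitive-separable}) both $\frac{||a_Nw^k||_T}{k}\to ||w||_T$ and $\frac{||a_Nw^k||_{T'}}{k}\to ||w||_{T'}$, hence
\[
\lim_{k\to +\infty}\frac{||a_Nw^k||_{T'}}{||a_Nw^k||_T}=\frac{||w||_{T'}}{||w||_T},
\]
which gives the desired inequality along a sequence of primitive elements.

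It remains to handle the case $||w||_T=0$. If also $||w||_{T'}=0$, the convention $\frac{0}{0}=0$ makes the inequality trivial. Otherwise $||w||_{T'}>0$, and the left-hand side equals $+\infty$, so I must show that the primitive supremum is also $+\infty$. Since $w$ is elliptic in $T$, it fixes some point $x_0\in T$, and therefore for every $k\in\mathbb{N}$,
\[
||a_Nw^k||_T\le d_T(x_0, a_Nw^k x_0) = d_T(x_0, a_N x_0),
\]
a constant independent of $k$. On the other hand $\frac{||a_Nw^k||_{T'}}{k}\to ||w||_{T'}>0$, so $||a_Nw^k||_{T'}\to +\infty$. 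Hence $\frac{||a_Nw^k||_{T'}}{||a_Nw^k||_T}\to +\infty$ along the primitive sequence $(a_Nw^k)_{k\in\mathbb{N}}$, finishing the argument. The only mildly non-routine point is exactly this last case, where the usual limit computation degenerates and one must separately exploit the ellipticity of $w$ in $T$ to bound $||a_Nw^k||_T$ uniformly in $k$; everything else is a direct transcription of the approximation trick used in Proposition \ref{primitive-separable}.
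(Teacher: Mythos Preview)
Your proof is correct and follows essentially the same route as the paper: both deduce the result from Corollary \ref{White-good} by showing that the supremum over simple elements equals the supremum over primitives, using the primitive approximants $a_Nw^k$ (the paper writes $w'w^k$ with $w'$ primitive in a complementary free factor, which is the same device). Your treatment of the degenerate case $||w||_T=0$, $||w||_{T'}>0$ is in fact slightly more explicit than the paper's, which simply asserts that the ratio tends to $+\infty$ without spelling out the uniform bound on $||a_Nw^k||_T$.
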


\begin{proof}
Let $w\in F_N$ be a simple element of $F_N$ (contained in some proper free factor of $F_N$), and let $w'$ be a primitive element contained in a complementary free factor. Then $w'w^k$ is primitive for all $k\in\mathbb{N}$, and $||w||_T=\lim_{k\to +\infty}\frac{||w'w^k||_T}{k}$. If $||w||_T\neq 0$, we thus get that 

\begin{displaymath}
\frac{||w||_{T'}}{||w||_T}=\lim_{k\to +\infty}\frac{||w'w^k||_{T'}}{||w'w^k||_T},
\end{displaymath}

\noindent so 

\begin{displaymath}
\frac{||w||_{T'}}{||w||_T}\le\sup_{g \in\mathcal{P}_N}\frac{||g||_{T'}}{||g||_T}.
\end{displaymath}

\noindent If $||w||_T=0$ and $||w||_{T'}>0$, then $\frac{||w'w^k||_{T'}}{||w'w^k||_T}$ tends to $+\infty$ as $k$ tends to $+\infty$, so the above inequality still holds. It also holds when $||w||_T=||w||_{T'}=0$, because in this case we have $\frac{||w||_{T'}}{||w||_T}=0$ by convention. Hence

\begin{displaymath}
\sup_{w \text{~simple}}\frac{||w||_{T'}}{||w||_T}=\sup_{g \in\mathcal{P}_N}\frac{||g||_{T'}}{||g||_T},
\end{displaymath}

\noindent and the claim follows from Corollary \ref{White-good}.
\end{proof}

From Proposition \ref{blueprint-2} and Corollary \ref{White-primitive}, we deduce the following statement.

\begin{prop} \label{case-good-approx}
Let $T,T'\in\overline{cv_N}$ be two good $F_N$-trees. If $T$ and $T'$ are primitive-equivalent, then $T=T'$. \qed
\end{prop}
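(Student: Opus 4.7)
The plan is to reduce directly to the machinery already developed in Sections \ref{sec-beginning} and \ref{sec-separable-candidates}. Since $T$ and $T'$ are good, Corollary \ref{White-primitive} tells us that for any $S\in\overline{cv_N}$,
\[
\text{Lip}(T,S)=\sup_{g\in\mathcal{P}_N}\frac{\|g\|_S}{\|g\|_T}\quad\text{and}\quad \text{Lip}(T',S)=\sup_{g\in\mathcal{P}_N}\frac{\|g\|_S}{\|g\|_{T'}},
\]
i.e.\ both $T$ and $T'$ satisfy White's theorem relatively to $\mathcal{P}_N$. This is precisely the hypothesis of Proposition \ref{blueprint-2}, so invoking Proposition \ref{primitive-separable} to pass from primitive-equivalence to simple-equivalence (or equivalently just using the $\mathcal{P}_N$ version directly) yields the conclusion $T=T'$.

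To be more explicit about the mechanism: primitive-equivalence of $T$ and $T'$ means $\|g\|_T=\|g\|_{T'}$ for every $g\in\mathcal{P}_N$, hence $\Lambda_{\mathcal{P}_N}(T,T')=\Lambda_{\mathcal{P}_N}(T',T)=1$. By Corollary \ref{White-primitive} applied in both directions, $\text{Lip}(T,T')\le 1$ and $\text{Lip}(T',T)\le 1$, so there exist $1$-Lipschitz $F_N$-equivariant maps $T\to\overline{T'}$ and $T'\to\overline{T}$. Proposition \ref{easy-comparison} then gives $\|g\|_{T'}\le\|g\|_T$ and $\|g\|_T\le\|g\|_{T'}$ for every $g\in F_N$, so the two trees have identical translation length functions on all of $F_N$, and Theorem \ref{rigidity} (Culler--Morgan) forces $T=T'$.

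There is essentially no obstacle here: once Corollary \ref{White-primitive} is in hand for good trees (which is the hard technical work of Section \ref{sec-separable-candidates}), the rigidity statement is an immediate application of the ``blueprint'' Proposition \ref{blueprint-2}. The only point worth noting is that the key input---the ability to upgrade agreement on the very thin set $\mathcal{P}_N$ to a genuine Lipschitz map into the metric completion---is exactly what Corollary \ref{White-primitive} buys us, and this is available precisely because both trees are assumed good (which is why in the next section the general case of Theorem \ref{intro-equivalences} requires the additional reduction, via Theorem \ref{approximation-by-separable}, to pulls of good trees).
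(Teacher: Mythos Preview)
Your proof is correct and follows essentially the same approach as the paper, which simply records that the result follows from Proposition \ref{blueprint-2} together with Corollary \ref{White-primitive}. You have merely unpacked the argument (and made explicit the appeal to Proposition \ref{primitive-separable} to pass from primitive-equivalence to simple-equivalence), which is exactly what the paper's one-line deduction intends.
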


\begin{rk} \label{separable-needed}
The condition on $T$ cannot be removed in Corollary \ref{White-primitive}, otherwise the simple-equivalence relation would be trivial on $\overline{cv_N}$, contradicting our analysis in Section \ref{sec-beginning}. If the translation length functions of two distinct trees $T,T'\in\overline{cv_N}$ are equal in restriction to $\mathcal{P}_N$, then either $T$ or $T'$ is not good. Applying Theorem \ref{approximation-by-separable} to the $F_{N-1}$-tree appearing in the definition of pull-equivalence classes, we see that the standard element of the class is thus the only good tree in its class. It follows from Remark \ref{rk-surface} that trees whose pull-equivalence class is nontrivial are geometric and contain no exotic components.
\end{rk}

\section{End of the proof of the main theorem} \label{sec-rigidity}

We now finish the proof of the main theorem of the paper.

\begin{theo} \label{equivalences}
For all $T,T'\in\overline{cv_N}$, the following assertions are equivalent.
\begin{itemize}
\item For all $g\in\mathcal{P}_N$, we have $||g||_T=||g||_{T'}$.
\item For all simple elements $g\in F_N$, we have $||g||_T=||g||_{T'}$.
\item The trees $T$ and $T'$ are special-pull-equivalent.
\end{itemize}
\end{theo}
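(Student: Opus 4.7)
The equivalence of (1) and (2) is Proposition \ref{primitive-separable}, and (3) $\Rightarrow$ (2) is Proposition \ref{slide-separable}, so only (2) $\Rightarrow$ (3) needs to be proven. The strategy is to reduce to the good-tree case, where Proposition \ref{case-good-approx} already provides rigidity of primitive-equivalence.

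Let $T, T' \in \overline{cv_N}$ be simple-equivalent. By Theorem \ref{approximation-by-separable}, each of $T$ and $T'$ is either good, or is a pull of a good tree with exactly one orbit of edges with trivial stabilizer. Denote the associated good trees by $\widehat{T}$ and $\widehat{T'}$ (with $\widehat{T}=T$ when $T$ is good, and similarly for $T'$). If both $T$ and $T'$ are good, Proposition \ref{case-good-approx} applied to the primitive-equivalent (Proposition \ref{primitive-separable}) pair $T,T'$ yields $T = T'$, so assume at least one of them is not good.

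The heart of the argument is to show that the association $T \mapsto \widehat{T}$ can be refined so that $T$ is a \emph{special} pull of $\widehat{T}$, and moreover that $T$ and $\widehat{T}$ have the same translation length on every simple element. For this, I would exploit the case-by-case structural description of non-good trees given in Section \ref{sec-slide-equiv} (Cases 1--4): such $T$ decomposes as a graph of actions with a single edge with trivial stabilizer, a corank-one free factor $A$ acting on a vertex tree $T_0$, and at most one simplicial edge in $T_0 \setminus T_0^{\min}$ whose stabilizer has controlled position with respect to $A$. The tree $\widehat{T}$ is identified as the standard (unique good) element of the intended equivalence class, characterized by having the edge with trivial stabilizer of maximal possible length. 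The case analysis forces the pulling elements used to obtain $T$ as a quotient of $\widehat{T}$ to lie outside every proper free factor of $A$, hence outside every corank-two free factor of $F_N$, so the pull is special. Then the argument of Proposition \ref{slide-separable}, combined with Proposition \ref{class-separable}, shows that the only elements whose translation lengths change under such a pull are conjugates of words of the form $t^{\epsilon} g^k t^{-\epsilon} w$ with $g$ not contained in any proper free factor of $A$, and Proposition \ref{class-separable} implies that all such words are nonsimple in $F_N$. Hence $T$ and $\widehat{T}$ agree on all simple elements.

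Applying this refinement to both $T$ and $T'$, the good trees $\widehat{T}$ and $\widehat{T'}$ are simple-equivalent to each other by transitivity, hence primitive-equivalent (Proposition \ref{primitive-separable}). Proposition \ref{case-good-approx} then forces $\widehat{T} = \widehat{T'}$, so $T$ and $T'$ are both special pulls of the same good tree, which is by Definition \ref{de-NS} precisely the definition of special-pull-equivalence. The main obstacle is the structural refinement step: carefully choosing the standard element $\widehat{T}$ and verifying that the pulling elements are genuinely outside every corank-two free factor. This is exactly where the classification of Section \ref{sec-slide-equiv} must be invoked to constrain how a non-good $T$ can arise and to rule out the pathological possibility that the pull produced by Theorem \ref{approximation-by-separable} is not equivalent to a special pull, which would break the preservation of translation lengths of simple elements and hence sever the bridge between $T$ and $\widehat{T}$.
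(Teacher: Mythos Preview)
Your reduction to the good case via Proposition \ref{case-good-approx} is the right instinct, but there is a genuine gap in the bridge you build between $T$ and $\widehat{T}$. Theorem \ref{approximation-by-separable} only guarantees that a non-good $T$ is a \emph{pull} of a good tree $\widehat{T}$; it does not say the pull is special. You invoke the case analysis of Section \ref{sec-slide-equiv} to force the pulling elements outside every proper free factor of $A$, but that analysis \emph{begins} by assuming $T$ is a special pull of $\widehat{T}$ and then describes the shape of such $T$ --- it does not prove that an arbitrary pull arising from Theorem \ref{approximation-by-separable} is special. So the claim that $T$ and $\widehat{T}$ agree on simple elements is circular. Worse, it can fail: by the computation in Proposition \ref{weak-horo-strong}, if a pulling element $g_1$ lies in a proper free factor $B$ of $A$, then for a suitable $g'\in A$ the element $g't^{-1}g_1t$ is primitive and satisfies $\|g't^{-1}g_1t\|_T \neq \|g't^{-1}g_1t\|_{\widehat{T}}$. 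In that situation $T$ and $\widehat{T}$ are genuinely \emph{not} simple-equivalent, and your transitivity step (passing from $T$ to $\widehat{T}$, then from $\widehat{T'}$ to $T'$, and comparing the two good trees) collapses.

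The paper sidesteps this entirely by never asserting that $T$ and $\widehat{T}$ are simple-equivalent. Since $T$ is a (possibly non-special) pull of $\widehat{T}$, one has only the inequality $\|g\|_{\widehat{T}} \ge \|g\|_T$ for all $g$; combined with simple-equivalence of $T$ and $T'$ this gives $\|g\|_{\widehat{T}} \ge \|g\|_{T'}$ for simple $g$. Goodness of $\widehat{T}$ and Corollary \ref{White-good} then yield a $1$-Lipschitz map $\widehat{T}\to\overline{T'}$, and the key new content (Proposition \ref{separable-slide}, via Proposition \ref{separable-in-graph} / Corollary \ref{cor-separable-in-graph}) is to show that this map is a morphism folding only U-turns over the distinguished edge --- so $T'$ is \emph{also} a pull of the same $\widehat{T}$. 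Only once both $T$ and $T'$ are pulls of a common tree does the translation-length comparison of Proposition \ref{weak-horo-strong} determine the pulling data and give special-pull-equivalence. The idea you are missing is precisely this morphism-and-folding analysis that transports the pull structure from $T$ across to $T'$ without ever requiring $\widehat{T}$ to lie in the simple-equivalence class of $T$.
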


In view of Propositions \ref{primitive-separable} and \ref{slide-separable}, we are left showing that simple-equivalent trees are special-pull-equivalent. Given an $\mathbb{R}$-tree $T$ and $x\in T$, a \emph{direction} based at $x$ is a germ of nondegenerate segments $[x,y]$ with $y\neq x$. In particular, any U-turn in an $F_N$-tree $T$ is defined by a pair of directions based at some point in $T$. Any direction based at $x$ defines an open half-tree of $T$, which is the set of all $y\in T\smallsetminus\{x\}$ such that $[x,y]$ contains $d$. The axis of an element $g\in F_N$ which is hyperbolic in $T$ \emph{crosses} a direction $d$ based at a point $x\in T$ if $x\in C_T(g)$ and there exists $y\in C_T(g)$ such that $d$ is the germ of $[x,y]$ (we also say that $C_T(g)$ crosses a pair of distinct directions $\{d,d'\}$ based at a point in $T$ if it crosses both $d$ and $d'$). A \emph{full family of U-turns} over $e$ is the collection of all turns of the form $(e,g^ke)$ for $k\in\mathbb{Z}$, where $(e,ge)$ is some given U-turn over $e$ (in particular $g$ fixes an endpoint of $e$ and is not a proper power). A \emph{compatible set of U-turns} over $e$ is a set consisting of at most one full family of U-turns at each extremity of $e$.  

\begin{prop} \label{separable-in-graph}
Let $T$ be an $F_N$-tree which contains an orbit of edges with trivial stabilizer $e^0$. Let $Y$ be a compatible set of U-turns over $e^0$. Let $\{d,d'\}$ be a pair of distinct directions based at a point $x_0\in T$, which do not define a U-turn over $e^0$, and do not define a U-turn over an edge in $T$ with nontrivial stabilizer. Then there exists a simple element $g\in F_N$ which is hyperbolic in $T$ and whose axis in $T$ crosses $\{d,d'\}$ but does not cross any of the orbits of the turns in $Y$.  
\end{prop}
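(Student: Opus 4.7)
Plan: The strategy is to collapse every orbit of edges of $T^{simpl}$ outside the orbit of $e^0$, producing the Bass--Serre tree $\Gamma$ of a one-edge free splitting $F_N = A\ast$ or $F_N = A\ast B$ with trivial edge group. By Lemma \ref{detect-separability}, it then suffices to construct a hyperbolic $g \in F_N$ whose axis in $T$ crosses $\{d, d'\}$, avoids the $F_N$-orbits of the turns in $Y$, and whose projection to $\Gamma$ is either constant or crosses the orbit of $\overline{e^0}$ at most once per fundamental domain.

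I would split the construction into two main cases. In the first case, a short segment from a point slightly along $d$ to a point slightly along $d'$ stays in a single component $C_0$ of $T$ minus the open $e^0$-orbit edges; then $C_0$ is stabilized by a conjugate $A_0$ of a proper free factor of $F_N$, so every hyperbolic element of $A_0$ is automatically simple, and any axis contained in $C_0$ avoids every edge in the orbit of $e^0$ and hence every turn in $Y$. Producing a hyperbolic $g \in A_0$ with axis through $x_0$ in the prescribed directions is direct when $A_0$ has rank at least $2$, by choosing two sufficiently long rays in $C_0$ starting at $x_0$ in directions $d, d'$ and closing them up with a suitable element of $A_0$; in the degenerate situation where $A_0$ is infinite cyclic with axis not aligned with $\{d,d'\}$, one falls through to Case 2.

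In the second case, the axis of $g$ must cross at least one $e^0$-orbit edge. I would extend the two rays from $x_0$ in directions $d$ and $d'$ past the first $e^0$-orbit edges they meet into the adjacent components and close them up by a hyperbolic element chosen so that, in $\Gamma$, the resulting axis crosses the orbit of $\overline{e^0}$ exactly once per fundamental domain. The hypothesis that $\{d,d'\}$ is not a U-turn over $e^0$ ensures the local crossing at $x_0$ is admissible (in particular, one is free to route through a single $e^0$-orbit edge rather than being forced to fold two adjacent orbit edges at $x_0$ in a $Y$-pattern), and at other extremities of $e^0$-orbit edges we use the freedom provided by vertex groups of rank at least $2$, or an appropriate choice of translate when the vertex group is cyclic, to avoid the $\langle g_i\rangle$-families in $Y$. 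The main obstacle I anticipate is the degenerate sub-case where all relevant component stabilizers are very small and the directions $d,d'$ do not align with the available axes: there the construction must route through several consecutive components of $T \setminus \mathrm{orbit}(e^0_{\circ})$ while still producing an axis whose $\Gamma$-projection crosses the orbit of $\overline{e^0}$ only once per fundamental domain, using the non-separating structure of $e^0$ (giving a stable letter $t$) and the second hypothesis that $\{d,d'\}$ is not a U-turn over any edge with nontrivial stabilizer, which rules out exactly the local obstructions forbidden by Lemmas \ref{morphism-1} and \ref{morphism-2}.
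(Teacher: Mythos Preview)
Your high-level strategy---pass to the one-edge splitting determined by $e^0$ and invoke Lemma \ref{detect-separability}---is the same as the paper's, and your case split (directions confined to a component versus forced to cross $e^0$) is a reasonable reorganization. However, there is a genuine gap.

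Your Case 2 proposes to close up the two rays so that the projection to $\Gamma$ ``crosses the orbit of $\overline{e^0}$ exactly once per fundamental domain.'' This is impossible when $e^0$ is separating: in the Bass--Serre tree of $A\ast B$ every hyperbolic element crosses the edge orbit an \emph{even} number of times per period, so the minimum is two, and Lemma \ref{detect-separability} gives no conclusion. The relevant situation (e.g.\ $x_0$ in the interior of $e^0$, or $d,d'$ straddling the edge) is not covered by your Case 1 either. The paper handles this by choosing $g_i$ in each factor $A_i$ with $g_2$ primitive in $A_2$ (and with $(e^0,g_ie^0)\notin Y$, which is where the precise definition of U-turn and the rank hypotheses are used), and then checking by hand that $g_1^{-1}g_2$ is primitive in $F_N$ by exhibiting a free basis containing it. You need some argument of this type; routing through Lemma \ref{detect-separability} alone will not close the separating case.

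A secondary concern: in your Case 1 you assert that producing a hyperbolic $g\in A_0$ whose axis passes through $x_0$ in the directions $d,d'$ is ``direct'' when $A_0$ has rank $\ge 2$. This is exactly where the paper's proof spends most of its effort (in its nonseparating case, with a different target element). The point $x_0$ need not lie in the $A_0$-minimal subtree, and the half-trees determined by $d,d'$ need not meet it either; one has to analyze the simplicial edges of $C_0$ outside that minimal subtree and use both hypotheses on $\{d,d'\}$. Your appeal to Lemmas \ref{morphism-1} and \ref{morphism-2} does not help here: those lemmas constrain morphisms between trees and say nothing about the existence of axes with prescribed local behavior.
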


\begin{proof}
Let $v_1$ and $v_2$ denote the extremities of $e^0$. We think of $e^0$ as an open subset of $T$.
\\
\\
\textit{Case 1} : The edge $e^0$ projects to a nonseparating edge in the quotient graph of actions.\\
Let $T'$ be a connected component of $T\smallsetminus F_N.e^0$. One can always find a path $\gamma$ in $T'$ which joins a point in the orbit of $v_1$ to a point in the orbit of $v_2$, and let $\gamma'$ be the concatenation of $\gamma$ and of an edge in the orbit of $e^0$. We will show that if $x_0\in T'$, then $\gamma$ can be chosen so that $\gamma'$ crosses a turn in the orbit of $\{d,d'\}$ (this property is automatic if $x_0$ belongs to the interior of $e^0$). Then $\gamma'$ is a fundamental domain for the axis of an element $g\in F_N$ which has the desired properties (in particular, it is primitive because $\gamma'$ crosses the orbit of $e^0$ exactly once).

Assume that $x_0\in T'$. Let $A$ be the stabilizer of $T'$. If $A$ stabilizes an edge in $T$, then $A$ is cyclic, and $T'$ consists of a single edge $e$ with cyclic stabilizer. In this case, we choose $\gamma$ to be equal to this edge. Otherwise, the $A$-minimal subtree of $T'$ is well-defined, we denote it by $T'_{min}$. 

If $T'\smallsetminus T'_{min}$ contains a simplicial edge with trivial stabilizer, then $e^0$ projects to a loop-edge in the quotient graph of actions, which has a valence $3$ vertex with trivial stabilizer, attached to a separating edge. In this case, each of the directions $d$ and $d'$ is either contained in $e^0$, or determines a half-tree of $T'$ that contains a point in the orbit of $v_1$, which is equal to the orbit of $v_2$. Therefore, we can find a path $\gamma$ in $T'$ which joins two points in the orbit of $v_1$, so that the concatenation $\gamma'$ of $\gamma$ and an edge in the orbit of $e^0$ crosses a turn in the orbit of $\{d,d'\}$.

Otherwise, all edges in $T'\smallsetminus T'_{min}$ (if any) have nontrivial stabilizer, equal to the stabilizer of the extremity of the edge in the orbit of $e^0$ to which they are attached. If none of the open half-trees of $T$ determined by the directions $d$ and $d'$ intersects $T'_{min}$, then the assumption made on the turn $\{d,d'\}$ implies that either 

\begin{itemize}
\item the vertices $v_1$ and $v_2$ belong to the same $F_N$-orbit, and $d$ and $d'$ are contained in translates of $e^0$, in which case we can choose $\gamma$ to be reduced to a point, or
\item one of the directions, say $d$, is contained in an edge $e$ with nontrivial stabilizer, and $d'$ is contained in $e^0$, in which case we choose $\gamma$ to be equal to $e$, or
\item the directions $d$ and $d'$ belong to two distinct orbits of edges with nontrivial stabilizer whose concatenation forms the desired path $\gamma$. 
\end{itemize}

\noindent Otherwise, up to exchanging $d$ and $d'$, the open half-tree of $T'$ determined by $d$ contains both a point in the orbit of $v_1$ and a point in the orbit of $v_2$. In addition, either $x_0$ belongs to the orbit of $v_1$ or $v_2$, or the open half-tree determined by $d'$ contains a point in the orbit of $v_1$ or $v_2$. So we can find a path $\gamma$ in $T'$ joining a point in the orbit of $v_1$ to a point in the orbit of $v_2$, so that $\gamma'$ crosses a turn in the orbit of $\{d,d'\}$. 
\\
\\
\textit{Case 2} : The edge $e^0$ projects to a separating edge in the quotient graph of actions.\\ 
Let $T_1$ (resp. $T_2$) be the connected component of $T\smallsetminus F_N.e^0$ that contains $v_1$ (resp. $v_2$). For all $i\in\{1,2\}$, let $A_i$ be the stabilizer of $T_i$, and let $T_i^{min}$ be the $A_i$-minimal subtree of $T_i$. Up to interchanging the roles of $T_1$ and $T_2$, we can assume that $x_0\in T_1$, or $x_0\in e^0$.

If $x_0\in T_1$, then the open half-tree determined by one of the directions $d$ or $d'$ intersects $T_1^{min}$, and in both cases we can choose a reduced path $\gamma_1$ in $T_1$ that crosses a turn in the orbit of $\{d,d'\}$ and joins $v_1$ to a translate $g_1v_1$. If $A_2$ is not elliptic in $T_2$, then we can choose a primitive element $g_2\in A_2$ that is hyperbolic in $T_2$. If $A_2$ is elliptic in $T_2$ and has rank at least $2$, then we can choose a primitive element $g_2\in A_2$ such that $(e^0,g_2e^0)\notin Y$. If $A_2$ is cyclic and elliptic in $T_2$, generated by an element $g_2$, then our definition of U-turns implies that $(e^0,g_2e^0)\notin Y$. Then $g_1^{-1}g_2$ satisfies the required conditions (in particular it is primitive, because if $\{a_1,\dots,a_k\}$ is a free basis of $A_1$, and $\{a_{k+1},\dots,a_{N-1},g_2\}$ is a free basis of $A_2$, then $\{a_1,\dots,a_{N-1},g_1^{-1}g_2\}$ is a free basis of $F_N$).

If $x_0\in e^0$, then we can find two primitive elements $g_1\in A_1$ and $g_2\in A_2$ as above and let $g:=g_1^{-1}g_2$. 
\end{proof}

Proposition \ref{separable-in-graph} can also be restated in the following way. 

\begin{cor} \label{cor-separable-in-graph}
Let $T,\widehat{T}\in\overline{cv_N}$. Assume that $\widehat{T}$ contains exactly one orbit of edges $e^0$ with trivial stabilizer, and that $T$ is a pull of $\widehat{T}$. Let $\{d,d'\}$ be a pair of distinct directions based at the same point in $\widehat{T}$, which does not define a U-turn over $e^0$, and does not define a U-turn over an edge with nontrivial stabilizer. Then there exists a simple element $g\in F_N$, which is hyperbolic in $\widehat{T}$, whose axis in $\widehat{T}$ crosses $\{d,d'\}$, and such that $||g||_T=||g||_{\widehat{T}}$. 
\end{cor}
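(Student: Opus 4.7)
The plan is to apply Proposition \ref{separable-in-graph} to the tree $\widehat{T}$ together with the compatible set $Y$ of U-turns over $e^0$ that encodes the identifications made by the pull. Writing $T$ as the quotient of $\widehat{T}$ obtained by equivariantly identifying $J_i \subseteq e^0$ with $g_iJ_i$ for $i\in\{1,2\}$ (where $J_i$ is a possibly degenerate subsegment of $e^0$ containing the extremity $v_i$, and $g_i$ lies in the stabilizer of $v_i$ and is not a proper power), I take $Y$ to consist of the full family of turns $(e^0, g_i^k e^0)$ for $k\in\mathbb{Z}\smallsetminus\{0\}$ and for those indices $i\in\{1,2\}$ with $J_i$ nondegenerate. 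By hypothesis $\{d,d'\}$ defines neither a U-turn over $e^0$ nor a U-turn over an edge with nontrivial stabilizer, so Proposition \ref{separable-in-graph} yields a simple element $g\in F_N$, hyperbolic in $\widehat{T}$, whose axis $C_{\widehat{T}}(g)$ crosses $\{d,d'\}$ while avoiding every $F_N$-orbit of turns in $Y$.

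The remaining task is to show $||g||_T = ||g||_{\widehat{T}}$. The quotient map $\phi\colon\widehat{T}\to T$ is $F_N$-equivariant and $1$-Lipschitz, which already gives $||g||_T \le ||g||_{\widehat{T}}$. For the converse, I plan to show that $\phi$ is injective and locally isometric in restriction to $C_{\widehat{T}}(g)$. A failure of local isometry at a point $p$ would require $\phi$ to fold two germs of the axis at $p$; by construction, $\phi$ only identifies the germ into $J_i$ with the germ into $g_iJ_i$ at each translate of $v_i$, which precisely corresponds to one of the U-turns ruled out for $C_{\widehat{T}}(g)$. A failure of injectivity would yield two distinct axis points $p\in hJ_i$ and $q\in hg_i^kJ_i$ with $\phi(p)=\phi(q)$ for some $h\in F_N$ and $k\neq 0$; since $he^0$ and $hg_i^ke^0$ share only the vertex $hv_i$ and the axis is a line through both $p$ and $q$, the axis must pass through $hv_i$ and therefore cross the turn $(he^0, hg_i^ke^0)\in F_N\cdot Y$, again a contradiction.

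It follows that $\phi$ maps $C_{\widehat{T}}(g)$ $F_N$-equivariantly and isometrically onto a bi-infinite geodesic in $T$ on which $g$ translates by $||g||_{\widehat{T}}$. This image is forced to coincide with the axis of $g$ in $T$, whence $||g||_T=||g||_{\widehat{T}}$. The main technical point is the injectivity argument in the preceding paragraph, which rests on the fact that $\widehat{T}$ is a tree: convexity forces any line passing through points of two distinct edges sharing a vertex $v$ to traverse $v$ via the corresponding turn, so long-range identifications and short-range foldings along the axis are both controlled by the same U-turn avoidance condition produced by Proposition \ref{separable-in-graph}.
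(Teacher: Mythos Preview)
Your proof is correct and follows the same approach as the paper: apply Proposition \ref{separable-in-graph} with $Y$ the compatible set of U-turns coming from the pull, then observe that for any $g$ whose axis avoids the orbits of turns in $Y$, one has $||g||_T=||g||_{\widehat{T}}$. The paper simply asserts this last equality (``with equality as long as $C_{\widehat{T}}(g)$ does not cross any turn in the orbit of a turn in $Y$''), whereas you spell out the underlying injectivity and local-isometry argument for the quotient map on the axis; this is a welcome elaboration but not a different strategy.
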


\begin{proof}
As $T$ is obtained from $\widehat{T}$ by equivariantly folding a collection $Y$ of U-turns, we have $||g||_T\le ||g||_{\widehat{T}}$ for all $g\in F_N$, with equality as long as $C_{\widehat{T}}(g)$ does not cross any turn in the orbit of a turn in $Y$. Corollary \ref{cor-separable-in-graph} thus follows from Proposition \ref{separable-in-graph}.
\end{proof}

\begin{prop} \label{separable-slide}
Let $T,T'\in\overline{cv_N}$. If $T$ and $T'$ are simple-equivalent, then there exists a good tree $\widehat{T}\in\overline{cv_N}$ such that either $T=T'=\widehat{T}$, or there exists an edge $e^0$ in $\widehat{T}$ with trivial stabilizer such that $T$ and $T'$ are both obtained from $\widehat{T}$ by pulling $e^0$.
\end{prop}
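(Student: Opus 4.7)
The strategy is to apply Theorem \ref{approximation-by-separable} to each of $T$ and $T'$ separately and then argue that the two good trees produced must coincide. Concretely, Theorem \ref{approximation-by-separable} yields good trees $\widehat{T}_1,\widehat{T}_2\in\overline{cv_N}$, each with at most one orbit of edges with trivial stabilizer $e_1^0,e_2^0$, such that $T$ is a pull of $\widehat{T}_1$ along $e_1^0$ and $T'$ is a pull of $\widehat{T}_2$ along $e_2^0$ (if $T$ is itself good we simply take $\widehat{T}_1=T$ with a trivial pull, and analogously for $T'$). The case where both $T$ and $T'$ are already good is handled immediately by Proposition \ref{case-good-approx} together with Proposition \ref{primitive-separable}, which force $T=T'$; so one may assume at least one of them is not good.

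The heart of the proof is the claim $\widehat{T}_1=\widehat{T}_2$. By Proposition \ref{case-good-approx} (combined with Proposition \ref{primitive-separable}), this reduces to showing that $\widehat{T}_1$ and $\widehat{T}_2$ are simple-equivalent. The key tool here is Corollary \ref{cor-separable-in-graph}: for any pair of distinct directions $\{d,d'\}$ in $\widehat{T}_1$ based at a common point that does not define a U-turn over $e_1^0$ nor a U-turn over an edge with nontrivial stabilizer, there exists a simple element $g\in F_N$ whose axis in $\widehat{T}_1$ crosses $\{d,d'\}$ and satisfies $||g||_T=||g||_{\widehat{T}_1}$. For such a $g$, the simple-equivalence of $T$ and $T'$ combined with the fact that $T'$ is a pull of $\widehat{T}_2$ (so that $||g||_{T'}\le||g||_{\widehat{T}_2}$) yields $||g||_{\widehat{T}_1}=||g||_T=||g||_{T'}\le||g||_{\widehat{T}_2}$, and the symmetric argument applied starting from $\widehat{T}_2$ produces the reverse inequality for an analogous class of simple elements.

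Once the equality $\widehat{T}:=\widehat{T}_1=\widehat{T}_2$ is established, the uniqueness of the orbit of edges with trivial stabilizer furnished by Theorem \ref{approximation-by-separable} forces $e_1^0$ and $e_2^0$ to coincide up to the $F_N$-action, so that $T$ and $T'$ are both pulls of the same good tree $\widehat{T}$ along the same edge $e^0$, as desired.

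The main obstacle is that the simple elements provided by Corollary \ref{cor-separable-in-graph} only yield one-sided inequalities at the specific pairs of directions they cross, and the remaining simple elements — precisely those whose axis in $\widehat{T}_1$ is forced to cross a U-turn over $e_1^0$ or over an edge with nontrivial cyclic stabilizer — must be handled by a finer structural analysis. To close this gap I would exploit the canonical graph-of-actions decomposition of $\widehat{T}_1$ and $\widehat{T}_2$ (Proposition \ref{graph-of-actions}): the vertex trees with dense orbits are rigid on simple elements (Corollary \ref{rigidity-for-dense} applied to the actions of their corank-$1$ stabilizers, which consist of simple elements by Lemma \ref{detect-separability}), and the only residual freedom lies in the single simplicial edge with trivial stabilizer, whose length and pulling data must be pinned down by comparing translation lengths of carefully chosen primitive elements of the form $tg^k$ with $g$ an element of the appropriate vertex stabilizer, in the spirit of the proof of Proposition \ref{primitive-separable}. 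This final reduction should force $\widehat{T}_1$ and $\widehat{T}_2$ to agree on every simple element.
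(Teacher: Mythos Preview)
Your symmetric strategy—produce good trees $\widehat{T}_1,\widehat{T}_2$ from $T,T'$ separately and then prove $\widehat{T}_1=\widehat{T}_2$—has a real gap that your final paragraph does not close. The inequalities you extract from Corollary~\ref{cor-separable-in-graph} go the wrong way for a direct comparison: applying it to $\widehat{T}_1$ gives $||g||_{\widehat{T}_1}\le||g||_{\widehat{T}_2}$ only for the particular simple $g$ it produces (those with $||g||_{\widehat{T}_1}=||g||_T$), while applying it to $\widehat{T}_2$ gives the reverse inequality for a \emph{different} collection of simple elements. There is no reason these collections cover all simple elements or overlap enough to force equality. Your proposed rescue via vertex-tree rigidity and elements $tg^k$ presupposes that $\widehat{T}_1$ and $\widehat{T}_2$ already share the same graph-of-actions structure (same vertex groups, same edge pattern), which is exactly what you are trying to prove; without that, Corollary~\ref{rigidity-for-dense} cannot be invoked to compare their vertex trees.

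The paper avoids this circularity by working asymmetrically. It only invokes Theorem~\ref{approximation-by-separable} for $T$, obtaining a single good tree $\widehat{T}$ with $T$ a pull of it, and then constructs a map $\widehat{T}\to\overline{T'}$ directly. The point is that goodness of $\widehat{T}$ lets you apply Corollary~\ref{White-good}: since $||g||_{T'}=||g||_T\le||g||_{\widehat{T}}$ for all simple $g$, you get a $1$-Lipschitz $F_N$-equivariant map $f:\widehat{T}\to\overline{T'}$. One then upgrades $f$ to a morphism that is isometric on edges, using Theorem~\ref{rigidity} and Proposition~\ref{dense-rigidity} to handle the vertex trees (their stabilizers consist of simple elements by Lemma~\ref{detect-separability}, so the minimal subtrees match), and using Corollary~\ref{cor-separable-in-graph} on a pair of opposite directions inside any edge to rule out length-shrinking. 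Finally, Corollary~\ref{cor-separable-in-graph} together with Lemmas~\ref{morphism-1}--\ref{morphism-3} shows the only directions $f$ can fold are U-turns over $e^0$, so $T'$ is itself a pull of $\widehat{T}$. This never requires comparing two unknown good trees; it shows directly that $T'$ is a pull of the \emph{same} $\widehat{T}$.
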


\begin{proof}
If both trees $T$ and $T'$ are good, then by Proposition \ref{case-good-approx} we have $T=T'$. We can thus assume that $T$ is not good, hence there exists a good tree $\widehat{T}$ having exactly one orbit of edges with trivial stabilizer such that $T$ is a pull of $\widehat{T}$ (Theorem \ref{approximation-by-separable}).

We first show the existence of an $F_N$-equivariant morphism from $\widehat{T}$ to $\overline{T'}$, which is isometric on edges. As $T$ is a pull of $\widehat{T}$, we have $||g||_{\widehat{T}}\ge ||g||_T$ for all $g\in F_N$. As $T$ and $T'$ are simple-equivalent, we thus have $||g||_{\widehat{T}}\ge ||g||_{T'}$ for all simple elements $g\in F_N$. As $\widehat{T}$ is good, Corollary \ref{White-good} provides a $1$-Lipschitz $F_N$-equivariant map $f:\widehat{T}\to \overline{T'}$, which we may choose to be linear on edges in the simplicial part of $\widehat{T}$. As $\widehat{T}$ contains an edge with trivial stabilizer, Lemma \ref{detect-separability} ensures that all vertex stabilizers of $\widehat{T}^{simpl}$ lie in some proper free factor of $F_N$, so for all $g\in F_N$ belonging to one of these vertex stabilizers, we have $||g||_{\widehat{T}}=||g||_T=||g||_{T'}$. Let $v$ be a vertex in $\widehat{T}^{simpl}$ whose stabilizer $G_v$ has rank at least $2$. Then the $G_v$-minimal subtree of $T'$ has the same translation length function as the $G_v$-minimal subtree $T_v^{min}$ of $\widehat{T}$, so by Theorem \ref{rigidity}, these two trees are $F_N$-equivariantly isometric. So $T_v^{min}$ isometrically embeds as an $F_N$-invariant subtree in $T'$, and by Proposition \ref{dense-rigidity}, the map $f$ restricts to a $G_v$-equivariant isometry on $T_v^{min}$, and hence on $T_v$. Hence we can write $f=f_2\circ f_1$, where $f_1$ reduces the length of some edges in the simplicial part of $\widehat{T}$, and $f_2$ is a morphism which is isometric on edges. If $f_1$ is not equal to the identity map, then $f_1$ strictly reduces the length of an edge $e'$ in the simplicial part of $\widehat{T}$. Corollary \ref{cor-separable-in-graph}, applied to a pair of opposite directions in the edge $e'$, gives the existence of a simple element $g\in F_N$, whose axis in $T$ crosses $e'$ (so that $||g||_{T'}<||g||_{\widehat{T}}$), and such that $||g||_{T}=||g||_{\widehat{T}}$. This is impossible as $T$ and $T'$ are simple-equivalent. So $f_1$ is equal to the identity map, and hence $f$ is a morphism.

Assume that $f$ identifies a pair $\{d,d'\}$ of directions in $T$. Since U-turns over edges with nontrivial stabilizers cannot be folded by $f$ (Lemma \ref{morphism-2}), Corollary \ref{cor-separable-in-graph} ensures that the pair $\{d,d'\}$ defines a U-turn over $e^0$, otherwise we would find a simple element $g\in F_N$ with $||g||_{T'}<||g||_T$. In other words, all turns in $T$, except possibly U-turns over $e^0$, are legal for the morphism $f$. So $f$ factors through a tree $T_1$ obtained by equivariantly identifying maximal subsegments of the unique edge of $\widehat{T}$ with trivial stabilizer along some translate at each of its extremities. If $T_1$ also contains an edge with trivial stabilizer, then the maximality condition in the definition of $T_1$ ensures that all turns in $T_1$ are legal, so $T'=T_1$. If all edges in the simplicial part of the tree $T_1$ have nontrivial stabilizer, then no more folding can occur (Lemmas \ref{morphism-1}, \ref{morphism-2} and \ref{morphism-3}), so again $T'=T_1$. The claim follows.
\end{proof}

In order to complete the proof of Theorem \ref{equivalences}, we are thus left showing the following.

\begin{prop}\label{weak-horo-strong}
Let $T,T'\in\overline{cv_N}$ be simple-equivalent. Assume that there exists a good tree $\widehat{T}$ and an edge $e^0$ in $\widehat{T}$ with trivial stabilizer, such that $T$ and $T'$ are obtained from $\widehat{T}$ by pulling $e^0$. Then $T$ and $T'$ are special-pull-equivalent.
\end{prop}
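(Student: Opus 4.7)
The strategy is to show that the given ancestor $\widehat{T}$ (or a slight modification of it) already serves as a common ancestor from which both $T$ and $T'$ are \emph{special} pulls. Recall from Section \ref{sec-slide-equiv} that a pull of $\widehat{T}$ is special when the edge $e^0$ is nonseparating in the graph-of-actions decomposition of $\widehat{T}$ with dense-orbit vertex trees, and each pulling element avoids every corank-two free factor of $F_N$ (equivalently, lies outside every proper free factor of the corank-one vertex stabilizer $A$). So the plan reduces to proving these two conditions under the simple-equivalence hypothesis.

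\textbf{Separating case.} Suppose first that $e^0$ is separating in $\widehat{T}$, realizing a splitting $F_N=A_1\ast A_2$. Every pulling element then lies in one of the proper free factors $A_i$, hence is simple. Taking $a\in A_1$ and $b\in A_2$ primitive (extend each to a basis and combine to get a basis of $F_N$), the element $ab$ is primitive and its $\widehat{T}$-axis crosses $e^0$ twice. The translation length $\|ab\|_T$ is an affine function of the pulling lengths $l_1,l_2$ at the two extremities, with corrections determined by which pulling element appears; varying $a,b$ yields enough primitive elements that $\|ab\|_T=\|ab\|_{T'}$ for all such pairs forces $(g_i,l_i)=(g_i',l_i')$ at both extremities. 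Thus $T=T'$ and special-pull-equivalence is immediate.

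\textbf{Nonseparating case.} Assume $e^0$ is nonseparating, so $F_N=A\ast\langle x_N\rangle$ with $x_N$ a stable letter. Fix an extremity $v_i$ and its pulling elements $g_i$ (for $T$) and $g_i'$ (for $T'$). I will argue that either the pull data at $v_i$ agree in $T$ and $T'$ (so the common pull can be absorbed into a new ancestor), or both $g_i,g_i'$ are nonsimple in $A$. Suppose for contradiction that $g_1$ is simple in $A$. By Proposition \ref{class-separable}, there is a basis $\{x_1,\dots,x_N\}$ of $F_N$ with $A=\langle x_1,\dots,x_{N-1}\rangle$ and $v\in F_N$ so that the element
\begin{displaymath}
W_m := v\,x_N\,g_1^{m}\,x_N^{-1}
\end{displaymath}
is primitive and cyclically reduced in this basis for every $m\ge1$. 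The axis of $W_m$ in $T$ crosses $e^0$ twice via the two occurrences of $x_N^{\pm 1}$, and between them interacts with the arc of length $l_1$ fixed by $\langle g_1\rangle$. A direct path-length computation in $T$ (splitting into whether $g_1$ is hyperbolic or elliptic in the vertex tree $T_0$) yields $\|W_m\|_T=m\alpha+\beta_T$, where $\alpha$ depends only on $g_1$ and the vertex tree, while $\beta_T$ encodes the pulling pair $(g_1,l_1)$. The analogous expression in $T'$ combined with simple-equivalence then forces $(g_1,l_1)=(g_1',l_1')$. A symmetric argument at $v_2$ and for the elements $g_i'$ completes this step.

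\textbf{Conclusion.} I now define $\widetilde{T}$: starting from $\widehat{T}$, at each extremity $v_i$ where the pull data agree in $T$ and $T'$ and use a simple pulling element, absorb that common pull into $\widetilde{T}$. At the remaining extremities, either no pull is used in either $T$ or $T'$, or both pulling elements are nonsimple in $A$, as established above. The tree $\widetilde{T}$ still lies in $\overline{cv_N}$ and still has an edge of trivial stabilizer projecting to a nonseparating edge of its graph-of-actions decomposition. By construction $T$ and $T'$ are both special pulls of $\widetilde{T}$, so $T\sim T'$. The main obstacle is the computation of $\beta_T$ in the elliptic case ($g_1$ elliptic in $T_0$), where the leading coefficient $\alpha$ may vanish and one cannot read off $(g_1,l_1)$ directly; I would handle this by replacing $W_m$ with refined primitive elements of the form $v\,x_N\,g_1^m\,u\,g_1^n\,x_N^{-1}$ where $u$ lies in a suitable complementary free factor of $A$, so that varying $(m,n)$ yields a two-parameter family of primitive elements whose translation lengths jointly detect the pulling data.
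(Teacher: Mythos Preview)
Your strategy matches the paper's, but the nonseparating case rests on a misconception. By definition of a pull (Definition~\ref{de-pull}), the pulling element $g_1$ lies in the stabilizer of the endpoint $v_1$ of $e^0$, hence fixes the attaching point $p_1$ in the vertex tree. There is therefore no ``hyperbolic case'': your coefficient $\alpha$ is always zero, and varying $m$ in $W_m = v\,x_N\,g_1^m\,x_N^{-1}$ gains nothing, since $d(p_1,g_1^mp_1)=0$ for all $m$. The same objection kills your two-parameter refinement, as $d(p_1, g_1^m u\, g_1^n p_1)=d(p_1,up_1)$ independently of $(m,n)$. What actually works is a \emph{single} such element (equivalently the paper's $g'\,t^{-1}g_1t$), once the complementary piece is chosen outside $\langle g_2\rangle\cup\langle g_2'\rangle$ to prevent cancellation at the other end of $e^0$ --- a point you do not address. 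The constant you call $\beta_T$ then already encodes $(g_1,l_1)$ versus $(g_1',l_1')$, and your ``obstacle'' evaporates.

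Your separating case is also too sketchy: restricting to primitive $a\in A_1$ misses the relevant test element when $g_1$ is not itself primitive in $A_1$, since then no primitive $a$ lies in $\langle g_1\rangle$ and $\|ab\|_T$ never sees $l_1$. The paper instead pairs the pulling element $g_1$ itself with a primitive $g_2''\in A_2\smallsetminus(\langle g_2\rangle\cup\langle g_2'\rangle)$; the product $g_1g_2''$ is nonetheless primitive in $F_N$ (replace $g_2''$ by $g_1g_2''$ in a basis), and the equality $\|g_1g_2''\|_T=\|g_1g_2''\|_{T'}$ then forces the pulling data to agree, yielding $T=T'$ outright.
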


\begin{rk}
The content of Proposition \ref{weak-horo-strong} is to show that the pull operation performed when passing from $T$ to $T'$ is of the particular form prescribed by the definition of special-pull-equivalent trees (Definition \ref{de-NS-pull}).
\end{rk}

\begin{proof}
Let $X$ denote the underlying graph of the canonical graph of actions associated to $\widehat{T}$ provided by Proposition \ref{graph-of-actions}. We again denote by $e^0$ the image of $e^0$ in $X$.
\\
\\
\textit{Case 1} : The edge $e^0$ is separating in $X$.\\
Then there exists a free splitting of $F_N$ of the form $F_N=A_1\ast A_2$ (for which we denote by $k_i$ the rank of $A_i$ for all $i\in\{1,2\}$, with $k_1+k_2=N$), an $A_i$-tree $T_i\in\overline{cv_{k_i}}$ together with an attaching point $p_i\in \overline{T_i}$ for all $i\in\{1,2\}$, nonnegative real numbers $l,l_1,l_2,l'_1,l'_2\in\mathbb{R}_{+}$ satisfying $l_1+l_2\le l$ and $l'_1+l'_2\le l$, and elements $g_i,g'_i\in A_i$ which are elliptic in $T_i$ and fix $p_i$ for all $i\in\{1,2\}$ such that $T$ and $T'$ are the trees dual to the graphs of actions displayed on Figure \ref{fig-the-graphs}. (Notice that up to changing the values of $l_i$ and $l'_i$, we can always assume the trees $T_i$ to be minimal). If $N=2$, then $\widehat{T}$ is dual to a (possibly simply- or doubly-degenerate) barbell graph, and no pull can be performed on $\widehat{T}$, whence $T=T'$. 

\begin{figure}
\begin{center}
\input{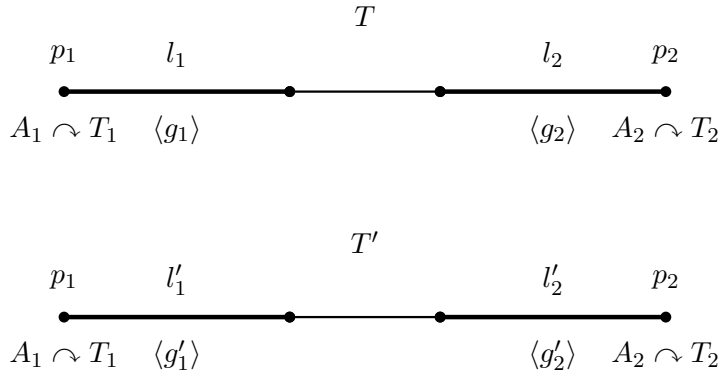}
\caption{The trees $T$ and $T'$ in Case 1 of the proof of Proposition \ref{weak-horo-strong}.}
\label{fig-the-graphs}
\end{center}
\end{figure}

We now assume that $N\ge 3$. Up to interchanging the roles of $A_1$ and $A_2$, we may assume that $k_2\ge 2$, so that there exists a primitive element $g''_2\in A_2\smallsetminus(\langle g_2\rangle \cup\langle g'_2\rangle)$. The element $g_1g''_2$ is primitive in $F_N$, because if $\{a_1,\dots, a_{k_1}\}$ is a free basis of $A_1$, and $\{g''_2,a'_2,\dots,a'_{k_2}\}$ is a free basis of $A_2$, then $\{a_1,\dots,a_{k_1},g_1g''_2,a'_2,\dots,a'_{k_2}\}$ is a free basis of $F_N$. We have

\begin{displaymath}
||g_1g''_2||_T=2(l-l_1)+d_{\overline{T_2}}(p_2,g''_2p_2),
\end{displaymath}

\noindent and as $d_{\overline{T_1}}(p_1,g_1p_1)=0$, we have

\begin{displaymath}
||g_1g''_2||_{T'}=\left\{
\begin{array}{rl}
2(l-l'_1)+d_{\overline{T_2}}(p_2,g''_2p_2)&\text{~if~} g'_1=g_1^{\pm 1}\\
2l+d_{\overline{T_2}}(p_2,g''_2p_2)&\text{~if~} g'_1\neq g_1^{\pm 1}
\end{array}.
\right.
\end{displaymath}

\noindent As $T_1$ and $T_2$ are simple-equivalent, as soon as $l_1>0$, we have $l_1=l'_1$ and $g_1={g'_1}^{\pm 1}$. The same argument shows that as soon as $l'_1>0$, we have $l_1=l'_1$ and $g_1={g'_1}^{\pm 1}$. If $k_1\ge 2$, we argue similarly to show that either $l_2=l'_2=0$, or $l_2=l'_2$ and $g_2={g'_2}^{\pm 1}$, hence $T=T'$. If $k_1=1$, then $l_1=l'_1=0$. Let $g''_1\in A_1$ be a generator of $A_1$. Comparing the translation lengths of $g''_1g_2$ in $T$ and in $T'$ also shows that either $l_2=l'_2=0$, or $l_2=l'_2$ and $g_2={g'_2}^{\pm 1}$, whence $T=T'$. 
\\
\\
\textit{Case 2} : The edge $e^0$ is nonseparating in $X$.\\ 
Then there exists a corank one free factor $A$ of $F_N$, with a choice of a stable letter $t$, a (not necessarily minimal) very small $A$-tree $T_{N-1}$, attaching points $p_1,p_2\in\overline{T_{N-1}}$, nonnegative real numbers $l,l_1,l_2,l'_1,l'_2\in\mathbb{R}_{+}$ satisfying $l_1+l_2\le l$ and $l'_1+l'_2\le l$, and elements $g_i,g'_i\in A$ which are elliptic in $T_{N-1}$ and fix $p_i$ for all $i\in\{1,2\}$, such that $T$ and $T'$ are the trees dual to the graphs of actions displayed on Figure \ref{fig-the-graphs-2}. If $N=2$, then $A$ contains no proper free factor, so $T$ and $T'$ are special-pull-equivalent. We now assume that $N\ge 3$. Assume that $l_1>0$ and that $g_1$ is contained in a corank one free factor $B$ of $A$. As $N\ge 3$, we can find $g'\in A\smallsetminus(\langle g_2\rangle\cup\langle g'_2\rangle)$ such that $A=B\ast\langle g'\rangle$. Then $g't^{-1}g_1t$ is primitive in $F_N$ (because if $\{b_1,\dots,b_{N-2}\}$ is a free basis of $B$, then $\{b_1,\dots,b_{N-2},t,g't^{-1}gt\}$ is a free basis of $F_N$). We have 

\begin{displaymath}
||g't^{-1}g_1t||_{T}=2(l-l_1)+d_{\overline{T_{N-1}}}(p_2,g'p_2), 
\end{displaymath}

\noindent and as $d_{\overline{T_{N-1}}}(p_1,g_1p_1)=0$, we also have 

\begin{displaymath}
||g't^{-1}g_1t||_{T'}=\left\{
\begin{array}{rl}
2(l-l'_1)+d_{\overline{T_{N-1}}}(p_2,g'p_2)&\text{~if~} g'_1=g_1^{\pm 1}\\
2l+d_{\overline{T_{N-1}}}(p_2,g'p_2)&\text{~if~} g'_1\neq g_1^{\pm 1}
\end{array}.
\right.
\end{displaymath}

As $T$ and $T'$ are simple-equivalent, as soon as $l_1>0$, we have $g_1={g'_1}^{\pm 1}$ and $l_1=l'_1$. Similarly, as soon as $l'_1>0$ and $g'_1$ is contained in a proper free factor of $A$, we have $g_1={g'_1}^{\pm 1}$ and $l_1=l'_1$. A similar argument also shows that if $l_2>0$ (resp. $l'_2>0$) and if $g_2$ (resp. $g'_2$) is contained in a proper free factor of $A$, then $g_2={g'_2}^{\pm 1}$ and $l_2=l'_2$. Hence $T$ and $T'$ are NS-pull-equivalent.
\end{proof}

\begin{figure}
\begin{center}
\input{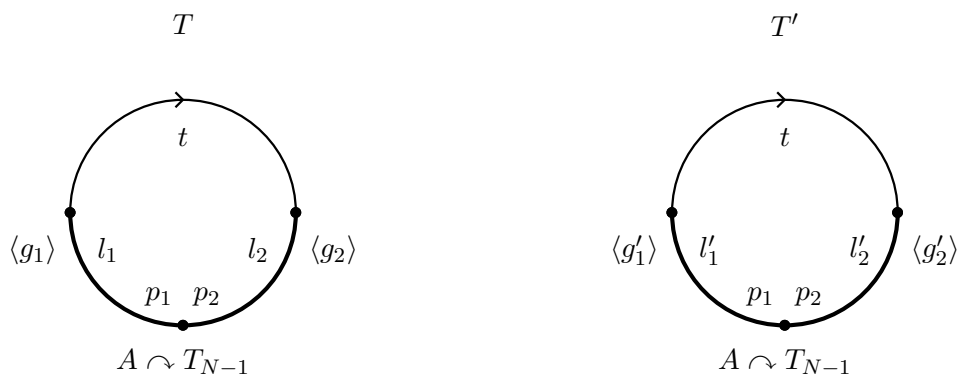}
\caption{The trees $T$ and $T'$ in Case 2 of the proof of Proposition \ref{weak-horo-strong}.}
\label{fig-the-graphs-2}
\end{center}
\end{figure}

\begin{proof} [Proof of Theorem \ref{equivalences}]
Theorem \ref{equivalences} follows from Propositions \ref{primitive-separable}, \ref{slide-separable}, \ref{separable-slide} and \ref{weak-horo-strong}.
\end{proof}

\bibliographystyle{amsplain}
\bibliography{/Users/Camille/Documents/Bibliographie}

\providecommand{\bysame}{\leavevmode\hbox to3em{\hrulefill}\thinspace}
\providecommand{\MR}{\relax\ifhmode\unskip\space\fi MR }
\providecommand{\MRhref}[2]{%
  \href{http://www.ams.org/mathscinet-getitem?mr=#1}{#2}
}
\providecommand{\href}[2]{#2}
\begin{thebibliography}{10}

\bibitem{AK11}
Y.~Algom-Kfir, \emph{Strongly contracting geodesics in {O}uter {S}pace}, Geom.
  Topol. \textbf{15} (2011), no.~4, 2181--2233.

\bibitem{AK12}
\bysame, \emph{The {M}etric {C}ompletion of {O}uter {S}pace}, arXiv:1202.6392v4
  (2013).

\bibitem{BF94}
M.~Bestvina and M.~Feighn, \emph{Outer limits}, preprint, 1994.

\bibitem{BF95}
\bysame, \emph{Stable actions of groups on real trees}, Invent. math.
  \textbf{121} (1995), no.~2, 287--321.

\bibitem{BFH97}
M.~Bestvina, M.~Feighn, and M.~Handel, \emph{Laminations, trees, and
  irreducible automorphisms of free groups}, Geom. Funct. Anal. \textbf{7}
  (1997), 215--244.

\bibitem{CFKM12}
M.~Carette, S.~Francaviglia, I.~Kapovich, and A.~Martino, \emph{Spectral
  rigidity of automorphic orbits in free groups}, Alg. Geom. Top. \textbf{12}
  (2012), no.~3, 1457--1486.

\bibitem{CL95}
M.M. Cohen and M.~Lustig, \emph{Very small group actions on $\mathbb{R}$-trees
  and {D}ehn twist automorphisms}, Topology \textbf{34} (1995), no.~3,
  575--617.

\bibitem{Cro90}
C.B. Croke, \emph{Rigidity for surfaces of non-positive curvature}, Comment.
  Math. Helv. \textbf{65} (1990), no.~1, 150--169.

\bibitem{CM87}
M.~Culler and J.W. Morgan, \emph{Group actions on $\mathbb{R}$-trees}, Proc.
  Lond. Math. Soc. \textbf{55} (1987), no.~3, 571--604.

\bibitem{CV86}
M.~Culler and K.~Vogtmann, \emph{Moduli of graphs and automorphisms of free
  groups}, Invent. math. \textbf{84} (1986), no.~1, 91--119.

\bibitem{CV91}
\bysame, \emph{The boundary of outer space in rank two}, Arboreal group theory
  (Berkeley,CA,1988) (Math. Sci. Res.~Inst. Publ., ed.), vol.~19, Springer, New
  York, 1991, pp.~189--230.

\bibitem{FM11}
S.~Francaviglia and A.~Martino, \emph{Metric {P}roperties of {O}uter {S}pace},
  Publ. Mat. \textbf{55} (2011), no.~2, 433--473.

\bibitem{GL95}
D.~Gaboriau and G.~Levitt, \emph{The rank of actions on $\mathbb{R}$-trees},
  Ann. Scient. Éc. Norm. Sup. \textbf{28} (1995), no.~5, 549--570.

\bibitem{GLP94}
D.~Gaboriau, G.~Levitt, and F.~Paulin, \emph{Pseudogroups of isometries of
  $\mathbb{R}$ and {R}ips' theorem on free actions on $\mathbb{R}$-trees},
  Israel J. Math. \textbf{87} (1994), no.~3, 403--428.

\bibitem{GS90}
H.~Gillet and P.B. Shalen, \emph{Dendrology of groups in low
  $\mathbb{Q}$-ranks}, J. Diff. Geom. \textbf{32} (1990), no.~3, 605--712.

\bibitem{Gui98}
V.~Guirardel, \emph{Approximations of stable actions on $\mathbb{R}$-trees},
  Comment. Math. Helv. \textbf{73} (1998), 89--121.

\bibitem{Gui00}
\bysame, \emph{Dynamics of $\text{{O}ut}({F}_n)$ on the boundary of outer
  space}, Ann. Scient. Éc. Norm. Sup. \textbf{33} (2000), no.~4, 433--465.

\bibitem{Hor14-2}
C.~Horbez, \emph{The horoboundary of outer space}, in preparation.

\bibitem{Ima79}
H.~Imanishi, \emph{On codimension one foliations defined by closed one forms
  with singularities}, J. Math. Kyoto Univ. \textbf{19} (1979), no.~2,
  285--291.

\bibitem{Jia91}
R.~Jiang, \emph{Arboreal {G}roup {T}heory}, vol.~19, ch.~Branch {P}oints and
  {F}ree {A}ctions on $\mathbb{R}$-{T}rees, pp.~251--293, Mathematical Sciences
  Research Institute Publications, 1991.

\bibitem{Kap12}
I.~Kapovich, \emph{Random length-spectrum rigidity for free groups}, Proc.
  Amer. Math. Soc. \textbf{140} (2012), no.~5, 1549--1560.

\bibitem{Kap09}
M.~Kapovich, \emph{Hyperbolic {M}anifolds and {D}iscrete {G}roups},
  Birkhäuser, 2009.

\bibitem{Lev94}
G.~Levitt, \emph{Graphs of actions on $\mathbb{R}$-trees}, Comment. Math. Helv.
  \textbf{69} (1994), no.~1, 28--38.

\bibitem{LL03}
G.~Levitt and M.~Lustig, \emph{Irreducible automorphisms of ${F}_n$ have
  north-south dynamics on compactified outer space}, J. Inst. Math. Jussieu
  \textbf{2} (2003), no.~1, 59--72.

\bibitem{LP97}
G.~Levitt and F.~Paulin, \emph{Geometric group actions on trees}, Amer. J.
  Math. \textbf{119} (1997), no.~1, 83--102.

\bibitem{Mei13}
S.~Meinert, \emph{The {L}ipschitz metric on deformation spaces of ${G}$-trees},
  arXiv:1312.1829v2 (2014).

\bibitem{Ota90}
J.-P. Otal, \emph{Le spectre marqué des longueurs des surfaces à courbure
  négative}, Ann. Math. \textbf{131} (1990), no.~1, 151--162.

\bibitem{Pau88}
F.~Paulin, \emph{Topologie de {G}romov équivariante, structures hyperboliques
  et arbres réels}, Invent. math. \textbf{94} (1988), no.~1, 53--80.

\bibitem{Pau89}
\bysame, \emph{The {G}romov topology on $\mathbb{R}$-trees}, Topol. Appl.
  \textbf{32} (1989), no.~3, 197--221.

\bibitem{Pau95}
\bysame, \emph{De la géométrie et de la dynamique des groupes discrets},
  Mémoire d'Habilitation à diriger les recherches - ENS Lyon (1995).

\bibitem{Ray11}
B.~Ray, \emph{Non-rigidity of cyclic automorphic orbits in free groups}, IJAC
  \textbf{22} (2012), no.~3.

\bibitem{Sel96}
Z.~Sela, \emph{The {N}ielsen-{T}hurston classification and automorphisms of a
  free group {I}}, Duke Math. J. \textbf{84} (1996), no.~2, 379--397.

\bibitem{Ser77}
J.-P. Serre, \emph{Arbres, amalgames, $sl_2$}, Astérisque \textbf{46} (1977).

\bibitem{SV92}
J.~Smillie and K.~Vogtmann, \emph{Length functions and outer space}, Michigan
  Math. J. \textbf{39} (1992), no.~3, 485--493.

\bibitem{Sta07}
Y.~Stalder, \emph{Fixed point properties in the space of marked groups}, Limits
  of graphs in group theory and computer science (G.~Arzhantseva and
  A.~Valette, eds.), Proc. Special semester at Center Bernoulli (Lausanne),
  Presses {P}olytechniques et {U}niversitaires {R}omandes, Jan-June 2007.

\bibitem{Sta99}
J.R. Stallings, \emph{Whitehead graphs on handlebodies}, Geometric group theory
  down under (Canberra, 1996) (Berlin de~Gruyter, ed.), 1999, pp.~317--330.

\bibitem{Vog02}
K.~Vogtmann, \emph{Automorphisms of {F}ree {G}roups and {O}uter {S}pace}, Geom.
  Dedic. \textbf{94} (2002).

\bibitem{Whi36}
J.H.C. Whitehead, \emph{On certain sets of elements in a free group}, Proc.
  Lond. Math. Soc. \textbf{41} (1936), 48--56.

\end{thebibliography}

\end{document}